\newcommand{\+}{\protect\nobreakdash-}
\renewcommand{\:}{\colon}
\newcommand{\rarrow}{\longrightarrow}
\newcommand{\ot}{\otimes}
\newcommand{\bu}{{\text{\smaller\smaller$\scriptstyle\bullet$}}}
\newcommand{\oc}{\mathbin{\text{\smaller$\square$}}}
\newcommand{\lrarrow}{\mskip.5\thinmuskip\relbar\joinrel\relbar\joinrel
 \rightarrow\mskip.5\thinmuskip\relax} 
\newcommand{\llarrow}{\mskip.5\thinmuskip\leftarrow\joinrel\relbar
 \joinrel\relbar\mskip.5\thinmuskip\relax}
\DeclareMathOperator{\Hom}{Hom}
\DeclareMathOperator{\Ext}{Ext}
\DeclareMathOperator{\Tor}{Tor}
\DeclareMathOperator{\Cohom}{Cohom}
\DeclareMathOperator{\Tot}{Tot}
\DeclareMathOperator{\id}{id}
\DeclareMathOperator{\rd}{rd}
\DeclareMathOperator{\cd}{cd}
\newcommand{\Modl}{{\operatorname{\mathsf{--Mod}}}}
\newcommand{\Hot}{\mathsf{Hot}}
\newcommand{\sD}{\mathsf D}
\newcommand{\dproj}{{\operatorname{\mathsf{-proj}}}}
\newcommand{\dinj}{{\operatorname{\mathsf{-inj}}}}
\newcommand{\proj}{{\mathsf{proj}}}
\newcommand{\inj}{{\mathsf{inj}}}
\newcommand{\sgr}{{\mathsf{gr}}}
\newcommand{\cdg}{{\mathsf{cdg}}}
\newcommand{\co}{{\mathsf{co}}}
\newcommand{\ctr}{{\mathsf{ctr}}}
\newcommand{\acycl}{{\mathsf{acycl}}}
\newcommand{\Fil}{\mathsf{Fil}}
\newcommand{\Cof}{\mathsf{Cof}}
\newcommand{\Prod}{\mathsf{Prod}}
\newcommand{\Add}{\mathsf{Add}}
\newcommand{\A}{\mathcal A}
\newcommand{\F}{\mathcal F}
\newcommand{\G}{\mathcal G}
\newcommand{\C}{\mathcal C}
\newcommand{\D}{\mathcal D}
\renewcommand{\S}{\mathcal S}
\newcommand{\T}{\mathcal T}
\newcommand{\X}{\mathcal X}
\newcommand{\Z}{\mathbb Z}
\newcommand{\Q}{\mathbb Q}
\newcommand{\Section}[1]{\bigskip\section{#1}\medskip}
\theoremstyle{plain}
\newtheorem{thm}{Theorem}[section]
\newtheorem{lem}[thm]{Lemma}
\newtheorem{prop}[thm]{Proposition}
\newtheorem{cor}[thm]{Corollary}
\theoremstyle{definition}
\newtheorem{ex}[thm]{Example}
\newtheorem{exs}[thm]{Examples}
\newtheorem{rem}[thm]{Remark}
\begin{document}

\title{An explicit self-dual construction of complete cotorsion pairs
in the relative context}

\author{Leonid Positselski}

\address{Institute of Mathematics of the Czech Academy of Sciences \\
\v Zitn\'a~25, 115~67 Praha~1 (Czech Republic); and
\newline\indent Laboratory of Algebra and Number Theory \\
Institute for Information Transmission Problems \\
Moscow 127051 (Russia)}

\email{positselski@math.cas.cz}

\begin{abstract}
 Let $R\rarrow A$ be a homomorphism of associative rings, and let
$(\F,\C)$ be a hereditary complete cotorsion pair in $R\Modl$.
 Let $(\F_A,\C_A)$ be the cotorsion pair in $A\Modl$ in which
$\F_A$ is the class of all left $A$\+modules whose underlying
$R$\+modules belong to~$\F$.
 Assuming that the $\F$\+resolution dimension of every left $R$\+module
is finite and the class $\F$ is preserved by the coinduction functor
$\Hom_R(A,{-})$, we show that $\C_A$ is the class of all direct summands
of left $A$\+modules finitely (co)filtered by $A$\+modules coinduced from
$R$\+modules from~$\C$.
 Assuming that the class $\F$ is closed under countable products and
preserved by the functor $\Hom_R(A,{-})$, we prove that $\C_A$ is
the class of all direct summands of left $A$\+modules cofiltered by
$A$\+modules coinduced from $R$\+modules from~$\C$, with the decreasing
filtration indexed by the natural numbers.
 A combined result, based on the assumption that countable products
of modules from $\F$ have finite $\F$\+resolution dimension bounded
by~$k$, involves cofiltrations indexed by the ordinal $\omega+k$.
 The dual results also hold, provable by the same technique going
back to the author's monograph on semi-infinite homological
algebra~\cite{Psemi}.
 In addition, we discuss the $n$\+cotilting and $n$\+tilting cotorsion
pairs, for which we obtain better results using a suitable version of
a classical Bongartz--Ringel lemma.
 As an illustration of the main results of the paper, we consider
certain cotorsion pairs related to the contraderived and coderived
categories of curved DG\+modules.
\end{abstract}

\maketitle

\tableofcontents

\bigskip
\section*{Introduction}
\medskip

 Cotorsion pairs (or in the older terminology, ``cotorsion theories''),
introduced by Salce in~\cite{Sal}, became a standard tool of
the contemporary theory of rings and modules~\cite{GT}.
 The basic idea can be explained in a few words as follows.

 Given an associative ring $A$ and left $A$\+modules $L$ and $M$,
the groups $\Ext_A^n(L,M)$ can be computed either in terms of
a projective resolution of $L$, or using an injective coresolution
of~$M$.
 But what if we wish to use ``partially injective'' and ``partially
projective'' resolutions?
 We want to resolve $L$ by modules that are only somewhat projective,
and coresolve $M$ by modules that are only somewhat injective.
 Can we use such resolutions in order to compute $\Ext_A^n(L,M)$\,?

 As one can see, the answer is positive, provided that the chosen
classes of ``partially injective'' and ``partially projective'' modules
fit each other and one is willing to resolve \emph{both} $L$ and $M$
simultaneously.
 For example, one can choose a flat resolution $F_\bu$ for the module
$L$, and simultaneously choose a coresolution $C^\bu$ of the module
$M$ by so-called \emph{cotorsion $A$\+modules} (in the sense of
Enochs~\cite{En}).
 Then the total complex of the bicomplex $\Hom_A(F_\bu,C^\bu)$
computes $\Ext_A^*(L,M)$.

 Alternatively, let $R\subset A$ be a subring.
 We want to resolve $L$ by $A$\+modules that are \emph{projective as
$R$\+modules}.
 What kind of coresolution of $M$ do we need to use jointly with
such a resolution of $L$, in order to compute the $\Ext$ groups
over~$A$\,?

 The definition of a (\emph{hereditary}) \emph{cotorsion pair}
provides a general answer to such questions.
 A pair of classes of left $A$\+modules $\F$ and $\C\subset A\Modl$
is called a cotorsion pair if $\Ext^1_A(F,C)=0$ for all $F\in\F$
and $C\in\C$, and both the classes $\F$ and $\C$ are maximal with
respect to this property.
 A cotorsion pair $(\F,\C)$ is said to be hereditary if
$\Ext^n_A(F,C)=0$ for all $F\in\F$, \ $C\in\C$, and $n\ge1$.

 In particular, returning to the example above, a left $A$\+module $C$
is said to be (\emph{Enochs}) \emph{cotorsion}~\cite{En} if
$\Ext^1_A(F,C)=0$ for all flat left $A$\+modules $F$, or equivalently,
$\Ext^n_A(F,C)=0$ for all flat $F$ and $n\ge1$.

 More generally, one can consider projective objects, injective objects,
and cotorsion pairs in an abelian category~$\A$.
 In order to compute the groups $\Ext_\A^*$ using projective or
injective resolutions, one needs to have \emph{enough} projectives or
injectives, respectively.
 What does it mean that there are ``enough partially
projective/injective objects'' in a cotorsion pair $(\F,\C)$\,?
 The appropriate definition of this was suggested in~\cite{Sal}, and
it is a strong and unobvious condition.

 Given a cotorsion pair $(\F,\C)$ in $A\Modl$, one says that
\emph{there are enough projectives in $(\F,\C)$} if every left
$A$\+module $L$ is a quotient module of a module $F$ from $\F$
\emph{by a submodule $C'=\ker(F\to L)$ belonging to~$\C$}.
 Similarly, one says that \emph{there are enough injectives in
$(\F,\C)$} if every left $A$\+module $M$ is a submodule of a module
$C$ from $\C$ \emph{with the quotient module $F'=C/M$ belonging
to~$\F$}.
 The short exact sequences $0\rarrow C'\rarrow F\rarrow L\rarrow0$
and $0\rarrow M\rarrow C\rarrow F'\rarrow0$ are called
\emph{approximation sequences}.
 A cotorsion pair $(\F,\C)$ in $A\Modl$ has enough projectives if and
only if it has enough injectives; these assertions are known as
\emph{Salce lemmas}~\cite{Sal}.
 A cotorsion pair having enough projectives (equivalently,
enough injectives) is said to be \emph{complete}.
 In other words, a cotorsion pair $(\F,\C)$ is complete if
approximation sequences with respect to $(\F,\C)$ exist for all
left $A$\+modules.

 The assertion that the \emph{flat cotorsion pair} $(\F,\C)$, where
$\F$ is the class of flat left $A$\+modules and $\C$ is the class of
cotorsion left $A$\+modules, \emph{is complete} became known as
the \emph{flat cover conjecture}.
 It was proved (in two different ways) in the paper~\cite{BBE}.
 
 The most powerful (and the most commonly used) approach to
constructing complete cotorsion pairs known today was developed by
Eklof and Trlifaj~\cite{ET}.
 The Eklof--Trlifaj theorem claims that \emph{any cotorsion pair
generated by a set of modules is complete}.
 Here a cotorsion pair $(\F,\C)$ is said to be generated by a class
of modules $\S\subset A\Modl$ if $\C$ is the class of all left
$A$\+modules $C$ such that $\Ext^1_A(S,C)=0$ for all $S\in\S$.
 Subsequently it was realized that the technique of the Eklof--Trlifaj
construction is a particular case of the so-called \emph{small
object argument} in the homotopy theory or model category theory.
 In fact, a complete cotorsion pair can be thought of as a particular
case of a \emph{weak factorization system}~\cite{Ros,Hov}.

 On the dual side, it is known that \emph{any cotorsion pair
cogenerated by a class of pure-injective modules is
complete}~\cite[Theorem~6.19]{GT}.
 Further alternative approaches to proving completeness of cotorsion
pairs in some special cases are provided by the Bongartz--Ringel
lemma~\cite[Lemma~2.1]{Bon}, \cite[Lemma~4$'$]{Rin},
\cite[Lemma~6.15 and Proposition~6.44]{GT} and the Auslander--Buchweitz
construction~\cite{AB}.

 The aim of this paper is to offer another such alternative approach.
 It is an explicit self-dual construction applicable in the particular
case of cotorsion pairs lifted via the functor of restriction of
scalars $A\Modl\rarrow R\Modl$ with respect to a ring homomorphism
$R\rarrow A$.
 In the most typical situation, $R$ would be a subring in $A$.
 Notice that the small object argument is decidedly \emph{not}
self-dual.
 In fact, it is known to be consistent with ZFC${}+{}$GCH that
the dual version of the Eklof--Trlifaj theorem is not true~\cite{ES}.

 Still, most of the complete cotorsion pairs constructed in this paper
can be easily obtained from the small object argument.
 The main advantage of our approach is that it produces a quite
explicit description of the second class in the cotorsion pair.
 Sometimes this also follows from the Eklof--Trlifaj theorem; but in
other cases it does not.
 In the latter cases, our approach provides new knowledge.

 In the work of the present author, other results somewhat resembling
those of the present paper were obtained in the paper~\cite{PSl},
where descriptions of the right classes in the so-called \emph{strongly
flat} cotorsion pairs, and sometimes also in the flat cotorsion pair,
were provided for categories of modules over commutative rings.
 The constructions of approximation sequences in the present paper
go back to the author's monograph on semi-infinite homological
algebra~\cite{Psemi}.

 Semi-infinite homological algebra, as interpreted in
the book~\cite{Psemi}, is the study of module categories over algebraic
structures which have a mixture of algebra and coalgebra variables in
them.
 These include corings over rings (which means roughly ``coalgebras
over algebras'') and semialgebras over coalgebras (``algebras over
coalgebras''), as well as more complicated semialgebras over corings.

 Relative situations appearing naturally in this context, that is
a coring over a ring or a semialgebra over a coalgebra, tend to be
better behaved than a usual ring over a subring.
 Nevertheless, techniques originally developed in the semi-infinite
context can be transferred to the realm of ring theory.
 That is what we do in this paper.

 Section~\ref{prelim-secn} is an overview of preliminary material.
 The main results of the paper are presented in
Section~\ref{cofiltrations-secn}. 
 In that section, for various cotorsion pairs $(\F_A,\C_A)$ in
the category of $A$\+modules, we describe the right class $\C_A$
as the class of all modules cofiltered by modules of simpler nature.
 The latter means typically the $A$\+modules $\Hom_R(A,C)$ coinduced
from certain $R$\+modules $C$, using a ring homomorphism $R\rarrow A$.
 Here the class $\F_A$ consists of all $A$\+modules whose underlying
$R$\+module belongs to the left class $\F$ of a cotorsion pair
$(\F,\C)$ in $R\Modl$ (while the $R$\+modules $C$ above range over
the class~$\C$).
 Moreover, we show that it suffices to consider rather short
cofiltrations (or, in another language, decreasing filtrations):
depending on the assumptions, these are either finite (co)filtrations,
or cofiltrations indexed by the natural numbers, or indexed by
the ordinal $\omega+k$, where $k$~is a finite integer.

 The dual results are discussed in Section~\ref{filtrations-secn}.
 For various cotorsion pairs $(\F^A,\C^A)$ in $A\Modl$, we describe
the left class $\F^A$ as the class of all modules filtered by modules
of simpler nature.
 The latter means typically the $A$\+modules $A\ot_RF$ induced from
certain $R$\+modules $F$, using a ring homomorphism $R\rarrow A$.
 Here the class $\C^A$ consists of all $A$\+modules whose underlying
$R$\+module belongs to the right class $\C$ of a cotorsion pair
$(\F,\C)$ in $R\Modl$ (while the $R$\+modules $F$ above range over
the class~$\F$).
 The results of Section~\ref{filtrations-secn} are generally less
surprising, from the point of view of the contemporary module theory,
than those of Section~\ref{cofiltrations-secn}, in that a description
of the left class in terms of filtrations is provided, for many
cotorsion pairs, by the small object argument.
 Still, we obtain some new information, in the sense that
the filtrations which we construct are rather short (either finite,
or indexed by the natural numbers, or by the ordinal~$\omega+k$).

 We also discuss the $n$\+cotilting and $n$\+tilting cotorsion pairs
(see~\cite[Chapters~13--15]{GT}), for which it turns out that
the conventional techniques of the tilting theory allow to obtain better
results than our ``semi-infinite'' approach.
 In this connection we introduce a generalized ($n\ge1$) version of
the classical ($n=1$) Bongartz lemma~\cite[Lemma~6.15 and
Proposition~6.44]{GT}, or which is the same, an infinitely generated
version of a lemma of Ringel~\cite[Lemma~4$'$]{Rin}, and use it
to extend a recent result of \v Saroch and
Trlifaj~\cite[Example~2.3]{ST} to $n\ge2$.
 This material is presented in Sections~\ref{cotilting-subsecn}
and~\ref{tilting-subsecn}.

 As an illustration for the main results of the paper, we produce
certain cotorsion pairs in the abelian categories of curved
DG\+modules over some curved DG\+rings.
 These are hereditary, complete cotorsion pairs related to
the contraderived and coderived abelian model structures, as
constructed in~\cite[Section~1.3]{Bec}.
 The idea to consider these cotorsion pairs was suggested to the author
by J.~\v St\!'ov\'\i\v cek.
 We obtain almost no new results in this direction (some general
theorems about filtrations and cofiltrations indexed by countable
ordinals are notable exceptions).
 However, our approach allows us to obtain new proofs of the results
of the memoir~\cite{Pkoszul} concerning the contraderived and coderived
categories of CDG\+modules~\cite[Theorems~3.6, 3.7 and~3.8]{Pkoszul},
interpreting these essentially as a particular case of our results on
cotorsion pairs arising from ring homomorphisms.
 This is the material of Section~\ref{contra-co-derived-secn}.

\subsection*{Acknowledgement}
 I~wish to thank Roman Bezrukavnikov for motivating correspondence.
 I~am grateful to Jan \v St\!'ov\'\i\v cek, Jan Trlifaj, and
Silvana Bazzoni for illuminating discussions from which this work
benefited greatly.
 I~also want to thank Michal Hrbek for helpful conversations and
comments.
 I~wish to thank an anonymous referee for reading the paper
carefully and spotting a number of misprints; the suggestion
to include Proposition~\ref{small-object-argument-for-coherent}
is due to the referee.
 The author is supported by the GA\v CR project 20-13778S and
research plan RVO:~67985840.

\Section{Preliminaries}  \label{prelim-secn}

 All \emph{rings} and \emph{algebras} in this paper are presumed to be
associative and unital.
 All \emph{ring homomorphisms} take the unit to the unit, and all
\emph{modules} are unital.

 Let $A$ be a ring.
 We denote by $A\Modl$ the abelian category of left $A$\+modules.
 For any left $A$\+module $M$, we denote by $\Add(M)=\Add_A(M)\subset
A\Modl$ the class of all direct summands of direct sums $M^{(I)}$ of
copies of the $A$\+module $M$, indexed by arbitrary sets~$I$.
 Similarly, we let $\Prod(M)=\Prod_A(M)\subset A\Modl$ denote the class
of all direct summands of products $M^I$ of copies of
the $A$\+module~$M$.

 For any $A$\+module $M$, choose a projective resolution
$\dotsb\rarrow P_2\rarrow P_1\rarrow P_0\rarrow M\rarrow0$ and
an injective coresolution $0\rarrow M\rarrow J^0\rarrow J^1\rarrow J^2
\rarrow\dotsb$.
 For every $i\ge0$, denote by $\Omega^iM$ the cokernel of the morphism
$P_{i+1}\rarrow P_i$ and by $\Omega^{-i}M$ the kernel of the morphism
$J^i\rarrow J^{i+1}$.
 So, in particular, $\Omega^0M=M$, and our notation is consistent.
 The $A$\+modules $\Omega^iM$ are called the \emph{syzygy modules} of
the $A$\+module $M$, while the $A$\+modules $\Omega^{-i}M$ are called
the \emph{cosyzygy modules} of~$M$.

 To emphasize that $M$ is viewed as a (left) $A$\+module, we will
sometimes use the notation~${}_AM$.
 If $R\rarrow A$ is a ring homomorphism, then the underlying left
$R$\+module of $M$ will be sometimes denoted by~${}_RM$.

 Given a ring homomorphism $R\rarrow A$ and a left $R$\+module $L$,
the left $A$\+module $A\ot_RL$ is said to be \emph{induced from}
the left $R$\+module~$L$.
 The left $A$\+module $\Hom_R(A,L)$ is said to be \emph{coinduced from}
the left $R$\+module~$L$.

\subsection{$\Ext^1$-orthogonal classes}
 We say that two left $A$\+modules $F$ and $C$ are
\emph{$\Ext^1$\+orthogonal} if $\Ext_A^1(F,C)=0$.
 Two classes of left $A$\+modules $\F$ and $\C\subset A\Modl$ are
called \emph{$\Ext^1$\+orthogonal} if $\Ext_A^1(F,C)=0$ for all
$F\in\F$ and $C\in\C$.

 Given a class of left $A$\+modules $\F\subset A\Modl$, we denote
by $\F^{\perp_1}\subset A\Modl$ the class of all left $A$\+modules $X$
such that $\Ext_A^1(F,X)=0$ for all $F\in\F$.
 Similarly, given a class of left $A$\+modules $\C\subset A\Modl$,
we denote by ${}^{\perp_1}\C\subset A\Modl$ the class of all
left $A$\+modules $Y$ such that $\Ext_A^1(Y,C)=0$ for all $C\in\C$.

 Clearly, the classes $\F^{\perp_1}$ and ${}^{\perp_1}\C$ are
closed under extensions and direct summands in $A\Modl$.
 The class $\F^{\perp_1}$ contains all injective left $A$\+modules,
while the class ${}^{\perp_1}\C$ contains all projective left
$A$\+modules.

 A pair of classes of left $A$\+modules $(\F,\C)$ is said to be
a \emph{cotorsion pair} if $\C=\F^{\perp_1}$ and $\F={}^{\perp_1}\C$.
 In other words, $(\F,\C)$ is called a cotorsion pair if both $\F$
and $\C$ are the maximal classes with the property of being
$\Ext^1$\+orthogonal to each other.

 For any class of left $A$\+modules $\S\subset A\Modl$, the pair of
classes $\F={}^{\perp_1}(\S^{\perp_1})$ and $\C=\S^{\perp_1}$ is
a cotorsion pair in $A\Modl$.
 We will say that the cotorsion pair $(\F,\C)$ is \emph{generated
by}~$\S$.
 The class~$\F$ is also said to be generated by~$\S$.

 Dually, for any class of left $A$\+modules $\T\subset A\Modl$,
the pair of classes $\F={}^{\perp_1}\T$ and
$\C=({}^{\perp_1}\T)^{\perp_1}$ is a cotorsion pair in $A\Modl$.
 We will say that the cotorsion pair $(\F,\C)$ is \emph{cogenerated
by}~$\T$.
 The class $\C$ is also said to be cogenerated by~$\T$.

 The following variation of the above notation will be also useful.
 Given a class of left $A$\+modules $\F$ and an integer $j\ge0$,
we denote by $\F^{\perp_{>j}}\subset A\Modl$ the class of all
left $A$\+modules $X$ such that $\Ext_A^n(F,X)=0$ for all $F\in\F$
and $n>j$.
 Similarly, given a class of left $A$\+modules $\C$, we denote by
${}^{\perp_{>j}}\C\subset A\Modl$ the class of all left $A$\+modules
$Y$ such that $\Ext_A^n(Y,C)=0$ for all $C\in\C$ and $n>j$.

\subsection{Approximation sequences}
 Let $\F$ and $\C\subset A\Modl$ be two $\Ext^1$\+orthogonal classes
of left $A$\+modules.
 We will say that $\F$ and $\C$ \emph{admit approximation sequences}
if, for every left $A$\+module $M$, there exist short exact
sequences of left $A$\+modules
\begin{gather}
0\lrarrow C'\lrarrow F\lrarrow M\lrarrow 0,
\label{sp-precover-seq} \\
0\lrarrow M\lrarrow C\lrarrow F'\lrarrow 0\hphantom{,}
\label{sp-preenvelope-seq}
\end{gather}
with $F$, $F'\in\F$ and $C$, $C'\in\C$.

 An approximation sequence~\eqref{sp-precover-seq} is called
a \emph{special precover sequence}, and the surjective morphism
$F\rarrow M$ is called a \emph{special precover}.
 An approximation sequence~\eqref{sp-preenvelope-seq} is called
a \emph{special preenvelope sequence}, and the injective morphism
$M\rarrow C$ is called a \emph{special preenvelope}.

\begin{lem}[Salce~\cite{Sal}] \label{salce-lemma}
 Let $(\F,\C)$ be an\/ $\Ext^1$\+orthogonal pair of classes of modules,
both of them closed under extensions in $A\Modl$.
 Assume that every left $A$\+module is a quotient module of
a module from $\F$ and a submodule of a module from~$\C$.
 Then a special precover sequence~\eqref{sp-precover-seq} exists
for every left $A$\+module $M$ \emph{if and only if} a special
preenvelope sequence~\eqref{sp-preenvelope-seq} exists for every
left $A$\+module~$M$.
\end{lem}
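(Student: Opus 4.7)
The plan is the classical symmetric pushout/pullback construction of Salce, exploiting the fact that $\F$ and $\C$ are each closed under extensions.

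Assume first that special precover sequences exist for every left $A$-module; I will produce a special preenvelope sequence for an arbitrary $M$. By hypothesis $M$ embeds into some $C_0\in\C$, giving a short exact sequence $0\to M\to C_0\to N\to 0$. Apply the special precover assumption to $N$ to get $0\to C''\to F\to N\to 0$ with $F\in\F$ and $C''\in\C$. Form the pullback $P$ of the diagram $C_0\to N\leftarrow F$. The two standard short exact sequences attached to a pullback square read
\begin{gather*}
0\lrarrow C''\lrarrow P\lrarrow C_0\lrarrow 0, \\
0\lrarrow M\lrarrow P\lrarrow F\lrarrow 0.
\end{gather*}
Since $C''$ and $C_0$ both lie in $\C$ and $\C$ is closed under extensions, $P\in\C$. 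The second sequence is then the required special preenvelope sequence for $M$.

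The converse direction is dual. Given $M$, use the hypothesis to choose a surjection $F_0\twoheadrightarrow M$ with $F_0\in\F$; denote its kernel by $K$, so we have $0\to K\to F_0\to M\to 0$. Apply the special preenvelope assumption to $K$ to obtain $0\to K\to C\to F'\to 0$ with $C\in\C$ and $F'\in\F$. Form the pushout $Q$ of $F_0\leftarrow K\to C$, producing
\begin{gather*}
0\lrarrow F_0\lrarrow Q\lrarrow F'\lrarrow 0, \\
0\lrarrow C\lrarrow Q\lrarrow M\lrarrow 0.
\end{gather*}
Closure of $\F$ under extensions gives $Q\in\F$, and the second sequence is the desired special precover sequence for $M$.

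There is no real obstacle here beyond bookkeeping: the pullback/pushout square supplies both short exact sequences automatically, and the sole nontrivial input is closure of $\F$ and $\C$ under extensions, which is exactly the hypothesis. The $\Ext^1$-orthogonality of $\F$ and $\C$ is not actually used in the argument itself; it enters only in justifying the terminology (the sequences deserve the names ``special precover'' and ``special preenvelope'' only because of orthogonality). The mildest care to take will be in verifying that the maps in the pullback/pushout short exact sequences are the ones I am naming, which is a direct diagram chase.
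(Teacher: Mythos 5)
Your proof is correct and is essentially the same pushout/pullback construction as the paper's: the paper writes out the pushout direction (preenvelopes $\Rightarrow$ precovers) and declares the other direction dual, while you spell out both. The remark that $\Ext^1$-orthogonality is not used in the construction itself is also accurate.
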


\begin{proof}
 Let us prove the ``if''.
 Let $M$ be a left $A$\+module, and let $E\in\F$ be a module for
which there exists a surjective $A$\+module morphism $E\rarrow M$.
 Let $N$ be the kernel of this morphism; so we have a short
exact sequence $0\rarrow N\rarrow E\rarrow M\rarrow0$.
 Let $0\rarrow N\rarrow C\rarrow F\rarrow 0$ be a special
preenvelope sequence for the left $A$\+module~$N$, i.~e., $C\in\C$
and $F\in\F$.
 Denote by $H$ the pushout (that is, in other words, the fibered
coproduct) of the pair of morphisms $N\rarrow E$ and $N\rarrow C$.
 So $H$ is the cokernel of the diagonal morphism $N\rarrow E\oplus C$.
 Then there are short exact sequences $0\rarrow E\rarrow H\rarrow F
\rarrow0$ and $0\rarrow C\rarrow H\rarrow M\rarrow0$.
 Now the former sequence shows that $H\in\F$, and the latter one is
the desired special precover sequence for the $A$\+module~$M$.
 The proof of the ``only if'' implication is dual.
\end{proof}

 Let $(\F,\C)$ be a cotorsion pair in $A\Modl$.
 Then it is clear from Lemma~\ref{salce-lemma} that the pair $(\F,\C)$
admits special precover sequences if and only if it admits special
preenvelope sequences.
 In this case, the cotorsion pair $(\F,\C)$ is said to be
\emph{complete}.

 Given a class of modules $\A\subset A\Modl$, denote by $\A^\oplus
\subset A\Modl$ the class of all direct summands of modules from~$\A$.

\begin{lem} \label{direct-summand-lemma}
 Let $(\F,\C)$ be an\/ $\Ext^1$\+orthogonal pair of classes of left
$A$\+modules admitting approximation sequences.
 Then $(\F^\oplus,\C^\oplus)$ is a complete cotorsion pair in
$A\Modl$.
\end{lem}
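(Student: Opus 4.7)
The plan is to verify first that $(\F^\oplus,\C^\oplus)$ is an $\Ext^1$\+orthogonal pair (which is automatic), then that each class is the full $\Ext^1$\+orthogonal of the other (this is the content of the lemma, using the approximation sequences), and finally observe that completeness comes for free since the approximation sequences given for $(\F,\C)$ are themselves approximation sequences for $(\F^\oplus,\C^\oplus)$.

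\textbf{Step 1: Orthogonality.} For any $F\in\F^\oplus$ and $C\in\C^\oplus$, write $F\oplus F''=F_0$ with $F_0\in\F$ and $C\oplus C''=C_0$ with $C_0\in\C$. Since $\Ext^1_A$ is additive in each argument, $\Ext^1_A(F,C)$ is a direct summand of $\Ext^1_A(F_0,C_0)=0$. Hence $\F^\oplus$ and $\C^\oplus$ are $\Ext^1$\+orthogonal, giving the inclusions $\F^\oplus\subseteq{}^{\perp_1}(\C^\oplus)$ and $\C^\oplus\subseteq(\F^\oplus)^{\perp_1}$.

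\textbf{Step 2: Maximality via approximations.} This is the main point. Suppose $X\in(\F^\oplus)^{\perp_1}$; in particular $\Ext^1_A(F',X)=0$ for every $F'\in\F$. Take a special preenvelope sequence $0\rarrow X\rarrow C\rarrow F'\rarrow0$ with $C\in\C$ and $F'\in\F$ (which exists by hypothesis). Since $\Ext^1_A(F',X)=0$, this sequence splits, so $X$ is a direct summand of $C\in\C$, i.e.\ $X\in\C^\oplus$. This proves $\C^\oplus=(\F^\oplus)^{\perp_1}$. Dually, if $Y\in{}^{\perp_1}(\C^\oplus)$, then $\Ext^1_A(Y,C')=0$ for every $C'\in\C$; taking a special precover sequence $0\rarrow C'\rarrow F\rarrow Y\rarrow0$ with $F\in\F$ and $C'\in\C$, this sequence splits, exhibiting $Y$ as a direct summand of $F\in\F$, so $Y\in\F^\oplus$. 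This yields $\F^\oplus={}^{\perp_1}(\C^\oplus)$, and $(\F^\oplus,\C^\oplus)$ is a cotorsion pair.

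\textbf{Step 3: Completeness.} Given a left $A$\+module $M$, the given approximation sequences \eqref{sp-precover-seq} and \eqref{sp-preenvelope-seq} with $F,F'\in\F\subseteq\F^\oplus$ and $C,C'\in\C\subseteq\C^\oplus$ are \emph{a fortiori} special precover and preenvelope sequences with respect to $(\F^\oplus,\C^\oplus)$. So the cotorsion pair $(\F^\oplus,\C^\oplus)$ is complete.

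I do not foresee any real obstacle: the whole argument rests on the standard ``splitting of short exact sequences with vanishing $\Ext^1$'' trick, applied once to the preenvelope sequence and once to the precover sequence. The only mild subtlety is to remember that membership in $(\F^\oplus)^{\perp_1}$ (resp.\ ${}^{\perp_1}(\C^\oplus)$) automatically implies the same vanishing against the smaller class $\F$ (resp.\ $\C$), which is what makes the approximation sequences immediately usable.
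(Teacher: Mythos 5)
Your proof is correct and follows essentially the same route as the paper: orthogonality passes to direct summands, the given approximation sequences serve a fortiori for $(\F^\oplus,\C^\oplus)$, and the maximality of each class is obtained by splitting the special preenvelope (resp.\ precover) sequence of a module in the relevant $\Ext^1$-orthogonal class. The only cosmetic difference is that the paper states the key inclusion as $\F^{\perp_1}\subset\C^\oplus$ while you work with $(\F^\oplus)^{\perp_1}$; these coincide, and the splitting argument is identical.
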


\begin{proof}
 Since $(\F,\C)$ is an\/ $\Ext^1$\+orthogonal pair of classes of
modules admitting approximation sequences, it follows immediately
that the pair of classes $\F^\oplus$ and $\C^\oplus$ has the same
properties.
 So it only remains to show that $\F^{\perp_1}\subset\C^\oplus$
and ${}^{\perp_1}\C\subset\F^\oplus$.
 Indeed, let $M$ be a left $A$\+module belonging to $\F^{\perp_1}$.
 By assumption, there exists a short exact sequence of left
$A$\+modules $0\rarrow M\rarrow C\rarrow F'\rarrow0$ with
$C\in\C$ and $F'\in\F$.
 Since $\Ext^1_A(F',M)=0$, it follows that $M$ is a direct summand
of~$C$.
\end{proof}

\subsection{Filtrations and cofiltrations}
 We consider ordinal-indexed smooth increasing filtrations (called
for brevity simply ``filtrations'') and ordinal-indexed smooth
decreasing filtrations (called ``cofiltrations'').
 In the main results of this paper, we will mostly deal with 
(co)filtrations by rather small ordinals, such as the ordinal of
natural numbers~$\omega$; but here we discuss the general case.

 Let $\alpha$~be an ordinal and $M$ be an $A$\+module.
 An \emph{$\alpha$\+filtration} on $M$ is a collection of submodules
$F_iM\subset M$ indexed by the ordinals $0\le i\le\alpha$ such that
\begin{itemize}
\item $F_0M=0$, \ $F_\alpha M=M$, and $F_j M\subset F_iM$ for all
$0\le j\le i\le\alpha$;
\item $F_iM=\bigcup_{j<i}F_jM$ for all limit ordinals $i\le\alpha$.
\end{itemize}
 An $A$\+module $M$ with an $\alpha$\+filtration $F$ is said to
be \emph{filtered} (or \emph{$\alpha$\+filtered}) by the $A$\+modules
$F_{i+1}M/F_iM$, \ $0\le i<\alpha$.

 Given a class of $A$\+modules $\S\subset A\Modl$, an $A$\+module $M$
is said to be \emph{$\alpha$\+filtered by $\S$} if $M$ admits
an $\alpha$\+filtration $F$ such that the successive quotient
module $F_{i+1}M/F_iM$ is isomorphic to a module from $\S$ for every
$0\le i<\alpha$.
 An $A$\+module is said to be \emph{filtered by~$\S$} if it is
$\alpha$\+filtered by $\S$ for some ordinal~$\alpha$.

 The class of all $A$\+modules filtered by $\S$ is denoted by
$\Fil(\S)\subset A\Modl$, and the class of all $A$\+modules
$\alpha$\+filtered by $\S$ is denoted by $\Fil_\alpha(\S)\subset
\Fil(\S)$.
 It is convenient to assume that $0\in\S$, guaranteeing that
$\Fil_\alpha(\S)\subset\Fil_\beta(\S)$ whenever $\alpha\le\beta$.

 Let $\alpha$ and $\beta$ be two ordinals.
 We denote, as usually, by $\alpha\cdot\beta=\bigsqcup_\beta\alpha$
the ordinal product of $\alpha$ and~$\beta$.
 This means the ordinal which is order isomorphic to the well-ordered
set of pairs $\{(i,j)\mid 0\le i<\alpha,\>0\le j<\beta\}$ with
the lexicographical order, $(i',j')<(i'',j'')$ if either $j'<j''$, or
$j'=j''$ and $i'< i''$.
 
\begin{lem}
 For any class of $A$\+modules $\S\subset A\Modl$, one has \par
\textup{(a)} $\Fil_\beta(\Fil_\alpha(\S))=
\Fil_{\alpha\cdot\beta}(\S)$; \par
\textup{(b)} $\Fil_\alpha(\S^\oplus)\subset
\Fil_\alpha(\S)^\oplus$. \qed
\end{lem}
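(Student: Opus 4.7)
For part~(a), the plan is a two-way construction. For $\Fil_{\alpha\cdot\beta}(\S) \subset \Fil_\beta(\Fil_\alpha(\S))$, I would start with an $(\alpha\cdot\beta)$-filtration $F_kM$, $0\le k\le\alpha\cdot\beta$, indexed via the lex-order isomorphism $\alpha\cdot\beta\cong\{(i,j)\}$. I define a $\beta$-filtration by $G_jM = F_{\alpha\cdot j}M$. Smoothness at limit $j$ follows because $\alpha\cdot j = \sup_{j'<j}\alpha\cdot j'$ when $j$~is a limit, so $G_jM = \bigcup_{j'<j}G_{j'}M$. The successive quotient $G_{j+1}M/G_jM$ inherits the filtration $F_{\alpha\cdot j + i}M/G_jM$ indexed by $0\le i\le\alpha$, which is a smooth $\alpha$-filtration whose subquotients coincide with those of $F$, hence lie in $\S$.

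The converse $\Fil_\beta(\Fil_\alpha(\S))\subset\Fil_{\alpha\cdot\beta}(\S)$ is the splicing direction. Given a $\beta$-filtration $G_jM$ with each $G_{j+1}M/G_jM\in\Fil_\alpha(\S)$, I pick an $\alpha$-filtration on each quotient and pull it back along the surjection $G_{j+1}M\twoheadrightarrow G_{j+1}M/G_jM$ to obtain a chain of submodules $G_jM = H^{(j)}_0\subset H^{(j)}_1\subset\dotsb\subset H^{(j)}_\alpha = G_{j+1}M$ with successive quotients in~$\S$. I then define $F_{\alpha\cdot j + i}M := H^{(j)}_i$. The main point to verify is smoothness at the two types of limit ordinals $k\le\alpha\cdot\beta$: those of the form $k=\alpha\cdot j + i$ with $i\le\alpha$ a limit (which reduces to smoothness of the pulled-back $\alpha$-filtration inside $G_{j+1}M$, which holds because pullback commutes with directed unions of submodules), and those of the form $k=\alpha\cdot j$ with $j$ a limit (which reduces to smoothness of~$G$).

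For part~(b), suppose $M$ is $\alpha$-filtered by $\S^\oplus$, with filtration $F_iM$ and successive quotients $N_i = F_{i+1}M/F_iM$ a direct summand of some $S_i\in\S$; write $S_i = N_i\oplus T_i$. I would construct an overmodule $\widetilde M := M \oplus \bigoplus_{i<\alpha} T_i$ with filtration
\[
F_i\widetilde M := F_iM \oplus \bigoplus_{j<i} T_j, \qquad 0\le i\le\alpha.
\]
Smoothness at limit $i$ is immediate because the direct sum $\bigoplus_{j<i}T_j$ is the directed union $\bigcup_{k<i}\bigoplus_{j<k}T_j$ and $F_iM = \bigcup_{k<i}F_kM$. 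The successive quotient is
\[
F_{i+1}\widetilde M/F_i\widetilde M \;\cong\; (F_{i+1}M/F_iM)\oplus T_i \;\cong\; N_i\oplus T_i \;=\; S_i \in \S.
\]
Hence $\widetilde M\in\Fil_\alpha(\S)$, and since $M$ is visibly a direct summand of $\widetilde M$, we conclude $M\in\Fil_\alpha(\S)^\oplus$.

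The only real obstacle is bookkeeping at limit ordinals in~(a); everything else is a transparent direct-sum construction in~(b) or a straightforward reindexing. No homological input (such as the Eklof lemma) is needed, since the statement is purely about the combinatorics of smooth filtrations.
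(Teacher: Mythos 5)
Your proof is correct. The paper states this lemma with an immediate \qed (and leaves the dual version, Lemma~\ref{iterated-cof-lemma}(a), explicitly ``to the reader''), so there is no proof to compare against; your argument --- reindexing along the lexicographic presentation of $\alpha\cdot\beta$ and splicing pulled-back filtrations for~(a), with the limit-ordinal cases correctly split according to whether the limit occurs inside a block or at a block boundary, and adjoining the complements $T_i$ to build the overmodule $\widetilde M$ for~(b) --- is exactly the standard verification the author is taking for granted.
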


 The following result is known as the Eklof lemma.

\begin{lem} \label{eklof-lemma}
 For any class of left $A$\+modules $\S$, one has\/
$\Fil(\S)^{\perp_1}=\S^{\perp_1}$.
\end{lem}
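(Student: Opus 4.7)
The inclusion $\Fil(\S)^{\perp_1}\subset\S^{\perp_1}$ is immediate: each $S\in\S$ lies in $\Fil(\S)$ via the length\+one filtration $0=F_0S\subset F_1S=S$, whose unique successive quotient is~$S$. The plan for the reverse inclusion is as follows. Fix $C\in\S^{\perp_1}$ and a module $M$ equipped with an $\alpha$\+filtration $(F_iM)_{0\le i\le\alpha}$ whose successive quotients $S_i:=F_{i+1}M/F_iM$ belong to $\S$ for every $i<\alpha$. I want to show that an arbitrary short exact sequence $0\rarrow C\rarrow X\rarrow M\rarrow 0$ splits, and I will do this by building a section by transfinite recursion along the filtration.

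Write $\pi\:X\rarrow M$ for the surjection, and set $X_i:=\pi^{-1}(F_iM)$, so that $X_0=C$, \ $X_\alpha=X$, the family $(X_i)_{0\le i\le\alpha}$ inherits smoothness at limit ordinals, and $X_{i+1}/X_i\cong S_i$. I construct by transfinite induction on~$i$ compatible sections $s_i\:F_iM\rarrow X_i$ of $\pi|_{X_i}$, meaning $\pi s_i=\id_{F_iM}$ and $s_i|_{F_jM}=s_j$ for $j\le i$. Take $s_0=0$; at a limit ordinal~$i$, glue the $s_j$, $j<i$, using the unions $F_iM=\bigcup_{j<i}F_jM$ and $X_i=\bigcup_{j<i}X_j$.

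The successor step is the only place the hypothesis enters. Given $s_i$, note that $s_i(F_iM)\cap C=0$ inside $X_{i+1}$, since $\pi$ is injective on the image of a section. Passing to the quotient $X_{i+1}/s_i(F_iM)$, the map~$\pi$ descends to a surjection onto $F_{i+1}M/F_iM=S_i$ whose kernel is $(C+s_i(F_iM))/s_i(F_iM)\cong C$, yielding a short exact sequence
\[
0\lrarrow C\lrarrow X_{i+1}/s_i(F_iM)\lrarrow S_i\lrarrow 0.
\]
Because $S_i\in\S$ and $C\in\S^{\perp_1}$, this sequence splits; fix a section~$\bar\sigma$, and let $Z\subset X_{i+1}$ be the preimage of $\bar\sigma(S_i)$ under the quotient map. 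One verifies that $Z\supset s_i(F_iM)$, that $Z\cap C=0$, and that $\pi(Z)=F_{i+1}M$, so $\pi|_Z\:Z\rarrow F_{i+1}M$ is an isomorphism; setting $s_{i+1}:=(\pi|_Z)^{-1}$ then extends~$s_i$. The terminal section $s_\alpha\:M\rarrow X$ splits the original sequence.

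The only delicate point is this successor step: one must convert the vanishing of $\Ext^1_A(S_i,C)$ into an explicit extension of the already\+constructed section. The cleanest way is the one above, quotienting out the image $s_i(F_iM)$ to isolate a single extension class with values in~$C$ (rather than trying to split the layer $0\rarrow X_i\rarrow X_{i+1}\rarrow S_i\rarrow 0$ itself, which is not an extension by~$C$). Once this is done, all bookkeeping is routine, and the conclusion $\Ext^1_A(M,C)=0$ follows.
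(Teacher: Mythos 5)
Your proof is correct. The paper does not give an argument for this lemma at all --- it simply cites \cite[Lemma~1]{ET} and \cite[Lemma~6.2]{GT} --- and your transfinite recursion (building compatible sections $s_i\:F_iM\rarrow\pi^{-1}(F_iM)$, with the successor step handled by quotienting out $s_i(F_iM)$ to isolate a single extension of $S_i$ by $C$ that splits by hypothesis) is essentially the standard proof found in those references. All the verifications in your successor step ($X_i=C\oplus s_i(F_iM)$, \ $Z\cap C=0$, \ $\pi(Z)=F_{i+1}M$) go through as stated.
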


\begin{proof}
 This is~\cite[Lemma~1]{ET} or~\cite[Lemma~6.2]{GT}.
\end{proof}

 The next result is called the Eklof--Trlifaj theorem.

\begin{thm} \label{eklof-trlifaj-theorem}
 Let $\S$ be a set (rather than a class) of left $A$\+modules,
and let $(\F,\C)$ be the cotorsion pair in $A\Modl$ generated by~$\S$.
 Then \par
\textup{(a)} $(\F,\C)$ is a complete cotorsion pair; \par
\textup{(b)} the class $\F$ can be described as
$\F=\Fil(\S\cup\{A\})^\oplus$, where $A$ denotes the free left
$A$\+module with one generator.
\end{thm}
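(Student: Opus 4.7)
The overall plan is to prove (a) by a transfinite iteration producing special preenvelope sequences (the classical small object argument, suited to the set $\S$), and then deduce (b) by combining the resulting preenvelope with a free-module surjection via a pushout, and finally splitting the ensuing short exact sequence using the $\Ext^1$\+orthogonality of $\F$ and~$\C$. By Lemma~\ref{salce-lemma}, constructing special preenvelopes suffices for completeness.

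For part~(a), fix a left $A$\+module~$M$ and choose a regular cardinal $\kappa$ strictly larger than $|S|+|A|+\aleph_0$ for every $S\in\S$ (possible since $\S$ is a \emph{set}). I would construct a smooth chain of $A$\+modules $M=M_0\subset M_1\subset\dotsb\subset M_i\subset\dotsb$ indexed by $i\le\kappa$, as follows. At a successor step, enumerate the set $J_i$ of all pairs $(S,\xi)$ with $S\in\S$ and $\xi\in\Ext_A^1(S,M_i)$, represent each $\xi$ by a short exact sequence $0\rarrow M_i\rarrow E_\xi\rarrow S\rarrow0$, and let $M_{i+1}$ be the simultaneous pushout; this gives a short exact sequence $0\rarrow M_i\rarrow M_{i+1}\rarrow\bigoplus_{\xi\in J_i}S_\xi\rarrow0$, so $M_{i+1}/M_i$ is a direct sum of modules from~$\S$, hence lies in $\Fil(\S)$. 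At a limit step, take the union. Set $C=M_\kappa$ and $F'=C/M$. Then $F'$ is filtered by the successive quotients $M_{i+1}/M_i$, each of which is already filtered by $\S$, so $F'\in\Fil(\S)$ by Lemma~2.4(a) (concatenation of filtrations). Now I must verify $C\in\S^{\perp_1}=\C$: given $S\in\S$ and any extension $0\rarrow C\rarrow E\rarrow S\rarrow 0$, the map $S\rarrow E$ factors through some $M_i$ with $i<\kappa$ because $S$ is generated by fewer than $\kappa$ elements and the chain is $\kappa$\+continuous, so the class of the extension pulls back to a class in $\Ext_A^1(S,M_i)$ which is killed by the next successor step. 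This is the main technical point and the principal obstacle, as it requires the interaction between the regularity of $\kappa$ and the size of the modules in $\S$ to be carried out precisely. The sequence $0\rarrow M\rarrow C\rarrow F'\rarrow0$ is then a special preenvelope sequence, and by Lemma~\ref{eklof-lemma} we have $F'\in\Fil(\S)\subset{}^{\perp_1}\C=\F$, so $(\F,\C)$ is complete.

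For part~(b), the inclusion $\Fil(\S\cup\{A\})^\oplus\subset\F$ is the easy direction: $\S\subset\F$ by definition of the generated cotorsion pair, $A\in\F$ since $A$ is projective, and $\F$ is closed under extensions (hence under filtrations by the Eklof Lemma~\ref{eklof-lemma}) and under direct summands. For the reverse inclusion, let $F\in\F$ be arbitrary. Choose a set $I$ and a surjection $\pi\:A^{(I)}\rarrow F$ with kernel~$N$, and apply the preenvelope construction of~(a) to~$N$, producing $0\rarrow N\rarrow C'\rarrow F''\rarrow 0$ with $C'\in\C$ and $F''\in\Fil(\S)$. Taking the pushout of $N\hookrightarrow A^{(I)}$ and $N\hookrightarrow C'$ yields a module $G$ sitting in two short exact sequences
\begin{gather*}
0\lrarrow A^{(I)}\lrarrow G\lrarrow F''\lrarrow 0, \\
0\lrarrow C'\lrarrow G\lrarrow F\lrarrow 0.
\end{gather*}
The first sequence exhibits $G$ as an extension of $F''\in\Fil(\S)$ by $A^{(I)}\in\Fil(\{A\})$, so concatenating these filtrations gives $G\in\Fil(\S\cup\{A\})$. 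The second sequence splits because $\Ext^1_A(F,C')=0$ by definition of the cotorsion pair, so $F$ is a direct summand of~$G$, i.e.\ $F\in\Fil(\S\cup\{A\})^\oplus$, as required.
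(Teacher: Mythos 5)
Your proposal is correct and follows essentially the same route as the paper, whose proof is a citation of Eklof--Trlifaj and G\"obel--Trlifaj together with exactly this outline: the small object argument produces special preenvelope sequences for the pair $(\Fil(\S),\S^{\perp_1})$, the Salce pushout then yields special precover sequences for $(\Fil(\S\cup\{A\}),\S^{\perp_1})$, and Lemma~\ref{direct-summand-lemma} (splitting via $\Ext^1$\+orthogonality) gives both completeness and the description of~$\F$. The only point to state with a bit more care is the verification that $M_\kappa\in\S^{\perp_1}$: one uses that each $S\in\S$ is ${<}\kappa$\+presented (not merely ${<}\kappa$\+generated) so that every extension of $S$ by $M_\kappa$ arises by pushout from an extension of $S$ by some $M_i$ with $i<\kappa$, which is then trivialized at step~$i+1$.
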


\begin{proof}
 Part~(a) is~\cite[Theorem~10]{ET} or~\cite[Theorem~6.11]{GT},
and part~(b) is~\cite[Corollary~6.13 or~6.14]{GT}.
 Essentially, one proves by an explicit construction (a particular
case of the small object argument) that the pair of classes
$\Fil(\S)$ and~$\S^{\perp_1}\subset A\Modl$ admits special preenvelope
sequences, and then by Lemma~\ref{salce-lemma} it follows that
the pair of classes $\Fil(\S\cup\{A\})$ and $\S^{\perp_1}$ admits
special precover sequences.
 The two classes $\Fil(\S\cup\{A\})$ and $\S^{\perp_1}$ are
$\Ext^1$\+orthogonal by Lemma~\ref{eklof-lemma}.
 By Lemma~\ref{direct-summand-lemma}, one can conclude that the two
classes $\Fil(\S\cup\{A\})^\oplus$ and $\S^{\perp_1}$ form
a complete cotorsion pair.
 By the definition, we have $\C=\S^{\perp_1}$, and it follows that
$\F=\Fil(\S\cup\{A\})^\oplus$.
\end{proof}

 Let $\alpha$~be an ordinal and $N$ be a left $A$\+module.
 An \emph{$\alpha$\+cofiltration} on $N$ is a collection of
left $A$\+modules $G_iN$ indexed by the ordinals $0\le i\le\alpha$
and left $A$\+module morphisms $G_iN\rarrow G_jN$ defined for
all $0\le j < i\le\alpha$ such that
\begin{itemize}
\item the triangle diagram $G_iN\rarrow G_jN\rarrow G_kN$ is
commutative for all triples of indices $0\le k<j<i\le\alpha$;
\item $G_0N=0$ and $G_\alpha N=N$;
\item the induced map into the projective limit
$G_iN\rarrow\varprojlim_{j<i}G_jN$ is an isomorphism for all
limit ordinals $i\le\alpha$;
\item the map $G_{i+1}N\rarrow G_iN$ is surjective for all
$0\le i<\alpha$.
\end{itemize}
 It follows from the above list of conditions that the map
$G_iN\rarrow G_jN$ is surjective for all $0\le j<i\le\alpha$.
 An $A$\+module $N$ with an $\alpha$\+cofiltration $G$ is said to
be \emph{cofiltered} (or \emph{$\alpha$\+cofiltered}) by
the $A$\+modules $\ker(G_{i+1}N\to G_iN)$.

 Given a class of $A$\+modules $\T\subset A\Modl$, an $A$\+module
$N$ is said to be \emph{$\alpha$\+cofiltered by $\T$} if $N$
admits an $\alpha$\+cofiltration $G$ such that the $A$\+module
$\ker(G_{i+1}N\to G_iN)$ is isomorphic to an $A$\+module from $\T$
for all $0\le i<\alpha$.
 An $A$\+module is said to be \emph{cofiltered by $\T$} if it is
$\alpha$\+cofiltered by $\T$ for some ordinal~$\alpha$.

 The class of all $A$\+modules cofiltered by $\T$ is denoted by
$\Cof(\T)\subset A\Modl$, and the class of all $A$\+modules
$\alpha$\+cofiltered by $\T$ is denoted by $\Cof_\alpha(\T)\subset
\Cof(\T)$.
 It is convenient to assume that $0\in\T$, so that
$\Cof_\alpha(\T)\subset\Cof_\beta(\T)$ whenever $\alpha\le\beta$.

\begin{lem} \label{iterated-cof-lemma}
 For any class of $A$\+modules $\T\subset A\Modl$ and any two
ordinals $\alpha$ and~$\beta$, one has \par
\textup{(a)} $\Cof_\beta(\Cof_\alpha(\T))=
\Cof_{\alpha\cdot\beta}(\T)$; \par
\textup{(b)} $\Cof_\alpha(\T^\oplus)\subset\Cof_\alpha(\T)^\oplus$.
\end{lem}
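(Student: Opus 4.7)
The plan is to dualize the standard argument that iterating filtrations yields filtrations, and to mimic the classical trick of absorbing direct summands into a single cofiltration.

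For part~(a), I will prove the two inclusions separately. In the forward direction $\Cof_\beta(\Cof_\alpha(\T))\subset\Cof_{\alpha\cdot\beta}(\T)$, start from a $\beta$\+cofiltration $(G_jN)$ of $N$ whose kernels $K_j=\ker(G_{j+1}N\to G_jN)$ each admit an $\alpha$\+cofiltration $(H^{(j)}_i)$ by modules from $\T$. Setting $L_j=\ker(N\twoheadrightarrow G_jN)$, lift each quotient $H^{(j)}_i$ of $K_j\cong L_j/L_{j+1}$ to a submodule $M^{(j)}_i\subset L_j$ containing $L_{j+1}$ with $L_j/M^{(j)}_i\cong H^{(j)}_i$; using the unique decomposition of an ordinal $\gamma<\alpha\cdot\beta$ as $\gamma=\alpha\cdot j+i$ with $0\le j<\beta$ and $0\le i<\alpha$, define $M_\gamma=M^{(j)}_i$ and $D_\gamma=N/M_\gamma$. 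The successor kernels are $\ker(H^{(j)}_{i+1}\to H^{(j)}_i)\in\T$, as needed. For the reverse inclusion, given an $\alpha\cdot\beta$\+cofiltration $(D_\gamma)$ of $N$ by $\T$, put $G_jN=D_{\alpha\cdot j}$ and $H^{(j)}_i=\ker(D_{\alpha\cdot j+i}\to D_{\alpha\cdot j})$; surjectivity of the transition maps is immediate, and the projective limit conditions for $G$ and $H^{(j)}$ transfer from that of $D$ via left exactness of $\varprojlim$ applied to the short exact sequences $0\to H^{(j)}_i\to D_{\alpha\cdot j+i}\to D_{\alpha\cdot j}\to 0$, together with cofinality of $\{\alpha\cdot j':j'<j\}$ in $\{\delta<\alpha\cdot j\}$ when $j$ is a limit.

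For part~(b), given an $\alpha$\+cofiltration $(D_i)$ of $N$ with kernels $K_i\in\T^\oplus$, choose complements with $K_i\oplus K'_i\in\T$ and build an auxiliary cofiltration $E'_i=\prod_{j<i}K'_j$ equipped with the natural projections, so that $\ker(E'_{i+1}\to E'_i)\cong K'_i$ and the projective limit condition is automatic because products commute with inverse limits. Setting $E_i=D_i\oplus E'_i$ and $N'=E'_\alpha=\prod_{j<\alpha}K'_j$, the family $(E_i)$ becomes an $\alpha$\+cofiltration of $N\oplus N'$ with successor kernels $K_i\oplus K'_i\in\T$; at limit steps one uses that a binary direct sum, being also a binary product, commutes with $\varprojlim$. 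Consequently $N\oplus N'\in\Cof_\alpha(\T)$ and $N\in\Cof_\alpha(\T)^\oplus$.

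The principal technical obstacle lies in the forward direction of part~(a), namely verifying $M_\gamma=\bigcap_{\delta<\gamma}M_\delta$ at the three types of limit ordinals $\gamma\le\alpha\cdot\beta$: the case $\gamma=\alpha\cdot j$ with $j$ a limit (use cofinality together with $L_j=\bigcap_{j'<j}L_{j'}$, which holds because $N/L_j=G_jN\cong\varprojlim_{j'<j} G_{j'}N$); the case $\gamma=\alpha\cdot j+i$ with $i<\alpha$ a limit (use $M^{(j)}_i=\bigcap_{i'<i}M^{(j)}_{i'}$, equivalent to the projective limit condition on $H^{(j)}$); and, when $\alpha$ is a limit, the boundary case $\gamma=\alpha\cdot(j+1)$, where $L_{j+1}=\bigcap_{i<\alpha}M^{(j)}_i$ follows from $\varprojlim_i H^{(j)}_i=K_j=L_j/L_{j+1}$. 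Everything else is straightforward bookkeeping with ordinal arithmetic.
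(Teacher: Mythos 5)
The paper offers no argument for this lemma (part~(b) is declared obvious and part~(a) is ``left to the reader''), so there is no proof to compare yours against; what you have written is the natural argument, and it is essentially correct. Part~(b), with the auxiliary cofiltration $E'_i=\prod_{j<i}K'_j$ on the product of complements, is exactly right. In part~(a) the reverse inclusion and the ordinal bookkeeping of the forward inclusion are fine; the one place where your write-up understates what must be checked is the limit stage of the forward direction. The definition of a cofiltration requires the map $D_\gamma\to\varprojlim_{\delta<\gamma}D_\delta$ to be an \emph{isomorphism}, and for a decreasing chain of submodules $M_\delta\subset N$ this is strictly stronger than $M_\gamma=\bigcap_{\delta<\gamma}M_\delta$: the natural map $N/\bigcap_\delta M_\delta\to\varprojlim_\delta N/M_\delta$ is injective but need not be surjective (compare $\Z\to\varprojlim_n\Z/p^n\Z$). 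So your parenthetical claims that the intersection conditions are ``equivalent'' to the projective limit conditions are not accurate, and verifying only the intersection identities would not finish the proof. The gap closes with the data you already invoke: each $D_{\alpha\cdot j+i'}=N/M^{(j)}_{i'}$ sits in a short exact sequence $0\to H^{(j)}_{i'}\to D_{\alpha\cdot j+i'}\to G_jN\to0$ with constant quotient term; since $\varprojlim$ is left exact, $\ker\bigl(\varprojlim_{i'<i}D_{\alpha\cdot j+i'}\to G_jN\bigr)=\varprojlim_{i'<i}H^{(j)}_{i'}$, and the hypothesis that $H^{(j)}_i\to\varprojlim_{i'<i}H^{(j)}_{i'}$ is an isomorphism, together with a short diagram chase (lift an element along the surjection onto $G_jN$ and correct by an element of $H^{(j)}_i$), yields that $D_{\alpha\cdot j+i}\to\varprojlim_{i'<i}D_{\alpha\cdot j+i'}$ is an isomorphism; the same argument handles $\gamma=\alpha\cdot(j+1)$ for limit $\alpha$ using $\varprojlim_{i<\alpha}H^{(j)}_i\cong K_j$, while the case $\gamma=\alpha\cdot j$ with $j$ a limit is immediate from cofinality. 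With that adjustment the proof is complete.
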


\begin{proof}
 Part~(b) is obvious.
 The proof of part~(a) is left to the reader.
\end{proof}

 The following assertion is known as the Lukas lemma or ``the dual
Eklof lemma''.

\begin{lem} \label{dual-eklof-lemma}
 For any class of left $A$\+modules $\T$, one has
${}^{\perp_1}\Cof(\T)={}^{\perp_1}\T$.
\end{lem}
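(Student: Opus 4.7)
The easy inclusion ${}^{\perp_1}\Cof(\T)\subset{}^{\perp_1}\T$ is immediate because every $T\in\T$ carries the trivial cofiltration $G_0T=0$, \ $G_1T=T$ with kernel $T\in\T$, so $\T\subset\Cof(\T)$.

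For the reverse inclusion, fix $Y\in{}^{\perp_1}\T$ and $N\in\Cof(\T)$ with an $\alpha$\+cofiltration $(G_iN)_{i\le\alpha}$ whose successive kernels $K_i=\ker(G_{i+1}N\to G_iN)$ lie in $\T$. I will show that any short exact sequence $0\to N\to E\to Y\to 0$ splits. Form the pushouts $E_i=E\sqcup_N G_iN$ along the structure maps $N=G_\alpha N\to G_iN$; each sits in a short exact sequence $0\to G_iN\to E_i\to Y\to 0$, and these are connected by surjections $E_i\to E_j$ for $j<i$ induced by the cofiltration. In particular $E_0=Y$, \ $E_\alpha=E$, and the snake lemma produces short exact sequences $0\to K_j\to E_{j+1}\to E_j\to 0$.

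The plan is to construct, by transfinite recursion on $i\le\alpha$, compatible splittings $s_i\colon Y\to E_i$ of $E_i\to Y$, where \emph{compatible} means $(E_i\to E_j)\circ s_i=s_j$ for all $j<i$; taking $i=\alpha$ then yields the desired splitting. Start with $s_0=\id_Y$. At a successor $i=j+1$, pull back $0\to K_j\to E_{j+1}\to E_j\to 0$ along $s_j$ to an extension $0\to K_j\to Q\to Y\to 0$ of $Y$ by $K_j$; since $K_j\in\T$ and $Y\in{}^{\perp_1}\T$ this splits, and composing the splitting with the projection $Q\to E_{j+1}$ produces $s_{j+1}$, automatically compatible with $s_j$ by the pullback property.

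The genuinely delicate step is the limit stage. With compatible $s_j$ already built for all $j<i$, the rule $y\mapsto(s_j(y))_j$ defines a section $Y\to\varprojlim_{j<i}E_j$ of the canonical map to $Y$; in particular, $\varprojlim_{j<i}E_j\to Y$ is surjective. Combined with the left exactness of $\varprojlim$, this yields a short exact sequence $0\to\varprojlim_{j<i}G_jN\to\varprojlim_{j<i}E_j\to Y\to 0$. Since $G_iN=\varprojlim_{j<i}G_jN$ by the limit axiom of a cofiltration, the five lemma applied to the pushout comparison map $E_i\to\varprojlim_{j<i}E_j$ shows that this map is an isomorphism, and transporting the constructed section back along it defines $s_i$, manifestly compatible with every earlier~$s_j$. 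The main obstacle is precisely this limit case: for ordinal\+indexed inverse systems, surjectivity of $\varprojlim E_j\to Y$ does \emph{not} follow formally from surjectivity at each stage, and in principle a general $\varprojlim^1$\+vanishing theorem would be needed; here, however, the inductively built compatible splittings supply the required surjectivity by hand.
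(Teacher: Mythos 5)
Your proof is correct. The paper does not actually prove this lemma but only cites \cite[Proposition~18]{ET} and \cite[Lemma~6.37]{GT}, and your argument --- pushing out the extension $0\to N\to E\to Y\to0$ along the cofiltration to get $0\to G_iN\to E_i\to Y\to0$, splitting successor stages via $\Ext^1_A(Y,K_j)=0$, and handling limit stages by noting that the compatible sections themselves supply the surjectivity of $\varprojlim_{j<i}E_j\to Y$ that would otherwise be the sticking point --- is exactly the standard proof found in those references (the dual of the Eklof lemma argument), with the delicate limit step treated correctly.
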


\begin{proof}
 This is~\cite[Proposition~18]{ET} or~\cite[Lemma~6.37]{GT}.
\end{proof}

 The dual version of the small object argument does not work in
module categories, because most modules are not cosmall.
 In fact, it is consistent with ZFC that the dual version of
Theorem~\ref{eklof-trlifaj-theorem}(a) is not true.

 Specifically, let $A=\Z$ be the ring of integers, so $A\Modl$ is
the category of abelian groups.
 Let $\T=\{\Z\}$ be the set consisting of one infinite cyclic
abelian group only; and let $\Q$ denote the additive group of
rational numbers.
 Let $(\mathcal W,\mathcal W^{\perp_1})$ be the cotorsion pair
in $\Z\Modl$ cogenerated by~$\T$; the class $\mathcal W ={}^{\perp_1}\T$
is known as the class of all \emph{Whitehead groups}.
 According to~\cite[Theorem~0.4]{ES}, it is consistent with
ZFC${}+{}$GCH that the group $\Q$ has no $\mathcal W$\+precover.
 (See also the discussion in~\cite[Lemma~2.1 and Example~2.2]{ST}.)

\subsection{Homological formulas}
 Let $R\rarrow A$ be a homomorphism of associative rings.
 Then every left or right $A$\+module has an underlying $R$\+module
structure.
 In particular, $A$ itself acquires the structure of
an $R$\+$R$\+bimodule.

\begin{lem} \label{homological-formulas-lemma}
\textup{(a)} Let $L$ be a left $R$\+module and $D$ be a left
$A$\+module, and let $n\ge0$ be an integer.
 Assume that\/ $\Tor^R_i(A,L)=0$ for all\/ $0<i\le n$.
 Then there is a natural isomorphism of abelian groups\/
$\Ext_A^i(A\ot_RL,\>D)\simeq\Ext_R^i(L,D)$ for every\/ $0\le i\le n$.
\par
\textup{(b)} Let $B$ be a left $A$\+module and $M$ be a left
$R$\+module, and let $n\ge0$ be an integer.
 Assume that\/ $\Ext_R^i(A,M)=0$ for all\/ $0<i\le n$.
 Then there is a natural isomorphism of abelian groups\/
$\Ext_A^i(B,\Hom_R(A,M))\simeq\Ext_R^i(B,M)$ for every\/ $0\le i\le n$.
\end{lem}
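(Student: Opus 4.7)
The plan is to prove part (a) by building an $A$-module resolution of $A\ot_RL$ out of an $R$-module resolution of $L$, and part (b) by the dual construction. In both cases, the hypothesis ($\Tor^R_i(A,L)=0$ or $\Ext_R^i(A,M)=0$ for $0<i\le n$) is exactly what is needed to guarantee that the induced/coinduced complex is still a resolution in the first $n{+}1$ terms, after which the adjunction between induction/coinduction and restriction of scalars converts the $A$-module $\Ext$ into the $R$-module $\Ext$.

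In detail, for part~(a), choose a projective resolution $P_\bu\rarrow L$ of $L$ in $R\Modl$. Applying $A\ot_R{-}$ yields a complex of projective left $A$-modules $A\ot_RP_\bu$ together with an augmentation to $A\ot_RL$, whose homology in positive degree is by definition $\Tor^R_i(A,L)$. By hypothesis, $A\ot_RP_\bu\rarrow A\ot_RL$ is exact in homological degrees $0,1,\dotsc,n$, hence is the truncation of a projective resolution of $A\ot_RL$ over $A$ in those degrees; this is enough to compute $\Ext_A^i(A\ot_RL,D)$ for $0\le i\le n$. Now apply $\Hom_A({-},D)$ and invoke the tensor-hom adjunction $\Hom_A(A\ot_RP_i,D)\simeq\Hom_R(P_i,D)$, natural in all variables. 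Taking cohomology gives $\Ext_A^i(A\ot_RL,D)\simeq H^i(\Hom_R(P_\bu,D))=\Ext_R^i(L,D)$ for $0\le i\le n$, as desired.

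Part~(b) is dual. Choose an injective coresolution $M\rarrow J^\bu$ in $R\Modl$. Because each $J^i$ is an injective left $R$-module, the $A$-module $\Hom_R(A,J^i)$ is an injective left $A$-module (by the adjunction $\Hom_A({-},\Hom_R(A,J^i))\simeq\Hom_R({-},J^i)$, whose right-hand side is exact on $A\Modl$). The hypothesis $\Ext_R^i(A,M)=0$ for $0<i\le n$ says precisely that $\Hom_R(A,J^\bu)$ has vanishing cohomology in degrees $1,\dotsc,n$, so $\Hom_R(A,M)\rarrow\Hom_R(A,J^\bu)$ is exact in cohomological degrees $0,\dotsc,n$ and thus computes $\Ext_A^i(B,\Hom_R(A,M))$ for $0\le i\le n$. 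Applying $\Hom_A(B,{-})$ and the hom-hom adjunction $\Hom_A(B,\Hom_R(A,J^i))\simeq\Hom_R(B,J^i)$ and taking cohomology yields the claimed isomorphism.

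There is no real obstacle here beyond bookkeeping. The only point that deserves care is the truncation argument: one does not get a full resolution of $A\ot_RL$ (resp.\ coresolution of $\Hom_R(A,M)$), only one that is exact up to the appropriate degree; this suffices to read off $\Ext^i$ for $i\le n$ but not beyond, which matches the stated range. Naturality of both isomorphisms in $L$, $D$ (resp.\ $B$, $M$) is inherited from the naturality of the adjunctions and of the chosen resolutions at the level of the derived functors.
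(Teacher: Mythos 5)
Your proof is correct and follows essentially the same route as the paper's: resolve over $R$, apply the (co)induction functor, use the $\Tor$/$\Ext$ vanishing hypothesis to see the resulting complex of projective (resp.\ injective) $A$\+modules is exact through the range needed, splice to a full resolution, and conclude via the adjunction isomorphisms. The paper writes out part~(b) and declares~(a) dual, whereas you write out~(a), but the argument is identical in substance, including your correct handling of the truncation/splicing point.
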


\begin{proof}
 We will prove part~(b); the proof of part~(a) is similar.
 Notice that, for any injective left $R$\+module $I$, the left
$A$\+module $\Hom_R(A,I)$ is injective (because the functor
$\Hom_R(A,{-})\:R\Modl\rarrow A\Modl$ is right adjoint to
the forgetful functor $A\Modl\rarrow R\Modl$, which is exact).
 Let $I^\bu$ be an injective coresolution of the left $R$\+module~$M$.
 Then the sequence of left $A$\+modules $0\rarrow\Hom_R(A,M)\rarrow
\Hom_R(A,I^0)\rarrow\dotsb\rarrow\Hom_R(A,I^{n+1})$ is exact, since
$\Ext_R^i(A,M)=0$ for all $0<i\le n$.
 Extending this sequence to an injective coresolution
$\Hom_R(A,I^0)\rarrow\dotsb\rarrow\Hom_R(A,I^{n+1})\rarrow J^{n+2}
\rarrow J^{n+3}\rarrow\dotsb$ of the left $A$\+module
$\Hom_R(A,M)$ and computing the groups $\Ext_A^i(B,\Hom_R(A,M))$
in terms of this coresolution, we obtain the desired natural
isomorphisms.
\end{proof}

\subsection{Resolution dimension}
 Let $A$ be a ring and $\F\subset A\Modl$ be a class of left
$A$\+modules.
 We will say that the class $\F$ is \emph{resolving} if the following
conditions hold:
\begin{enumerate}
\renewcommand{\theenumi}{\roman{enumi}}
\item $\F$ is closed under extensions in $A\Modl$;
\item $\F$ is closed under the kernels of surjective morphisms in
$A\Modl$;
\item every left $A$\+module is a quotient module of a module from~$\F$.
\end{enumerate}
 Notice that, if $\F$ is closed under direct summands, then
condition~(iii) can be equivalently rephrased by saying that all
the projective left $A$\+modules belong to~$\F$.

 Let $k\ge0$ be an integer.
 We say that a left $A$\+module $M$ has \emph{$\F$\+resolution
dimension~$\le k$} if there exists an exact sequence of
left $A$\+modules $0\rarrow F_k\rarrow F_{k-1}\rarrow\dotsb\rarrow F_1
\rarrow F_0\rarrow M\rarrow0$ with $F_i\in\F$ for all $0\le i\le k$.

 Dually, a class of modules $\C\subset A\Modl$ is said to be
\emph{coresolving} if the following conditions hold:
\begin{enumerate}
\renewcommand{\theenumi}{\roman{enumi}*}
\item $\C$ is closed under extensions in $A\Modl$;
\item $\C$ is closed under the cokernels of injective morphisms in
$A\Modl$;
\item every left $A$\+module is a submodule of a module from~$\C$.
\end{enumerate}
 If $\C$ is closed under direct summands, then condition~(iii*) is
equivalent to the condition that all the injective left $A$\+modules
belong to~$\C$.

 We say that a left $A$\+module $N$ has \emph{$\C$\+coresolution
dimension~$\le k$} if there exists an exact sequence of
left $A$\+modules $0\rarrow N\rarrow C^0\rarrow C^1\rarrow\dotsb
\rarrow C^{k-1}\rarrow C^k\rarrow0$ with $C^i\in\C$ for all
$0\le i\le k$.

\begin{lem} \label{co-resolution-dimension-lemma}
\textup{(a)} Let $\F\subset A\Modl$ be a resolving class, and
let $M$ be a left $A$\+module of $\F$\+resolution dimension~$\le k$.
 Let\/ $0\rarrow G_k\rarrow G_{k-1}\rarrow\dotsb\rarrow G_1\rarrow
G_0\rarrow M\rarrow0$ be an exact sequence of left $A$\+modules.
 Assume that $G_i\in\F$ for all\/ $0\le i<k$.
 Then $G_k\in\F$. \par
\textup{(b)} Let $\C\subset A\Modl$ be a coresolving class, and
let $N$ be a left $A$\+module of $\C$\+coresolution dimension~$\le k$.
 Let\/ $0\rarrow N\rarrow D^0\rarrow D^1\rarrow\dotsb\rarrow D^{k-1}
\rarrow D^k\rarrow 0$ be an exact sequence of left $A$\+modules.
 Assume that $D^i\in\C$ for all\/ $0\le i<k$.
 Then $D^k\in\C$.
\end{lem}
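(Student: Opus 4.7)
I would prove both parts by induction on $k$; part~(b) follows by straightforward dualization (resolving $\to$ coresolving, kernels of surjections $\to$ cokernels of injections, pullbacks $\to$ pushouts), so I focus on~(a). The key technical ingredient is a pair of auxiliary claims about short exact sequences in $A\Modl$, proven by joint induction on~$d\ge0$. Claim~$\alpha(d)$: if $0\rarrow X\rarrow Y\rarrow Z\rarrow 0$ is exact with $Y\in\F$ and $Z$ of $\F$\+resolution dimension~$\le d$, then $X$ has $\F$\+resolution dimension~$\le\max(0,d-1)$. Claim~$\beta(d)$: if $0\rarrow X\rarrow Y\rarrow Z\rarrow 0$ is exact with $Z\in\F$ and $X$ of $\F$\+resolution dimension~$\le d$, then $Y$ has $\F$\+resolution dimension~$\le d$. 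The base cases $\alpha(0)$ and $\beta(0)$ are immediate from~(ii) and~(i), respectively.

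For $\alpha(d)$ with $d\ge1$, I would pick an $\F$\+resolution of $Z$ of length~$\le d$ whose first term is $F\in\F$ and whose first syzygy $Z_1$ has $\F$\+resolution dimension~$\le d-1$, then form the pullback $P=Y\times_Z F$. The two resulting short exact sequences $0\rarrow Z_1\rarrow P\rarrow Y\rarrow0$ and $0\rarrow X\rarrow P\rarrow F\rarrow0$ allow me to conclude by $\beta(d-1)$ that $P$ has $\F$\+resolution dimension~$\le d-1$, and then by $\alpha(d-1)$ that $X$ has $\F$\+resolution dimension~$\le\max(0,d-2)\le d-1$. For $\beta(d)$ with $d\ge1$, I would pick an $\F$\+surjection $\pi\colon H\twoheadrightarrow Y$ via~(iii), observe that $\pi^{-1}(X)\in\F$ by~(ii) since $H/\pi^{-1}(X)\cong Z\in\F$, and apply the just-established $\alpha(d)$ to $0\rarrow\ker\pi\rarrow\pi^{-1}(X)\rarrow X\rarrow 0$ to deduce that $\ker\pi$ has $\F$\+resolution dimension~$\le d-1$. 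Splicing an $\F$\+resolution of $\ker\pi$ with $0\rarrow\ker\pi\rarrow H\rarrow Y\rarrow0$ then yields the desired $\F$\+resolution of $Y$ of length~$\le d$.

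Given these claims, the main statement of~(a) follows by induction on~$k$: the base case $k=0$ is trivial since $G_0\cong M\in\F$. For $k\ge1$, set $N=\ker(G_0\to M)$. The short exact sequence $0\rarrow N\rarrow G_0\rarrow M\rarrow 0$ together with $\alpha(k)$ shows that $N$ has $\F$\+resolution dimension~$\le k-1$; the exact sequence $0\rarrow G_k\rarrow G_{k-1}\rarrow\dotsb\rarrow G_1\rarrow N\rarrow0$ of length~$k-1$ with inner terms in $\F$ then allows the outer induction hypothesis (for~$k-1$) to yield $G_k\in\F$. The main obstacle is the bookkeeping in the joint induction of $\alpha$ and $\beta$: at each $d$, one must prove $\alpha(d)$ first (from $\alpha(d-1)$ and $\beta(d-1)$) and only then $\beta(d)$ (which uses $\alpha(d)$), and one must verify no circularity arises. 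A secondary subtle point is the essential use of closure under kernels of surjections~(ii) in verifying $\pi^{-1}(X)\in\F$ in the proof of $\beta(d)$; this is what lets the argument run without assuming $\F$ is closed under direct summands or contains the projectives.
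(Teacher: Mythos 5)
Your overall architecture -- a joint induction on the two syzygy-shifting claims, with a pullback for $\alpha$ and a surjection from a module of $\F$ for $\beta$ -- is sound and essentially reconstructs the argument of the sources the paper cites ([Sto, Proposition~2.3] and [Pcosh, Appendix~A.5]); the paper itself gives only a reference. However, there is a genuine gap in your inductive step for $\alpha(d)$ when $d\ge2$. You invoke $\alpha(d-1)$ on the sequence $0\rarrow X\rarrow P\rarrow F\rarrow 0$, but $\alpha(d-1)$ requires the \emph{middle} term to lie in $\F$, whereas at that point you only know $\rd_\F P\le d-1$, which for $d\ge2$ does not force $P\in\F$. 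Worse, the conclusion you extract -- that $\rd_\F X\le\max(0,d-2)$ -- is false in general: take $\F$ to be the projective modules over a ring of global dimension~$2$, let $Z$ have projective dimension exactly~$2$ and $Y=F$ be a projective module surjecting onto it; then $X$ is a first syzygy of $Z$ and has projective dimension~$1$, not~$0$. Only the weaker bound $\rd_\F X\le d-1$ is true, and it is precisely the assertion of $\alpha(d)$ -- so it cannot be obtained by citing a lower instance of $\alpha$ on that sequence. (For $d=1$ your step is fine, since there $P$ does lie in $\F$ and condition~(ii) applies directly.)

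The repair is local and uses the same device as your own proof of $\beta(d)$. Given $0\rarrow X\rarrow P\rarrow F\rarrow 0$ with $F\in\F$ and $\rd_\F P\le d-1$, choose by~(iii) a surjection $\rho\:H\twoheadrightarrow P$ with $H\in\F$. Then $\rho^{-1}(X)=\ker(H\to F)$ belongs to $\F$ by~(ii) (both $H$ and $F$ are in $\F$), while $\ker\rho=\ker(H\to P)$ has $\rd_\F\le\max(0,d-2)$ by a \emph{legitimate} application of $\alpha(d-1)$, since now the middle term $H$ is in $\F$. Splicing a resolution of $\ker\rho$ of that length onto $0\rarrow\ker\rho\rarrow\rho^{-1}(X)\rarrow X\rarrow0$ gives $\rd_\F X\le d-1$, as required. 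With this substitution the induction closes in the order $\alpha(0),\beta(0)\Rightarrow\alpha(1)\Rightarrow\beta(1)\Rightarrow\alpha(2)\Rightarrow\dotsb$ without circularity, and the remainder of your argument -- the outer induction on~$k$, the use of~(ii) to see $\pi^{-1}(X)\in\F$ in $\beta(d)$, and the dualization for part~(b) -- is correct.
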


\begin{proof}
 This is~\cite[Proposition~2.3(1)]{Sto}
or~\cite[Corollary~A.5.2]{Pcosh}.
 (The resolving and coresolving classes are assumed to be closed under
direct summands in~\cite{Sto}, but this assumption can be dropped.)
\end{proof}

\begin{lem} \label{co-resolution-dimension-defined-classes}
\textup{(a)} For any resolving class\/ $\F\subset A\Modl$ and any
integer $l\ge0$, the class $\F(l)$ of all left $A$\+modules of
$\F$\+resolution dimension~$\le l$ is resolving as well. \par
\textup{(b)} For any coresolving class\/ $\C\subset A\Modl$ and
any integer $l\ge0$, the class $\C(l)$ of all left $A$\+modules
of $\C$\+coresolution dimension~$\le l$ is coresolving as well.
\end{lem}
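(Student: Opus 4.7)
My plan is to deduce both parts of Lemma~\ref{co-resolution-dimension-defined-classes} from Lemma~\ref{co-resolution-dimension-lemma} by induction on~$l$; I will treat part~(a), since part~(b) follows by the formally dual argument. Condition~(iii) for $\F(l)$ is immediate because $\F \subseteq \F(l)$. The content is to verify closure under extensions and under kernels of surjections.

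The engine of the induction is the following characterization: for $l \ge 1$, a module $M$ lies in $\F(l)$ if and only if there is a short exact sequence $0 \to K \to F \to M \to 0$ with $F \in \F$ and $K \in \F(l-1)$. The ``if'' direction is splicing. For the ``only if'' direction, I would take any surjection $F \to M$ from $\F$ with kernel~$K$, extend~$K$ to a partial $\F$-resolution of length $l-1$ with leftmost kernel $K_{l-1}$, splice with $0 \to K \to F \to M \to 0$ to obtain an exact sequence of length $l$ ending at $M$, and apply Lemma~\ref{co-resolution-dimension-lemma}(a) with $k = l$ to conclude $K_{l-1} \in \F$, whence $K \in \F(l-1)$.

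Assuming inductively that $\F(l-1)$ is resolving, I would prove closure of $\F(l)$ under extensions as follows. Given $0 \to M' \to M \to M'' \to 0$ with $M', M'' \in \F(l)$, pick a surjection $F_0 \to M$ from $\F$ with kernel $K$, and let $K'$ be the preimage of~$M'$ in~$F_0$. This yields two short exact sequences $0 \to K' \to F_0 \to M'' \to 0$ and $0 \to K \to K' \to M' \to 0$. The characterization applied to the first gives $K' \in \F(l-1)$. For the second, choose a surjection $F_0' \to M'$ from $\F$ with kernel $K_1' \in \F(l-1)$ (characterization again) and form the pullback $P = K' \times_{M'} F_0'$, which fits into short exact sequences $0 \to K_1' \to P \to K' \to 0$ and $0 \to K \to P \to F_0' \to 0$; the inductive hypothesis on $\F(l-1)$ successively delivers $P \in \F(l-1)$ (extension closure) and $K \in \F(l-1)$ (kernel closure). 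The ``if'' direction of the characterization then places $M$ in $\F(l)$. Closure of $\F(l)$ under kernels of surjections is analogous: for $0 \to M' \to M \to M'' \to 0$ with $M, M'' \in \F(l)$, the characterization first produces $F_0 \to M$ with kernel $K \in \F(l-1)$, after which one composes with $M \to M''$, applies the characterization to extract $K' \in \F(l-1)$ sitting in $0 \to K \to K' \to M' \to 0$, then surjects onto $K'$ from an object of~$\F$ with kernel in $\F(l-2)$ and composes to get a surjection from $\F$ onto $M'$ whose kernel is an extension in $\F(l-1)$ by the inductive hypothesis.

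The main obstacle I anticipate is simply the bookkeeping of pullback and pushout diagrams and the resulting short exact sequences, together with the boundary case $l = 1$, where the symbol $\F(l-2)$ appears and should either be interpreted as the zero class or handled directly from the fact that $\F$ itself is resolving. Beyond that, the whole argument is diagram chasing powered by Lemma~\ref{co-resolution-dimension-lemma}(a), and part~(b) is established by running the same diagrams with all arrows reversed and invoking Lemma~\ref{co-resolution-dimension-lemma}(b) in place of~(a).
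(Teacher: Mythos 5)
Your argument is correct. Note that the paper itself gives no proof of this lemma---it is dispatched with a citation to \v St\!'ov\'\i\v cek's paper and to the appendix of the contraherent cosheaves preprint---so there is no in-paper argument to compare against; what you have written is essentially the standard dimension-shifting proof found in those references. The key characterization you isolate (for $l\ge1$, one has $M\in\F(l)$ if and only if \emph{every} surjection $F\twoheadrightarrow M$ with $F\in\F$ has kernel in $\F(l-1)$, the ``only if'' half coming from Lemma~\ref{co-resolution-dimension-lemma}(a)) is exactly the right engine, and your pullback bookkeeping in the extension-closure step and the splicing in the kernel-closure step both check out. The only boundary point is the one you already flag: for $l=1$ the module $K'$ lands in $\F(0)=\F$, so no further surjection onto $K'$ is needed and the symbol $\F(-1)$ never genuinely arises. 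Part~(b) is indeed the formal dual.
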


\begin{proof}
 This is~\cite[Proposition~2.3(2)]{Sto}
or~\cite[Lemma~A.5.4]{Pcosh}.
\end{proof}

\begin{lem} \label{hereditary-cotorsion-pair-lemma}
 Let $(\F,\C)$ be a cotorsion pair in $A\Modl$.
 Then the following conditions are equivalent:
\begin{enumerate}
\item the class $\F$ is resolving (i.~e., $\F$ is closed under
the kernels of surjective morphisms in $A\Modl$);
\item the class $\C$ is coresolving (i.~e., $\C$ is closed under
the cokernels of injective morphisms in $A\Modl$);
\item $\Ext_A^2(F,C)=0$ for all $F\in\F$ and $C\in\C$;
\item $\Ext_A^n(F,C)=0$ for all $F\in\F$, \ $C\in\C$,
and $n\ge1$ (i.~e., $\C=\F^{\perp_{>0}}$ and $\F={}^{\perp_{>0}}\C$).
\end{enumerate}
\end{lem}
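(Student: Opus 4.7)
The plan is to establish the cycle $(4) \Rightarrow (3) \Rightarrow (1) \Rightarrow (4)$ together with $(3) \Rightarrow (2) \Rightarrow (4)$, which will make all four conditions equivalent. The implication $(4) \Rightarrow (3)$ is immediate.

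For $(3) \Rightarrow (1)$, I would take a short exact sequence $0 \rarrow F' \rarrow F \rarrow F'' \rarrow 0$ with $F,\,F'' \in \F$ and, for each $C \in \C$, read off the fragment
\[
\Ext_A^1(F,C) \rarrow \Ext_A^1(F',C) \rarrow \Ext_A^2(F'',C)
\]
of the long exact sequence of $\Ext_A^*({-},C)$. The left term vanishes by the cotorsion pair property and the right term by~(3), so $\Ext_A^1(F',C) = 0$ for every $C \in \C$, which means $F' \in {}^{\perp_1}\C = \F$. The implication $(3) \Rightarrow (2)$ is proved in exactly the dual fashion, using the long exact sequence of $\Ext_A^*(F,{-})$ applied to a short exact sequence $0 \rarrow C' \rarrow C \rarrow C'' \rarrow 0$ with $C,\,C' \in \C$.

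For $(1) \Rightarrow (4)$, the key preliminary remark is that every projective left $A$\+module $P$ belongs to $\F$: indeed $\Ext_A^1(P,X) = 0$ for every $A$\+module $X$, so $P \in {}^{\perp_1}\C = \F$. Fixing $F \in \F$ and a projective resolution $P_\bu \rarrow F$, the short exact sequences $0 \rarrow \Omega^{i+1}F \rarrow P_i \rarrow \Omega^iF \rarrow 0$ together with~(1) show by induction on $i$ that $\Omega^iF \in \F$ for all $i \geq 0$. Dimension shifting then yields $\Ext_A^n(F,C) \simeq \Ext_A^1(\Omega^{n-1}F,C) = 0$ for every $n \geq 1$ and $C \in \C$. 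The implication $(2) \Rightarrow (4)$ is entirely dual: injectives lie in $\C$ because $\Ext_A^1(Y,J) = 0$ for every $Y$, condition~(2) propagates membership in $\C$ along the cosyzygy chain $\Omega^{-i}C$, and dimension shifting in the second variable gives the conclusion. There is no real obstacle in this proof; it is essentially bookkeeping with long exact sequences of Ext and dimension shifting. The only point to remember, and precisely what makes the syzygy/cosyzygy induction go through, is that projectives sit in $\F$ and injectives sit in $\C$, which follows directly from the cotorsion pair axioms.
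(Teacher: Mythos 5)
Your proposal is correct and follows the same standard argument the paper alludes to (long exact sequences of Ext and dimension shifting via syzygies/cosyzygies, with the key observation that projectives lie in $\F$ and injectives in $\C$); the paper itself merely cites \cite{GR} and \cite{GT} and sketches exactly this route. The only cosmetic difference is the orientation of the implication cycle, which is immaterial.
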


\begin{proof}
 This lemma is well-known; see~\cite[Theorem~1.2.10]{GR}
or~\cite[Lemma~5.24]{GT}.
 The argument is straightforward, based on the long exact sequences
of Ext groups for a short exact sequence of modules.
 One proves the equivalences
(1)\,$\Longleftrightarrow$\,(3)\,$\Longleftrightarrow$\,(2)
and then deduces~(4) from either (1) or~(2).
\end{proof}

 A cotorsion pair $(\F,\C)$ in $A\Modl$ is said to be
\emph{hereditary} if it satisfies the equivalent conditions of
Lemma~\ref{hereditary-cotorsion-pair-lemma}.

\Section{Cofiltrations by Coinduced Modules}
\label{cofiltrations-secn}

\subsection{Posing the problem}
 Let $R\rarrow A$ be a homomorphism of associative rings,
and let $\F$ be a class of left $R$\+modules.
 Mostly we will assume $\F$ to be the left part of a cotorsion
pair $(\F,\C)$ in $R\Modl$.

 Denote by $\F_A$ the class of all left $A$\+modules whose
underlying $R$\+modules belong to~$\F$.
 Does there exist a cotorsion pair $(\F_A,\C_A)$ in $A\Modl$\,?

 Obviously, if the answer to this question is positive, then
the class $\C_A$ can be recovered as $\C_A=\F_A^{\perp_1}$.
 But can one describe the class $\C_A$ more explicitly?

 We start with the following easy lemma, which provides
a necessary condition.

\begin{lem} \label{necessary-A-in-F}
 Assume that $\F_A$ is the left part of a cotorsion pair
$(\F_A,\C_A)$ in $A\Modl$.
 Then the left $R$\+module $A$ belongs to~$\F$.
\end{lem}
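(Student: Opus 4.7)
The plan is to exploit projectivity of the free rank-one $A$-module. The class $\F_A$ is defined purely in terms of the underlying $R$-module structure, so the conclusion "${}_RA\in\F$" is literally the same as "$A\in\F_A$" when $A$ is viewed as the free left $A$-module on one generator. Thus the task reduces to showing that this particular $A$-module lies in $\F_A$.

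First I would note that the free left $A$-module $A$ is projective, so $\Ext_A^1(A,X)=0$ for every left $A$-module $X$. In particular $\Ext_A^1(A,C)=0$ for all $C\in\C_A$, which says $A\in{}^{\perp_1}\C_A$. Since $(\F_A,\C_A)$ is assumed to be a cotorsion pair, one has $\F_A={}^{\perp_1}\C_A$, and therefore $A\in\F_A$. Unwinding the definition of $\F_A$, this means the underlying left $R$-module of $A$ belongs to $\F$, which is the desired conclusion.

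There is essentially no obstacle here: the lemma is a formal consequence of the definitions together with the fact that every projective $A$-module sits in the left class of any cotorsion pair on $A\Modl$. The content of the statement is rather that it pinpoints the unavoidable compatibility condition $A\in\F$ (with $A$ regarded via the ring homomorphism $R\to A$) that one must impose on $\F$ before it even makes sense to look for a cotorsion pair of the form $(\F_A,\C_A)$ in subsequent results.
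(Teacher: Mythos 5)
Your proof is correct and follows essentially the same route as the paper: both observe that projective left $A$\+modules lie in the left class of any cotorsion pair (the paper states this for all projectives, you specialize to the free module $A$ and spell out the $\Ext^1$\+vanishing), and then unwind the definition of $\F_A$.
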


\begin{proof}
 For any cotorsion pair $(\F_A,\C_A)$ in $A\Modl$, all projective
left $A$\+modules belong to~$\F_A$.
 So, in the situation at hand, the underlying left $R$\+modules
of all projective left $A$\+modules must belong to~$\F$.
\end{proof}

 The next lemma shows that this condition is also sufficient to
get a cotorsion pair $(\F_A,\C_A)$.
 Given a class of left $R$\+modules $\T$, we denote by $\Hom_R(A,\T)$
the class of all left $A$\+modules of the form $\Hom_R(A,T)$ with
$T\in\T$.

\begin{lem} \label{orthogonal-to-coinduced-lemma}
 Let $(\F,\C)$ be a cotorsion pair in $R\Modl$ cogenerated by
a class of left $R$\+modules~$\T$.
 Assume that the left $R$\+module $A$ belongs to~$\F$.
 Then we have \par
\textup{(a)} $\F_A={}^{\perp_1}\!\Hom_R(A,\C)=
{}^{\perp_1}\!\Hom_R(A,\T)$; \par
\textup{(b)} $(\F_A,\F_A^{\perp_1})$ is a cotorsion pair in
$A\Modl$; \par
\textup{(c)} $\Cof(\Hom_R(A,\T))^\oplus\subset
\Cof(\Hom_R(A,\C))^{\oplus}\subset\F_A^{\perp_1}$.
\end{lem}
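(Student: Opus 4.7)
The plan is to reduce everything to the natural Ext-isomorphism of Lemma~\ref{homological-formulas-lemma}(b) and the Lukas lemma (Lemma~\ref{dual-eklof-lemma}). The key observation is that since $(\F,\C)$ is cogenerated by $\T$, we have $\T\subset({}^{\perp_1}\T)^{\perp_1}=\C$, so $\T\subset\C$; and since $A\in\F={}^{\perp_1}\C$, the group $\Ext_R^1(A,C)$ vanishes for every $C\in\C$ (in particular for every $T\in\T$). This is precisely the hypothesis needed to invoke Lemma~\ref{homological-formulas-lemma}(b) with $n=1$.

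For part~(a), I would argue as follows. For an arbitrary left $A$\+module $B$ and any $C\in\C$, Lemma~\ref{homological-formulas-lemma}(b) produces a natural isomorphism
\[
\Ext_A^1(B,\Hom_R(A,C))\simeq\Ext_R^1({}_RB,C).
\]
Hence $B\in{}^{\perp_1}\Hom_R(A,\C)$ if and only if ${}_RB\in{}^{\perp_1}\C=\F$, which is just $B\in\F_A$. The identical computation with $C$ replaced by $T\in\T$ yields $B\in{}^{\perp_1}\Hom_R(A,\T)\iff {}_RB\in{}^{\perp_1}\T=\F$, so all three classes coincide.

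For part~(b), it remains only to verify $\F_A={}^{\perp_1}(\F_A^{\perp_1})$; the inclusion $\subset$ is automatic. The same Ext-isomorphism shows $\Hom_R(A,\C)\subset\F_A^{\perp_1}$, because for $F\in\F_A$ and $C\in\C$ we have $\Ext_A^1(F,\Hom_R(A,C))\simeq\Ext_R^1({}_RF,C)=0$. Therefore ${}^{\perp_1}(\F_A^{\perp_1})\subset{}^{\perp_1}\Hom_R(A,\C)=\F_A$ by~(a), giving the reverse inclusion.

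For part~(c), the inclusion $\Cof(\Hom_R(A,\T))^\oplus\subset\Cof(\Hom_R(A,\C))^\oplus$ is immediate from $\T\subset\C$. For the second inclusion, the Lukas lemma yields
\[
{}^{\perp_1}\Cof(\Hom_R(A,\C))={}^{\perp_1}\Hom_R(A,\C)=\F_A,
\]
where the second equality is part~(a). Thus $\Cof(\Hom_R(A,\C))\subset\F_A^{\perp_1}$, and since the class $\F_A^{\perp_1}$ is automatically closed under direct summands, taking $({-})^\oplus$ on the left-hand side preserves the inclusion. There is no serious obstacle here: the entire argument is a bookkeeping exercise once one recognizes that the Ext-adjunction of Lemma~\ref{homological-formulas-lemma}(b) applies with $n=1$ thanks to the hypothesis $A\in\F$.
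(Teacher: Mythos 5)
Your proposal is correct and follows essentially the same route as the paper: part~(a) via Lemma~\ref{homological-formulas-lemma}(b) with $n=1$ (justified by $A\in\F={}^{\perp_1}\C$), part~(b) by recognizing $(\F_A,\F_A^{\perp_1})$ as the cotorsion pair cogenerated by $\Hom_R(A,\C)$, and part~(c) from part~(a) together with the Lukas lemma. The only difference is that you spell out the bookkeeping (e.g.\ $\T\subset\C$ and the closure of $\F_A^{\perp_1}$ under direct summands) that the paper leaves implicit.
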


\begin{proof}
 Part~(a): by assumptions, we have $\F={}^{\perp_1}\T$ and
$\Ext^1_R(A,T)=0$ for all $T\in\T$.
 By Lemma~\ref{homological-formulas-lemma}(b) (for $n=1$), it follows
that a left $A$\+module $F$ belongs to ${}^{\perp_1}\!\Hom_R(A,\T)$
if and only if the underlying left $R$\+module of $F$ belongs to
${}^{\perp_1}\T$.
 In particular, this is applicable to $\T=\C$.

 Part~(b): in view of part~(a), $(\F_A,\F_A^{\perp_1})$ is
the cotorsion pair in $A\Modl$ cogenerated by the class
$\Hom_R(A,\T)$ or $\Hom_R(A,\C)$.

 Part~(c) follows from part~(a) and Lemma~\ref{dual-eklof-lemma}.
\end{proof}

 So we have answered our first question, but we want to know more.
 Can one guarantee that the cotorsion pair $(\F_A,\C_A)$ is complete?

\begin{prop} \label{deconstructible-prop}
 Let $(\F,\C)$ be a (complete) cotorsion pair in $R\Modl$ generated
by a set of left $R$\+modules~$\S$, and let $\F_A$ be the class of
all left $A$\+modules whose underlying left $R$\+modules belong
to~$\F$.
 Assume that the left $R$\+module $A$ belongs to~$\F$.
 Then there exists a complete cotorsion pair $(\F_A,\C_A)$ in
$A\Modl$ generated by a certain set of left $A$\+modules~$\S_A$.
\end{prop}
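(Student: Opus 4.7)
The plan is to apply the Eklof--Trlifaj theorem (Theorem~\ref{eklof-trlifaj-theorem}) to a carefully chosen set of $A$\+modules. By Lemma~\ref{orthogonal-to-coinduced-lemma}(b), the pair $(\F_A,\F_A^{\perp_1})$ is already a cotorsion pair in $A\Modl$, so it remains only to exhibit a set $\S_A$ of $A$\+modules such that $\S_A^{\perp_1}=\F_A^{\perp_1}$: Theorem~\ref{eklof-trlifaj-theorem}(a) will then deliver completeness, and uniqueness of a cotorsion pair given its right class forces the generated pair to coincide with $(\F_A,\F_A^{\perp_1})$. I first observe that $\F_A$ is closed under direct summands (inherited from $\F$) and under transfinite extensions: a filtration of an $A$\+module by modules in $\F_A$, viewed over $R$, is a filtration by modules in $\F$, and Lemma~\ref{eklof-lemma} applied to the pair $(\F,\C)$ gives $\Fil(\F)^{\perp_1}=\F^{\perp_1}=\C$, so $\Fil(\F)\subseteq{}^{\perp_1}\C=\F$.

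Next, I would fix an infinite regular cardinal $\kappa$ with $\kappa>|A|$ and $\kappa>|S|$ for all $S\in\S$, and let $\S_A$ be a set of representatives of the isomorphism classes of $A$\+modules $N$ with $|N|\le\kappa$ and ${}_RN\in\F$. Since $A\in\F_A$ by assumption and $\S_A\subseteq\F_A$, the closure properties of the previous paragraph give $\Fil(\S_A\cup\{A\})^\oplus\subseteq\F_A$. The crucial step is the reverse inclusion, which by Theorem~\ref{eklof-trlifaj-theorem}(b) is equivalent to the equality $\S_A^{\perp_1}=\F_A^{\perp_1}$ (and in turn to every $M\in\F_A$ being a direct summand of a module $\Fil(\S_A\cup\{A\})$\+filtered as an $A$\+module).

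To establish this main step --- which is the principal obstacle --- I would argue as follows. Given $M\in\F_A$, by Theorem~\ref{eklof-trlifaj-theorem}(b) applied to $(\F,\C)$ the underlying $R$\+module ${}_RM$ is a direct summand of an $R$\+module $N$ that is $\Fil(\S\cup\{R\})$\+filtered. Applying Hill's lemma to this filtration produces a complete sublattice $\mathcal{H}$ of $R$\+submodules of $N$ such that every subset of $N$ of cardinality $<\kappa$ is contained in some member of $\mathcal{H}$ of cardinality $<\kappa$, and such that every inclusion within $\mathcal{H}$ has quotient in $\Fil(\S\cup\{R\})\subseteq\F$. A L\"owenheim--Skolem-style iteration, alternately closing $R$\+submodules inside $M$ under the $A$\+action and then enlarging to a member of $\mathcal{H}$ of controlled cardinality, produces a continuous chain of $A$\+submodules of $M$ whose successive quotients have cardinality $\le\kappa$ and (being quotients of $R$\+submodules in $\mathcal{H}$) have underlying $R$\+module in $\F$; hence they belong to $\S_A$. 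This yields $M\in\Fil(\S_A)$ (up to a direct summand absorbed into $\S_A$), and Theorem~\ref{eklof-trlifaj-theorem}(a) then gives the completeness of $(\F_A,\F_A^{\perp_1})$.
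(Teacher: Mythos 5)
Your proposal is correct and follows essentially the same route as the paper: the paper likewise deduces the result by first making $\F$ deconstructible (via \cite[Lemma~7.12]{GT}), then using the Hill lemma to deconstruct $\F_A$ with a set $\S_A$ of $<\kappa$\+presented modules, and finally invoking the Eklof--Trlifaj theorem. The only difference is that the paper cites the Hill-lemma and direct-summand steps as black boxes from~\cite{GT}, whereas you sketch the underlying L\"owenheim--Skolem closure argument explicitly.
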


\begin{proof}
 A class of left $R$\+modules $\F$ is said to be \emph{deconstructible}
if there exists a set of left $R$\+modules $\S$ such that
$\F=\Fil(\S)$.
 Any class of modules of the form $\F=\Fil(\S)^\oplus$ is
deconstructible, that is, for any set $\S\subset R\Modl$ there exists
a set $\S'\subset R\Modl$ such that $\Fil(\S)^\oplus=\Fil(\S')\subset
R\Modl$ \cite[Lemma~7.12]{GT}.
 Furthermore, it follows from the Hill lemma~\cite[Theorem~7.10]{GT}
that the class $\F_A\subset A\Modl$ is deconstructible for every
deconstructible class $\F\subset R\Modl$.
 So there exists a set of left $A$\+modules $\S_A$ such that
$\F_A=\Fil(\S_A)$.
 In fact, if~$\kappa$ is an uncountably infinite regular cardinal
such that the cardinalites of $R$ and $A$ are smaller than~$\kappa$ and
all the modules in $\S$ are $<\kappa$\+presented, then one can use
the set of (representatives of the isomorphism classes of) all
the $<\kappa$\+presented modules in $\F_A$ in the role of~$\S_A$.
 Finally, if a deconstructible class $\F_A=\Fil(\S_A)\subset A\Modl$ is
closed under direct summands and $A\in\F_A$, then
$(\F_A,\F_A^{\perp_1})$ is a complete cotorsion pair in $A\Modl$
generated by the set of left $A$\+modules $\S_A$ by
Lemma~\ref{eklof-lemma} and Theorem~\ref{eklof-trlifaj-theorem}.
\end{proof}

 After these observations, which follow from the general theory of
cotorsion pairs in module categories, essentially the only remaining
question is the one about an explicit description of the class
$\C_A=\F_A^{\perp_1}$.
 In the rest of Section~\ref{cofiltrations-secn}, our aim is to show
that, under certain assumptions, the inclusions in
Lemma~\ref{orthogonal-to-coinduced-lemma}(c) become equalities, that is,
most importantly, $\C_A=\Cof(\Hom_R(A,\C))^\oplus$.

 In fact, depending on specific assumptions, we will be able to
prove that $\C_A=\Cof_\beta(\Hom_R(A,\C))^\oplus$ for certain rather
small ordinals~$\beta$.
 Our assumptions are going to be rather restrictive; but we will
\emph{not} assume the cotorsion pair $(\F,\C)$ to be generated by a set
(as in Proposition~\ref{deconstructible-prop}).

 Concerning the second inclusion in
Lemma~\ref{orthogonal-to-coinduced-lemma}(c), all we can say is
the following.

\begin{lem}
 Let $\T$ be a class of left $R$\+modules such that
$A\in{}^{\perp_1}\T$, and let $\alpha$~be an ordinal.
 Then \par
\textup{(a)} $\Hom_R(A,\Cof_\alpha(\T))\subset
\Cof_\alpha(\Hom_R(A,\T))$; \par
\textup{(b)} $\Hom_R(A,\Cof_\alpha(\T)^\oplus)\subset
\Cof_\alpha(\Hom_R(A,\T))^\oplus$. \par
\noindent In particular, if $\C=\Cof(\T)^\oplus$, then\/
$\Cof(\Hom_R(A,\C))^\oplus=\Cof(\Hom_R(A,\T))^\oplus$.
\end{lem}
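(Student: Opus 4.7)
The plan for part (a) is to apply the functor $\Hom_R(A,-)$ termwise to a given $\alpha$\+cofiltration of a module $N$ and verify that the result is again an $\alpha$\+cofiltration, of the same ordinal length. Concretely, if $(G_iN)_{0\le i\le\alpha}$ is an $\alpha$\+cofiltration with successive kernels $K_i=\ker(G_{i+1}N\to G_iN)\in\T$, I would set $G_i':=\Hom_R(A,G_iN)$ together with the transition maps obtained by applying $\Hom_R(A,-)$ to the original ones. Then $G_0'=\Hom_R(A,0)=0$ and $G_\alpha'=\Hom_R(A,N)$. The limit axiom at limit ordinals $i\le\alpha$ is free: $\Hom_R(A,-)$ is right adjoint to the exact forgetful functor $A\Modl\to R\Modl$, hence preserves all small limits, so $G_i'=\varprojlim_{j<i}G_j'$ automatically.

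The only nontrivial point, and the main obstacle, is the successor step, where I need $G_{i+1}'\to G_i'$ to be surjective with kernel in $\Hom_R(A,\T)$. Applying the left exact functor $\Hom_R(A,-)$ to $0\to K_i\to G_{i+1}N\to G_iN\to 0$ yields the five-term exact sequence
\begin{equation*}
0\lrarrow \Hom_R(A,K_i)\lrarrow G_{i+1}'\lrarrow G_i'\lrarrow \Ext^1_R(A,K_i).
\end{equation*}
The hypotheses $A\in{}^{\perp_1}\T$ and $K_i\in\T$ together force $\Ext^1_R(A,K_i)=0$; hence $G_{i+1}'\to G_i'$ is surjective and its kernel $\Hom_R(A,K_i)$ belongs to $\Hom_R(A,\T)$. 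This completes part~(a). Part~(b) then follows formally: a direct summand decomposition is preserved by any additive functor, so if $N$ is a summand of $N'\in\Cof_\alpha(\T)$, then $\Hom_R(A,N)$ is a summand of $\Hom_R(A,N')\in\Cof_\alpha(\Hom_R(A,\T))$ by part~(a).

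For the final assertion, assume $\C=\Cof(\T)^\oplus$. The inclusion $\Cof(\Hom_R(A,\T))^\oplus\subset\Cof(\Hom_R(A,\C))^\oplus$ is immediate from $\T\subset\C$. For the reverse inclusion, part~(b) applied to $\C=\Cof(\T)^\oplus$ yields $\Hom_R(A,\C)\subset\Cof(\Hom_R(A,\T))^\oplus$, and I would then invoke Lemma~\ref{iterated-cof-lemma} (iterated cofiltrations and the compatibility of $\Cof_\alpha$ with $(-)^\oplus$) to get
\begin{equation*}
\Cof(\Hom_R(A,\C))^\oplus\subset\Cof\bigl(\Cof(\Hom_R(A,\T))^\oplus\bigr)^\oplus\subset\Cof(\Hom_R(A,\T))^\oplus,
\end{equation*}
closing the chain of inclusions and finishing the proof.
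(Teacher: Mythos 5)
Your proof is correct and follows essentially the same route as the paper: the paper's one-line justification of part (a) is precisely that $\Hom_R(A,{-})$ preserves inverse limits and short exact sequences whose kernels lie in $\{A\}^{\perp_1}$, which is exactly your termwise application of the functor with the $\Ext^1_R(A,K_i)=0$ argument at successor steps; your treatment of (b) and of the final chain of inclusions via Lemma~\ref{iterated-cof-lemma} also matches the paper's.
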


\begin{proof}
 Part~(a) holds, because the functor $\Hom_R(A,{-})\:R\Modl\rarrow
A\Modl$ preserves inverse limits, as well as short exact sequences of
modules belonging to $\{A\}^{\perp_1}\subset R\Modl$.
 Part~(b) follows immediately from~(a).

 The last assertion follows from~(b) in view of 
Lemma~\ref{iterated-cof-lemma}.
 Indeed, we have $\Cof(\Hom_R(A,\C))^\oplus=
\Cof(\Hom_R(A,\Cof(\T)^\oplus))^\oplus\subset
\Cof(\Cof(\Hom_R(A,\T))^\oplus)^\oplus=\Cof(\Hom_R(A,\T))^\oplus$.
\end{proof}

 For a class of examples of complete cotorsion pairs like in
Proposition~\ref{deconstructible-prop} arising in connection with
$n$\+cotilting modules, see
Lemma~\ref{cotilting-class-change-of-ring}
and Proposition~\ref{bazzoni-prop} below.

\subsection{Finite filtrations by coinduced modules}
\label{finite-by-coinduced-subsecn}
 Let $R\rarrow A$ be a ring homomorphism.
 Suppose that we are given an $\Ext^1$\+orthogonal pair of
classes of left $R$\+modules $\F$ and $\C\subset R\Modl$, and
denote by $\F_A\subset A\Modl$ the class of all left $A$\+modules
whose underlying left $R$\+modules belong to~$\F$.
 
 For any left $R$\+module $M$, one can consider the left $A$\+module
$\Hom_R(A,M)$.
 Sometimes we also consider the underlying left $R$\+module of
the left $A$\+module $\Hom_R(A,M)$.
 That is what we do when formulating the following condition, which
will be a key technical assumption in much of the rest of
Section~\ref{cofiltrations-secn}:
\begin{itemize}
\item[($\dagger\dagger$)] for any left $R$\+module $F\in\F$,
the left $R$\+module $\Hom_R(A,F)$ also belongs to~$\F$.
\end{itemize}

 The specific assumption on which the results of this
Section~\ref{finite-by-coinduced-subsecn} are based is that
all left $R$\+modules have finite $\F$\+resolution dimension.

\begin{lem} \label{coinduction-preserves-resolution-dimension}
 Assume that the\/ $\Ext^1$\+orthogonal pair of classes of
left $R$\+modules $(\F,\C)$ admits special precover
sequences~\eqref{sp-precover-seq}.
 Assume further that the left $R$\+module $A$ belongs to~$\F$,
the condition~\textup{($\dagger\dagger$)} holds, and the class $\F$
is resolving in $R\Modl$.
 Let $M$ be a left $R$\+module of $\F$\+resolution dimension~$\le l$.
 Then the $\F$\+resolution dimension of the left $R$\+module\/
$\Hom_R(A,M)$ also does not exceed~$l$.
\end{lem}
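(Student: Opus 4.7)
\medskip

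\textbf{Proof proposal.} I would proceed by induction on $l$, the key point being to replace the given $\F$\+resolution of $M$, which we cannot control under $\Hom_R(A,{-})$, by a resolution built one step at a time out of \emph{special precover sequences}, whose rightward term lies in $\C$ and therefore transports exactly across $\Hom_R(A,{-})$.

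\emph{Base case} $l=0$. Then $M\in\F$, and hypothesis~($\dagger\dagger$) gives $\Hom_R(A,M)\in\F$.

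\emph{Inductive step.} Assume the statement for $l-1$, and let $M$ have $\F$\+resolution dimension $\le l$. Choose a special precover sequence
\[
0\lrarrow C'\lrarrow F\lrarrow M\lrarrow 0
\]
with $F\in\F$ and $C'\in\C$, which exists by assumption. I claim that $C'$ has $\F$\+resolution dimension at most $l-1$. Indeed, using that $\F$ is resolving (so every module is an $\F$\+quotient), extend $F\rarrow M$ to an infinite $\F$\+resolution $\dotsb\rarrow P_2\rarrow P_1\rarrow F\rarrow M\rarrow0$. Then Lemma~\ref{co-resolution-dimension-lemma}(a) applied to this resolution of $M$ shows that the $(l-1)$st kernel lies in $\F$; truncating, this produces an $\F$\+resolution of $C'$ of length $\le l-1$.

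Now apply $\Hom_R(A,{-})$ to the precover sequence. Since $A\in\F$ and $C'\in\C$, the $\Ext^1$\+orthogonality of $\F$ and $\C$ gives $\Ext^1_R(A,C')=0$, so we obtain a short exact sequence of left $A$\+modules
\[
0\lrarrow\Hom_R(A,C')\lrarrow\Hom_R(A,F)\lrarrow\Hom_R(A,M)\lrarrow0.
\]
By ($\dagger\dagger$), the underlying $R$\+module of $\Hom_R(A,F)$ belongs to $\F$, and by the inductive hypothesis the underlying $R$\+module of $\Hom_R(A,C')$ has $\F$\+resolution dimension at most $l-1$. Splicing an $\F$\+resolution of $\Hom_R(A,C')$ of length $\le l-1$ onto $\Hom_R(A,F)\rarrow\Hom_R(A,M)$ produces an $\F$\+resolution of $\Hom_R(A,M)$ of length $\le l$, completing the induction.

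The main obstacle is precisely to extract exactness of $\Hom_R(A,{-})$ on the step of the resolution. A generic $\F$\+resolution of $M$ has syzygies of unknown type, and ($\dagger\dagger$) alone says nothing about $\Ext^i_R(A,{-})$ on those syzygies. The point of the argument is that special precover sequences are tailored to this: they place $C'$ in the $\Ext^1$\+orthogonal class $\C$, where the hypothesis $A\in\F$ immediately kills the obstruction to applying $\Hom_R(A,{-})$ exactly. Everything else is bookkeeping with resolution dimensions, for which Lemmas~\ref{co-resolution-dimension-lemma} and~\ref{co-resolution-dimension-defined-classes} provide the standard tools.
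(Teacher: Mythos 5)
Your proof is correct and follows essentially the same route as the paper's: both replace an arbitrary $\F$\+resolution of $M$ by one assembled from iterated special precover sequences, so that the syzygies lie in $\C$ and hence $\Hom_R(A,{-})$ (exact on short exact sequences with kernel in $\C$, since $A\in{}^{\perp_1}\C$) carries the resolution to an $\F$\+resolution of $\Hom_R(A,M)$ via~($\dagger\dagger$). The only difference is organizational — you unroll the construction as an induction on~$l$ with a splice at each step, whereas the paper builds the full length\+$l$ resolution at once and applies the coinduction functor to it in one go.
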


\begin{proof}
 Let $0\rarrow C_1\rarrow F_0\rarrow M\rarrow0$ be a special
precover sequence~\eqref{sp-precover-seq} for the left
$R$\+module~$M$; so $C_1\in\C$ and $F_0\in\F$.
 Consider a special precover sequence $0\rarrow C_2\rarrow F_1
\rarrow C_1\rarrow0$ for the left $R$\+module $C_1$, etc.
 Proceeding in this way, we construct an exact sequence of
left $R$\+modules $0\rarrow C_l\rarrow F_{l-1}\rarrow F_{l-2}
\rarrow\dotsb\rarrow F_1\rarrow F_0\rarrow M\rarrow0$, in
which $F_i\in\F$ for all $0\le i\le l-1$, \ $C_l\in\C$, and
the image $C_i$ of the morphism $F_i\rarrow F_{i-1}$ belongs
to $\C$ for all $1\le i\le l-1$.
 Since the $\F$\+resolution dimension of $M$ does not exceed~$l$
by assumption, by Lemma~\ref{co-resolution-dimension-lemma}(a) it
follows that $C_l\in\F$.
 Since $A\in\F\subset{}^{\perp_1}\C$, our exact sequence remains
exact after applying the functor $\Hom_R(A,{-})$.
 The resulting exact sequence is the desired resolution of length~$l$
of the left $R$\+module $\Hom_R(A,M)$ by modules from~$\F$.
\end{proof}

\begin{prop} \label{finite-resol-dim-approximation-sequences}
 Assume that the\/ $\Ext^1$\+orthogonal pair of classes of
left $R$\+modules $(\F,\C)$ admits approximation
sequences~\textup{(\ref{sp-precover-seq}\+-\ref{sp-preenvelope-seq})}.
 Assume that the left $R$\+module $A$ belongs to~$\F$, and
that the condition~\textup{($\dagger\dagger$)} holds.
 Assume further that the class $\F$ is resolving in $R\Modl$ and
the $\F$\+resolution dimension of any left $R$\+module does not
exceed a finite integer~$k\ge0$.
 Then the\/ $\Ext^1$\+orthogonal pair of classes of left $A$\+modules
$\F_A$ and\/ $\Cof_{k+1}(\Hom_R(A,\C))$ admits approximation sequences
as well.
 Here the integer~$k+1$ is considered as a finite ordinal.
\end{prop}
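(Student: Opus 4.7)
The plan is to construct the special preenvelope sequence by an iterative coinduction procedure, and then derive the special precover sequence via a pushout argument modeled on the proof of Salce's lemma (Lemma~\ref{salce-lemma}).

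Given $N \in A\Modl$, I construct recursively, for $i = 0, 1, \dots, k$, short exact sequences of $A$-modules
\[
0 \longrightarrow N^{(i)} \longrightarrow \Hom_R(A, \bar{C}_i) \longrightarrow N^{(i+1)} \longrightarrow 0,
\]
with $N^{(0)} = N$. At step $i$, I fix a special preenvelope $0 \to N^{(i)}|_R \to \bar{C}_i \to \bar{F}_i \to 0$ of the underlying $R$-module ($\bar{C}_i \in \C$, $\bar{F}_i \in \F$), and embed $N^{(i)} \hookrightarrow \Hom_R(A, \bar{C}_i)$ as the composition of the adjunction unit $N^{(i)} \hookrightarrow \Hom_R(A, N^{(i)}|_R)$ (which is split as a map of $R$-modules by evaluation at $1 \in A$) with $\Hom_R(A,-)$ applied to $N^{(i)}|_R \hookrightarrow \bar{C}_i$.

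The central claim is that after $k+1$ such iterations, the $A$-module $F' := N^{(k+1)}$ lies in $\F_A$, i.e., $F'|_R \in \F$. To verify this, apply $\Hom_R(A,-)$ to the preenvelope: using $\Ext^1_R(A, \bar{C}_i) = 0$ (from $A \in \F$ and $\bar{C}_i \in \C$), one obtains the four-term exact sequence
\[
0 \to \Hom_R(A, N^{(i)}|_R) \to \Hom_R(A, \bar{C}_i) \to \Hom_R(A, \bar{F}_i) \to \Ext^1_R(A, N^{(i)}|_R) \to 0,
\]
and combines this with the $R$-split short exact sequence coming from the unit map to analyze the underlying $R$-module $N^{(i+1)}|_R$. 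Using Lemma~\ref{coinduction-preserves-resolution-dimension} (which invokes~$(\dagger\dagger)$), Lemma~\ref{co-resolution-dimension-lemma}(a), and the resolving closure of $\F$, one shows that the relevant resolution dimension strictly decreases with each iteration, so that $k+1$ steps suffice to reach $\F_A$. The $k+1$ resulting short exact sequences are then assembled by iterated pushout into a single short exact sequence $0 \to N \to C \to F' \to 0$, where $C$ carries an explicit $(k+1)$-step filtration with successive quotients the coinduced modules $\Hom_R(A, \bar{C}_i) \in \Hom_R(A, \C)$; this is the desired special preenvelope sequence.

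For the special precover sequence, I mimic the pushout construction from the proof of Salce's lemma. Given $M \in A\Modl$, pick a surjection $E \twoheadrightarrow M$ with $E \in \F_A$ (e.g., a free $A$-module $A^{(I)}$, which belongs to $\F_A$ because $A \in \F$ and $\F$ is closed under direct sums in the typical settings), let $N := \ker(E \to M)$, and apply the preenvelope construction above to $N$ to produce $0 \to N \to C \to F' \to 0$ with $C \in \Cof_{k+1}(\Hom_R(A, \C))$ and $F' \in \F_A$. The pushout $H := E \sqcup_N C$ then fits into short exact sequences $0 \to E \to H \to F' \to 0$ (whence $H \in \F_A$ by closure under extensions) and $0 \to C \to H \to M \to 0$; the latter is the special precover sequence. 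The main obstacle is the strict-decrease claim underlying the dimension argument: the diagram chase must control $\Hom_R(A, \bar{C}_i)|_R$, which need not itself lie in $\F$ (since $(\dagger\dagger)$ governs only coinduction from $\F$, not from $\C$), while still forcing the dimension drop; the resolving closure of $\F$ and careful bookkeeping of the obstruction $\Ext^1_R(A, N^{(i)}|_R)$ are essential.
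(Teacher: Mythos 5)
The central claim of your preenvelope construction---that $N^{(k+1)}$ lies in $\F_A$ after $k+1$ iterations---is false; the obstruction you mention in your last sentence is fatal, not a matter of bookkeeping. Passing to the \emph{cokernel} of $N^{(i)}\hookrightarrow\Hom_R(A,\bar C_i)$ produces no syzygy step: $N^{(i+1)}$ is an extension of a submodule of $\Hom_R(A,\bar F_i)$ by $\Hom_R(A,N^{(i)})/N^{(i)}$, and for the latter piece the only available bound is $\rd_\F\le\rd_\F N^{(i)}$, so the $\F$\+resolution dimension need not decrease at all. Concretely, take $R=\Z$, \ $A=\Z[i]$, \ $\F=\Z\Modl_\proj$, \ $\C=\Z\Modl$, \ $k=1$; all hypotheses of the proposition hold. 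Every special preenvelope over $R$ splits (its third term is projective), so $\bar C_i\cong N^{(i)}\oplus\bar F_i$ and $N^{(i+1)}\cong\bigl(\Hom_\Z(A,N^{(i)})/N^{(i)}\bigr)\oplus\Hom_\Z(A,\bar F_i)$. Writing $A=\Z\cdot1\oplus\Z\cdot i$, one checks that $\Hom_\Z(A,X)/\nu_X(X)\cong X$ as abelian groups for every $A$\+module $X$ (the map $f\mapsto f(i)-i\,f(1)$ induces the isomorphism). Hence each $N^{(m)}$ contains a copy of $N$ as a direct summand of abelian groups, and for $N=A/2A$ no iterate is ever $\Z$\+projective.

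The hypotheses here are not self-dual: they are tailored to a direct construction of the special \emph{precover}, which is what the paper does. One defines $Q(M)$ as the pullback of $\nu_M\:M\to\Hom_R(A,M)$ and $\Hom_R(A,F(M))\twoheadrightarrow\Hom_R(A,M)$, so that $Q(M)$ is the \emph{kernel} of a surjection from $\Hom_R(A,F(M))$---which lies in $\F$ precisely by~($\dagger\dagger$), since $F(M)\in\F$---onto $\Hom_R(A,M)/M$, a module of $\F$\+resolution dimension $\le\rd_\F M$. Being a first syzygy relative to $\F$, it satisfies $\rd_\F Q(M)<\rd_\F M$, and $k$ iterations land in $\F_A$, with the kernel of $Q^k(M)\to M$ cofiltered by the modules $\Hom_R(A,C'(Q^i(M)))$. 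The special preenvelope is then obtained by a \emph{single} application of the Salce pullback argument to the embedding $N\hookrightarrow\Hom_R(A,C(N))$, not by iterating coinduced embeddings. (A secondary gap: your Salce step for the precover needs every $A$\+module to be a quotient of a module from $\F_A$, e.g.\ $A^{(I)}\in\F_A$; this requires $\F$ to be closed under direct sums, which is not assumed. The paper's order of construction avoids this issue as well.)
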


\begin{proof}
 The pair of classes $\F_A$ and $\Cof(\Hom_R(A,\C))\subset A\Modl$ is
$\Ext^1$\+orthogonal by Lemma~\ref{orthogonal-to-coinduced-lemma}(c).
 Let us show by explicit construction that the pair of classes $\F_A$
and $\Cof_k(\Hom_R(A,\C))$ admits special precover sequences.
 The construction below goes back to~\cite[Lemma~1.1.3]{Psemi}.

 Let $M$ be a left $A$\+module.
 Then there is a natural (adjunction) morphism of left $A$\+modules
$\nu_M\:M\rarrow\Hom_R(A,M)$ defined by the formula
$\nu_M(m)(a)=am\in M$ for every $m\in M$ and $a\in A$.
 The map~$\nu_M$ is always injective.
 Moreover, viewed as a morphism of left $R$\+modules, $\nu_M$~is
a split monomorphism.
 Indeed, the evaluation-at-unit map $\phi_M\:\Hom_R(A,M)\rarrow M$
taking a function $f\in\Hom_R(A,M)$ to its value $\phi_M(f)=f(1)\in M$
is a left $R$\+module morphism for which the composition
$\phi_M\circ\nu_M$ is the identity map, $\phi_M\circ\nu_M=\id_M$.

 Consider the underlying left $R$\+module of $M$, and choose
a special precover sequence $0\rarrow C'(M)\rarrow F(M)\rarrow M
\rarrow 0$ in $R\Modl$ with $C'(M)\in\C$ and $F(M)\in\F$.
 Then we have $\Ext^1_R(A,C'(M))=0$, so the morphism of left
$A$\+modules $\Hom_R(A,F(M))\rarrow\Hom_R(A,M)$ coinduced from
the surjective left $R$\+module map $F(M)\rarrow M$ is surjective.
 Denote by $Q(M)$ the pullback (or in other words, the fibered
product) of the pair of left $A$\+module morphisms $M\rarrow
\Hom_R(A,M)$ and $\Hom_R(A,F(M))\rarrow\Hom_R(A,M)$.

 We have a commutative diagram of left $A$\+module morphisms, in
which the four short sequences are exact:
\begin{equation} \label{precover-construction-diagram}
 \xymatrix{
  & 0 & 0 \\
  0 \ar[r] & M \ar[u]\ar[r] & \Hom_R(A,M) \ar[u]\ar[r]
  & \Hom_R(A,M)/M \ar[r] & 0 \\ \\
  0 \ar[r] & Q(M) \ar[uu]\ar[r] & \Hom_R(A,F(M)) \ar[uu]\ar[r]
  & \Hom_R(A,M)/M \ar@{=}[uu] \ar[r] & 0 \\ \\
  & \Hom_R(A,C'(M)) \ar[uu]\ar@{=}[r] & \Hom_R(A,C'(M)) \ar[uu] \\
  & 0 \ar[u] & 0 \ar[u]
 }
\end{equation}

 Introduce the notation $\rd_\F N$ for the $\F$\+resolution dimension
of a left $R$\+module~$N$.
 We will apply the same notation to $A$\+modules, presuming that
the $\F$\+resolution dimension of the underlying $R$\+module is taken.

 Next we observe that, whenever $0<\rd_\F M<\infty$,
the $\F$\+resolution dimension of the underlying left $R$\+module
of the left $A$\+module $Q(M)$ is strictly smaller than
the $\F$\+resolution dimension of the underlying $R$\+module of
the $A$\+module~$M$, i.~e., $\rd_\F Q(M)<\rd_\F(M)$.
 Indeed, the short exact sequence of left $A$\+modules $0\rarrow M
\rarrow \Hom_R(A,M)\rarrow \Hom_R(A,M)/M\rarrow0$ splits over $R$,
or in other words, the underlying left $R$\+module of $\Hom_R(A,M)/M$
can be presented as the kernel of the surjective left $R$\+module
morphism $\phi_M\:\allowbreak\Hom_R(A,M)\rarrow M$.
 By Lemmas~\ref{coinduction-preserves-resolution-dimension}
and~\ref{co-resolution-dimension-defined-classes}(a), we have
$\rd_\F\Hom_R(A,M)/M\le\rd_\F M$.
 Since $\Hom_R(A,F(M))\in\F$, it follows from the short exact
sequence $0\rarrow Q(M)\rarrow\Hom_R(A,F(M))\rarrow \Hom_R(A,M)/M
\rarrow0$ that $\rd_\F Q(M)<\rd_\F(M$).

 It remains to iterate our construction, producing a sequence of
surjective morphisms of left $A$\+modules
$$
 M\llarrow Q(M)\llarrow Q(Q(M))\llarrow Q^3(M)\llarrow\dotsb
 \llarrow Q^k(M).
$$
 Since $\rd_\F M\le k$ by assumption, it follows from the above
argument that $\rd_\F Q^k(M)\le0$, that is $Q^k(M)\in\F_A$.

 The kernel of the surjective morphism $Q^k(M)\rarrow M$ is
cofiltered by the kernels of the surjective $A$\+module morphisms
$Q(M)\rarrow M$, \ $Q^2(M)\rarrow Q(M)$, \dots,
$Q^k(M)\rarrow Q^{k-1}(M)$.
 These are the left $A$\+modules $\Hom_R(A,C'(M))$, \ 
$\Hom_R(A,C'(Q(M)))$, \ $\Hom_R(A,C'(Q^2(M)))$, \dots,
$\Hom_R(A,C'(Q^{k-1}(M)))$.
 We have constructed the desired special precover sequence for
the pair of classes $\F_A$ and $\Cof_k(\Hom_R(A,\C))$.

 Finally, any left $R$\+module $N$ is a submodule of an $R$\+module
$C(N)\in\C$, since a special preenvelope sequence with respect to
$(\F,\C)$ exists for $N$ by assumption.
 If $N$ is a left $A$\+module, then the map~$\nu_N$ provides
an embedding of $N$ into the left $A$\+module $\Hom_R(A,N)$,
which is a submodule of the left $A$\+module $\Hom_R(A,C(N))$.
 Thus $N$ is an $A$\+submodule of $\Hom_R(A,C(N))$.
 Following the proof of (the ``only if'' implication in)
Lemma~\ref{salce-lemma}, we conclude that the pair of classes $\F_A$
and $\Cof_{k+1}(\Hom_R(A,\C))$ admits special preenvelope sequences.
\end{proof}

\begin{thm} \label{finite-resol-dim-theorem}
 Let $(\F,\C)$ be a hereditary complete cotorsion pair in $R\Modl$.
 Assume that the left $R$\+module $A$ belongs to~$\F$, and
that the condition~\textup{($\dagger\dagger$)} holds.
 Assume further that the $\F$\+resolution dimension of any left
$R$\+module does not exceed a finite integer~$k\ge0$.
 Then the pair of classes $\F_A$ and
$\C_A=\Cof_{k+1}(\Hom_R(A,\C))^\oplus$ is a hereditary complete
cotorsion pair in $A\Modl$.
\end{thm}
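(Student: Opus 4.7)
My plan is to derive the theorem as a structural corollary of Proposition~\ref{finite-resol-dim-approximation-sequences}, combined with the two general lemmas developed earlier in the section. First I would verify that the hypotheses of that proposition hold. Completeness of $(\F,\C)$ provides the approximation sequences in $R\Modl$; heredity of $(\F,\C)$, via the equivalence in Lemma~\ref{hereditary-cotorsion-pair-lemma}, ensures that $\F$ is closed under kernels of surjective maps and hence resolving; and the remaining assumptions—$A\in\F$, condition $(\dagger\dagger)$, and the uniform bound $\rd_\F M\le k$—are exactly the hypotheses of the theorem. The proposition then yields that $\F_A$ and $\Cof_{k+1}(\Hom_R(A,\C))$ form an $\Ext^1$-orthogonal pair in $A\Modl$ admitting both special precover and special preenvelope sequences.

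Next I would invoke Lemma~\ref{direct-summand-lemma} to conclude that the pair $(\F_A^\oplus,\,\Cof_{k+1}(\Hom_R(A,\C))^\oplus)$ is a complete cotorsion pair in $A\Modl$. To identify its left class with $\F_A$, I would note that $\F_A$ is already closed under direct summands: $\F$ has this property in $R\Modl$ (being the left class of a cotorsion pair there), and the underlying $R$-module of a direct summand in $A\Modl$ is a direct summand in $R\Modl$. Hence $\F_A^\oplus=\F_A$, and setting $\C_A:=\Cof_{k+1}(\Hom_R(A,\C))^\oplus$ produces the claimed complete cotorsion pair $(\F_A,\C_A)$.

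For heredity I would use the criterion (1)$\Leftrightarrow$(4) of Lemma~\ref{hereditary-cotorsion-pair-lemma} and check that $\F_A$ is closed under kernels of surjective morphisms in $A\Modl$. Any short exact sequence of $A$-modules is simultaneously short exact in $R\Modl$; if its middle and right terms lie in $\F_A$, then their underlying $R$-modules lie in $\F$, and heredity of $(\F,\C)$ in $R\Modl$ forces the underlying $R$-module of the kernel into $\F$, so the kernel belongs to $\F_A$.

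Because the genuinely delicate step—the iterated pullback construction from diagram~(\ref{precover-construction-diagram}), producing a finite cofiltration by modules $\Hom_R(A,C'(Q^i(M)))$—has already been executed in the proof of Proposition~\ref{finite-resol-dim-approximation-sequences}, no serious obstacle remains. If anything, the one point that requires a moment of care is the verification that closure of $\F_A$ under direct summands and under kernels of surjections can be checked at the level of underlying $R$-modules; the rest is a routine assembly of Proposition~\ref{finite-resol-dim-approximation-sequences}, Lemma~\ref{direct-summand-lemma}, and the hereditary-cotorsion-pair criterion.
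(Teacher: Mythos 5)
Your proposal is correct and follows essentially the same route as the paper: the paper's proof likewise observes that $\F_A$ is closed under direct summands and kernels of surjective morphisms because $\F$ is, and then deduces the theorem from Proposition~\ref{finite-resol-dim-approximation-sequences} together with Lemma~\ref{direct-summand-lemma}. Your write-up merely makes explicit the routine verifications (that $\F$ is resolving, that $\F_A^\oplus=\F_A$, and the heredity check) that the paper leaves implicit.
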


\begin{proof}
 The class $\F_A$ is closed under direct summands and the kernels
of surjective morphisms, since the class $\F$~is.
 Thus the assertion of the theorem follows from
Proposition~\ref{finite-resol-dim-approximation-sequences}
in view of Lemma~\ref{direct-summand-lemma}.
\end{proof}

\begin{cor} \label{finite-resol-dim-cor}
 For any associative ring homomorphism $R\rarrow A$ and any
hereditary complete cotorsion pair $(\F,\C)$ in $R\Modl$ satisfying
the assumptions of Theorem~\ref{finite-resol-dim-theorem}, one has
$\F_A^{\perp_1}=\Cof_{k+1}(\Hom_R(A,\C))^\oplus$.
 In particular, it follows that\/ $\Cof(\Hom_R(A,\C))^\oplus=
\Cof_{k+1}(\Hom_R(A,\C))^\oplus$.  \hbadness=1200
\end{cor}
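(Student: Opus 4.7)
The corollary is essentially a direct unpacking of Theorem~\ref{finite-resol-dim-theorem}, so my plan is quite short. First, I would invoke Theorem~\ref{finite-resol-dim-theorem} to conclude that $(\F_A,\>\Cof_{k+1}(\Hom_R(A,\C))^\oplus)$ is a (hereditary, complete) cotorsion pair in $A\Modl$. By the definition of a cotorsion pair, the right-hand class in any cotorsion pair is determined by the left-hand class as its $\Ext^1$\+orthogonal; applying this to the cotorsion pair just obtained yields $\F_A^{\perp_1}=\Cof_{k+1}(\Hom_R(A,\C))^\oplus$, which is the first assertion.

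For the ``in particular'' assertion, the plan is to sandwich $\Cof(\Hom_R(A,\C))^\oplus$ between two copies of $\Cof_{k+1}(\Hom_R(A,\C))^\oplus$. On one side, since $k+1$ is a particular ordinal and $0\in\Hom_R(A,\C)$, the inclusion $\Cof_{k+1}(\Hom_R(A,\C))^\oplus\subset\Cof(\Hom_R(A,\C))^\oplus$ is immediate from the definition. On the other side, Lemma~\ref{orthogonal-to-coinduced-lemma}(c) gives $\Cof(\Hom_R(A,\C))^\oplus\subset\F_A^{\perp_1}$, and combining this with the first assertion of the corollary yields $\Cof(\Hom_R(A,\C))^\oplus\subset\Cof_{k+1}(\Hom_R(A,\C))^\oplus$.

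There is no real obstacle here, because all the substantive work has already been done in Proposition~\ref{finite-resol-dim-approximation-sequences} (construction of the short approximation sequences via the iterated pullback $Q^i(M)$) and in Theorem~\ref{finite-resol-dim-theorem} (combining this with Lemma~\ref{direct-summand-lemma}). The only point worth checking explicitly is that the ``in particular'' assertion makes sense without ambiguity: the class $\Cof(\Hom_R(A,\C))$ a~priori involves cofiltrations indexed by arbitrarily large ordinals, yet the corollary tells us that, up to direct summands, no cofiltration longer than $k+1$ steps is ever needed---the nontrivial content coming entirely from the finite $\F$\+resolution dimension hypothesis feeding into Lemma~\ref{coinduction-preserves-resolution-dimension} and the termination of the iteration $M\leftarrow Q(M)\leftarrow Q^2(M)\leftarrow\dotsb\leftarrow Q^k(M)$ after $k$ steps.
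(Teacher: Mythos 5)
Your proposal is correct and follows the paper's own argument exactly: the first assertion is read off from Theorem~\ref{finite-resol-dim-theorem} via the defining property of a cotorsion pair, and the ``in particular'' assertion is obtained by combining it with the inclusion $\Cof(\Hom_R(A,\C))^\oplus\subset\F_A^{\perp_1}$ from Lemma~\ref{orthogonal-to-coinduced-lemma}(c) together with the trivial reverse inclusion. Nothing is missing.
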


\begin{proof}
 The first assertion is a part of
Theorem~\ref{finite-resol-dim-theorem}.
 The second assertion follows from the first one together with
Lemma~\ref{orthogonal-to-coinduced-lemma}(c).
\end{proof}

\begin{rem} \label{corings-and-comodules-remark}
 The condition~($\dagger\dagger$) appears to be rather restrictive.
 In fact, the construction of
Proposition~\ref{finite-resol-dim-approximation-sequences} originates
from the theory of comodules over corings, as
in~\cite[Lemma~1.1.3]{Psemi}, where the natural analogue
of this condition feels much less restrictive, particularly when
$\F$ is simply the class of all projective left $R$\+modules.
 So one can say that the ring $A$ in this
Section~\ref{finite-by-coinduced-subsecn} really ``wants'' to be
a coring $C$ over the ring~$R$, and the left $A$\+modules ``want''
to be left $C$\+comodules.
 Then the coinduction functor, which was $\Hom_R(A,{-})$
in the condition~($\dagger\dagger$), takes the form of the tensor
product functor $C\ot_R{-}$.
 This one is much more likely to take projective left $R$\+modules to
projective left $R$\+modules (it suffices that $C$ be
a projective left $R$\+module).
 To make a ring $A$ behave rather like a coring, one can assume it to
be ``small'' relative to $R$ in some sense.
 The following example is inspired by the analogy with corings and
comodules.
\end{rem}

\begin{ex} \label{corings-and-comodules-example}
 Let $\F=R\Modl_\proj$ be the class of all projective left $R$\+modules.
 Then $\C=R\Modl$ is the class of all left $R$\+modules, and
$\F_A=A\Modl_{R\dproj}$ is the class of all left $A$\+modules
whose underlying $R$\+modules are projective.
 In the terminology of~\cite[Sections~4.1 and~4.3]{BP}
and~\cite[Section~5]{Pfp}, the left $A$\+modules from the related class
$\C_A=\F_A^{\perp_1}$ would be called \emph{weakly injective relative
to~$R$} or \emph{weakly $A/R$\+injective}.

 For $\F=R\Modl_\proj$, the necessary condition of
Lemma~\ref{necessary-A-in-F} means that $A$ must be a projective left
$R$\+module.
 Assume that $A$ is a finitely generated projective left $R$\+module;
then the functor $\Hom_R(A,{-})$ preserves infinite direct sums.
 Assume further that the left $R$\+module $\Hom_R(A,R)$ is projective.
 Then it follows that the functor $\Hom_R(A,{-})$ preserves the class
$\F$ of all projective left $R$\+modules.
 Thus the condition~($\dagger\dagger$) is satisfied.
 
 The results of Section~\ref{finite-by-coinduced-subsecn} tell us
that, whenever the left homological dimension of the ring $R$ is
a finite number~$k$ and the assumptions in the previous paragraph hold,
the $\Ext^1$\+orthogonal pair of classes of left $A$\+modules
$A\Modl_{R\dproj}$ and $\Cof_{k+1}(\Hom_R(A,R\Modl))$ admits
approximation sequences.
 Consequently, the pair of classes $\F_A=A\Modl_{R\dproj}$ and
$\C_A=\Cof_{k+1}(\Hom_R(A,R\Modl))^\oplus$ is a hereditary complete
cotorsion pair in $A\Modl$.
 In particular, we have
$$
 (A\Modl_{R\dproj})^{\perp_1}=\Cof_{k+1}(\Hom_R(A,R\Modl))^\oplus,
$$
and therefore
$\Cof(\Hom_R(A,R\Modl))^\oplus=\Cof_{k+1}(\Hom_R(A,R\Modl))^\oplus$.
 So the weakly $A/R$\+injective left $A$\+modules are precisely
the direct summands of the $A$\+modules admitting a finite
$(k+1)$\+step filtration by $A$\+modules coinduced from left
$R$\+modules.

 The reader can find a discussion of the related results for
corings and comodules (of which this example is a particular case)
in~\cite[Lemma~3.11(a)]{Prev}.
\end{ex}

 For a class of examples to Theorem~\ref{finite-resol-dim-theorem}
arising in connection with $n$\+cotilting modules, see
Example~\ref{cotilting-examples}(1) below.
 For a class of examples to the same theorem arising from curved
DG\+rings, see Proposition~\ref{finite-homol-dim-contraacyclic}.

 One problem with the condition~($\dagger\dagger$) is that it mentions
the underived $\Hom_R(A,F)$.
 The groups $\Ext_R^i(A,F)$ with $i>0$ are lurking around, but they
are ignored in the formulation of the condition.
 Yet there is no reason to expect these Ext groups to vanish for all
modules $F\in\F$.

 Thus it may be useful to generalize~($\dagger\dagger)$ by restricting
it to a subclass of the class $\F$ consisting of modules for which
the functor $\Hom_R(A,{-})$ is better behaved.
 One can do so by considering the following condition:
\begin{itemize}
\item[($\widetilde{\dagger\dagger}$)] 
there exists a coresolving class $\D\subset R\Modl$ such that
$\C\subset\D$, the underlying left $R$\+modules of all
the left $A$\+modules from $\C_A=\F_A^{\perp_1}$ belong
to~$\D$, and the left $R$\+module $\Hom_R(A,F)$ belongs to $\F$
for every left $R$\+module $F\in\F\cap\D$.
\end{itemize}

 Taking $\D=R\Modl$, one recovers~$(\dagger\dagger)$ as a particular
case of~($\widetilde{\dagger\dagger}$).

\begin{thm} \label{finite-resol-dim-tilde}
 Let $(\F,\C)$ be a hereditary complete cotorsion pair in $R\Modl$.
 Assume that the left $R$\+module $A$ belongs to $\F$, and that
the condition~\textup{($\widetilde{\dagger\dagger}$)} holds.
 Assume further that the $\F$\+resolution dimension of any
left $R$\+module does not exceed a finite integer $k\ge0$.
 Then the class $\C_A=\F_A^{\perp_1}\subset A\Modl$ can be described
as $\C_A=\Cof_{k+1}(\Hom_R(A,\C))^\oplus$.
 In particular, we have\/ $\Cof(\Hom_R(A,\C))^\oplus=
\Cof_{k+1}(\Hom_R(A,\C))^\oplus$.
\end{thm}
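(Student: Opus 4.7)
The plan is to follow the proofs of Lemma~\ref{coinduction-preserves-resolution-dimension} and Proposition~\ref{finite-resol-dim-approximation-sequences}, but to restrict everything to $R$\+modules belonging to the coresolving class~$\D$. Under $(\widetilde{\dagger\dagger})$ the functor $\Hom_R(A,{-})$ preserves $\F$ only when restricted to $\F\cap\D$, so the essential task is to keep the whole construction inside~$\D$ at every stage.

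First I would prove a relative version of Lemma~\ref{coinduction-preserves-resolution-dimension}: if $M\in\D$ has $\rd_\F M\le l$, then $\rd_\F\Hom_R(A,M)\le l$. The iterated-precover argument produces an exact sequence $0\rarrow C_l\rarrow F_{l-1}\rarrow\dotsb\rarrow F_0\rarrow M\rarrow0$ with $C_i\in\C$ and $C_l\in\F$ (by Lemma~\ref{co-resolution-dimension-lemma}(a)). Since $\D$ is closed under extensions and $M,C_i\in\D$, an easy induction gives $F_i,C_l\in\F\cap\D$, so $(\widetilde{\dagger\dagger})$ applies to yield a length\+$l$ $\F$\+resolution of~$\Hom_R(A,M)$.

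Next I would re-run the iteration $M\mapsto Q(M)\mapsto\dotsb\mapsto Q^k(M)$ of Proposition~\ref{finite-resol-dim-approximation-sequences} for any $A$\+module $M$ whose underlying $R$\+module lies in~$\D$. The middle term $F(M)$ of the special precover $0\rarrow C'(M)\rarrow F(M)\rarrow M\rarrow0$ lies automatically in $\F\cap\D$ (as an extension of $C'(M)\in\C\subset\D$ by $M\in\D$), so $\Hom_R(A,F(M))\in\F$; the module $\Hom_R(A,C'(M))$ lies in $\Hom_R(A,\C)\subset\C_A$ by Lemma~\ref{orthogonal-to-coinduced-lemma}(c), hence its underlying $R$\+module is in~$\D$ by~$(\widetilde{\dagger\dagger})$, and therefore $Q(M)\in\D$ by extension closure. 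The relative form of Lemma~\ref{coinduction-preserves-resolution-dimension} then gives $\rd_\F Q(M)<\rd_\F M$ exactly as in the original argument, and after $k$ iterations we obtain a special precover sequence $0\rarrow K\rarrow Q^k(M)\rarrow M\rarrow0$ with $Q^k(M)\in\F_A$ and $K\in\Cof_k(\Hom_R(A,\C))$, valid for every $A$\+module $M$ whose underlying $R$\+module lies in~$\D$.

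Finally, to prove the theorem, let $N\in\C_A$, so its underlying $R$\+module is in $\D$ by~$(\widetilde{\dagger\dagger})$. Fix a special preenvelope $0\rarrow N\rarrow C(N)\rarrow F'(N)\rarrow0$ of the underlying $R$\+module in $(\F,\C)$ and set $E:=\Hom_R(A,C(N))\in\Hom_R(A,\C)\subset\C_A$. The composition $N\hookrightarrow\Hom_R(A,N)\hookrightarrow E$ of $\nu_N$ with the coinduction of the inclusion $N\hookrightarrow C(N)$ is injective; since $E$ and $N$ both lie in $\D$ as $R$\+modules and $\D$ is coresolving, the quotient $M:=E/N$ lies in $\D$ as well. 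Applying the previous step to $M$ and pulling back the resulting precover along $E\twoheadrightarrow M$ gives short exact sequences $0\rarrow N\rarrow H\rarrow Q^k(M)\rarrow0$ and $0\rarrow K\rarrow H\rarrow E\rarrow0$. Concatenating the $k$\+cofiltration of $K$ with the additional layer $H/K=E\in\Hom_R(A,\C)$ produces a $(k+1)$\+cofiltration of $H$, so $H\in\Cof_{k+1}(\Hom_R(A,\C))$. Since $N\in\F_A^{\perp_1}$ and $Q^k(M)\in\F_A$, the first sequence splits, so $N$ is a direct summand of $H$, yielding $\C_A\subset\Cof_{k+1}(\Hom_R(A,\C))^\oplus$. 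The reverse inclusion is Lemma~\ref{orthogonal-to-coinduced-lemma}(c), and the second assertion follows by sandwiching $\Cof(\Hom_R(A,\C))^\oplus$ between $\Cof_{k+1}(\Hom_R(A,\C))^\oplus$ and $\C_A$. The main obstacle throughout is the bookkeeping required to ensure that each successive $Q^i(M)$, each coinduced module, and each pullback remains in $\D$ as an $R$\+module; this is precisely what $(\widetilde{\dagger\dagger})$ together with the extension and cokernel closure of $\D$ delivers.
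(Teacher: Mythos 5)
Your proposal is correct and follows essentially the same route as the paper's proof: establish the containment $\Cof_{k+1}(\Hom_R(A,\C))^\oplus\subset\C_A$ via Lemma~\ref{orthogonal-to-coinduced-lemma}(c), and for the converse embed $N\in\C_A$ into $\Hom_R(A,C(N))$, run the $Q^k$\+construction on the quotient while verifying at each stage (using that $\D$ is coresolving and that $\Hom_R(A,\C)\subset\C_A$ has underlying $R$\+modules in~$\D$) that everything stays inside~$\D$ so that ($\widetilde{\dagger\dagger}$) applies, and conclude by the Salce pullback and the splitting forced by $\Ext^1_A(Q^k(M),N)=0$. The only cosmetic difference is that you isolate the $\D$\+relative version of Lemma~\ref{coinduction-preserves-resolution-dimension} as an explicit preliminary step, which the paper leaves implicit in its appeal to the proof of Proposition~\ref{finite-resol-dim-approximation-sequences}.
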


\begin{proof}
 We are following the proof of Corollary~\ref{finite-resol-dim-cor}
step by step and observing that the assumptions of the present
theorem are sufficient for the validity of the argument.
 Essentially, the point is that the key constructions are performed
within the class $\D\subset R\Modl$ and the class of all left
$A$\+modules whose underlying left $R$\+modules belong to~$\D$.

 The inclusion $\Cof(\Hom_R(A,\C))^\oplus\subset\C_A$ holds by
Lemma~\ref{orthogonal-to-coinduced-lemma}(c).
 Given a left $A$\+module $N\in\C_A$, we will show that
$N\in\Cof_{k+1}(\Hom_R(A,\C))^\oplus$.

 Arguing as in the last paragraph of the proof of
Proposition~\ref{finite-resol-dim-approximation-sequences},
the left $R$\+module $N$ is a submodule of an $R$\+module $C(N)\in\C$,
and therefore the left $A$\+module $N$ is a $A$\+submodule of
the left $A$\+module $\Hom_R(A,C(N))$.
 Denote the quotient $A$\+module by $M=\Hom_R(A,C(N))/N$.
 By~($\widetilde{\dagger\dagger}$), we have ${}_RN\in\D$ and
$\Hom_R(A,C(N))\in\D$, hence the underlying left $R$\+module of $M$
also belongs to~$\D$.

 Now we construct the diagram~\eqref{precover-construction-diagram}
for the left $A$\+module~$M$.
 In the special precover sequence $0\rarrow C'(M)\rarrow F(M)\rarrow
M\rarrow0$, we have $C'(M)\in\C\subset\D$ and ${}_RM\in\D$, hence
$F(M)\in\D$.
 According to~($\widetilde{\dagger\dagger}$), it follows that
$\Hom_R(A,F(M))\in\F$.
 Also by~($\widetilde{\dagger\dagger}$), we have
$\Hom_R(A,C'(M))\in\Hom_R(A,\C)\subset\D$, so it follows from the short
exact sequence $0\rarrow\Hom_R(A,C'(M))\rarrow Q(M)\rarrow M\rarrow0$
that the underlying left $R$\+module of the left $A$\+module $Q(M)$
belongs to~$\D$.

 Iterating the construction and following the proof of
Proposition~\ref{finite-resol-dim-approximation-sequences}, we obtain
a surjective morphism of left $A$\+modules $Q^k(M)\rarrow M$ with
$Q^k(M)\in\F_A$ and the kernel belonging to $\Cof_k(\Hom_R(A,\C))$.
 Following the proof of (the ``only if'' implication in)
Lemma~\ref{salce-lemma}, we produce an injective $A$\+module morphism
from $N$ into an $A$\+module belonging to $\Cof_{k+1}(\Hom_R(A,\C))$
with the cokernel isomorphic to~$Q^k(M)$.
 As $\Ext_A^1(Q^k(M),N)=0$ by assumption, we can conclude that
$N\in\Cof_{k+1}(\Hom_R(A,\C))^\oplus$.
\end{proof}

 For a class of examples to Theorem~\ref{finite-resol-dim-tilde}
arising in connection with $n$\+cotilting modules, see
Example~\ref{cotilting-examples}(2).

\subsection{Cotilting cotorsion pairs and dual Bongartz--Ringel lemma}
\label{cotilting-subsecn}
 In this section we digress to discuss an important class of examples
in which a suitable version of the Bongartz--Ringel
lemma~\cite[Lemma~2.1]{Bon}, \cite[Lemma~4$'$]{Rin},
\cite[Proposition~6.44]{GT} leads to a better result than the techniques
of Section~\ref{finite-by-coinduced-subsecn}.

 Let $U$ be a left $R$\+module and $n\ge0$ be an integer.
 The $R$\+module $U$ is said to be
\emph{$n$\+cotilting}~\cite[Section~2]{AC}, \cite[Definition~15.1]{GT}
if the following three conditions hold:
\begin{enumerate}
\renewcommand{\theenumi}{C\arabic{enumi}}
\item the injective dimension of the left $R$\+module $U$ does
not exceed~$n$;
\item $\Ext^i_R(U^\kappa,U)=0$ for all integers $i>0$ and
all cardinals~$\kappa$;
\item there exists an exact sequence of left $R$\+modules
$0\rarrow U_n\rarrow U_{n-1}\rarrow\dotsb\rarrow U_1\rarrow U_0
\rarrow J\rarrow0$, where $J$ is an injective cogenerator of
$R\Modl$ and $U_i\in\Prod_R(U)$ for all $0\le i\le n$.
\end{enumerate}

 The \emph{$n$\+cotilting class} induced by $U$ in $R\Modl$ is
the class of left $R$\+modules $\F={}^{\perp_{>0}}U=
\{F\in R\Modl\mid\Ext^i_R(F,U)=0 \ \forall i>0\}$.
 The cotorsion pair $(\F,\C)$ with $\C=\F^{\perp_1}\subset R\Modl$ is
hereditary and complete~\cite[Proposition~3.3]{AC}; it is called
the \emph{$n$\+cotilting cotorsion pair} induced by $U$ in $R\Modl$.

\begin{prop} \label{when-n-cotilting}
 Let $R\rarrow A$ be a homomorphism of associative rings and $U$ be
an $n$\+cotilting left $R$\+module.
 Assume that the underlying left $R$\+module of $A$ belongs to $\F$,
that is ${}_RA\in\F$.
 Then \par
\textup{(a)} the left $A$\+module\/ $\Hom_R(A,U)$ satisfies
the conditions~\textup{(C1)} and~\textup{(C3)}; \par
\textup{(b)} the left $A$\+module\/ $\Hom_R(A,U)$
satisfies~\textup{(C2)} if and only if its underlying
left $R$\+module belongs to~$\F$.
\end{prop}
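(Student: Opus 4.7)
The plan is to derive all three conditions from the adjunction isomorphism of Lemma~\ref{homological-formulas-lemma}(b). The hypothesis ${}_RA\in\F={}^{\perp_{>0}}U$ says precisely that $\Ext^i_R(A,U)=0$ for all $i\ge1$, so applying Lemma~\ref{homological-formulas-lemma}(b) with $M=U$ for every $n$ yields
\[
\Ext^i_A\bigl(B,\Hom_R(A,U)\bigr)\simeq\Ext^i_R(B,U)
\]
for every left $A$\+module $B$ and every $i\ge0$. The auxiliary fact that $\Hom_R(A,{-})$ sends injective left $R$\+modules to injective left $A$\+modules (being right adjoint to the exact forgetful functor $A\Modl\to R\Modl$) will also be used throughout.

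For the injective-dimension condition~(C1), I would start with an injective coresolution $0\rarrow U\rarrow I^0\rarrow\dotsb\rarrow I^n\rarrow 0$ in $R\Modl$. Writing $K^j=\ker(I^j\to I^{j+1})$, a short dimension-shifting induction on the sequences $0\rarrow K^j\rarrow I^j\rarrow K^{j+1}\rarrow 0$, using $\Ext^{>0}_R(A,U)=0$ and the injectivity of each $I^j$, yields $\Ext^{>0}_R(A,K^j)=0$ for every~$j$. The coresolution therefore stays exact after applying $\Hom_R(A,{-})$, and the resulting complex of injective left $A$\+modules is an injective coresolution of $\Hom_R(A,U)$ of length $\le n$.

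For~(C3), I would apply $\Hom_R(A,{-})$ to the exact sequence $0\rarrow U_n\rarrow\dotsb\rarrow U_0\rarrow J\rarrow 0$ guaranteed by~(C3) for $U$. Since each $U_i$ is a direct summand of some $U^{I_i}$, its image $\Hom_R(A,U_i)$ is a direct summand of $\Hom_R(A,U)^{I_i}$ and hence lies in $\Prod_A(\Hom_R(A,U))$. The $A$\+module $\Hom_R(A,J)$ is injective (as $J$ is), and it is an injective cogenerator in $A\Modl$: the adjunction $\Hom_A(M,\Hom_R(A,J))\simeq\Hom_R(M,J)$ lifts any nonzero $R$\+linear map $M\to J$ to a nonzero $A$\+linear map $M\to\Hom_R(A,J)$. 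Exactness of the coinduced sequence is again a dimension-shifting argument on the intermediate kernels, using $\Ext^{>0}_R(A,J)=0$ and $\Ext^{>0}_R(A,U_i)=0$ (the latter via $\Ext^i_R(A,U^{I_i})\simeq\Ext^i_R(A,U)^{I_i}=0$ and passage to direct summands).

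For part~(b), I would substitute $B=\Hom_R(A,U)^\kappa=\Hom_R(A,U^\kappa)$ into the displayed isomorphism, obtaining
\[
\Ext^i_A\bigl(\Hom_R(A,U)^\kappa,\Hom_R(A,U)\bigr)\simeq\Ext^i_R\bigl({}_R\Hom_R(A,U)^\kappa,\,U\bigr),
\]
so that condition~(C2) for $\Hom_R(A,U)$ is equivalent to the statement that every power of the underlying $R$\+module ${}_R\Hom_R(A,U)$ lies in~$\F$. Taking $\kappa=1$ immediately gives the ``only if'' direction. The ``if'' direction requires $\F$ to be closed under arbitrary direct products, and this is the main delicate point of the argument; it is a known structural property of $n$\+cotilting classes (resting on Bazzoni's theorem that every cotilting module is pure-injective), which I would simply invoke.
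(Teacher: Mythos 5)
Your proposal is correct and follows essentially the same route as the paper: apply $\Hom_R(A,{-})$ to the injective coresolution and to the (C3) sequence, using $\Ext_R^{>0}(A,U)=0$ to preserve exactness, and reduce (C2) via Lemma~\ref{homological-formulas-lemma}(b) to the question of whether ${}_R\Hom_R(A,U)^\kappa\in\F$, which follows from ${}_R\Hom_R(A,U)\in\F$ because $n$\+cotilting classes are closed under products. The only difference is that you spell out the dimension-shifting and cogenerator verifications that the paper leaves implicit.
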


\begin{proof}
 Part~(a): by assumption, we have $\Ext_R^i(A,U)=0$ for all $i>0$.
 Hence applying the functor $\Hom_R(A,{-})$ to an injective
resolution $0\rarrow U\rarrow J^0\rarrow\dotsb\rarrow J^n\rarrow0$
of the left $R$\+module $U$ produces an injective resolution
$0\rarrow\Hom_R(A,U)\rarrow\Hom_R(A,J^0)\rarrow\dotsb\rarrow
\Hom_R(A,J^n)\rarrow0$ of the left $A$\+module $\Hom_R(A,U)$.
 Similarly, applying the functor $\Hom_R(A,{-})$ to an exact
sequence in~(C3) produces an exact sequence of left $A$\+modules
$0\rarrow\Hom_R(A,U_n)\rarrow\dotsb\rarrow\Hom_R(A,U_0)\rarrow
\Hom_R(A,J)\rarrow0$, in which $\Hom_R(A,U_i)\in\Prod_A(\Hom_R(A,U))$
for all $0\le i\le n$ and $\Hom_R(A,J)$ is an injective cogenerator
of $A\Modl$.

 Part~(b): put $U'=\Hom_R(A,U)$.
 By Lemma~\ref{homological-formulas-lemma}(b), we have
$\Ext^i_A(U'{}^\kappa,U')\simeq\Ext^i_R(U'{}^\kappa,U)$ for all
$i\ge0$, since $\Ext_R^i(A,U)=0$ for $i>0$.
 It follows that the left $A$\+module $U'$ is $n$\+cotilting if and only
if the left $R$\+module $U'{}^\kappa$ belongs to $\F\subset R\Modl$
for every cardinal~$\kappa$.
 Since the $n$\+cotilting class $\F$ is closed under infinite
products in $R\Modl$ \cite[Proposition~15.5(a)]{GT}, it suffices
that ${}_RU'\in\F$.
\end{proof}

\begin{lem} \label{cotilting-class-change-of-ring}
 Let $R\rarrow A$ be a homomorphism of associative rings and $U$ be
an $n$\+cotilting left $R$\+module.
 Let $(\F,\C)$ be the $n$\+cotilting cotorsion pair induced by $U$ in
$R\Modl$.
 Assume that the underlying left $R$\+module of $A$ belongs to $\F$,
that is ${}_RA\in\F$.
 Assume further that the left $A$\+module\/ $\Hom_R(A,U)$ is
$n$\+cotilting.
 Then the $n$\+cotilting cotorsion pair induced by\/ $\Hom_R(A,U)$
in $A\Modl$ has the form $(\F_A,\C_A)$ in our notation.
 In other words, the $n$\+cotilting class induced by\/ $\Hom_R(A,U)$
in $A\Modl$ consists precisely of all the left $A$\+modules whose
underlying left $R$\+modules belong to the $n$\+cotilting class $\F$
induced by $U$ in $R\Modl$.
\end{lem}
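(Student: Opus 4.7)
The plan is to reduce everything to a single application of Lemma~\ref{homological-formulas-lemma}(b). Write $U' = \Hom_R(A,U)$. By definition, the $n$\+cotilting class of $U'$ in $A\Modl$ is $\F' = {}^{\perp_{>0}}U' = \{B \in A\Modl \mid \Ext^i_A(B,U')=0 \text{ for all } i>0\}$, and since $U'$ is $n$\+cotilting (this is where we use the extra hypothesis), the associated $n$\+cotilting cotorsion pair in $A\Modl$ is $(\F',(\F')^{\perp_1})$. So it suffices to show $\F' = \F_A$.

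The hypothesis ${}_RA \in \F = {}^{\perp_{>0}}U$ gives $\Ext^i_R(A,U) = 0$ for all $i>0$. Applied with $M=U$, Lemma~\ref{homological-formulas-lemma}(b) then yields a natural isomorphism
\[
\Ext_A^i(B,U') \;\simeq\; \Ext_R^i({}_RB,U)
\]
for every left $A$\+module $B$ and every $i \ge 0$ (one can take $n$ arbitrarily large in the lemma, since the required vanishing of $\Ext^i_R(A,U)$ holds in all positive degrees).

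From this isomorphism, the chain of equivalences $B \in \F' \iff \Ext^i_A(B,U')=0 \text{ for all } i>0 \iff \Ext^i_R({}_RB,U)=0 \text{ for all } i>0 \iff {}_RB \in {}^{\perp_{>0}}U = \F \iff B\in \F_A$ is immediate. This proves $\F' = \F_A$, and hence the $n$\+cotilting cotorsion pair induced by $U'$ in $A\Modl$ coincides with $(\F_A,\F_A^{\perp_1}) = (\F_A,\C_A)$, where the second equality is the defining relation of $\C_A$.

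The main (and only) nontrivial ingredient is verifying that Lemma~\ref{homological-formulas-lemma}(b) applies in all positive cohomological degrees; this is automatic from the $\Ext^{>0}$\+vanishing built into the $n$\+cotilting class. Everything else is formal from the definitions.
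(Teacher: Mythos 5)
Your proof is correct and is essentially identical to the paper's own argument: both reduce the statement to the isomorphism $\Ext^i_A(B,\Hom_R(A,U))\simeq\Ext^i_R({}_RB,U)$ supplied by Lemma~\ref{homological-formulas-lemma}(b), which applies in all degrees because ${}_RA\in\F={}^{\perp_{>0}}U$ gives $\Ext^i_R(A,U)=0$ for all $i>0$. The paper states this in one sentence; you have merely spelled out the same chain of equivalences in more detail.
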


\begin{proof}
 Indeed, for any left $A$\+module $F$ we have $\Ext^i_A(F,\Hom_R(A,U))
\simeq\Ext^i_R(F,U)$ for all $i\ge0$ by
Lemma~\ref{homological-formulas-lemma}(b), since $\Ext^i_R(A,U)=0$
for all $i>0$.
\end{proof}

\begin{prop} \label{bazzoni-prop}
 Let $R$ be a commutative ring and $A$ be an associative $R$\+algebra.
 Let $U$ be an $n$\+cotilting $R$\+module and $(\F,\C)$ be
the $n$\+cotilting cotorsion pair induced by $U$ in $R\Modl$.
 Assume that the underlying $R$\+module of $A$ belongs to~$\F$.
 Then the left $A$\+module\/ $\Hom_R(A,U)$ is $n$\+cotilting.
\end{prop}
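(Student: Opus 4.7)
The plan is to verify the three defining conditions for $\Hom_R(A,U)$ to be an $n$\+cotilting left $A$\+module. Conditions (C1) and (C3) are handed to us by Proposition~\ref{when-n-cotilting}(a). By Proposition~\ref{when-n-cotilting}(b), condition (C2) is equivalent to the assertion that the underlying left $R$\+module of $\Hom_R(A,U)$ lies in the $n$\+cotilting class $\F={}^{\perp_{>0}}U\subset R\Modl$. The commutativity of $R$ plays its role here: it guarantees that the $R$\+action on $A$ is two-sided and central, so $\Hom_R(A,U)$ carries an unambiguous $R$\+module structure, and that is the one in which the Ext vanishing must be checked.

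To show $\Hom_R(A,U)\in\F$, I would choose a projective resolution $\dotsb\rarrow P_1\rarrow P_0\rarrow A\rarrow0$ of $A$ in $R\Modl$ and apply $\Hom_R({-},U)$. Since $\Ext^i_R(A,U)=0$ for all $i>0$ by the hypothesis ${}_RA\in\F$, the resulting complex is exact in positive degrees, yielding a long exact sequence of $R$\+modules
\[
 0\rarrow\Hom_R(A,U)\rarrow\Hom_R(P_0,U)\rarrow\Hom_R(P_1,U)\rarrow\dotsb.
\]
Each $P_j$ is a direct summand of some free $R$\+module $R^{(J_j)}$, so $\Hom_R(P_j,U)$ is a direct summand of the product $U^{J_j}$. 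Condition~(C2) applied to $U$ itself says $U^\kappa\in\F$ for every cardinal~$\kappa$, and $\F$ is closed under direct summands; hence every $\Hom_R(P_j,U)$ lies in~$\F$.

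The remaining step is a standard dimension-shift. Let $K_0=\Hom_R(A,U)$ and inductively set $K_{k+1}=\Hom_R(P_k,U)/K_k$, so the above long exact sequence decomposes into short exact sequences $0\rarrow K_k\rarrow\Hom_R(P_k,U)\rarrow K_{k+1}\rarrow0$. Since $\Hom_R(P_k,U)\in\F$, the long exact $\Ext({-},U)$ sequence gives isomorphisms $\Ext^i_R(K_k,U)\simeq\Ext^{i+1}_R(K_{k+1},U)$ for all $i\ge1$, and iterating produces
\[
 \Ext^i_R(\Hom_R(A,U),U)\simeq\Ext^{i+k}_R(K_k,U)\qquad\text{for all }i\ge1,\ k\ge0.
\]
Condition~(C1) bounds the injective dimension of $U$ by~$n$, so choosing $k$ with $i+k>n$ makes the right-hand side zero. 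This proves $\Ext^i_R(\Hom_R(A,U),U)=0$ for all $i\ge1$, hence $\Hom_R(A,U)\in\F$, and the proof is complete.

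I do not expect a real obstacle: product-closure of $\F$ (from (C2) for $U$), closure under summands, and bounded-injdim dimension-shifting are all standard. The only genuinely delicate point is bookkeeping of the $R$\+module structure on $\Hom_R(A,U)$, which is the reason the commutativity of $R$ appears in the hypotheses.
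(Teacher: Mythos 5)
Your proof is correct and follows the same route as the paper: both arguments apply $\Hom_R({-},U)$ to a projective resolution of ${}_RA$ and use $\Ext^{>0}_R(A,U)=0$ (from ${}_RA\in\F$) to obtain an exact coresolution $0\rarrow\Hom_R(A,U)\rarrow\Hom_R(P_0,U)\rarrow\Hom_R(P_1,U)\rarrow\dotsb$ whose terms are direct summands of products of copies of~$U$. The only difference is the concluding step: the paper simply cites the characterization of the cotilting class $\F$ as the class of modules admitting a $\Prod(U)$\+coresolution (\cite[Lemma~3.2]{Baz}, \cite[Proposition~15.5(a)]{GT}), whereas you prove the implication you need by hand, via dimension shifting against the bound $\operatorname{inj.dim}U\le n$ from~(C1) together with $\Hom_R(P_j,U)\in\F$ from~(C2); this makes your argument self-contained but establishes nothing beyond the easy direction of the cited result.
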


\begin{proof}
 According to Proposition~\ref{when-n-cotilting}, it suffices to show
that the $R$\+module $U'=\Hom_R(A,U)$ belongs to~$\F$.
 The following argument was suggested to the author by S.~Bazzoni.
 By~\cite[Lemma~3.2]{Baz} or~\cite[Proposition~15.5(a)]{GT},
the cotilting class $\F$ can be described as the class of
all $R$\+modules admitting a coresolution by products of copies of~$U$.
 Let $\dotsb\rarrow P_2\rarrow P_1\rarrow P_0\rarrow A\rarrow0$ be
a free resolution of the $R$\+module~$A$.
 Then $0\rarrow\Hom_R(A,U)\rarrow\Hom_R(P_0,U)\rarrow\Hom_R(P_1,U)
\rarrow\Hom_R(P_2,U)\rarrow\dotsb$ is a coresolution of the $R$\+module
$\Hom_R(A,U)$ by products of copies of~$U$.
 Thus ${}_RU'\in\F$, as desired.
\end{proof}

 A discussion of the particular case of the above proposition and
lemma in which the ring $R$ is Noetherian and $A=R_{\mathfrak m}$ is
the localization of $R$ at the maximal ideal $\mathfrak m\subset R$
can be found in~\cite[Lemma~2.1]{TS}.

\begin{exs} \label{cotilting-examples}
 Let $A$ be an associative algebra over a commutative ring $R$, and
let $U$ be an $n$\+cotilting $R$\+module.
 Let $(\F,\C)$ be the $n$\+cotilting cotorsion pair induced by $U$ in
$R\Modl$.
 Assume that the $R$\+module $A$ belongs to~$\F$.
 Then $\Hom_R(A,U)$ is an $n$\+cotilting left $A$\+module
by Proposition~\ref{bazzoni-prop}, and
the induced $n$\+cotilting cotorsion pair in $A\Modl$ has the form
$(\F_A,\C_A)$ in our notation by
Lemma~\ref{cotilting-class-change-of-ring}.

 (1) In the following particular cases
Theorem~\ref{finite-resol-dim-theorem} is applicable.
 Assume that either $A$ is a projective $R$\+module, or $n\le 2$.
 Then the condition~($\dagger\dagger$) holds.
 
 Indeed, when $A$ is a projective $R$\+module, it suffices to observe
that $\F$ is closed under infinite products.
 When $n\le 2$, consider an $R$\+module $F\in\F$.
 Choose a projective presentation $P_1\rarrow P_0\rarrow A\rarrow 0$
for the $R$\+module~$A$.
 Then we have a left exact sequence of $R$\+modules $0\rarrow
\Hom_R(A,F)\rarrow\Hom_R(P_0,F)\rarrow\Hom_R(P_1,F)$ with
$\Hom_R(P_i,F)\in\F$ for $i=0$,~$1$.
 Denoting by $L$ the cokernel of the morphism $\Hom_R(P_0,F)\rarrow
\Hom_R(P_1,F)$, we have $\Ext^i_R(\Hom_R(A,F),U)=
\Ext^{i+2}_R(L,U)=0$ for all $i>0$, as desired.

 Finally, the $\F$\+resolution dimension of any $R$\+module does not
exceed~$n$ (since the injective dimension of the $R$\+module~$U$
is~$\le n$).
 According to Corollary~\ref{finite-resol-dim-cor}, we can conclude
that $\F_A^{\perp_1}=\C_A=\Cof_{n+1}(\Hom_R(A,\C))^\oplus$.

 (2) This is a generalization of~(1) that can be obtained
using Theorem~\ref{finite-resol-dim-tilde}.
 We are assuming that $A$ is an associative $R$\+algebra, $U$ is
an $n$\+cotilting $R$\+module, and ${}_RA\in\F$.
 Assume further that $\Ext^i_R(A,\Hom_R(A,U))=0$ for all $i>0$.
 (In particular, this holds whenever $A$ is a flat $R$\+module, as
the $R$\+module $U$ is pure-injective by~\cite[Theorem~15.7]{GT}.)
 Then we claim that ($\widetilde{\dagger\dagger}$)~is satisfied.

 Let $\D=A^\perp\subset R\Modl$ be the class of all $R$\+modules $D$
such that $\Ext^i_R(A,D)=0$ for all $i>0$.
 Then we have $\C\subset\D$, since ${}_RA\in\F$.
 Furthermore, all the left $A$\+modules in $\C_A$ have finite
resolutions by direct summands of products of copies of $\Hom_R(A,U)$
\cite[Proposition~15.5(b)]{GT}; hence $\Ext^i_R(A,C)=0$
for all $C\in\C_A$ and $i>0$.

 In order to check the condition~($\widetilde{\dagger\dagger}$), it
remains to show that $\Hom_R(A,F)\in\F$ for any $R$\+module
$F\in\F\cap\D$.
 Indeed, let us choose a projective resolution $\dotsb\rarrow P_2
\rarrow P_1\rarrow P_0\rarrow A\rarrow0$ for the $R$\+module~$A$.
 Then we have an exact sequence of $R$\+modules $0\rarrow\Hom_R(A,F)
\rarrow\Hom_R(P_0,F)\rarrow\Hom_R(P_1,F)\rarrow\Hom_R(P_2,F)\rarrow
\dotsb$ with $\Hom_R(P_i,F)\in\F$ for all $i\ge0$.
 Denoting by $L$ the image of the morphism $\Hom_R(P_{n-1},F)
\rarrow\Hom_R(P_n,F)$, we have $\Ext_R^i(\Hom_R(A,F),U)=
\Ext_R^{i+n}(L,U)=0$ for all $i>0$, as desired.

 By Theorem~\ref{finite-resol-dim-tilde}, we can infer that
$\F_A^{\perp_1}=\C_A=\Cof_{n+1}(\Hom_R(A,\C))^\oplus$.
\end{exs}

 Now we will explain how a stronger and more general version of
the results of Examples~\ref{cotilting-examples} can be obtained
with an approach based on a suitable version of the Bongartz--Ringel
lemma, in the spirit of~\cite[Example~2.3]{ST}.

\begin{thm}[dual Bongartz--Ringel lemma] \label{dual-n-bongartz}
 Let $A$ be an associative ring, $n\ge0$ be an integer, and
$\T=\{S_0,S_1,\dotsc,S_n\}$ be a collection of $n+1$ left $A$\+modules.
 Assume that $S_0$ is an injective cogenerator of $A\Modl$ and\/
$\Ext_A^1(S_j^\kappa,S_i)=0$ for all\/ $0\le i\le j\le n$ and all
cardinals~$\kappa$.
 Let $(\F,\C)=({}^{\perp_1}\T,({}^{\perp_1}\T)^{\perp_1})$ be
the cotorsion pair cogenerated by the set $\T$ in $A\Modl$.
 Then \par
\textup{(a)} $(\F,\C)$ is a complete cotorsion pair; \par
\textup{(b)} the class $\C\subset A\Modl$ can be described as the class
of all direct summands of $(n+1)$\+cofiltered left $A$\+modules $D$
with a cofiltration $D=G_{n+1}D\twoheadrightarrow G_nD\twoheadrightarrow
\dotsb\twoheadrightarrow G_1D\twoheadrightarrow G_0D=0$ such that\/
$\ker(G_{i+1}D\to G_iD)\in\Prod(S_i)$ for every\/ $0\le i\le n$.
 In particular, we have
$\C=\Cof_{n+1}(\bigcup_{i=0}^n\Prod(S_i))^\oplus$.
\end{thm}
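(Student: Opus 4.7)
The plan is to establish (a) and (b) simultaneously by induction on~$n$. The inclusion $\Cof_{n+1}(\bigcup_i\Prod(S_i))^\oplus\subset\C$ is automatic from Lemma~\ref{dual-eklof-lemma}: since $\Ext$ commutes with products in the second variable, $\Ext^1_A(F,S_i^\kappa)=\Ext^1_A(F,S_i)^\kappa=0$ for any $F\in\F$, so ${}^{\perp_1}(\bigcup_i\Prod(S_i))={}^{\perp_1}\T=\F$, whence $\Cof(\bigcup_i\Prod(S_i))^\oplus\subset\F^{\perp_1}=\C$. For the reverse inclusion together with completeness, I note that $\F$ contains all projectives and $\C$ contains the injective cogenerator~$S_0$ (hence all injectives); so by Salce's Lemma~\ref{salce-lemma} it suffices to construct, for each $A$\+module $M$, a special preenvelope sequence $0\to M\to D\to F\to 0$ with $F\in\F$ and $D\in\Cof_{n+1}(\bigcup_i\Prod(S_i))^\oplus$. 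Given such preenvelopes, any $C\in\C$ yields a split sequence (since $\Ext^1_A(F,C)=0$), exhibiting $C$ as a direct summand of a module in $\Cof_{n+1}(\bigcup_i\Prod(S_i))$.

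I would construct the preenvelope by induction on~$n$. The base case $n=0$ is immediate: $M$ embeds canonically into $S_0^{\Hom_A(M,S_0)}\in\Prod(S_0)$ with cokernel in $\F_0=A\Modl$. For the inductive step, apply the hypothesis to the truncated list $\{S_0,\dots,S_{n-1}\}$ (whose assumptions are inherited) to obtain a preenvelope $0\to M\to D'\to F'\to 0$ with $F'\in\F_{n-1}:={}^{\perp_1}\{S_0,\dots,S_{n-1}\}$ and $D'\in\Cof_n(\bigcup_{i<n}\Prod(S_i))^\oplus$.

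The crucial step is a universal product extension that kills $\Ext^1_A(F',S_n)$. Set $J=\Ext^1_A(F',S_n)$ and take $\delta\in\Ext^1_A(F',S_n^J)\simeq\Ext^1_A(F',S_n)^J$ to be the tautological element corresponding to $\id_J$. This gives a short exact sequence $0\to S_n^J\to F\to F'\to 0$. In the long exact sequence of $\Hom_A({-},S_n)$, the connecting map $\Hom_A(S_n^J,S_n)\to\Ext^1_A(F',S_n)=J$ sends the projection $\pi_j$ to the $j$\+th component of $\delta$, which is $j$ itself; hence this map is surjective, and combined with $\Ext^1_A(S_n^J,S_n)=0$ (the case $i=j=n$ of the hypothesis) it forces $\Ext^1_A(F,S_n)=0$. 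The analogous LES with $S_i$ in place of $S_n$ ($i<n$), using $\Ext^1_A(S_n^J,S_i)=0$ and $F'\in\F_{n-1}$, shows $F\in\F_{n-1}$; therefore $F\in\F_n=\F$.

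To assemble the preenvelope, form the pullback $P=D'\times_{F'}F$, which fits into short exact sequences $0\to M\to P\to F\to 0$ and $0\to S_n^J\to P\to D'\to 0$. Writing $\tilde D=D'\oplus E\in\Cof_n(\bigcup_{i<n}\Prod(S_i))$ and extending the second sequence trivially over $E$ yields $0\to S_n^J\to\tilde P\to\tilde D\to 0$ with $\tilde P=P\oplus E$; prepending $S_n^J\in\Prod(S_n)$ to the cofiltration of $\tilde D$ endows $\tilde P$ with an $(n+1)$\+cofiltration having kernels in $\bigcup_{i\le n}\Prod(S_i)$, so $P\in\Cof_{n+1}(\bigcup_i\Prod(S_i))^\oplus$, as required. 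The delicate point is verifying surjectivity of the connecting map via universality of $\delta$---this is precisely where the assumption $\Ext^1_A(S_n^\kappa,S_n)=0$ enters essentially.
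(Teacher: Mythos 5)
Your proof is correct and rests on the same key device as the paper's: the universal coextension $0\to S_i^{J}\to F\to F'\to0$ with $J$ the underlying set of $\Ext_A^1(F',S_i)$, whose preservation of $\Ext^1$\+orthogonality is verified exactly as in the paper from the hypothesis $\Ext_A^1(S_j^\kappa,S_i)=0$. The only difference is organizational — the paper performs all $n$ universal extensions in one pass on top of $M$ to get the special precover and then applies the Salce pullback once with $S_0^{I_0}$, whereas you build the special preenvelope by induction on~$n$ with one universal extension and one pullback per step — and both routes yield the required approximation sequences.
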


\begin{proof}
 This is an $n\ge1$ generalization of the classical dual Bongartz
lemma~\cite[Proposition~6.44]{GT}, which corresponds to the case $n=1$.
 At the same time, this is an infinitely generated dual version of
a result of Ringel, who considered the $n\ge1$ case for finitely
generated modules over Artinian algebras~\cite[Lemma~4$'$]{Rin}.

 Denote by $\D$ the class of all left $A$\+modules admitting
an $(n+1)$\+cofiltration $G$ with the successive quotient modules as
described in part~(b).
 Then it is clear that one has $\Ext_A^1(F,D)=0$ for all $F\in\F$ and
$D\in\D$.
 Moreover, $\Ext_A^1(F,C)=0$ for all $F\in\F$ and
$C\in\Cof(\bigcup_{i=0}^n\Prod(S_i))^\oplus$.
 In order to prove the theorem, it remains to show that the pair of
classes $\F$ and $\D\subset A\Modl$ admits approximation sequences
(cf.\ Lemma~\ref{direct-summand-lemma}).

 Let us first show that the pair of classes $\F$ and $\D$ admits
special precover sequences.
 Let $M$ be a left $A$\+module; put $G_1F=M$.
 Denote by $I_1$ the underlying set (or any generating subset) of
the abelian group $\Ext^1_A(M,S_1)$.
 Then we have a canonical element in the abelian group
$\Ext^1_A(M,S_1)^{I_1}=\Ext^1_A(M,S_1^{I_1})$.
 Let $G_2F$ denote the middle term of the related short exact sequence
of left $A$\+modules $0\rarrow S_1^{I_1}\rarrow G_2F\rarrow G_1F=M
\rarrow0$.
 Notice that $\Ext^1_A(G_2F,S_1)=0$ by construction (in view of
the assumption that $\Ext^1_A(S_1^{I_1},S_1)=0$).

 Denote by $I_2$ the underlying set (or any generating subset) of
the abelian group $\Ext^1_A(G_2F,S_2)$.
 Then we have a canonical element in the abelian group
$\Ext^1_A(G_2F,S_2)^{I_2}=\Ext^1_A(G_2F,S_2^{I_2})$.
 Let $G_3F$ denote the middle term of the related short exact sequence
of left $A$\+modules $0\rarrow S_2^{I_2}\rarrow G_3F\rarrow G_2F
\rarrow0$.
 Notice that $\Ext^1_A(G_3F,S_1)=0$ since $\Ext^1_A(G_2F,S_1)=0
=\Ext^1_A(S_2^{I_2},S_1)$, and $\Ext^1_A(G_3F,S_2)=0$ by construction
(as $\Ext^1_A(S_2^{I_2},S_2)=0$).

 Proceeding in this way until all the modules $S_1$,~\dots, $S_n$ have
been taken into account, we construct a sequence of sets
$I_1$, $I_2$,~\dots, $I_n$ and a left $A$\+module $F$ with
an $(n+1)$\+step cofiltration $F=G_{n+1}F\twoheadrightarrow G_nF
\twoheadrightarrow\dotsb\twoheadrightarrow G_2F\twoheadrightarrow G_1F=M
\twoheadrightarrow G_0F=0$ such that $\ker(G_{i+1}F\to G_iF)\simeq
S_i^{I_i}$ for $1\le i\le n$.
 Furthermore, we have $\Ext^1_A(G_jF,S_i)=0$ for all $0\le i<j\le n+1$,
so in particular $\Ext^1_A(F,S_i)=0$ for all $0\le i\le n$.
 Denoting by $D'$ the kernel of the surjective morphism $F=G_{n+1}F
\rarrow G_1F=M$, we obtain the desired special precover sequence
$0\rarrow D'\rarrow F\rarrow M\rarrow0$ with $D'\in\D$ and $F\in\F$.
 Here the left $A$\+module $D'$ is endowed with a cofiltration $G$ as in
part~(b), with the additional property that $G_1D'=0$.

 To produce a special preenvelope sequence $0\rarrow N\rarrow D\rarrow
F'\rarrow0$ (with $D\in\D$ and $F'\in\F$) for a left $A$\+module $N$,
it now remains to choose a set $I_0$ such that $N$ is a submodule in
$S_0^{I_0}$ and use the construction from the proof of (the ``only if''
implication in) Lemma~\ref{salce-lemma}.
\end{proof}

\begin{rem} \label{deste-counterex-remark}
 For any $n$\+cotilting $R$\+module $U$, the induced $n$\+cotilting
cotorsion pair $(\F,\C)$ is obviously cogenerated by the cosyzygy
modules $U$, $\Omega^{-1}U$,~\dots, $\Omega^{-n+1}U$ of
the $R$\+module~$U$.
 So the first na\"\i ve idea of an application of
Theorem~\ref{dual-n-bongartz} to the cotilting cotorsion pairs
would be to consider the sequence of cosyzygy modules $S_n=U$,
\,$S_{n-1}=\Omega^{-1}U$,~\dots, $S_1=\Omega^{-n+1}U$.

 In fact, one has $\Ext_R^1(U^X,\Omega^{-i}U)\simeq
\Ext_R^{i+1}(U^X,U)=0$ for all $i\ge0$ and all sets~$X$.
 However, it may well happen that $\Ext_R^1(\Omega^{-1}U,\Omega^{-1}U)
\ne0$.
 Let $0\rarrow U\rarrow J^0\rarrow\Omega^{-1}U\rarrow0$ be a short exact
sequence of $R$\+modules with an injective $R$\+module~$J^0$;
then one has $\Ext_R^1(\Omega^{-1}U,\Omega^{-1}U)\simeq
\Ext_R^2(\Omega^{-1}U,U)\simeq\Ext_R^2(J^0,U)$, and there is no apparent
reason for this Ext group to vanish.

 The relevant counterexample was constructed by D'Este~\cite{Est}.
 One starts with the observation that, for any finite-dimensional
algebra $A$ over a field~$k$ such that $A$ has finite homological
dimension~$n$, the free $A$\+module $A$ is $n$\+cotilting.
 Furthermore, for any field~$k$, there is an acyclic quiver algebra $A$
of homological dimension~$2$, with $4$~vertices, $4$~edges, and
$2$~relations, such that $\Ext^1_A(\Omega^{-1}A,\Omega^{-1}A)\ne0$
(for the minimal cosyzygy module $\Omega^{-1}A$ of the free
$A$\+module~$A$) \cite[Theorem~5]{Est}.

 Therefore, the dual Bongartz--Ringel lemma is \emph{not} applicable
to the sequence of cosyzygy modules $\Omega^{-j}U$ of an $n$\+cotilting
$R$\+module $U$ (generally speaking), and the na\"\i ve idea does not
work.
 That is why an approach based on the next lemma is needed instead.

 On the other hand, for any $n$\+cotilting module $U$ over
a commutative Noetherian ring $R$, one can choose an injective
coresolution of $U$ in such a way that the related sequence of
cosyzygy modules $U$, $\Omega^{-1}U$,~\dots, $\Omega^{-n+1}U$,
\,$\Omega^{-n}U=J$ would satisfy the assumptions of
Theorem~\ref{dual-n-bongartz} (see~\cite[Corollary~3.17]{APST}).
\end{rem}

\begin{lem} \label{derived-cotilting-classes}
 Let $R$ be an associative ring and $U$ be an $n$\+cotilting
left $R$\+module.
 Then, for every\/ $0\le j\le n$, there exists an $(n-j)$\+cotilting
left $R$\+module $U_j$ such that the class $\F_j={}^{\perp_{>j}}U$ is
the cotilting class induced by $U_j$ in $R\Modl$.
 In particular, one can (and we will) take $U_0=U$, while
$J=U_n$ is an injective cogenerator of $R\Modl$.
\end{lem}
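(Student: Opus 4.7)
I would proceed by induction on~$j$, starting from $U_0:=U$ and ending at $U_n:=J$ (the injective cogenerator coming from axiom~(C3) for~$U$). The inductive step must produce, from the $(n-j+1)$-cotilting module $U_{j-1}$ inducing the cotilting class~$\F_{j-1}$, a new $(n-j)$-cotilting module $U_j$ with ${}^{\perp_{>0}}U_j=\F_j$.

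Two natural candidates for $U_j$ suggest themselves: the first cosyzygy $\Omega^{-1}U_{j-1}$ coming from an injective coresolution of $U_{j-1}$, or the cokernel of the first map in the (C3) sequence for $U_{j-1}$. In either case, axiom~(C1) and the identification of the cotilting class follow readily by dimension-shifting along the relevant short exact sequence, combined with the inductive hypothesis ${}^{\perp_{>0}}U_{j-1}=\F_{j-1}$. For the cosyzygy candidate, this uses $\Ext^i_R(F,\Omega^{-1}U_{j-1})\simeq\Ext^{i+1}_R(F,U_{j-1})$ for $i\ge 1$, showing simultaneously that $\F_j={}^{\perp_{>0}}\Omega^{-1}U_{j-1}$ and that the injective dimension is at most $n-j$. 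For the (C3) candidate, a parallel long exact $\Ext$ computation works, using that modules in $\Prod_R(U_{j-1})$ have injective dimension at most $n-j+1$. Axiom~(C3) for $U_j$ is then obtained by a suitable shift of the (C3) sequence for~$U_{j-1}$.

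The main technical obstacle will be axiom~(C2), the self-Ext vanishing $\Ext^i_R(U_j^\kappa,U_j)=0$ for all $i\ge 1$ and all cardinals~$\kappa$. As D'Este's counterexample recalled in Remark~\ref{deste-counterex-remark} shows, neither candidate for $U_j$ automatically satisfies (C2); the module must be enlarged by additional direct summands while preserving both the cotilting class~$\F_j$ and the injective dimension bound $\mathrm{id}(U_j)\le n-j$. The summands must lie in the right Ext-orthogonal $\F_j^{\perp_{>0}}$ in order not to shrink the cotilting class, and — crucially — they must be chosen so that the (C2) computation reduces, via the long exact Ext sequence, to vanishings controlled by axiom~(C2) for $U_{j-1}$. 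The latter propagates to $\Prod_R(U_{j-1})$ since $\Ext_R$ commutes with products in the second argument, and combining this with the closure of $\F_{j-1}$ under products (a standard property of cotilting classes, cited in the proof of Proposition~\ref{when-n-cotilting}) one iterates through the (C3) sequence for $U_{j-1}$ to obtain all the required Ext-vanishings for~$U_j$.
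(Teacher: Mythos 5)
Your plan correctly isolates the easy half of the statement: for the naive candidates (the cosyzygy $\Omega^{-1}U_{j-1}$, or the cokernel of the first map in the (C3) sequence), dimension shifting does give ${}^{\perp_{>0}}\Omega^{-1}U_{j-1}={}^{\perp_{>1}}U_{j-1}=\F_j$ together with the injective dimension bound. But the proposal stops exactly where the actual content of the lemma begins. You acknowledge that (C2) fails for these candidates (D'Este), and then assert that the module ``must be enlarged by additional direct summands'' chosen so that the self-Ext vanishing reduces to (C2) for $U_{j-1}$ --- without constructing such summands or proving they exist. That existence claim \emph{is} the lemma: a module $X\in\F_j^{\perp_{>0}}$ such that $U_j\oplus X$ satisfies $\Ext^i_R((U_j\oplus X)^\kappa,\,U_j\oplus X)=0$ for all $i>0$ and all~$\kappa$ involves the three families of groups $\Ext^i(X^\kappa,U_j)$, $\Ext^i(U_j^\kappa,X)$, $\Ext^i(X^\kappa,X)$, and no mechanism is offered for killing them. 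A Bongartz-type completion (as in Theorem~\ref{dual-n-bongartz} of this paper) cannot be invoked here, because its hypothesis $\Ext^1(S_j^\kappa,S_i)=0$ is precisely what D'Este's example destroys; note also that (C3) for $U_j$ does not follow from ``a suitable shift'' of the (C3) sequence for $U_{j-1}$, since the latter produces terms in $\Prod(U_{j-1})$ rather than in $\Prod(U_j)$.

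For comparison: the paper does not prove this lemma by elementary induction at all; it cites the literature (\cite[Lemma~3.4]{Baz}, \cite[Theorem~4.2]{Baz2} combined with \cite[Theorem~13]{Sto0} and \cite[Theorem~4.2]{AC}, or \cite[Proposition~15.13]{GT}). The known arguments go through the abstract characterization of $m$-cotilting classes (resolving, closed under products and direct summands, plus a coresolution condition) and the theorem that every such class is induced by a cotilting module; that existence theorem in turn rests on the pure-injectivity of cotilting modules (\v St\!'ov\'\i\v cek, Bazzoni) and a nontrivial approximation-theoretic construction. So the missing step in your plan is not a routine verification but the deep part of the result, and as written the proposal does not constitute a proof.
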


\begin{proof}
 The proof of this, classical by now, result is
based on~\cite[Lemma~3.4]{Baz}.
 One can combine the assertions of~\cite[Theorem~4.2]{Baz2}
and~\cite[Theorem~13]{Sto0} with~\cite[Theorem~4.2]{AC}.
 Alternatively, see~\cite[Proposition~15.13]{GT}.
\end{proof}

\begin{lem} \label{cogenerated-by-cotilting-modules}
 In the notation of Lemma~\ref{derived-cotilting-classes},
the $n$\+cotilting class $\F={}^{\perp_{>0}}U$ can be described as
$\F={}^{\perp_1}\{U_0,U_1,\dotsc,U_n\}=\{F\in R\Modl \mid
\Ext^1_R(F,U_j)=0 \ \forall\,0\le j\le n\}$.
\end{lem}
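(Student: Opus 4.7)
The plan is to prove the two inclusions separately. The forward inclusion $\F \subseteq {}^{\perp_1}\{U_0, U_1, \dotsc, U_n\}$ is straightforward: since $\F = \F_0 \subseteq \F_j = {}^{\perp_{>0}}U_j \subseteq {}^{\perp_1}U_j$ holds for every $0 \le j \le n$, any $F \in \F$ satisfies $\Ext^1_R(F, U_j) = 0$ for each~$j$.

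For the reverse inclusion, I would proceed by downward induction on~$j$, proving the stronger statement ${}^{\perp_1}\{U_j, U_{j+1}, \dotsc, U_n\} \subseteq \F_j$ for every $0 \le j \le n$; the case $j = 0$ will then yield the lemma. The base case $j = n$ is vacuous because $\F_n = R\Modl$. For the inductive step from~$j$ to~$j-1$, take $F \in \bigcap_{i=j-1}^{n} {}^{\perp_1}U_i$; by the inductive hypothesis, $F \in \F_j$, so $\Ext^i_R(F, U) = 0$ for every $i > j$. The task is then to extract $\Ext^j_R(F, U) = 0$ from the additional vanishing $\Ext^1_R(F, U_{j-1}) = 0$, which would upgrade $F$ to a member of $\F_{j-1}$.

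The decisive tool is a dimension-shifting identification. Fixing an injective coresolution $0 \rarrow U \rarrow J^0 \rarrow J^1 \rarrow \dotsb \rarrow J^n \rarrow 0$ in $R\Modl$ and letting $\Omega^{-(j-1)}U$ denote the $(j-1)$-st cosyzygy, one iterates the long exact sequences of $\Ext$ coming from the short exact sequences $0 \rarrow \Omega^{-i}U \rarrow J^i \rarrow \Omega^{-(i+1)}U \rarrow 0$ and uses injectivity of the $J^i$ to obtain the natural isomorphism $\Ext^1_R(F, \Omega^{-(j-1)}U) \cong \Ext^j_R(F, U)$ for every left $R$\+module~$F$. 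The plan is then to arrange, drawing on the constructions underlying Lemma~\ref{derived-cotilting-classes} (as developed in \cite{Baz}, \cite{Baz2}, \cite{Sto0}, \cite{AC}, or \cite[Proposition~15.13]{GT}), that the cotilting module $U_{j-1}$ can be chosen so as to contain $\Omega^{-(j-1)}U$ as a direct summand, possibly together with injective summands needed to enforce condition~(C3). Since injective summands contribute trivially to $\Ext^1_R(F, {-})$, the hypothesis $\Ext^1_R(F, U_{j-1}) = 0$ will then deliver $\Ext^j_R(F, U) = 0$, completing the induction.

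The main obstacle I expect is verifying that a coherent choice of the cotilting modules $U_j$ compatible with these cosyzygy identifications actually exists simultaneously for all $0 \le j \le n$ and fits the constructions underlying Lemma~\ref{derived-cotilting-classes}; alternatively, one must argue that the class ${}^{\perp_1}\{U_0, \dotsc, U_n\}$ does not depend on the particular admissible choice of the modules~$U_j$. Once this subtlety is handled, the inductive argument goes through cleanly.
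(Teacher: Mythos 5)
Your overall skeleton (the easy forward inclusion plus a downward induction showing ${}^{\perp_1}\{U_j,\dotsc,U_n\}\subseteq\F_j$) is the same as the paper's, but the decisive inductive step is not established, and the mechanism you propose for it is actually false in general. You want to deduce $\Ext^j_R(F,U)=0$ from $\Ext^1_R(F,U_{j-1})=0$ by arranging that $\Omega^{-(j-1)}U$ be a direct summand of $U_{j-1}$ (up to injective summands). This cannot be arranged: a cotilting module $V$ satisfies $\Ext^1_R(V,V)=0$ by condition~(C2), and this vanishing passes to direct summands, so $U_{j-1}$ can only contain $\Omega^{-(j-1)}U$ as a summand if $\Ext^1_R(\Omega^{-(j-1)}U,\Omega^{-(j-1)}U)=0$. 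Remark~\ref{deste-counterex-remark} records D'Este's counterexample in which $\Ext^1_R(\Omega^{-1}U,\Omega^{-1}U)\ne0$ already for $j=2$; adding injective summands does not help, since a nonminimal cosyzygy differs from the minimal one only by injectives and the nonzero self-extension of the minimal cosyzygy survives. So the ``main obstacle'' you flag at the end is not a technicality to be checked --- it is exactly the obstruction the paper's remark warns about, and your proof does not go through as written. (Your fallback of showing independence of the class from the choice of the $U_j$ does not help either, since \emph{no} admissible choice contains the cosyzygy as a summand.)

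The repair is to shift dimension on $F$ rather than on $U$, and to use only the class-level content of Lemma~\ref{derived-cotilting-classes}, namely ${}^{\perp_{>0}}U_j={}^{\perp_{>j}}U$, never a module-level comparison of $U_j$ with a cosyzygy. From $\Ext^m_R(F,X)\simeq\Ext^{m-i}_R(\Omega^iF,X)$ for $m>i$ one gets ${}^{\perp_{>i}}U_j={}^{\perp_{>i+j}}U$ for all $i,j\ge0$: indeed $F\in{}^{\perp_{>i}}U_j$ iff $\Omega^iF\in{}^{\perp_{>0}}U_j={}^{\perp_{>j}}U$ iff $F\in{}^{\perp_{>i+j}}U$. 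In particular $\F_j={}^{\perp_{>j}}U={}^{\perp_{>1}}U_{j-1}$, so for $F\in\F_j$ the hypothesis $\Ext^1_R(F,U_{j-1})=0$ gives
$$
F\in{}^{\perp_1}U_{j-1}\cap{}^{\perp_{>1}}U_{j-1}={}^{\perp_{>0}}U_{j-1}=\F_{j-1},
$$
which is precisely the identity $\Ext^j_R(F,U)=0$ you were after. This is the paper's argument; with this substitution your induction closes.
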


\begin{proof}
 For any $i$, $j\ge0$ we have
${}^{\perp_{>i}}U_j={}^{\perp_{>i+j}}U$,
since a left $R$\+module $F$ belongs to ${}^{\perp_{>i}}U_j$
if and only if the left $R$\+module $\Omega^iF$ belongs to
${}^{\perp_{>0}}U_j$, which means that $\Omega^iF$ belongs to
${}^{\perp_{>j}}U$, which holds if and only if $F$ belongs to
${}^{\perp_{>i+j}}U$.
 In particular, it follows that ${}^{\perp_{>1}}U_j={}^{\perp_{>j+1}}U
={}^{\perp_{>0}}U_{j+1}$.

 Now ${}^{\perp_1}U_n=R\Modl$ and ${}^{\perp_1}U_{n-1}=
{}^{\perp_{>0}}U_{n-1}={}^{\perp_{>1}}U_{n-2}$.
 Proceeding by decreasing induction in $0\le j\le n$, one proves that
${}^{\perp_1}\{U_j,U_{j+1},\dotsc,U_n\}={}^{\perp_{>0}}U_j$, since
${}^{\perp_1}U_j\cap{}^{\perp_1}\{U_{j+1},U_{j+2},\dotsc,U_n\}=
{}^{\perp_1}U_j\cap{}^{\perp_{>0}}U_{j+1}=
{}^{\perp_1}U_j\cap{}^{\perp_{>1}}U_j={}^{\perp_{>0}}U_j$.
\end{proof}

 The next theorem is a generalization of~\cite[Example~2.3]{ST}
(which corresponds to the case of $n=1$).

\begin{thm} \label{right-cotilting-class-described}
 Let $R$ be an associative ring and $U$ be an $n$\+cotilting
left $R$\+module.
 Let $(\F,\C)$ be the $n$\+cotilting cotorsion pair induced by $U$ in
$R\Modl$.
 Then the class $\C$ can be described as the class of all
direct summands of $(n+1)$\+cofiltered left $R$\+modules $D$ with
a cofiltration $D=G_{n+1}D\twoheadrightarrow G_nD\twoheadrightarrow
\dotsb\twoheadrightarrow G_1D\twoheadrightarrow G_0D=0$ such that,
in the notation of Lemma~\ref{derived-cotilting-classes},
\,$G_1D\in\Prod(J)$, \ $\ker(G_{i+1}D\to G_iD)\in\Prod(U_{n-i})$
for every\/ $0\le i\le n$, and\/ $\ker(G_{n+1}D\to G_nD)\in\Prod(U)$.
\end{thm}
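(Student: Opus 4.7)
The plan is to deduce the theorem directly from the dual Bongartz--Ringel lemma (Theorem~\ref{dual-n-bongartz}), applied to the finite collection of cotilting modules supplied by Lemma~\ref{derived-cotilting-classes}, listed in the \emph{reverse} order of their indices. Specifically, I would set $S_i := U_{n-i}$ for $0 \le i \le n$, so that $S_0 = U_n = J$ is an injective cogenerator of $R\Modl$ and $S_n = U_0 = U$. By Lemma~\ref{cogenerated-by-cotilting-modules}, the cotorsion pair cogenerated by $\T := \{S_0, \dotsc, S_n\}$ in $R\Modl$ coincides with $(\F, \C)$.

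The one substantive verification is the Ext-vanishing hypothesis of Theorem~\ref{dual-n-bongartz}: for every $0 \le i \le j \le n$ and every cardinal $\kappa$, we need $\Ext^1_R(S_j^\kappa, S_i) = 0$, which after the substitution $l := n-j$ and $m := n-i$ (so that $0 \le l \le m \le n$) reads $\Ext^1_R(U_l^\kappa, U_m) = 0$. For this I would argue as follows: each $U_l$ is $(n-l)$-cotilting, so by condition (C2) its arbitrary power $U_l^\kappa$ lies in ${}^{\perp_{>0}}U_l = \F_l$; and the inductive proof of Lemma~\ref{cogenerated-by-cotilting-modules} in fact yields the refined identification $\F_l = {}^{\perp_1}\{U_l, U_{l+1}, \dotsc, U_n\}$, so since $l \le m \le n$, we conclude $\Ext^1_R(U_l^\kappa, U_m) = 0$, as required.

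With the hypotheses of Theorem~\ref{dual-n-bongartz} verified, the description of $\C$ provided by part~(b) of that theorem translates directly: the single condition $\ker(G_{i+1}D \to G_iD) \in \Prod(S_i) = \Prod(U_{n-i})$ for $0 \le i \le n$ subsumes the three highlighted clauses in the statement, with $i = 0$ giving $G_1 D \in \Prod(J)$ (since $G_0 D = 0$) and $i = n$ giving $\ker(G_{n+1} D \to G_n D) \in \Prod(U)$.

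The main (indeed, essentially the only) obstacle is getting the ordering right: as Remark~\ref{deste-counterex-remark} emphasizes, the na\"\i ve attempt to use the cosyzygies $\Omega^{-j}U$ in place of the $U_{n-j}$ fails, since $\Ext^1_R(\Omega^{-1}U, \Omega^{-1}U)$ need not vanish. The decisive feature of the $U_l$ is that each of them is itself a cotilting module, so its powers lie in its own cotilting class $\F_l$, which contains $U_l, U_{l+1}, \dotsc, U_n$ but in general none of $U_0, \dotsc, U_{l-1}$; this is exactly why the ordering with $J$ first and $U$ last produces a valid input for Theorem~\ref{dual-n-bongartz}.
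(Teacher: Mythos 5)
Your proposal is correct and follows essentially the same route as the paper: both apply Theorem~\ref{dual-n-bongartz} to the list $S_i=U_{n-i}$ (so $S_0=J$, $S_n=U$), identify the cogenerated cotorsion pair via Lemma~\ref{cogenerated-by-cotilting-modules}, and verify the Ext-orthogonality from $U_l^\kappa\in{}^{\perp_{>0}}U_l=\F_l$ together with the inclusion of $U_m$ ($m\ge l$) in the right orthogonal of $\F_l$. The only cosmetic difference is that the paper phrases this last step as $\F_l\subset\F_m={}^{\perp_{>0}}U_m$ rather than via the refined identification $\F_l={}^{\perp_1}\{U_l,\dotsc,U_n\}$, which is the same fact.
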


\begin{proof}
 By Lemma~\ref{cogenerated-by-cotilting-modules}, the $n$\+cotilting
cotorsion pair $(\F,\C)$ is cogenerated by the set of $n+1$~modules
$S_0=J$, $S_1=U_{n-1}$, $S_2=U_{n-2}$,~\dots, $S_n=U$.
 Furthermore, one has $\Ext^m_R(U_i^\kappa,U_j)=0$ for all integers
$0\le i\le j\le n$, \,$m>0$ and all cardinals~$\kappa$, since
$U_i^\kappa\in{}^{\perp_{>0}}U_i\subset{}^{\perp_{>0}}U_j$.
 Thus Theorem~\ref{dual-n-bongartz}(b) is applicable.
\end{proof}

 The result of the following corollary generalizes those of
Examples~\ref{cotilting-examples}.

\begin{cor} \label{finite-filtration-by-coinduced-via-dual-n-bongartz}
 Let $R\rarrow A$ be a homomorphism of associative rings and $U$ be
an $n$\+cotilting left $R$\+module.
 Let $(\F,\C)$ be the $n$\+cotilting cotorsion pair induced by $U$ in
$R\Modl$.
 Assume that the underlying left $R$\+module of $A$ belongs to $\F$,
that is ${}_RA\in\F$.
 Assume further that the left $A$\+module\/ $\Hom_R(A,U_j)$ is
$(n-j)$\+cotilting for every\/ $0\le j\le n$.
 (In particular, by Proposition~\ref{bazzoni-prop}, this holds whenever
$R$ is commutative and $A$ is an $R$\+algebra.)
 Let $(\F_A,\C_A)$ be the $n$\+cotilting cotorsion pair induced by
$U'=\Hom_R(A,U)$ in $A\Modl$.
 Then we have $\C_A=\Cof_{n+1}(\Hom_R(A,\C))^\oplus$.
\end{cor}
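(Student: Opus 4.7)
The plan is to apply Theorem~\ref{right-cotilting-class-described} to the $n$-cotilting left $A$-module $U':=\Hom_R(A,U)$, after identifying the derived cotilting modules of $U'$ furnished by Lemma~\ref{derived-cotilting-classes} with the coinduced modules $\Hom_R(A,U_j)$ for $0\le j\le n$. The very first step is Lemma~\ref{cotilting-class-change-of-ring}, which already tells us that the $n$-cotilting cotorsion pair induced by $U'$ in $A\Modl$ is $(\F_A,\C_A)$.

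Next I would verify that for every $0\le j\le n$ the coinduced left $A$-module $U'_j:=\Hom_R(A,U_j)$ is an $(n-j)$-cotilting module inducing the cotilting class ${}^{\perp_{>j}}U'\subset A\Modl$. Since ${}_RA\in\F\subset\F_j={}^{\perp_{>0}}U_j$, we have $\Ext^i_R(A,U_j)=0$ for all $i>0$ (and the same for $U$ in place of $U_j$). By Lemma~\ref{homological-formulas-lemma}(b) this yields $\Ext^i_A(B,U'_j)\simeq\Ext^i_R(B,U_j)$ and $\Ext^i_A(B,U')\simeq\Ext^i_R(B,U)$ for every left $A$-module $B$ and every $i\ge 0$. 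Using Lemma~\ref{derived-cotilting-classes} for the equality $\F_j={}^{\perp_{>j}}U={}^{\perp_{>0}}U_j$, I obtain the chain of equivalences $B\in{}^{\perp_{>j}}U'\iff{}_RB\in\F_j\iff B\in{}^{\perp_{>0}}U'_j$, which together with the hypothesis that $U'_j$ is $(n-j)$-cotilting delivers the desired identification.

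With this identification in hand I would apply Theorem~\ref{right-cotilting-class-described} to $U'$, taking the $U'_j$ as its derived cotilting modules. It describes $\C_A$ as the class of direct summands of $(n+1)$-cofiltered left $A$-modules whose successive kernels lie in $\Prod_A(U'_{n-i})$, $0\le i\le n$. Each $U_j$ lies in $\C$ (because $\F\subset\F_j$ and $U_j\in\F_j^{\perp_1}$), and $\C$ is closed under arbitrary products as the right class of a cotorsion pair; since $\Hom_R(A,-)$ commutes with products, $\Prod_A(\Hom_R(A,U_j))\subset\Hom_R(A,\C)^\oplus$ for every $j$. An application of Lemma~\ref{iterated-cof-lemma}(b) then gives $\C_A\subset\Cof_{n+1}(\Hom_R(A,\C))^\oplus$, and the reverse inclusion is immediate from Lemma~\ref{orthogonal-to-coinduced-lemma}(c).

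The main obstacle is the second paragraph: establishing that the coinduced modules $\Hom_R(A,U_j)$ can serve as derived cotilting modules of $U'$ on the $A$-side, which is where the ring change genuinely interacts with the chain of cotilting classes produced by Lemma~\ref{derived-cotilting-classes}. Once that identification is in place, the rest of the argument reduces to routine bookkeeping with the cofiltrations supplied by Theorem~\ref{right-cotilting-class-described} and the observation that arbitrary products of modules in $\C$ remain in $\C$.
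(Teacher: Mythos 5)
Your proposal is correct and follows essentially the same route as the paper: identify the coinduced modules $\Hom_R(A,U_j)$ as derived cotilting modules of $U'$ via Lemma~\ref{homological-formulas-lemma}(b) (using ${}_RA\in\F\subset\F_j$), then apply Theorem~\ref{right-cotilting-class-described} over $A$ and absorb the classes $\Prod_A(\Hom_R(A,U_j))$ into $\Hom_R(A,\C)^\oplus$. The only difference is that you spell out the final bookkeeping (closure of $\C$ under products, Lemma~\ref{iterated-cof-lemma}(b), and the reverse inclusion from Lemma~\ref{orthogonal-to-coinduced-lemma}(c)) that the paper leaves implicit.
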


\begin{proof}
 Put $U'_j=\Hom_R(A,U_j)\in A\Modl$ for all $0\le j\le n$.
 Then the class ${}^{\perp_{>0}}U'_j\subset A\Modl$ consists of
all the left $A$\+modules whose underlying left $R$\+modules belong to
the class ${}^{\perp_{>0}}U_j\subset R\Modl$, in view of
Lemma~\ref{homological-formulas-lemma}(b)
(since ${}_RA\in\F\subset\F_j$).
 Similarly, the class ${}^{\perp_{>j}}U'\subset A\Modl$ consists of
all the left $A$\+modules whose underlying left $R$\+modules belong to
the class ${}^{\perp_{>j}}U\subset R\Modl$.
 Hence ${}^{\perp_{>0}}U'_j={}^{\perp_{>j}}U'\subset A\Modl$.

 Notice further that $U_j\in\C$ for all $0\le j\le n$, as
$\F\subset\F_j={}^{\perp_{>0}}U_j\subset{}^{\perp_1}U_j$.
 Now the assertion of the corollary follows from
Theorem~\ref{right-cotilting-class-described} applied to the ring $A$
and the cotilting $A$\+modules $U'=U'_0$, $U_1'$,~\dots,~$U'_n$.
\end{proof}

\subsection{Decreasing filtrations by coinduced modules}
\label{decreasing-filtrations-subsecn}
 Let $R\rarrow A$ be a homomorphism of associative rings, and
let $(\F,\C)$ be an $\Ext^1$\+orthogonal pair of classes of
left $R$\+modules.
 Instead of assuming finiteness of the $\F$\+resolution dimension,
we now assume that the class $\F$ is closed under countable products
in $R\Modl$.

 As usually, we denote by~$\omega$ the first infinite ordinal,
that is the ordinal of nonnegative integers.
 The ``cofiltrations'' appearing in the next proposition are
the usual complete, separated infinite decreasing filtrations
indexed by the natural numbers.

\begin{prop} \label{product-closed-approximation-sequences}
 Assume that the\/ $\Ext^1$\+orthogonal pair of classes of
left $R$\+modules $(\F,\C)$ admits approximation
sequences~\textup{(\ref{sp-precover-seq}\+-\ref{sp-preenvelope-seq})}.
 Assume that the underlying left $R$\+module of $A$ belongs to~$\F$,
and that the condition~\textup{($\dagger\dagger$)} holds.
 Assume further that the class $\F$ is closed under the kernels of
surjective morphisms and countable products in $R\Modl$.
 Then the\/ $\Ext^1$\+orthogonal pair of classes of left $A$\+modules
$\F_A$ and\/ $\Cof_\omega(\Hom_R(A,\C))$ admits approximation
sequences as well.
\end{prop}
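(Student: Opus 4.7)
Following the argument of Proposition~\ref{finite-resol-dim-approximation-sequences}, my plan is to iterate the pullback construction of diagram~\eqref{precover-construction-diagram} $\omega$ times and then pass to the inverse limit. Given a left $A$-module $M$, set $Q^0(M)=M$ and inductively $Q^{n+1}(M)=Q(Q^n(M))$, producing a tower of surjective $A$-module morphisms $\cdots\to Q^2(M)\to Q(M)\to M$ in which $\ker(Q^{n+1}(M)\to Q^n(M))=\Hom_R(A,C'(Q^n(M)))\in\Hom_R(A,\C)$. Let $\bar F=\varprojlim_n Q^n(M)$ in $A\Modl$ and $K_n=\ker(Q^n(M)\to M)$. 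Because the tower has surjective transition morphisms, Mittag--Leffler gives $\varprojlim^1=0$ and yields a short exact sequence $0\to\bar K\to\bar F\to M\to 0$ with $\bar K=\varprojlim_n K_n$. The kernels $K_n$ form a surjective tower with $K_0=0$ and $\ker(K_{n+1}\to K_n)=\Hom_R(A,C'(Q^n(M)))$, exhibiting $\bar K$ as an $\omega$-cofiltered module with successive quotients in $\Hom_R(A,\C)$, so $\bar K\in\Cof_\omega(\Hom_R(A,\C))$.

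The crux is showing that the underlying $R$-module of $\bar F$ lies in~$\F$. The joint closure of $\F$ under countable products and under kernels of surjective morphisms implies that $\F$ is closed under $\omega$-indexed inverse limits of surjective towers of $\F$-modules, via the Milnor short exact sequence $0\to\varprojlim X_n\to\prod_n X_n\to\prod_n X_n\to 0$. Since the $Q^n(M)$ themselves are generally not in~$\F$ (as $M$ itself need not be), this closure does not apply directly to the tower $\{Q^n(M)\}$, and $\bar F$ must instead be recognized as the kernel of a surjection between $\F$-modules via the ambient $H_n=\Hom_R(A,F(Q^{n-1}(M)))$, which do belong to $\F$ by~$(\dagger\dagger)$. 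Specifically, the embeddings $Q^n(M)\hookrightarrow H_n$ and the pullback coordinate maps $\alpha_n\colon H_n\to\Hom_R(A,Q^{n-1}(M))$ cut out $\bar F$ inside $\prod_{n\ge 1}H_n$ by the combination of the pointwise constraints $h_n\in Q^n(M)$ and the tower compatibility relations $\Hom_R(A,f_{n-1})(\alpha_n(h_n))=\alpha_{n-1}(h_{n-1})$. The main obstacle I expect is to organize these two families of constraints into a single surjection onto a target that itself lies in~$\F$, so that closure under kernels of surjections then delivers $\bar F\in\F_A$.

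Once the special precover sequence is established, the special preenvelope sequence follows from the ``only if'' direction of Salce's Lemma~\ref{salce-lemma}, exactly as in the last paragraph of the proof of Proposition~\ref{finite-resol-dim-approximation-sequences}: any $A$-module $N$ embeds into $\Hom_R(A,C)$ for a special $\C$-preenvelope $C$ of~${}_RN$, and applying the already-constructed precover to the cokernel produces a short exact sequence $0\to N\to E\to F'\to 0$ with $F'\in\F_A$ and $E$ assembled as an extension of a module in $\Cof_\omega(\Hom_R(A,\C))$ by $\Hom_R(A,C)\in\Hom_R(A,\C)$. Since ordinal addition gives $1+\omega=\omega$, this extension again lies in $\Cof_\omega(\Hom_R(A,\C))$, completing the preenvelope construction.
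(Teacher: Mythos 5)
Your setup, the identification of the kernel of $\varprojlim_n Q^n(M)\to M$ as an object of $\Cof_\omega(\Hom_R(A,\C))$, and the concluding preenvelope step all match the paper. But the crux you single out --- showing that $\bar F=\varprojlim_n Q^n(M)$ has underlying $R$\+module in $\F$ --- is left genuinely open: you describe $\bar F$ as cut out of $\prod_{n}H_n$ (with $H_n=\Hom_R(A,F(Q^{n-1}(M)))$) by two families of constraints and say the remaining obstacle is to package these constraints as a single surjection onto a module in $\F$. That packaging is not available in the form you suggest. The pointwise constraint $h_n\in Q^n(M)\subset H_n$ is the kernel of the map $H_n\to \Hom_R(A,Q^{n-1}(M))/Q^{n-1}(M)$, and this cokernel is in general \emph{not} in $\F$ (in the setting of Proposition~\ref{finite-resol-dim-approximation-sequences} its resolution dimension was merely controlled, and here no finiteness is assumed). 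So ``kernel of a surjection between $\F$\+modules'' cannot be arranged this way, and the argument stalls exactly where you say it does.

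The missing idea is the \emph{interleaving} of the two towers. The pullback square defining $Q(M)$ gives an $A$\+linear injection $Q(M)\to\Hom_R(A,F(M))$, and composing the coinduced map $\Hom_R(A,F(M))\to\Hom_R(A,M)$ with the $R$\+linear evaluation retraction $\phi_M$ gives an $R$\+linear map $\Hom_R(A,F(M))\to M$ whose composite with the first map is the structural surjection $Q(M)\to M$. Iterating, one gets $R$\+linear factorizations $Q^{n+1}(M)\to H_{n+1}\to Q^n(M)$, so the towers $\{Q^n(M)\}$ and $\{H_n\}$ are mutually cofinal as projective systems of $R$\+modules. Hence $\varprojlim_n H_n\simeq\varprojlim_n Q^n(M)$ and $\varprojlim^1_n H_n\simeq\varprojlim^1_n Q^n(M)=0$, the latter by Mittag--Leffler for the surjective $Q$\+tower (note the $H$\+tower itself need not have surjective transition maps, which is why the comparison is needed to kill $\varprojlim^1$). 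Now the Milnor sequence
$0\to\varprojlim_n H_n\to\prod_n H_n\to\prod_n H_n\to 0$
is exact on the right, and since each $H_n\in\F$ by~($\dagger\dagger$) and $\F$ is closed under countable products and kernels of surjections, $\varprojlim_n H_n\in\F$; transporting back along the cofinality isomorphism gives $\bar F\in\F_A$. This is precisely the closure property you correctly formulated in your second paragraph, applied to the $H$\+tower rather than the $Q$\+tower --- the cofinality trick is what lets you do so.
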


\begin{proof}
 The pair of classes $\F_A$ and $\Cof(\Hom_R(A,\C))\subset A\Modl$ is
$\Ext^1$\+orthogonal by Lemma~\ref{orthogonal-to-coinduced-lemma}(c).
 The explicit construction below, showing that the pair of classes
$\F_A$ and $\Cof_\omega(\Hom_R(A,\C))$ admits special precover
sequences, plays a key role.
 It goes back to~\cite[semicontramodule-related assertions in
Lemma~3.3.3]{Psemi}.

 Let $M$ be a left $A$\+module.
 We proceed with the construction from the proof of
Proposition~\ref{finite-resol-dim-approximation-sequences}; but
instead of a finite number of~$k$ iterations, we perform
$\omega$~iterations now.
 So we produce a sequence of surjective morphisms of left
$A$\+modules
\begin{equation} \label{Q-projective-system}
 M\llarrow Q(M)\llarrow Q(Q(M))\llarrow\dotsb\llarrow Q^n(M)
 \llarrow\dotsb,
\end{equation}
where $n$~ranges over the nonnegative integers.
 Clearly, the kernel of the surjective left $A$\+module morphism
$\varprojlim_{n\in\omega}Q^n(M)\rarrow M$ is $\omega$\+cofiltered
by the left $A$\+modules $\Hom_R(A,C'(Q^n(M)))$, \,$n\in\omega$,
which belong to $\Hom_R(A,\C)$ by construction. 
 Now the claim is that the left $A$\+module
$\varprojlim_{n\in\omega} Q^n(M)$ belongs to~$\F_A$.

 Recall that the injective $A$\+module morphism $\nu_M\:M
\rarrow\Hom_R(A,M)$ admits a natural $R$\+linear retraction
$\phi_M\:\Hom_R(A,M)\rarrow M$.
 Looking on the diagram~\eqref{precover-construction-diagram},
one can see that the surjective map $Q(M)\rarrow M$ factorizes
as $Q(M)\rarrow\Hom_R(A,F(M))\rarrow M$.
 Here $Q(M)\rarrow\Hom_R(A,F(M))$ is an $A$\+module morphism, but
$\Hom_R(A,F(M))\rarrow M$ is only an $R$\+module morphism
(between left $A$\+modules).
 Thus the sequence of surjective morphisms of left
$A$\+modules~\eqref{Q-projective-system} is mutually cofinal with
a sequence of left $R$\+module morphisms
\begin{multline} \label{F-Q-projective-system}
 \Hom_R(A,F(M))\llarrow\Hom_R(A,F(Q(M)))\llarrow\dotsb \\
 \llarrow\Hom_R(A,F(Q^n(M)))\llarrow\dotsb
\end{multline}

 The left $R$\+modules $F(Q^n(M))$, \,$n\ge0$, belong to $\F$
by construction.
 According to~($\dagger\dagger$), it follows that the underlying
left $R$\+modules of the left $A$\+modules $\Hom_R(A,F(Q^n(M)))$
belong to $\F$, too.
 The derived projective limits of mutually cofinal projective systems
agree, hence
$$
 \varprojlim\nolimits^1_{n\in\omega}\Hom_R(A,F(Q^n(M)))\simeq
 \varprojlim\nolimits^1_{n\in\omega}Q^n(M)=0,
$$
as the maps $Q^{n+1}(M)\rarrow Q^n(M)$ are surjective.
 Therefore, we have a short exact sequence of left $R$\+modules
\begin{multline} \label{projlimit-telescope}
 0\lrarrow\varprojlim\nolimits_{n\in\omega}\Hom_R(A,F(Q^n(M))) \\
 \lrarrow\prod\nolimits_{n\in\omega}\Hom_R(A,F(Q^n(M)))
 \lrarrow\prod\nolimits_{n\in\omega}\Hom_R(A,F(Q^n(M)))
 \lrarrow0.
\end{multline}
 Since the class $\F\subset R\Modl$ is closed under countable
products and the kernels of surjective morphisms by assumption,
it follows that the left $R$\+module
$\varprojlim_{n\in\omega}\Hom_R(A,F(Q^n(M)))$ belongs to~$\F$.

 Furthermore, the underived projective limits of mutually cofinal
projective systems also agree; so we have an isomorphism of
left $R$\+modules
$$
 \varprojlim\nolimits_{n\in\omega}Q^n(M)\simeq
 \varprojlim\nolimits_{n\in\omega}\Hom_R(A,F(Q^n(M))).
$$
 Since $\varprojlim_{n\in\omega}\Hom_R(A,F(Q^n(M)))\in\F$,
we can conclude that $\varprojlim_{n\in\omega}Q^n(M)\in\F_A$,
as desired.
 This finishes the construction of the special precover sequences
for the pair of classes of left $A$\+modules $\F_A$ and
$\Cof_\omega(\Hom_R(A,\C))$.

 At last, the special preenvelope sequences for the pair of classes
$\F_A$ and $\Cof_\omega(\Hom_R(A,\C))\subset A\Modl$ are produced
from the special precover sequences in the same way as in
the last paragraph of the proof of
Proposition~\ref{finite-resol-dim-approximation-sequences}.
\end{proof}

\begin{thm} \label{product-closed-theorem}
  Let $(\F,\C)$ be a hereditary complete cotorsion pair in $R\Modl$.
 Assume that the left $R$\+module $A$ belongs to~$\F$, and
that the condition~\textup{($\dagger\dagger$)} holds.
 Assume further that the class $\F$ is closed under countable
products in $R\Modl$.
 Then the pair of classes $\F_A$ and
$\C_A=\Cof_\omega(\Hom_R(A,\C))^\oplus$ is a hereditary complete
cotorsion pair in $A\Modl$.
\end{thm}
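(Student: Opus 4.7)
The plan is to mirror precisely the proof of Theorem~\ref{finite-resol-dim-theorem}: the real content is packaged in the preceding Proposition~\ref{product-closed-approximation-sequences}, and the theorem follows by combining that proposition with the direct-summand closure Lemma~\ref{direct-summand-lemma} together with the hereditariness criterion in Lemma~\ref{hereditary-cotorsion-pair-lemma}. So the task reduces to checking that the hypotheses of Proposition~\ref{product-closed-approximation-sequences} are in force and then doing the routine bookkeeping on direct summands and extension closure.

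First I verify that Proposition~\ref{product-closed-approximation-sequences} applies to $(\F,\C)$. Since $(\F,\C)$ is complete, it admits approximation sequences~\textup{(\ref{sp-precover-seq}\+-\ref{sp-preenvelope-seq})}. Since $(\F,\C)$ is hereditary, Lemma~\ref{hereditary-cotorsion-pair-lemma} gives that $\F$ is closed under kernels of surjective morphisms. The assumptions ${}_RA\in\F$, condition~($\dagger\dagger$), and closure of $\F$ under countable products are all hypotheses of the theorem. Thus the proposition yields that the $\Ext^1$\+orthogonal pair $\F_A$ and $\Cof_\omega(\Hom_R(A,\C))$ admits approximation sequences in $A\Modl$.

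Now I invoke Lemma~\ref{direct-summand-lemma} on this pair to conclude that $(\F_A^\oplus,\>\Cof_\omega(\Hom_R(A,\C))^\oplus)$ is a complete cotorsion pair in $A\Modl$. It remains to show $\F_A^\oplus=\F_A$: this is immediate because $\F$ is closed under direct summands in $R\Modl$ (as the left part of a cotorsion pair, $\F={}^{\perp_1}\C$ is automatically so), and a direct summand of an $A$\+module whose underlying $R$\+module lies in $\F$ has underlying $R$\+module that is a direct summand of a module in $\F$, hence lies in $\F$. Thus the cotorsion pair takes the claimed form $(\F_A,\>\Cof_\omega(\Hom_R(A,\C))^\oplus)$.

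Finally, hereditariness: by Lemma~\ref{hereditary-cotorsion-pair-lemma} it is enough to verify that $\F_A$ is closed under kernels of surjections in $A\Modl$. This follows at once from the corresponding closure property of $\F$ in $R\Modl$, since short exact sequences in $A\Modl$ remain short exact after restriction of scalars to $R$. The substantive work was already carried out in the proof of Proposition~\ref{product-closed-approximation-sequences}, where the main obstacle is the argument that the projective limit $\varprojlim_{n\in\omega}Q^n(M)$ lies in $\F_A$; this uses $\varprojlim^1$-vanishing for the Mittag--Leffler system~\eqref{F-Q-projective-system} together with the countable product closure of $\F$ applied to the short exact sequence~\eqref{projlimit-telescope}. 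For the theorem itself, no further work beyond the preceding bookkeeping is needed.
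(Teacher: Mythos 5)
Your proof is correct and follows essentially the same route as the paper's: the paper likewise deduces the theorem from Proposition~\ref{product-closed-approximation-sequences} via Lemma~\ref{direct-summand-lemma}, with the observations that $\F_A^\oplus=\F_A$ and that closure of $\F_A$ under kernels of surjections gives hereditariness (exactly as spelled out in the proof of Theorem~\ref{finite-resol-dim-theorem}, to which the paper refers). Your write-up just makes explicit the bookkeeping the paper leaves implicit.
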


\begin{proof}
 Follows from Proposition~\ref{product-closed-approximation-sequences}
in view of Lemma~\ref{direct-summand-lemma} (cf.\
the proof of Theorem~\ref{finite-resol-dim-theorem}).
\end{proof}

\begin{cor} \label{product-closed-cor}
 For any associative ring homomorphism $R\rarrow A$ and any
hereditary complete cotorsion pair $(\F,\C)$ in $R\Modl$ satisfying
the assumptions of Theorem~\ref{product-closed-theorem}, one has
$\F_A^{\perp_1}=\Cof_\omega(\Hom_R(A,\C))^\oplus$.
 In particular, it follows that\/ $\Cof(\Hom_R(A,\C))^\oplus=
\Cof_\omega(\Hom_R(A,\C))^\oplus$.
\end{cor}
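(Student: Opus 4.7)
The plan is direct: both assertions follow immediately from Theorem~\ref{product-closed-theorem} together with Lemma~\ref{orthogonal-to-coinduced-lemma}(c), so no additional work is required beyond recording the definition of a cotorsion pair and one trivial inclusion. First, Theorem~\ref{product-closed-theorem} asserts that, under the hypotheses in question, the pair $(\F_A,\C_A)$ with $\C_A=\Cof_\omega(\Hom_R(A,\C))^\oplus$ is a (hereditary, complete) cotorsion pair in $A\Modl$. But by the very definition of a cotorsion pair, the right-hand class is the $\Ext^1$-orthogonal complement of the left-hand class, so $\F_A^{\perp_1}=\C_A=\Cof_\omega(\Hom_R(A,\C))^\oplus$. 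This is the first assertion.

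For the second, observe that Lemma~\ref{orthogonal-to-coinduced-lemma}(c) (whose hypothesis $A\in\F$ is given) yields $\Cof(\Hom_R(A,\C))^\oplus\subset\F_A^{\perp_1}$. Combining with the equality just proved gives $\Cof(\Hom_R(A,\C))^\oplus\subset\Cof_\omega(\Hom_R(A,\C))^\oplus$. The reverse inclusion is trivial, since every $\omega$\+cofiltration is a special case of an ordinal-indexed cofiltration, hence $\Cof_\omega(\Hom_R(A,\C))\subset\Cof(\Hom_R(A,\C))$, and passing to direct summands preserves this inclusion.

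There is essentially no obstacle here: all of the substantive content, namely the explicit construction of the inverse system $Q^n(M)$ and the argument that $\varprojlim_n Q^n(M)$ lies in $\F_A$ (using closure of $\F$ under countable products together with the $\varprojlim{}^1$ vanishing in~\eqref{projlimit-telescope}), has already been carried out in Proposition~\ref{product-closed-approximation-sequences} and packaged in Theorem~\ref{product-closed-theorem}. The corollary is merely the observation that the cotorsion pair produced there happens to give a description of an a~priori much larger class, $\Cof(\Hom_R(A,\C))^\oplus$, in terms of countable-length cofiltrations.
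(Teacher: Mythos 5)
Your proof is correct and follows exactly the paper's own route: the first equality is read off from Theorem~\ref{product-closed-theorem} via the definition of a cotorsion pair, and the second follows by combining it with the inclusion $\Cof(\Hom_R(A,\C))^\oplus\subset\F_A^{\perp_1}$ from Lemma~\ref{orthogonal-to-coinduced-lemma}(c) and the trivial reverse containment. Nothing to add.
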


\begin{proof}
 This is a corollary of Theorem~\ref{product-closed-theorem}
and Lemma~\ref{orthogonal-to-coinduced-lemma}(c)
(cf.\ the proof of Corollary~\ref{finite-resol-dim-cor}).
\end{proof}

\begin{rem} \label{semicontramodules-remark}
 As mentioned in Remark~\ref{corings-and-comodules-remark},
the condition~($\dagger\dagger$) appears to be rather restrictive.
 In fact, the construction of
Proposition~\ref{product-closed-approximation-sequences} originates
from the theory of semicontramodules over semialgebras,
as in~\cite[Lemma~3.3.3]{Psemi}, where the natural analogue of this
condition feels much less restrictive, particularly when $\F$ is
simply the class of all projective objects.
 So one can say that the ring $R$ in this
Section~\ref{decreasing-filtrations-subsecn} really ``wants''
to be a coalgebra $C$ (say, over a field~$k$), and accordingly
the ring $A$ becomes a semialgebra $S$ over~$C$.
 The left $R$\+modules ``want'' to be left $C$\+contramodules, and
the left $A$\+modules ``want'' to be left $S$\+semicontramodules.

 Then the coinduction functor, which was $\Hom_R(A,{-})$ in
the condition~($\dagger\dagger$), takes the form of
the functor $\Cohom_C(S,{-})$.
 This one is much more likely to take projective left $C$\+contramodules
to projective left $C$\+contramodules.
 In fact, all projective $C$\+contramodules are direct summands
of the free contramodules $\Hom_k(C,V)$, where $V$ ranges over
$k$\+vector spaces; and one has $\Cohom_C(S,\Hom_k(C,V))\simeq
\Hom_k(S,V)$.
 This is a projective left $C$\+contramodule for any $V$ whenever
the right $C$\+comodule $S$ is injective.
 Besides, the class of all projective contramodules over
a coalgebra over a field is always closed under infinite products;
so the specific assumption of
Section~\ref{decreasing-filtrations-subsecn} is satisfied in
the contramodule context, too.

 To make a ring $R$ behave rather like a coalgebra, one can assume it
to be ``small'' in some sense.
 The following examples are inspired by the analogy with semialgebras
and semicontramodules.
\end{rem}

\begin{exs} \label{semicontramodules-examples}
 Let $\F=R\Modl_\proj$ be the class of all projective left
$R$\+modules; then $\C=R\Modl$ is the class of all left $R$\+modules
(cf.\ Example~\ref{corings-and-comodules-example}).

\smallskip
 (1)~Assume that the ring $R$ is left perfect and right coherent
(e.~g., it suffices that $R$ be right Artinian).
 Then the class of all projective left $R$\+modules is closed under
infinite products~\cite{Bas,Ch}; so the specific assumption of
Section~\ref{decreasing-filtrations-subsecn} is satisfied.

 Furthermore, all flat left $R$\+modules are projective, and
all left $R$\+modules have projective covers~\cite{Bas}.
 Let $J\subset R$ be the Jacobson radical; then the correspondence
$P\longmapsto P/JP$ is a bijection between the isomorphism classes
of projective left $R$\+modules and arbitrary $R/J$\+modules.
 The quotient ring $R/J$ is classically semisimple, so it is
isomorphic to a finite product of simple Artinian rings
$R_1$,~\dots,~$R_m$.
 Denote by $J_i\subset R$ the kernel of the surjective map
$R\rarrow R_i$, \ $1\le i\le m$.
 Then, choosing~$\kappa$ to be a large enough cardinal, one can make
the (semisimple) $R_i$\+module $R^\kappa/J_i(R^\kappa)$
arbitrarily large.
 Therefore, all the projective left $R$\+modules are direct
summands of products of copies of the free left $R$\+module~$R$.

 Assume further that the left $R$\+module $\Hom_R(A,R)$ is projective.
 Then it follows that the functor $\Hom_R(A,{-})$ preserves the class
$\F$ of all projective left $R$\+modules.
 Thus the condition~($\dagger\dagger$) is satisfied.

\medskip
 (2)~Assume that $R$ is a finite-dimensional algebra over a field~$k$
and $R\rarrow A$ is a morphism of $k$\+algebras.
 This is a particular case of~(1), so the above discussion is
applicable.
 Furthermore, we have $\Hom_R(A,R)\simeq\Hom_R(A,R^{**})\simeq
(R^*\ot_RA)^*$, where $V\longmapsto V^*$ denotes the passage to
the dual $k$\+vector space.

 The functor $N\longmapsto N^*$ takes injective right $R$\+modules
to projective left $R$\+modules.
 Thus the condition~($\dagger\dagger$) holds whenever the underlying
right $R$\+module of the right $A$\+module $R^*\ot_RA$ is injective.

\medskip
 (3)~Assume that $R$ is a quasi-Frobenius ring, i.~e., the classes of
injective and projective left $R$\+modules coincide (and the same
holds for right $R$\+modules).
 All such rings $R$ are left and right Artinian, so the discussion
in~(1) is applicable.

 Furthermore, whenever $R$ is quasi-Frobenius,
the condition~($\dagger\dagger$) can be rephrased by saying that
the functor $\Hom_R(A,{-})$ takes injective left $R$\+modules to
injective left $R$\+modules.
 This holds whenever $A$ is a projective right $R$\+module.

\medskip
 The results of Section~\ref{decreasing-filtrations-subsecn} tell us
that, whenever the left $R$\+module $A$ is projective and any one of
the above sets of conditions~(1\+-3) is satisfied,
the $\Ext^1$\+orthogonal pair of classes of left $A$\+modules
$A\Modl_{R\dproj}$ and $\Cof_\omega(\Hom_R(A,R\Modl))$ admits
approximation sequences.
 Consequently, the pair of classes $\F_A=A\Modl_{R\dproj}$ and
$\C_A=\Cof_\omega(\Hom_R(A,R\Modl))^\oplus$ is a hereditary complete
cotorsion pair in $A\Modl$.
 In particular, we have
$$
 (A\Modl_{R\dproj})^{\perp_1}=\Cof_\omega(\Hom_R(A,R\Modl))^\oplus,
$$
and therefore $\Cof(\Hom_R(A,R\Modl))^\oplus=
\Cof_\omega(\Hom_R(A,R\Modl))^\oplus$.
 So the weakly $A/R$\+injective left $A$\+modules are precisely
the direct summands of the $A$\+modules admitting a complete,
separated $\omega$\+indexed decreasing filtration by $A$\+modules
coinduced from left $R$\+modules.
\end{exs}

 The next theorem is a generalization of
Corollary~\ref{product-closed-cor} in which
the condition~($\dagger\dagger$) is replaced by
the condition~($\widetilde{\dagger\dagger}$).

\begin{thm} \label{product-closed-tilde}
 Let $(\F,\C)$ be a hereditary complete cotorsion pair in $R\Modl$.
 Assume that the left $R$\+module $A$ belongs to $\F$, and that
the condition~\textup{($\widetilde{\dagger\dagger}$)} holds.
 Assume further that the class $\F$ is closed under countable
products in $R\Modl$.
 Then the class $\C_A=\F_A^{\perp_1}\subset A\Modl$ can be described
as $\C_A=\Cof_\omega(\Hom_R(A,\C))^\oplus$.
 In particular, we have\/ $\Cof(\Hom_R(A,\C))^\oplus=
\Cof_\omega(\Hom_R(A,\C))^\oplus$.
\end{thm}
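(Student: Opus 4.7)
The plan is to follow the proof of Corollary~\ref{product-closed-cor} step by step, observing that the weaker hypotheses of the present theorem suffice to validate the argument, in exactly the way Theorem~\ref{finite-resol-dim-tilde} generalizes Corollary~\ref{finite-resol-dim-cor}. The inclusion $\Cof_\omega(\Hom_R(A,\C))^\oplus\subset\C_A$ is immediate from Lemma~\ref{orthogonal-to-coinduced-lemma}(c), so I focus on the reverse inclusion. The key observation will be that every intermediate object in the iterative construction of Proposition~\ref{product-closed-approximation-sequences} has its underlying $R$\+module in the coresolving class~$\D$, so that condition~($\widetilde{\dagger\dagger}$) may be applied wherever~($\dagger\dagger$) was previously used.

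Given $N\in\C_A$, I would first embed its underlying $R$\+module into some $C(N)\in\C$ and form the $A$\+module quotient $M=\Hom_R(A,C(N))/N$. Condition~($\widetilde{\dagger\dagger}$) gives ${}_RN\in\D$, and since $\Hom_R(A,C(N))\in\Cof(\Hom_R(A,\C))\subset\C_A$, the same condition gives ${}_R\Hom_R(A,C(N))\in\D$. Because $\D$ is coresolving, it follows that ${}_RM\in\D$.

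Next I would apply the $\omega$\+step construction of Proposition~\ref{product-closed-approximation-sequences} to $M$, producing the projective system~\eqref{Q-projective-system}. By induction on $n$, the underlying $R$\+module of each $Q^n(M)$ remains in $\D$: the sequence $0\to\Hom_R(A,C'(Q^n(M)))\to Q^{n+1}(M)\to Q^n(M)\to 0$ has outer terms in $\D$ (the leftmost because coinduced $A$\+modules from $\C$ lie in $\C_A$, hence in~$\D$), and $\D$ is closed under extensions. Consequently $F(Q^n(M))\in\F\cap\D$, so~($\widetilde{\dagger\dagger}$) yields $\Hom_R(A,F(Q^n(M)))\in\F$; this is the crucial enabling step. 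The inverse-limit argument, exploiting the vanishing of $\varprojlim^1$ together with closure of $\F$ under countable products and kernels of surjective maps, then goes through verbatim, producing a short exact sequence $0\to K\to F'\to M\to 0$ with $F'=\varprojlim_n Q^n(M)\in\F_A$ and $K\in\Cof_\omega(\Hom_R(A,\C))$.

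Finally, a Salce-type pullback $D=\Hom_R(A,C(N))\times_M F'$ yields short exact sequences $0\to K\to D\to\Hom_R(A,C(N))\to 0$ and $0\to N\to D\to F'\to 0$. The first allows one to equip $D$ with an $\omega$\+cofiltration by modules from $\Hom_R(A,\C)$, placing $\Hom_R(A,C(N))$ as the outermost successive quotient and absorbing the $\omega$\+cofiltration of $K$ into the remaining inner steps; the limit term is $D$ because the relevant descending chain of kernels intersects to zero inside $K=\varprojlim_n G_nK$. Thus $D\in\Cof_\omega(\Hom_R(A,\C))$, and since $\Ext^1_A(F',N)=0$ the second sequence splits, making $N$ a direct summand of $D$, so $N\in\Cof_\omega(\Hom_R(A,\C))^\oplus$. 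The main technical obstacle is the inductive $\D$\+tracking through the $\omega$\+iteration; this is exactly the mechanism that makes~($\widetilde{\dagger\dagger}$) a genuine relaxation of~($\dagger\dagger$).
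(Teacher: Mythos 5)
Your proposal is correct and follows exactly the route the paper intends: it replays the construction of Proposition~\ref{product-closed-approximation-sequences} while inductively tracking that every intermediate object has underlying $R$\+module in $\D$, which is precisely the mechanism worked out in the proof of Theorem~\ref{finite-resol-dim-tilde} that the paper's (deliberately terse) proof of this theorem defers to. The only detail worth noting is that your final cofiltration of $D$ is an $\omega$\+cofiltration because prepending one step to an $\omega$\+cofiltration reindexes as $1+\omega=\omega$, which you correctly use.
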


\begin{proof}
 Similar to the proof of Theorem~\ref{finite-resol-dim-tilde}, where
all the essential details have been already worked out.
 One follows the proof of Corollary~\ref{product-closed-cor} step by
step and observes that the assumptions of the present theorem are
sufficient for the validity of the argument.
\end{proof}

\subsection{Combined result on coinduced modules}
\label{combined-coinduced-subsecn}
 In this section we combine the constructions of
Propositions~\ref{finite-resol-dim-approximation-sequences}
and~\ref{product-closed-approximation-sequences} in order to
obtain a more general result under relaxed assumptions.
 Specifically, we assume that all the countable products of modules
from $\F$ have finite $\F$\+resolution dimensions.

\begin{prop} \label{combined-coinduced-approximation-sequences}
 Assume that the\/ $\Ext^1$\+orthogonal pair of classes of
left $R$\+modules $(\F,\C)$ admits approximation
sequences~\textup{(\ref{sp-precover-seq}\+-\ref{sp-preenvelope-seq})}.
 Assume that the left $R$\+module $A$ belongs to~$\F$, and
that the condition~\textup{($\dagger\dagger$)} holds.
 Assume further that the class $\F$ is resolving in $R\Modl$ and
the $\F$\+resolution dimension of any countable product of modules
from $\F$ does not exceed a finite integer~$k\ge0$.
 Then the\/ $\Ext^1$\+orthogonal pair of classes of left $A$\+modules
$\F_A$ and\/ $\Cof_{\omega+k}(\Hom_R(A,\C))$ admits approximation
sequences as well.
 Here $\omega+k$ is the $k$\+th successor ordinal of~$\omega$.
\end{prop}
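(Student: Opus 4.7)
The plan is to splice together the two iteration schemes already used in Propositions~\ref{finite-resol-dim-approximation-sequences} and~\ref{product-closed-approximation-sequences}: first run the $Q$\+construction $\omega$~times, then run it $k$~more times on the resulting projective limit. The key point is that, under the relaxed hypothesis of the present proposition, the object $M_\omega:=\varprojlim_{n\in\omega}Q^n(M)$ is no longer guaranteed to lie in $\F_A$, but only to have $\F$\+resolution dimension $\le k$ as an underlying $R$\+module; the further $k$~iterations are then used to bring the resolution dimension down to zero, exactly as in the finite case.

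First, for an arbitrary left $A$\+module $M$, I iterate the diagram~\eqref{precover-construction-diagram} infinitely many times, producing the tower~\eqref{Q-projective-system} and its cofinal tower~\eqref{F-Q-projective-system}. As in the proof of Proposition~\ref{product-closed-approximation-sequences}, the telescope short exact sequence~\eqref{projlimit-telescope} presents the underlying left $R$\+module of $M_\omega$ as the kernel of a surjective morphism between two copies of $\prod_{n\in\omega}\Hom_R(A,F(Q^n(M)))$. By~($\dagger\dagger$) each factor of this product lies in $\F$, so by the assumption of the proposition the product itself has $\F$\+resolution dimension $\le k$. Since by Lemma~\ref{co-resolution-dimension-defined-classes}(a) the class $\F(k)$ is itself resolving, and in particular closed under kernels of surjective morphisms, it follows that the left $R$\+module $M_\omega$ has $\F$\+resolution dimension $\le k$.

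Now I apply the finite-dimensional construction of Proposition~\ref{finite-resol-dim-approximation-sequences} to the left $A$\+module $M_\omega$: iterating the $Q$\+construction $k$~more times yields a surjective morphism $Q^k(M_\omega)\twoheadrightarrow M_\omega$ with $Q^k(M_\omega)\in\F_A$, whose kernel admits a $k$\+step cofiltration by the left $A$\+modules $\Hom_R(A,C'(Q^j(M_\omega)))\in\Hom_R(A,\C)$ for $0\le j<k$. Composing with $M_\omega\twoheadrightarrow M$, I obtain the desired surjection $Q^k(M_\omega)\twoheadrightarrow M$ from a module in $\F_A$; let $K$ denote its kernel.

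It remains to assemble the $(\omega+k)$\+cofiltration on~$K$. For $0\le i<\omega$ I set $G_iK=\ker(Q^i(M)\to M)$, viewed as a quotient of $K$ via the map $Q^k(M_\omega)\to Q^i(M)$; the successive kernels $\ker(G_{i+1}K\to G_iK)$ are the modules $\Hom_R(A,C'(Q^i(M)))$ from the $\omega$\+part of the construction. The limit step is handled by $G_\omega K=\varprojlim_{i<\omega}G_iK=\ker(M_\omega\to M)$, the identification using the Mittag\+Leffler vanishing of $\varprojlim^1$ for the surjective projective system $\{N_i\}$, exactly as in Proposition~\ref{product-closed-approximation-sequences}. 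For $0\le j\le k$ I then set $G_{\omega+j}K=\ker(Q^j(M_\omega)\to M)$; here $G_{\omega+k}K=K$ and the successive kernels are $\Hom_R(A,C'(Q^j(M_\omega)))$. All successive kernels lie in $\Hom_R(A,\C)$, and all the smoothness/limit conditions of the cofiltration definition hold by construction. This yields the special precover sequence for the pair $(\F_A,\Cof_{\omega+k}(\Hom_R(A,\C)))$. The special preenvelope sequences are then produced from the special precover sequences by the Salce\+type argument recalled in the last paragraphs of the proofs of Propositions~\ref{finite-resol-dim-approximation-sequences} and~\ref{product-closed-approximation-sequences}. The main potential obstacle is the bookkeeping with the $\omega+k$~indexed cofiltration at the ordinal $\omega$, where one must verify that the limit $\varprojlim_{i<\omega}\ker(Q^i(M)\to M)$ is indeed the kernel of $M_\omega\to M$; this is essentially the same vanishing-of-$\varprojlim^1$ argument that underlies Proposition~\ref{product-closed-approximation-sequences}.
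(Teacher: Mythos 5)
Your proposal is correct and follows essentially the same route as the paper: run the $Q$\+construction $\omega$~times, use the telescope sequence~\eqref{projlimit-telescope} together with the hypothesis on countable products and Lemma~\ref{co-resolution-dimension-defined-classes}(a) to see that $\varprojlim_{n}Q^n(M)$ has $\F$\+resolution dimension~$\le k$, then run $k$~more finite iterations and observe that the total kernel is an extension of a module in $\Cof_k(\Hom_R(A,\C))$ by one in $\Cof_\omega(\Hom_R(A,\C))$, hence lies in $\Cof_{\omega+k}(\Hom_R(A,\C))$. Your explicit assembly of the $(\omega+k)$\+cofiltration only spells out what the paper leaves implicit in that last extension step.
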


\begin{proof}
 As in previous proofs, we start with an explicit construction
of special precover sequences for the pair of classes $\F_A$
and $\Cof_{\omega+k}(\Hom_R(A,\C))\subset A\Modl$.

 Let $M$ be a left $A$\+module.
 Proceeding as in the proof of
Proposition~\ref{product-closed-approximation-sequences}, we
construct the $\omega$\+indexed projective system of surjective
morphisms of left $A$\+modules~\eqref{Q-projective-system}.
 The underlying left $R$\+module of the left $A$\+module
$\varprojlim_{n\in\omega}Q^n(M)$ is isomorphic to the projective
limit of the projective system of left
$R$\+modules~\eqref{F-Q-projective-system}, and it can be
described as the leftmost term of the short exact
sequence~\eqref{projlimit-telescope}.

 The left $R$\+modules $\Hom_R(A,F(Q^n(M)))$ belong to $\F$
by~($\dagger\dagger$), so the left $R$\+module
$\prod_{n\in\omega}\Hom_R(A,F(Q^n(M)))$ has $\F$\+resolution
dimension~$\le k$ in our present assumptions.
 By Lemma~\ref{co-resolution-dimension-defined-classes}(a),
it follows that the $\F$\+resolution dimension of (the underlying
left $R$\+module of the left $A$\+module)
$N=\varprojlim_{n\in\omega}Q^n(M)$ does not exceed~$k$.

 Now we apply the construction from the proof of
Proposition~\ref{finite-resol-dim-approximation-sequences}
to the left $A$\+module~$N$, producing the sequence of
surjective morphisms of left $A$\+modules
$$
 N\llarrow Q(N)\llarrow Q(Q(N))\llarrow\dotsb\llarrow Q^k(N).
$$
 Following the argument in the proof of
Proposition~\ref{finite-resol-dim-approximation-sequences}, we have
$Q^k(N)\in\F_A$, since $\rd_\F N\le k$.
 Finally, the kernel of the composition of surjective morphisms
$$
 Q^k(N)\lrarrow N=\varprojlim\nolimits_{n\in\omega} Q^n(M)\lrarrow M
$$
is an extension of the kernels of the morphisms
$Q^k(N)\rarrow N$ and $\varprojlim_{n\in\omega} Q^n(M)\rarrow M$.
 The former kernel belongs to $\Cof_k(\Hom_R(A,\C))$ and
the latter one to $\Cof_\omega(\Hom_R(A,\C))$; thus
the kernel of the morphism $Q^k(N)\rarrow M$ belongs to
$\Cof_{\omega+k}(\Hom_R(A,\C))$.

 We have produced the desired special precover sequences.
 Having these at our disposal, the special preenvelope sequences are
constructed in the same way as in the proofs of
Propositions~\ref{finite-resol-dim-approximation-sequences}
and~\ref{product-closed-approximation-sequences}.
\end{proof}

\begin{thm} \label{combined-coinduced-theorem}
 Let $(\F,\C)$ be a hereditary complete cotorsion pair in $R\Modl$.
 Assume that the left $R$\+module $A$ belongs to~$\F$, and
that the condition~\textup{($\dagger\dagger$)} holds.
 Assume further that the $\F$\+resolution dimension of any
countable product of modules from $\F$ in $R\Modl$ does not
exceed a finite integer $k\ge0$.
 Then the pair of classes $\F_A$ and
$\C_A=\Cof_{\omega+k}(\Hom_R(A,\C))^\oplus$ is a hereditary complete
cotorsion pair in $A\Modl$.
\end{thm}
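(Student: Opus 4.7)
The plan is to deduce the theorem from Proposition~\ref{combined-coinduced-approximation-sequences} in essentially the same mechanical way that Theorems~\ref{finite-resol-dim-theorem} and~\ref{product-closed-theorem} were deduced from their approximation-sequence counterparts. Since $(\F,\C)$ is a hereditary complete cotorsion pair in $R\Modl$, it is an $\Ext^1$\+orthogonal pair admitting approximation sequences, and by Lemma~\ref{hereditary-cotorsion-pair-lemma} the class $\F$ is resolving in $R\Modl$. Together with the standing assumptions ${}_RA\in\F$, the condition~$(\dagger\dagger)$, and the bound $k$ on the $\F$\+resolution dimension of countable products of modules from $\F$, this is exactly the input of Proposition~\ref{combined-coinduced-approximation-sequences}. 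So the pair of classes $\F_A$ and $\Cof_{\omega+k}(\Hom_R(A,\C))\subset A\Modl$ is $\Ext^1$\+orthogonal and admits approximation sequences.

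Next I would invoke Lemma~\ref{direct-summand-lemma} to conclude that $\bigl(\F_A^\oplus,\>\Cof_{\omega+k}(\Hom_R(A,\C))^\oplus\bigr)$ is a complete cotorsion pair in $A\Modl$. To finish the identification of the left class, I would observe that $\F_A$ is already closed under direct summands: since $\F$ is the left class of a cotorsion pair in $R\Modl$, it is closed under direct summands, and membership of an $A$\+module in $\F_A$ depends only on the underlying $R$\+module. Hence $\F_A^\oplus=\F_A$, which puts the cotorsion pair in the form $(\F_A,\C_A)$ with $\C_A=\Cof_{\omega+k}(\Hom_R(A,\C))^\oplus$ claimed in the theorem.

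It remains to verify heredity. By Lemma~\ref{hereditary-cotorsion-pair-lemma} it suffices to check that $\F_A$ is closed under the kernels of surjective morphisms in $A\Modl$. But if $0\to K\to G\to H\to 0$ is a short exact sequence in $A\Modl$ with $G$, $H\in\F_A$, then the underlying sequence of $R$\+modules is exact with outer terms in $\F$, and since $\F$ is resolving in $R\Modl$ the underlying $R$\+module of $K$ belongs to $\F$, i.e., $K\in\F_A$.

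The only step that requires genuine work is the one encapsulated in Proposition~\ref{combined-coinduced-approximation-sequences}, and that proposition has already been established; the present theorem is the formal packaging of its conclusion into the language of hereditary complete cotorsion pairs. The main conceptual point to keep in mind, for the reader, is the structure of the cofiltration of length $\omega+k$: the first $\omega$ steps come from the iterated construction $M\leftarrow Q(M)\leftarrow Q^2(M)\leftarrow\cdots$ whose inverse limit lands in an $A$\+module whose underlying $R$\+module has finite $\F$\+resolution dimension $\le k$, and the remaining $k$ steps then apply the finite construction of Proposition~\ref{finite-resol-dim-approximation-sequences} to that inverse limit, so no additional argument beyond assembling these ingredients is needed here.
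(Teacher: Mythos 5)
Your proposal is correct and follows the paper's own route exactly: the theorem is deduced from Proposition~\ref{combined-coinduced-approximation-sequences} via Lemma~\ref{direct-summand-lemma}, with the observations that $\F_A$ is closed under direct summands and under kernels of surjections (the latter giving heredity) supplied just as in the proofs of Theorems~\ref{finite-resol-dim-theorem} and~\ref{product-closed-theorem}. Your explicit check that $\F$ is resolving via Lemma~\ref{hereditary-cotorsion-pair-lemma}, needed as input to the proposition, is a correct and welcome detail that the paper leaves implicit.
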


\begin{proof}
 Follows from
Proposition~\ref{combined-coinduced-approximation-sequences}
in view of Lemma~\ref{direct-summand-lemma}.
\end{proof}

\begin{cor} \label{combined-coinduced-cor}
 For any associative ring homomorphism $R\rarrow A$ and any
hereditary complete cotorsion pair $(\F,\C)$ in $R\Modl$ satisfying
the assumptions of Theorem~\ref{combined-coinduced-theorem}, one has
$\F_A^{\perp_1}=\Cof_{\omega+k}(\Hom_R(A,\C))^\oplus$.
 In particular, it follows that\/ $\Cof(\Hom_R(A,\C))^\oplus=
\Cof_{\omega+k}(\Hom_R(A,\C))^\oplus$.
\end{cor}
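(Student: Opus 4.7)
The plan is to deduce this corollary as a direct consequence of the preceding theorem, following the exact template used in Corollaries \ref{finite-resol-dim-cor} and \ref{product-closed-cor}. There is essentially no new work to do: all the substance has already been packaged into Theorem \ref{combined-coinduced-theorem}, and the remaining argument is just unwinding definitions and invoking one earlier lemma.

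First I would observe that the first equality $\F_A^{\perp_1}=\Cof_{\omega+k}(\Hom_R(A,\C))^\oplus$ is literally a part of the assertion of Theorem \ref{combined-coinduced-theorem}: that theorem states that $(\F_A,\,\Cof_{\omega+k}(\Hom_R(A,\C))^\oplus)$ is a cotorsion pair in $A\Modl$, and by the definition of a cotorsion pair the right-hand class coincides with the $\Ext^1$-orthogonal $\F_A^{\perp_1}$ to the left-hand class. So nothing more is needed for the first statement.

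For the ``in particular'' part, I would argue by sandwiching. The inclusion $\Cof_{\omega+k}(\Hom_R(A,\C))^\oplus\subset\Cof(\Hom_R(A,\C))^\oplus$ is tautological (it merely enlarges the class of permissible cofiltration lengths). In the opposite direction, Lemma \ref{orthogonal-to-coinduced-lemma}(c), applied to the cotorsion pair $(\F,\C)$ in $R\Modl$ together with the assumption ${}_RA\in\F$, yields $\Cof(\Hom_R(A,\C))^\oplus\subset\F_A^{\perp_1}$. Combining these two inclusions with the first equality we have just proved, namely $\F_A^{\perp_1}=\Cof_{\omega+k}(\Hom_R(A,\C))^\oplus$, forces equality throughout, giving $\Cof(\Hom_R(A,\C))^\oplus=\Cof_{\omega+k}(\Hom_R(A,\C))^\oplus$.

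Because the proof is a pure invocation of prior results, there is no real obstacle; the only thing to watch is that Lemma \ref{orthogonal-to-coinduced-lemma}(c) requires ${}_RA\in\F$, which is among the standing hypotheses inherited from Theorem \ref{combined-coinduced-theorem}, so its application is legitimate.
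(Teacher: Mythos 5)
Your proposal is correct and follows exactly the paper's own argument: the first equality is read off from Theorem~\ref{combined-coinduced-theorem} via the definition of a cotorsion pair, and the second is obtained by sandwiching with Lemma~\ref{orthogonal-to-coinduced-lemma}(c). No gaps.
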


\begin{proof}
 This is a corollary of Theorem~\ref{combined-coinduced-theorem}
and Lemma~\ref{orthogonal-to-coinduced-lemma}(c).
\end{proof}

 For a class of examples to Theorem~\ref{combined-coinduced-theorem}
arising from curved DG\+rings, see
Proposition~\ref{doublestar-contraacyclic-prop} below.

 The final theorem of this section is a generalization of
Corollary~\ref{combined-coinduced-cor} in which
the condition~($\dagger\dagger$) is replaced by
the condition~($\widetilde{\dagger\dagger}$).

\begin{thm} \label{combined-coinduced-tilde}
 Let $(\F,\C)$ be a hereditary complete cotorsion pair in $R\Modl$.
 Assume that the left $R$\+module $A$ belongs to $\F$, and that
the condition~\textup{($\widetilde{\dagger\dagger}$)} holds.
 Assume further that the $\F$\+resolution dimension of any
countable product of modules from $\F$ in $R\Modl$ does not
exceed a finite integer $k\ge0$.
 Then the class $\C_A=\F_A^{\perp_1}\subset A\Modl$ can be described
as $\C_A=\Cof_{\omega+k}(\Hom_R(A,\C))^\oplus$.
 In particular, we have\/ $\Cof(\Hom_R(A,\C))^\oplus=
\Cof_{\omega+k}(\Hom_R(A,\C))^\oplus$.
\end{thm}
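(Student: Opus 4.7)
The plan is to mimic, step by step, the proof of Corollary~\ref{combined-coinduced-cor}, carrying out every construction within the coresolving class $\D$ provided by~($\widetilde{\dagger\dagger}$), exactly in the spirit of how Theorem~\ref{finite-resol-dim-tilde} generalized Corollary~\ref{finite-resol-dim-cor} and Theorem~\ref{product-closed-tilde} generalized Corollary~\ref{product-closed-cor}. The inclusion $\Cof_{\omega+k}(\Hom_R(A,\C))^\oplus\subset\C_A$ is immediate from Lemma~\ref{orthogonal-to-coinduced-lemma}(c). For the converse inclusion, I fix $N\in\C_A$, embed $N$ as an $A$\+submodule of $\Hom_R(A,C(N))$ for some $C(N)\in\C$ (as in the last paragraph of the proof of Proposition~\ref{finite-resol-dim-approximation-sequences}), and set $M:=\Hom_R(A,C(N))/N$. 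The goal is to construct a special precover sequence $0\rarrow D'\rarrow F\rarrow M\rarrow0$ with $F\in\F_A$ and $D'\in\Cof_{\omega+k}(\Hom_R(A,\C))$, exploit $\Ext^1_A(F,N)=0$ to realise $N$ as a direct summand of a module in $\Cof_{\omega+k+1}(\Hom_R(A,\C))$, and then absorb the extra step back into $\Cof_{\omega+k}$ as in the proof of Theorem~\ref{finite-resol-dim-tilde}.

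The heart of the argument is to iterate the $Q$\+construction of diagram~\eqref{precover-construction-diagram} first $\omega$ times, then $k$~further times, while verifying at each stage that the underlying left $R$\+modules remain in $\D$. Since ${}_RN\in\D$ (because $N\in\C_A$) and $C(N)\in\C\subset\D$, one checks that the underlying $R$\+module of $\Hom_R(A,C(N))$ lies in~$\D$ (arguing as in Example~\ref{cotilting-examples}(2), using that ${}_RA\in\F$ and a projective resolution of $A$), and hence ${}_RM\in\D$. Inductively, the special precover sequence $0\rarrow C'(M)\rarrow F(M)\rarrow M\rarrow0$ gives $F(M)\in\F\cap\D$ (by closure of $\D$ under extensions), so $\Hom_R(A,F(M))\in\F$ by~($\widetilde{\dagger\dagger}$), and the short exact sequence $0\rarrow\Hom_R(A,C'(M))\rarrow Q(M)\rarrow M\rarrow0$ keeps ${}_RQ(M)\in\D$. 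Iterating, every $Q^n(M)$ has underlying $R$\+module in $\D$ and every $\Hom_R(A,F(Q^n(M)))$ lies in~$\F$.

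Now I invoke the countable-product hypothesis: the product $\prod_{n\in\omega}\Hom_R(A,F(Q^n(M)))$ has $\F$\+resolution dimension~$\le k$, so by the short exact sequence~\eqref{projlimit-telescope} and Lemma~\ref{co-resolution-dimension-defined-classes}(a), the underlying $R$\+module of $N':=\varprojlim_{n\in\omega}Q^n(M)$ also has $\F$\+resolution dimension~$\le k$; and ${}_RN'\in\D$ as a submodule (up to the countable product) of modules in~$\D$. Applying $k$ more iterations of the $Q$\+construction to $N'$, exactly as in Proposition~\ref{finite-resol-dim-approximation-sequences}, yields $Q^k(N')\in\F_A$, and the kernel of $Q^k(N')\rarrow M$ is an extension of a module in $\Cof_k(\Hom_R(A,\C))$ by one in $\Cof_\omega(\Hom_R(A,\C))$, hence lies in $\Cof_{\omega+k}(\Hom_R(A,\C))$. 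Pushing out against $N\hookrightarrow\Hom_R(A,C(N))$ along the lines of (the ``only if'' direction of) Lemma~\ref{salce-lemma} and using $\Ext^1_A(Q^k(N'),N)=0$ exhibits $N$ as a direct summand of an element of $\Cof_{\omega+k}(\Hom_R(A,\C))$.

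The main obstacle is the bookkeeping verification that every intermediate $A$\+module produced by the construction has its underlying $R$\+module in $\D$, so that~($\widetilde{\dagger\dagger}$) remains applicable at each iteration step and the finite $\F$\+resolution dimension bound on countable products genuinely propagates to $N'$. Once this closure is in hand, the proof is a routine merger of the arguments of Theorems~\ref{finite-resol-dim-tilde} and~\ref{product-closed-tilde} in the format of Corollary~\ref{combined-coinduced-cor}.
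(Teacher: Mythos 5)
Your overall route is exactly the paper's: follow the proof of Corollary~\ref{combined-coinduced-cor} step by step, re-using the bookkeeping from Theorem~\ref{finite-resol-dim-tilde} to keep every intermediate module inside the class~$\D$. Almost everything checks out, but the one place where a genuinely new verification is required is exactly where your justification fails. You need ${}_RN'\in\D$ for $N'=\varprojlim_{n\in\omega}Q^n(M)$ in order to run the remaining $k$ iterations of the $Q$\+construction (so that $F(N')\in\F\cap\D$ and hence $\Hom_R(A,F(N'))\in\F$ by~($\widetilde{\dagger\dagger}$)). You assert this holds because $N'$ is ``a submodule (up to the countable product) of modules in~$\D$.'' But $\D$ is only assumed coresolving: it is closed under extensions and under cokernels of injections, \emph{not} under submodules, and nothing in~($\widetilde{\dagger\dagger}$) gives closure under countable products either. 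So neither step of that one-line argument is available.

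The correct observation (and the only genuinely new ingredient of this proof relative to Theorems~\ref{finite-resol-dim-tilde} and~\ref{product-closed-tilde}) is the following: the kernel of the surjection $\varprojlim_{n\in\omega}Q^n(M)\rarrow M$ belongs to $\Cof_\omega(\Hom_R(A,\C))\subset\C_A$, and by~($\widetilde{\dagger\dagger}$) the underlying $R$\+modules of all modules in $\C_A$ lie in~$\D$; since ${}_RM\in\D$ and $\D$ is closed under extensions, it follows that ${}_RN'\in\D$. With this substitution your argument goes through; the rest (the telescope sequence~\eqref{projlimit-telescope} giving $\rd_\F N'\le k$ via Lemma~\ref{co-resolution-dimension-defined-classes}(a), the final $k$ iterations, and the Salce-type pushout exhibiting $N$ as a direct summand) matches the paper.
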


\begin{proof}
 One follows the proof of Corollary~\ref{combined-coinduced-cor} step by
step and observes that the assumptions of the present theorem are
sufficient for the validity of the argument.
 Almost all the essential details have been worked out already in
the proof of Theorem~\ref{finite-resol-dim-tilde}, and only one
observation remains to be made.

 Let $M$ be a left $A$\+module whose underlying left $R$\+module
belongs to~$\D$.
 Then the underlying left $R$\+module of the left $A$\+module
$\varprojlim_{n\in\omega}Q^n(M)$ also belongs to $\D$, because
the kernel of the surjective $A$\+module morphism
$\varprojlim_{n\in\omega}Q^n(M)\rarrow M$ belongs to
$\Cof_\omega(\Hom_R(A,\C))\subset\C_A$ and the class
$\D\subset R\Modl$ is closed under extensions.
\end{proof}

\Section{Filtrations by Induced Modules} \label{filtrations-secn}

 The setting in this section is dual to that in
Section~\ref{cofiltrations-secn}, and the main results are also dual.
 But the ambient context of the general theory of cotorsion pairs in
module categories, based on the small object argument etc., is
\emph{not} self-dual.
 So we discuss the situation in detail, making both the similarities
and the differences visible.
 
\subsection{Posing the problem}
 Let $R\rarrow A$ be a homomorphism of associative rings, and let
$\C$ be a class of left $R$\+modules.
 Mostly we will assume $\C$ to be the right part of a cotorsion pair
$(\F,\C)$ in $R\Modl$.

 Denote by $\C^A$ the class of all left $A$\+modules whose underlying
$R$\+modules belong to~$\C$.
 Does there exist a cotorsion pair $(\F^A,\C^A)$ in $A\Modl$\,?
 
 Obviously, if the answer to this question is positive, then
the class $\F^A$ can be recovered as $\F^A={}^{\perp_1}\C^A$.
 But can one describe the class $\F^A$ more explicitly?
 
 We start with an easy lemma providing a necessary condition.
 Here, for any ring $S$ and right $S$\+module $E$, we denote
by $E^+$ the left $S$\+module $E^+=\Hom_{\Z}(E,\Q/\Z)$
(which is called the \emph{character module} of~$E$).
 The $S$\+module $E^+$ is always cotorsion in the sense of
Enochs~\cite{En}; in fact, it is even
pure-injective~\cite[Chapter~2]{GT}.
 The left $S$\+module $E^+$ is injective whenever $E$ is a flat
right $S$\+module.
 In particular, the left $S$\+module $S^+$ is injective; in fact,
it is an injective cogenerator of $S\Modl$.

\begin{lem} \label{necessary-A-plus-in-C}
 Assume that $\C^A$ is the right part of a cotorsion pair
$(\F^A,\C^A)$ in $A\Modl$.
 Then the left $R$\+module $A^+$ belongs to~$\C$.
 Consequently, one has $\Tor^R_1(A,F)=0$ for any left $R$\+module
$F\in{}^{\perp_1}\C$.
\end{lem}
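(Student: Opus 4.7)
The plan is to exploit two standard facts: (i) the right class of any cotorsion pair contains all injective modules, and (ii) the Ext--Tor duality via the character module functor $E \mapsto E^+$.

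First I would verify that $A^+$, viewed as a left $A$-module via right multiplication of $A$ on itself, is injective. This follows from the standard adjunction argument: the functor $\Hom_\Z(A,{-})\: \Z\Modl\rarrow A\Modl$ is right adjoint to the (exact) forgetful functor $A\Modl\rarrow\Z\Modl$, so it preserves injective objects, and $\Q/\Z$ is an injective abelian group. Since $(\F^A,\C^A)$ is a cotorsion pair in $A\Modl$, the class $\C^A$ contains every injective left $A$-module; in particular $A^+\in\C^A$. By the very definition of $\C^A$, the underlying left $R$-module of $A^+$ then lies in $\C$, which gives the first assertion.

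For the second assertion, I would invoke the standard natural isomorphism of character modules,
\[
 \Ext^1_R(F,A^+)\;\simeq\;\Tor^R_1(A,F)^+,
\]
valid for any left $R$-module $F$. Now take any $F\in{}^{\perp_1}\C$. Since $A^+\in\C$ by the first part, the left-hand side vanishes, hence $\Tor^R_1(A,F)^+=0$. The character module functor $E\mapsto E^+=\Hom_\Z(E,\Q/\Z)$ is faithful on abelian groups (as $\Q/\Z$ is an injective cogenerator of $\Z\Modl$), so $E^+=0$ forces $E=0$. Therefore $\Tor^R_1(A,F)=0$, as required.

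There is essentially no obstacle here; the only thing to be slightly careful about is the bookkeeping of module structures, namely that $A^+$ carries a left $A$-module structure inherited from the \emph{right} $A$-action on $A$, whose underlying left $R$-module structure (through $R\rarrow A$) is the one referred to in the statement. Once this is set up, the argument is a direct application of the general facts that cotorsion pair right classes contain injectives and that $\Ext$ into a character module computes the character module of $\Tor$.
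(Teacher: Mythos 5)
Your proof is correct and follows essentially the same route as the paper: $A^+$ is an injective left $A$-module, the right class of a cotorsion pair contains all injectives, and the second assertion follows from the Ext--Tor duality $\Ext^1_R(F,A^+)\simeq\Tor^R_1(A,F)^+$ together with faithfulness of the character functor. Your extra care with the adjunction argument for injectivity of $A^+$ and the module-structure bookkeeping only makes explicit what the paper leaves implicit.
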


\begin{proof}
 For any cotorsion pair $(\F^A,\C^A)$ in $A\Modl$, all injective
left $A$\+modules belong to~$\C^A$.
 So, in the situation at hand, the underlying left $R$\+modules
of all injective left $A$\+modules must belong to~$\C$.
 This proves the first assertion.
 The second one follows from the natural isomorphism of abelian
groups $\Tor^R_1(A,F)^+\simeq\Ext_R^1(F,A^+)=0$.
\end{proof}

 The next lemma shows that this condition is also sufficient to
get a cotorsion pair $(\F^A,\C^A)$.
 Given a class of left $R$\+modules $\S$, we denote by $A\ot_R\S$
the class of all left $A$\+modules of the form $A\ot_RS$ with $S\in\S$.

\begin{lem} \label{orthogonal-to-induced-lemma}
 Let $(\F,\C)$ be a cotorsion pair in $R\Modl$ generated by a class
of left $R$\+modules~$\S$.
 Assume that the left $R$\+module $A^+$ belongs to~$\C$.
 Then we have \par
\textup{(a)} $\C^A=(A\ot_R\F)^{\perp_1}=(A\ot_R\S)^{\perp_1}$; \par
\textup{(b)} $({}^{\perp_1}\C^A,\C^A)$ is a cotorsion pair in
$A\Modl$; \par
\textup{(c)} $\Fil(A\ot_R\S)^\oplus\subset\Fil(A\ot_R\F)^\oplus
\subset{}^{\perp_1}\C^A$.
\end{lem}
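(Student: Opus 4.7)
The plan is to mirror the structure of Lemma~\ref{orthogonal-to-coinduced-lemma}, substituting Lemma~\ref{homological-formulas-lemma}(a) for Lemma~\ref{homological-formulas-lemma}(b) and using Lemma~\ref{eklof-lemma} (Eklof) in place of Lemma~\ref{dual-eklof-lemma} (Lukas). The only genuinely new input needed is the vanishing of $\Tor_1^R(A,F)$ for $F\in\F$, which has in fact already been observed in Lemma~\ref{necessary-A-plus-in-C}.

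For part~(a), I first note that since $(\F,\C)$ is generated by $\S$, we have $\S\subset\F={}^{\perp_1}\C$. By the isomorphism $\Tor_1^R(A,F)^+\simeq\Ext^1_R(F,A^+)$ and the hypothesis $A^+\in\C$, the group $\Tor_1^R(A,F)$ vanishes for every $F\in\F$ (and hence for every $F\in\S$). Therefore Lemma~\ref{homological-formulas-lemma}(a), with $n=1$, applies: for every $L\in\F$ (resp.\ $L\in\S$) and every left $A$\+module $D$, there is a natural isomorphism $\Ext_A^1(A\ot_RL,\>D)\simeq\Ext_R^1(L,\>{}_RD)$. Consequently, an $A$\+module $D$ lies in $(A\ot_R\F)^{\perp_1}$ (resp.\ $(A\ot_R\S)^{\perp_1}$) if and only if its underlying $R$\+module lies in $\F^{\perp_1}=\C$ (resp.\ $\S^{\perp_1}=\C$, since $\S$ generates the pair). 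This yields $(A\ot_R\F)^{\perp_1}=\C^A=(A\ot_R\S)^{\perp_1}$.

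Part~(b) is then immediate from~(a): by the definition of a generated cotorsion pair, $({}^{\perp_1}((A\ot_R\S)^{\perp_1}),\>(A\ot_R\S)^{\perp_1})=({}^{\perp_1}\C^A,\C^A)$ is a cotorsion pair in $A\Modl$, in fact the one generated by the class $A\ot_R\S$.

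For part~(c), the outer inclusion $\Fil(A\ot_R\S)^\oplus\subset{}^{\perp_1}\C^A$ follows from~(a) together with the Eklof lemma: we have $\Fil(A\ot_R\S)^{\perp_1}=(A\ot_R\S)^{\perp_1}=\C^A$, hence $\Fil(A\ot_R\S)\subset{}^{\perp_1}\C^A$, and since ${}^{\perp_1}\C^A$ is closed under direct summands, the same holds for $\Fil(A\ot_R\S)^\oplus$. The same argument with $\F$ in place of $\S$ gives $\Fil(A\ot_R\F)^\oplus\subset{}^{\perp_1}\C^A$. The middle inclusion $\Fil(A\ot_R\S)^\oplus\subset\Fil(A\ot_R\F)^\oplus$ is obvious from $\S\subset\F$. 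There is no serious obstacle here: the only substantive point is the Tor-vanishing that unlocks Lemma~\ref{homological-formulas-lemma}(a), and this is already provided by the hypothesis $A^+\in\C$ via the character-module duality.
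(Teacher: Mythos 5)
Your proof is correct and follows essentially the same route as the paper: the Tor-vanishing $\Tor^R_1(A,-)=0$ on $\S$ (or $\F$) extracted from $A^+\in\C$ via character-module duality, Lemma~\ref{homological-formulas-lemma}(a) with $n=1$ for part~(a), the definition of a generated cotorsion pair for part~(b), and the Eklof lemma for part~(c). No gaps.
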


\begin{proof}
 Part~(a): by assumptions, we have $\C=\S^{\perp_1}$ and
$\Ext_R^1(S,A^+)=0$ for all $S\in\S$, hence $\Tor^R_1(A,S)=0$.
 By Lemma~\ref{homological-formulas-lemma}(a) (for $n=1$), it follows
that a left $A$\+module $C$ belongs to $(A\ot_R\S)^{\perp_1}$
if and only if the underlying left $R$\+module of $C$ belongs
to~$\S^{\perp_1}$.
 In particular, this is applicable to $\S=\F$.

 Part~(b): in view of part~(a), $({}^{\perp_1}\C^A,\C^A)$ is
the cotorsion pair in $A\Modl$ generated by the class
$A\ot_R\S$ or $A\ot_R\F$.

 Part~(c) follows from part~(a) and Lemma~\ref{eklof-lemma}.
\end{proof}

 So we have answered our first question, but we want to know more.
 Can one guarantee that the cotorsion pair $(\F^A,\C^A)$ is complete?

\begin{prop} \label{generated-by-induced-prop}
 Let $(\F,\C)$ be a (complete) cotorsion pair in $R\Modl$ generated
by a set of left $R$\+modules $\S$, and let $\C^A$ be the class of
all left $A$\+modules whose underlying left $R$\+modules belong to~$\C$.
 Assume that the left $R$\+module $A^+$ belongs to~$\C$.
 Then there is a complete cotorsion pair $(\F^A,\C^A)$ in $A\Modl$
generated by the set of left $A$\+modules $\S^A=A\ot_R\S$.
 Moreover, one has $\F^A=\Fil(\S^A\cup\{A\})^\oplus$.
\end{prop}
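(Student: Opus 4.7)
The plan is to derive this proposition as a direct application of the Eklof--Trlifaj theorem (Theorem~\ref{eklof-trlifaj-theorem}), once the class $\C^A$ has been identified as the right $\Ext^1$-orthogonal of $\S^A$ via the preceding lemma. First I would note that, since $\S$ is a set of left $R$-modules, the collection $\S^A = A\ot_R\S$ is a set (of isomorphism classes) of left $A$-modules, so that it is legitimate input for Theorem~\ref{eklof-trlifaj-theorem}.

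Next, the hypothesis $A^+\in\C$ is exactly what is needed to invoke Lemma~\ref{orthogonal-to-induced-lemma}(a), which yields
$$
\C^A \;=\; (A\ot_R\F)^{\perp_1} \;=\; (A\ot_R\S)^{\perp_1} \;=\; (\S^A)^{\perp_1}.
$$
By Lemma~\ref{orthogonal-to-induced-lemma}(b) the pair $(\F^A,\C^A) = ({}^{\perp_1}\C^A,\C^A)$ is therefore a cotorsion pair in $A\Modl$, and by construction it is the cotorsion pair generated by the set $\S^A$.

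Finally, I would apply Theorem~\ref{eklof-trlifaj-theorem} to the set $\S^A\subset A\Modl$. Part~(a) immediately delivers completeness of $(\F^A,\C^A)$, and part~(b) gives the explicit description
$$
\F^A \;=\; \Fil(\S^A\cup\{A\})^\oplus,
$$
where $A$ denotes the free left $A$-module of rank one, as required.

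There is no real obstacle to overcome here: all the substantive work is already packaged in the two cited results. Lemma~\ref{orthogonal-to-induced-lemma} absorbs the change-of-rings mechanism (the assumption $A^+\in\C$ forces $\Tor_1^R(A,S)=0$ for all $S\in\S\subset\F$, which is the hypothesis under which the adjunction isomorphism $\Ext_A^1(A\ot_RS,D)\simeq\Ext_R^1(S,D)$ of Lemma~\ref{homological-formulas-lemma}(a) is applicable), and Theorem~\ref{eklof-trlifaj-theorem} supplies both completeness and the filtration description from any set of generators. The proposition is simply the assembly of these two inputs.
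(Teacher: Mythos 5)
Your proposal is correct and follows exactly the same route as the paper: identify $\C^A=(\S^A)^{\perp_1}$ via Lemma~\ref{orthogonal-to-induced-lemma}(a--b) and then quote both parts of Theorem~\ref{eklof-trlifaj-theorem} for completeness and the filtration description. The extra detail you supply (why $A^+\in\C$ makes Lemma~\ref{homological-formulas-lemma}(a) applicable) is a correct unpacking of what the cited lemma already contains.
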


\begin{proof}
 By Lemma~\ref{orthogonal-to-induced-lemma}(a\+b), the desired
cotorsion pair $(\F^A,\C^A)$ is generated by the set~$\S^A$.
 Hence both assertions follow from Theorem~\ref{eklof-trlifaj-theorem}.
\end{proof}

 These observations, based on the general theory of cotorsion pairs
in module categories, essentially answer all the questions above.
 We have a complete cotorsion pair $(\F^A,\C^A)$, and we also have
a description of the class~$\F^A$.
 Still we would like to improve upon these answers a little bit.

 In the rest of Section~\ref{filtrations-secn}, our aim is to show
that, under certain specific assumptions, the class $\F^A$ can be
described as $\F^A=\Fil_\beta(A\ot_R\F)^\oplus$ for rather small
ordinals~$\beta$.
 Besides, even though our assumptions are going to be rather
restrictive, we will \emph{not} assume the cotorsion pair $(\F,\C)$
to be generated by a set.

 For a class of examples of cotorsion pairs like in
Proposition~\ref{generated-by-induced-prop} arising in connection
with $n$\+tilting modules, see
Lemma~\ref{tilting-class-change-of-ring}
and Proposition~\ref{dual-bazzoni-prop} below.

\subsection{Finite filtrations by induced modules}
\label{finite-by-induced-subsecn}
 Let $R\rarrow A$ be a ring homomorphism.
 Suppose that we are given an $\Ext^1$\+orthogonal pair of classes
of left $R$\+modules $\F$ and $\C\subset R\Modl$, and denote by
$\C^A\subset A\Modl$ the class of all left $A$\+modules whose
underlying left $R$\+modules belong to~$\C$.

 For any left $R$\+module $M$, one can consider the left $A$\+module
$A\ot_RM$.
 Sometimes we will also consider the underlying left $R$\+module of
the left $A$\+module $A\ot_RM$.
 That is what we do when formulating the following condition, which
will be a key technical assumption in much of the rest of
Section~\ref{filtrations-secn}:
\begin{itemize}
\item[($\dagger$)] for any left $R$\+module $C\in\C$, the left
$R$\+module $A\ot_RC$ also belongs to~$\C$.
\end{itemize}

 The specific assumption on which the results of this
Section~\ref{finite-by-induced-subsecn} are based is that all left
$R$\+modules have finite $\C$\+coresolution dimension.

\begin{lem} \label{induction-preserves-coresolution-dimension}
 Assume that the\/ $\Ext^1$\+orthogonal pair of classes of
left $R$\+modules $(\F,\C)$ admits special preenvelope
sequences~\eqref{sp-preenvelope-seq}.
 Assume further the left $R$\+module $A^+$ belongs to~$\C$,
the condition~\textup{($\dagger$)} holds, and the class $\C$ is
coresolving in $R\Modl$.
 Let $M$ be a left $R$\+module of $\C$\+coresolution dimension~$\le l$.
 Then the $\C$\+coresolution dimension of the $R$\+module $A\ot_RM$
also does not exceed~$l$.
\end{lem}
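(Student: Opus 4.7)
The plan is to mimic the construction of Lemma~\ref{coinduction-preserves-resolution-dimension}, but dualized: rather than iterating special precovers and applying $\Hom_R(A,{-})$, I iterate special preenvelopes and apply $A\ot_R{-}$. First, starting from $M$, I form a special preenvelope sequence $0\to M\to C^0\to F^1\to 0$ with $C^0\in\C$ and $F^1\in\F$; then a special preenvelope $0\to F^1\to C^1\to F^2\to 0$ for $F^1$; and so on through~$l$ steps. Splicing these short exact sequences yields an exact sequence
\[
 0\lrarrow M\lrarrow C^0\lrarrow C^1\lrarrow\dotsb\lrarrow C^{l-1}\lrarrow F^l\lrarrow 0
\]
with $C^i\in\C$ for $0\le i\le l-1$ and the intermediate cosyzygies $F^1,\dotsc,F^l$ all in~$\F$. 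Since $M$ has $\C$\+coresolution dimension $\le l$ and $\C$ is coresolving, Lemma~\ref{co-resolution-dimension-lemma}(b) forces $F^l\in\C$ as well.

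Next I apply the functor $A\ot_R{-}$ to this exact sequence. To see exactness is preserved, I break it back up into its constituent short exact sequences $0\to K^i\to C^i\to K^{i+1}\to 0$, where $K^0=M$ and $K^i=F^i$ for $1\le i\le l$. By Lemma~\ref{necessary-A-plus-in-C}, the hypothesis $A^+\in\C$ implies $\Tor^R_1(A,F)=0$ for every $F\in\F={}^{\perp_1}\C$; in particular $\Tor^R_1(A,F^i)=0$ for all $1\le i\le l$. Hence the Tor long exact sequence shows that each short piece stays short exact after applying $A\ot_R{-}$. Splicing the results back together gives an exact sequence
\[
 0\lrarrow A\ot_RM\lrarrow A\ot_RC^0\lrarrow\dotsb\lrarrow A\ot_RC^{l-1}\lrarrow A\ot_RF^l\lrarrow 0.
\]

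Finally, condition~($\dagger$) applies to each of $C^0,\dotsc,C^{l-1}$ and also to $F^l$ (which we have shown lies in $\C$), so every term to the right of $A\ot_RM$ in the displayed sequence belongs to~$\C$. This exhibits the desired $\C$\+coresolution of length $l$ of the left $R$\+module $A\ot_RM$, so its $\C$\+coresolution dimension is $\le l$.

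I do not anticipate a real obstacle here: the whole argument is a line-by-line dualization of Lemma~\ref{coinduction-preserves-resolution-dimension}, with the role played there by the hypothesis ${}_RA\in\F$ (ensuring $\Hom_R(A,{-})$ preserves exactness of the relevant short sequences) now played by $A^+\in\C$ (ensuring $\Tor^R_1(A,F)=0$ for $F\in\F$ and hence that $A\ot_R{-}$ preserves exactness of the relevant short sequences). The only point that requires any care is the bookkeeping of the intermediate cosyzygies, to make sure that every cosyzygy against which Tor is evaluated genuinely lies in~$\F$.
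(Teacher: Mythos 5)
Your proposal is correct and follows essentially the same route as the paper's proof: iterate special preenvelopes, splice, invoke Lemma~\ref{co-resolution-dimension-lemma}(b) to get $F^l\in\C$, use $A^+\in\C$ (via Lemma~\ref{necessary-A-plus-in-C}) to see that $A\ot_R{-}$ preserves exactness of the sequence, and apply~($\dagger$) to the terms. The only cosmetic remark is that you write $\F={}^{\perp_1}\C$, whereas only the inclusion $\F\subset{}^{\perp_1}\C$ is assumed; this is all that is needed for the Tor-vanishing step.
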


\begin{proof}
 This is the dual version of
Lemma~\ref{coinduction-preserves-resolution-dimension}.
 Let $0\rarrow M\rarrow C^0\rarrow F^1\rarrow0$ be a special
preenvelope sequence~\eqref{sp-preenvelope-seq} for the left
$R$\+module $M$; so $C^0\in\C$ and $F^1\in\F$.
 Consider a special preenvelope sequence $0\rarrow F^1\rarrow C^1
\rarrow F^2\rarrow0$ for the left $R$\+module $F^1$, etc.
 Proceeding in this way, we construct an exact sequence of left
$R$\+modules $0\rarrow M\rarrow C^0\rarrow C^1\rarrow\dotsb\rarrow
C^{l-2}\rarrow C^{l-1}\rarrow F^l\rarrow0$, in which
$C^i\in\C$ for all $0\le i\le l-1$, \ $F^l\in\F$, and
the image $F^i$ of the morphism $C^{i-1}\rarrow C^i$ belongs to $\F$
for all $1\le i\le l-1$.
 Since the $\C$\+coresolution dimension of $M$ does not exceed~$l$
by assumption, by Lemma~\ref{co-resolution-dimension-lemma}(b)
it follows that $F^l\in\C$.
 Since $\Tor^R_1(A,F)=0$ for all $F\in F$
(see Lemma~\ref{necessary-A-plus-in-C}), our sequence remains exact
after applying the functor $A\ot_R{-}$.
 The resulting exact sequence is the desired coresolution of length~$l$
of the left $R$\+module $A\ot_RM$ by modules from~$\C$.
\end{proof}

\begin{prop} \label{finite-coresol-dim-approximation-sequences}
 Assume that the\/ $\Ext^1$\+orthogonal pair of classes of
left $R$\+modules $(\F,\C)$ admits approximation
sequences~\textup{(\ref{sp-precover-seq}\+-\ref{sp-preenvelope-seq})}.
 Assume that the left $R$\+module $A^+$ belongs to~$\C$, and
that the condition~\textup{($\dagger$)} holds.
 Assume further that the class $\C$ is coresolving in $R\Modl$ and
the $\C$\+coresolution dimension of any left $R$\+module does not
exceed a finite integer~$k\ge0$.
 Then the\/ $\Ext^1$\+orthogonal pair of classes of left $A$\+modules\/
$\Fil_{k+1}(A\ot_R\F)$ and $\C^A$ admits approximation sequences
as well.
 Here the integer~$k+1$ is considered as a finite ordinal.
\end{prop}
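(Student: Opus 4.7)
The plan is to dualize the proof of Proposition~\ref{finite-resol-dim-approximation-sequences} step by step, replacing every pullback, injective map, and coinduction functor by its pushout, surjective map, and induction counterpart. The $\Ext^1$\+orthogonality of $\Fil_{k+1}(A\otimes_R\F)$ with $\C^A$ comes almost for free: the assumption $A^+\in\C$ gives $\Ext_R^1(F,A^+)=0$ and hence $\Tor^R_1(A,F)=0$ for every $F\in\F$ (since $\Tor^R_1(A,F)^+\simeq\Ext^1_R(F,A^+)$), so Lemma~\ref{homological-formulas-lemma}(a) furnishes isomorphisms $\Ext^1_A(A\otimes_R F,C)\simeq\Ext^1_R(F,C)=0$ for $F\in\F$ and $C\in\C^A$, and Lemma~\ref{eklof-lemma} extends this vanishing to all of $\Fil(A\otimes_R\F)$.

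The central construction is the pushout dual to the diagram~\eqref{precover-construction-diagram}. For a left $A$\+module $N$, the counit morphism $\mu_N\:A\otimes_R N\to N$ is a surjective $A$\+module map with an $R$\+linear section $\sigma_N\:n\mapsto 1\otimes n$, so the $R$\+module sequence $0\to N\to A\otimes_R N\to D_N\to 0$ is exact and split, where $D_N:=\ker\mu_N$. Pick a special preenvelope $0\to N\to C(N)\to F'(N)\to 0$ in $R\Modl$ with $C(N)\in\C$ and $F'(N)\in\F$; the map $A\otimes_R N\to A\otimes_R C(N)$ is injective because $\Tor^R_1(A,F'(N))=0$. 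Define $P(N)$ to be the pushout of $\mu_N$ along this injection. A direct diagram chase then produces two short exact sequences of $A$\+modules
\[
0\to N\to P(N)\to A\otimes_R F'(N)\to 0,\qquad 0\to D_N\to A\otimes_R C(N)\to P(N)\to 0,
\]
with $A\otimes_R F'(N)\in A\otimes_R\F$ and, by~($\dagger$), $A\otimes_R C(N)\in\C$.

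The crucial claim is that $\cd_\C P(N)\le\cd_\C N-1$ whenever $\cd_\C N\ge 1$. Lemma~\ref{induction-preserves-coresolution-dimension} gives $\cd_\C(A\otimes_R N)\le\cd_\C N$, and the $R$\+split exact sequence $0\to N\to A\otimes_R N\to D_N\to 0$ together with the fact that $\C(\cd_\C N)$ is coresolving (Lemma~\ref{co-resolution-dimension-defined-classes}(b), i.\,e.\ closed under cokernels of injective morphisms) then yields $\cd_\C D_N\le\cd_\C N$. A standard dimension-shift argument applied to the second pushout sequence---splice a $\C$\+coresolution of $D_N$ with $A\otimes_R C(N)\in\C$, using closure of $\C$ under cokernels of injections---gives $\cd_\C P(N)\le\cd_\C D_N-1\le\cd_\C N-1$. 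Iterating $P$ exactly $k$~times produces an ascending chain $N\hookrightarrow P(N)\hookrightarrow\dotsb\hookrightarrow P^k(N)$ with $P^k(N)\in\C^A$, whose composite cokernel carries a $k$\+step filtration by the modules $A\otimes_R F'(P^i(N))\in A\otimes_R\F$, and thus lies in $\Fil_k(A\otimes_R\F)\subset\Fil_{k+1}(A\otimes_R\F)$; this is the desired special preenvelope sequence. Special precover sequences are then obtained from these special preenvelopes via the ``only if'' half of Lemma~\ref{salce-lemma}: every left $A$\+module $M$ is a quotient of $A\otimes_R F_M$ for some $F_M\in\F$ surjecting onto ${}_R M$, and taking the pushout along the preenvelope sequence for the kernel prepends one extra $A\otimes_R\F$\+layer to the $k$\+step filtration, landing in $\Fil_{k+1}(A\otimes_R\F)$. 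I expect the main obstacle to be the careful verification of the strict drop $\cd_\C P(N)\le\cd_\C N-1$, which requires executing the dual splicing argument delicately, exploiting the coresolving nature of both $\C$ and the derived classes $\C(d)$.
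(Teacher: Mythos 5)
Your proposal is correct and follows essentially the same route as the paper: the pushout module you call $P(N)$ is exactly the paper's $W(N)$ from diagram~\eqref{preenvelope-construction-diagram}, the dimension drop $\cd_\C W(N)<\cd_\C N$ is established by the same combination of Lemma~\ref{induction-preserves-coresolution-dimension}, Lemma~\ref{co-resolution-dimension-defined-classes}(b) and dimension shifting, and the precover sequences are then produced by the same Salce-type pushout. The only (harmless) slip is that the final step uses the ``if'' half of Lemma~\ref{salce-lemma}, not the ``only if'' half; the construction you describe is the right one.
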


\begin{proof}
 The pair of classes $\Fil(A\ot_R\F)$ and $\C^A\subset A\Modl$ is
$\Ext^1$\+orthogonal by Lemma~\ref{orthogonal-to-induced-lemma}(c).
 Let us show by explicit construction that the pair of classes
$\Fil_k(A\ot_R\nobreak\F)$ and $\C^A$ admits special preenvelope
sequences.
 The construction below goes back to~\cite[Lemma~3.1.3(b)]{Psemi}.

 Let $N$ be a left $A$\+module.
 Then there is a natural (adjunction) morphism of left $A$\+modules
$\pi_N\:A\ot_RN\rarrow N$ defined by the formula $\pi_N(a\ot n)=an$
for every $a\in A$ and $n\in N$.
 The map~$\pi_N$ is always surjective.
 Moreover, viewed as a morphism of left $R$\+modules, $\pi_N$~is
a split epimorphism.
 Indeed, the map $\epsilon_N\:N\rarrow A\ot_RN$ taking every element
$n\in N$ to the element $\epsilon(n)=1\ot n\in A\ot_RN$ is a left
$R$\+module morphism for which the composition $\pi_N\circ\epsilon_N$
is the identity map, $\pi_N\circ\epsilon_N=\id_N$.

 Consider the underlying left $R$\+module of $N$, and choose
a special preenvelope sequence $0\rarrow N\rarrow C(N)\rarrow F'(N)
\rarrow0$ in $R\Modl$ with $C(N)\in\C$ and $F'(N)\in\F$.
 Then we have $\Tor^R_1(A,F'(N))=0$
(see Lemma~\ref{necessary-A-plus-in-C}), so the morphism of left
$A$\+modules $A\ot_RN\rarrow A\ot_RC(N)$ induced from the injective
left $R$\+module map $N\rarrow C(N)$ is injective.
 Denote by $W(N)$ the pushout (or in other words, the fibered
coproduct) of the pair of left $A$\+module morphisms
$A\ot_RN\rarrow N$ and $A\ot_RN\rarrow A\ot_RC(N)$.

 We have a commutative diagram of left $A$\+module morphisms, in
which the four short sequences are exact:
\begin{equation} \label{preenvelope-construction-diagram}
\begin{gathered}
 \xymatrix{
  & & 0 \ar[d] & 0 \ar[d] \\
  0 \ar[r] & \ker(\pi_N) \ar[r]\ar@{=}[dd]
  & A\ot_RN \ar[r]^-{\pi_N} \ar[dd] & N \ar[r]\ar[dd] & 0 \\ \\
  0 \ar[r] & \ker(\pi_N) \ar[r] & A\ot_RC(N) \ar[r] \ar[dd]
  & W(N) \ar[r] \ar[dd] & 0 \\ \\
  & & A\ot_RF'(N) \ar@{=}[r] \ar[d] & A\ot_R F'(N) \ar[d] \\
  & & 0 & 0
 }
\end{gathered}
\end{equation}

 Introduce the notation $\cd_\C M$ for the $\C$\+coresolution dimension
of a left $R$\+mod\-ule~$M$.
 We will apply the same notation to left $A$\+modules, presuming that
the $\C$\+coresolution dimension of the underlying $R$\+module is taken.

 Next we observe that, whenever $0<\cd_\C N<\infty$,
the $\C$\+coresolution dimension of the underlying left $R$\+module
of the left $A$\+module $W(N)$ is strictly smaller than
the $\C$\+coresolution dimension of the underlying $R$\+module of
the $A$\+module $N$, i.~e., $\cd_\C W(N)<\cd_\C(N)$.
 Indeed, the short exact sequence of left $A$\+modules $0\rarrow
\ker(\pi_N)\rarrow A\ot_RN\rarrow N\rarrow0$ splits over $R$, or
in other words, the underlying left $R$\+module of $\ker(\pi_N)$ can be
presented as the cokernel of the injective left $R$\+module morphism
$\epsilon_N\:N\rarrow A\ot_RN$.
 By Lemmas~\ref{induction-preserves-coresolution-dimension}
and~\ref{co-resolution-dimension-defined-classes}(b), we have
$\cd_\C\ker(\pi_N)\le\cd_\C N$.
 Since $A\ot_RC(N)\in\C$, it follows from the short exact sequence
$0\rarrow\ker(\pi_N)\rarrow A\ot_RC(N)\rarrow W(N)\rarrow0$ that
$\cd_\C W(N)<\cd_\C N$.

 It remains to iterate our construction, producing a sequence of
injective morphisms of left $A$\+modules
$$
 N\lrarrow W(N)\lrarrow W(W(N))\lrarrow W^3(N)\lrarrow\dotsb
 \lrarrow W^k(N).
$$
Since $\cd_\C(N)\le k$ by assumption, it follows from the above
argument that $\cd_\C W^k(N)\le 0$, that is $W^k(N)\in\C$.

 The cokernel of the injective morphism $N\rarrow W^k(N)$ is filtered
by the cokernels of the injective $A$\+module morphisms
$N\rarrow W(N)$, \ $W(N)\rarrow W^2(N)$, \dots,
$W^{k-1}(N)\rarrow W^k(N)$.
 These are the left $A$\+modules $A\ot_RF'(N)$, \ $A\ot_RF'(W(N))$, \
$A\ot_RF'(W^2(N))$, \dots, $A\ot_RF'(W^{k-1}(N))$.
 We have constructed the desired special preenvelope sequence for
the pair of classes $\Fil_k(A\ot_R\F)$ and~$\C^A$.

 Finally, any left $R$\+module $M$ is a quotient module of
an $R$\+module $F(M)\in\F$, since a special precover sequence with
respect to $(\F,\C)$ exists for $M$ by assumption.
 If $M$ is a left $A$\+module, then the map~$\pi_M$ presents $M$ as
a quotient module of the left $A$\+module $A\ot_RM$, which is
a quotient module of the left $A$\+module $A\ot_RF(M)$.
 Thus $M$ is a quotient $A$\+module of $A\ot_RF(M)$.
 Following the proof of (the ``if'' implication in)
Lemma~\ref{salce-lemma}, we conclude that the pair of classes
$\Fil_{k+1}(A\ot_R\F)$ and $\C^A$ admits special precover sequences.
\end{proof}

\begin{thm} \label{finite-coresol-dim-theorem}
 Let $(\F,\C)$ be a hereditary complete cotorsion pair in $R\Modl$.
 Assume that the left $R$\+module $A^+$ belongs to~$\C$, and
that the condition~\textup{($\dagger$)} holds.
 Assume further that the $\C$\+coresolution dimension of any left
$R$\+module does not exceed a finite integer~$k\ge0$.
 Then the pair of classes $\F^A=\Fil_{k+1}(A\ot_R\F)^\oplus$ and
$\C^A$ is a hereditary complete cotorsion pair in $A\Modl$.
\end{thm}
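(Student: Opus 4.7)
The plan is to follow the template of the proof of Theorem~\ref{finite-resol-dim-theorem}: invoke Proposition~\ref{finite-coresol-dim-approximation-sequences} to produce approximation sequences for the $\Ext^1$-orthogonal pair of classes $(\Fil_{k+1}(A\ot_R\F),\,\C^A)$ in $A\Modl$, and then apply Lemma~\ref{direct-summand-lemma} to upgrade this to a complete cotorsion pair after passing to direct summands.

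First I would verify the hypotheses of Proposition~\ref{finite-coresol-dim-approximation-sequences}. Since $(\F,\C)$ is a hereditary complete cotorsion pair in $R\Modl$, it is in particular an $\Ext^1$-orthogonal pair admitting approximation sequences, and by Lemma~\ref{hereditary-cotorsion-pair-lemma} the class $\C$ is coresolving in $R\Modl$. The remaining hypotheses $A^+\in\C$, condition~($\dagger$), and the uniform bound $k$ on the $\C$-coresolution dimension of every left $R$-module are assumed. The proposition then yields that the $\Ext^1$-orthogonal pair $\Fil_{k+1}(A\ot_R\F)$ and $\C^A$ admits approximation sequences in $A\Modl$. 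Applying Lemma~\ref{direct-summand-lemma} to this pair gives a complete cotorsion pair $(\Fil_{k+1}(A\ot_R\F)^\oplus,\,(\C^A)^\oplus)$. Because $\C$ is closed under direct summands, being the right class of a cotorsion pair in $R\Modl$, the class $\C^A$ is closed under direct summands as well, so $(\C^A)^\oplus=\C^A$. This produces the cotorsion pair $(\F^A,\C^A)$ of the statement.

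To establish hereditariness, I would appeal to Lemma~\ref{hereditary-cotorsion-pair-lemma} and check that $\C^A$ is closed under cokernels of injective morphisms in $A\Modl$. This is immediate from the definition of $\C^A$: an injection of left $A$-modules is also an injection of the underlying left $R$-modules, its cokernel computed in $A\Modl$ agrees with its cokernel computed in $R\Modl$, and $\C$ is coresolving. No serious obstacle arises at the level of this deduction; the real content of the theorem has already been absorbed into Proposition~\ref{finite-coresol-dim-approximation-sequences}, via the iterated pushout diagram~\eqref{preenvelope-construction-diagram} and the dimension-reduction step guaranteed by Lemma~\ref{induction-preserves-coresolution-dimension}.
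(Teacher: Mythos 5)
Your proposal is correct and follows exactly the paper's own argument: the paper likewise derives the theorem from Proposition~\ref{finite-coresol-dim-approximation-sequences} together with Lemma~\ref{direct-summand-lemma}, observing that $\C^A$ is closed under direct summands and under cokernels of injective morphisms because $\C$ is. Your additional remarks (that restriction of scalars preserves cokernels of injections, and that $(\C^A)^\oplus=\C^A$) merely spell out details the paper leaves implicit.
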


\begin{proof}
 The class $\C^A$ is closed under direct summands and the cokernels
of injective morphisms, since the class $\C$~is.
 Thus the assertion of the theorem follows from
Proposition~\ref{finite-coresol-dim-approximation-sequences}
in view of Lemma~\ref{direct-summand-lemma}.
\end{proof}

\begin{cor} \label{finite-coresol-dim-cor}
 For any associative ring homomorphism $R\rarrow A$ and any
hereditary complete cotorsion pair $(\F,\C)$ in $R\Modl$ satisfying
the assumptions of Theorem~\ref{finite-coresol-dim-theorem}, one has
${}^{\perp_1}\C^A=\Fil_{k+1}(A\ot_R\F)^\oplus$.
 In particular, it follows that\/ $\Fil(A\ot_R\F)^\oplus=
\Fil_{k+1}(A\ot_R\F)^\oplus$.
\end{cor}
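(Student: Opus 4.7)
The plan is to derive this corollary directly from the preceding Theorem~\ref{finite-coresol-dim-theorem} together with Lemma~\ref{orthogonal-to-induced-lemma}(c), mimicking the short argument used for Corollary~\ref{finite-resol-dim-cor} in the dual (coinduction) setting.

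First I would establish the first assertion ${}^{\perp_1}\C^A=\Fil_{k+1}(A\ot_R\F)^\oplus$. By Theorem~\ref{finite-coresol-dim-theorem}, under the given hypotheses the pair of classes $(\Fil_{k+1}(A\ot_R\F)^\oplus,\,\C^A)$ is a (hereditary, complete) cotorsion pair in $A\Modl$. In any cotorsion pair $(\F^A,\C^A)$ one has $\F^A={}^{\perp_1}\C^A$ by the very definition of a cotorsion pair, so setting $\F^A:=\Fil_{k+1}(A\ot_R\F)^\oplus$ yields the claimed equality.

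For the second assertion, I would use Lemma~\ref{orthogonal-to-induced-lemma}(c), which (applied to the class $\S=\F$ generating the cotorsion pair $(\F,\C)$, and using that $A^+\in\C$ follows from hypothesis $(\dagger)$ together with $A=A\ot_RR$ and Lemma~\ref{necessary-A-plus-in-C}\,--- or more simply, is directly guaranteed by the hypotheses of Theorem~\ref{finite-coresol-dim-theorem} which include $A^+\in\C$) gives the inclusion $\Fil(A\ot_R\F)^\oplus\subset{}^{\perp_1}\C^A$. Since trivially $\Fil_{k+1}(A\ot_R\F)^\oplus\subset\Fil(A\ot_R\F)^\oplus$, combining with the first assertion one obtains the chain
\[
\Fil_{k+1}(A\ot_R\F)^\oplus\ \subset\ \Fil(A\ot_R\F)^\oplus\ \subset\ {}^{\perp_1}\C^A\ =\ \Fil_{k+1}(A\ot_R\F)^\oplus,
\]
forcing equality throughout; in particular $\Fil(A\ot_R\F)^\oplus=\Fil_{k+1}(A\ot_R\F)^\oplus$.

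There is no real obstacle here: all the substantive work --- the explicit construction of special preenvelope sequences via the pushout diagram~\eqref{preenvelope-construction-diagram}, the dimension-reduction step $\cd_\C W(N)<\cd_\C N$, and the passage to direct summands via Lemma~\ref{direct-summand-lemma} --- has already been carried out in Proposition~\ref{finite-coresol-dim-approximation-sequences} and Theorem~\ref{finite-coresol-dim-theorem}. The corollary is a purely formal consequence, whose only content beyond the theorem is the observation that the Eklof-lemma inclusion $\Fil(A\ot_R\F)^\oplus\subset{}^{\perp_1}\C^A$ must collapse to an equality with its finitely-filtered subclass, a dual-side analogue of the coinduction statement noted after Corollary~\ref{finite-resol-dim-cor}.
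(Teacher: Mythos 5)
Your proof is correct and follows essentially the same route as the paper: the first equality is read off from Theorem~\ref{finite-coresol-dim-theorem} (since in a cotorsion pair the left class is ${}^{\perp_1}$ of the right class), and the second follows by sandwiching $\Fil(A\ot_R\F)^\oplus$ between $\Fil_{k+1}(A\ot_R\F)^\oplus$ and ${}^{\perp_1}\C^A$ using Lemma~\ref{orthogonal-to-induced-lemma}(c). The parenthetical detour about deriving $A^+\in\C$ is unnecessary (it is already a hypothesis of the theorem, as you note), but harmless.
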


\begin{proof}
 The first assertion is a part of
Theorem~\ref{finite-coresol-dim-theorem}.
 The second assertion follows from the first one together with
Lemma~\ref{orthogonal-to-induced-lemma}(c).
\end{proof}

\begin{rem} \label{corings-and-contramodules-remark}
 The condition~($\dagger$) appears to be rather restrictive.
 In fact, the construction of
Proposition~\ref{finite-coresol-dim-approximation-sequences}
originates from the theory of contramodules over corings,
as in~\cite[Lemma~3.1.3(b)]{Psemi}, where the natural analogue of
this condition feels much less restrictive, particularly when $\C$
is simply the class of all injective left $R$\+modules.
 So one can say that the ring $A$ in this
Section~\ref{finite-by-induced-subsecn} really ``wants'' to be
a coring $C$ over $R$, and the left $A$\+modules ``want'' to be
left $C$\+contramodules.
 Then the induction functor, which was the tensor product $A\ot_R{-}$
in the condition~($\dagger$), takes the form of the Hom functor
$\Hom_R(C,{-})$.
 This one is much more likely to take injective left $R$\+modules to
injective left $R$\+modules (it suffices that $C$ be a flat right
$R$\+module).
 To make a ring $A$ behave rather like a coring, one can assume it
to be ``small'' relative to $R$ in some sense.
 The following example is inspired by the analogy with corings and
contramodules.
\end{rem}

\begin{ex} \label{corings-and-contramodules-example}
 Let $\C=R\Modl_\inj$ be the class of all injective left $R$\+modules.
 Then $\F=R\Modl$ is the class of all left $R$\+modules, and
$\C^A=A\Modl_{R\dinj}$ is the class of all left $A$\+modules whose
underlying $R$\+modules are injective.
 In the terminology of~\cite[Sections~4.1 and~4.3]{BP}
and~\cite[Section~5]{Pfp}, the left $A$\+modules from the related
class $\F^A={}^{\perp_1}\C^A$ would be called \emph{weakly
projective relative to~$R$} or \emph{weakly $A/R$\+projective}.

 For $\C=R\Modl_\inj$, the necessary condition of
Lemma~\ref{necessary-A-plus-in-C} says that $A^+$ must be an injective
left $R$\+module; equivalently, this means that $A$ is a flat right
$R$\+module.
 Assume that $A$ is a finitely generated projective right $R$\+module;
then the functor $A\ot_R{-}$ preserves infinite products.
 Assume further that there exists an injective cogenerator $I$ of
the category of left $R$\+modules such that the left $R$\+module
$A\ot_RI$ is injective.
 Under the above assumption, this is equivalent to the condition that
the right $R$\+module $\Hom_{R^{\mathrm{op}}}(A,R)$ is flat.
 Then it follows that the functor $A\ot_R{-}$ preserves the class
$\C$ of all injective left $R$\+modules.
 Thus the condition~($\dagger$) is satisfied.

 The results of Section~\ref{finite-by-induced-subsecn} tell us that,
whenever the left homological dimension of the ring $R$ is a finite
number~$k$ and the assumptions in the previous paragraph hold,
the $\Ext^1$\+orthogonal pair of classes of left $A$\+modules
$\Fil_{k+1}(A\ot_RR\Modl)$ and $A\Modl_{R\dinj}$ admits
approximation sequences.
 Consequently, the pair of classes
$\F^A=\Fil_{k+1}(A\ot_RR\Modl)^\oplus$ and $\C^A=A\Modl_{R\dinj}$
is a hereditary complete cotorsion pair in $A\Modl$.
 In particular, we have
$$
 {}^{\perp_1} A\Modl_{R\dinj}=\Fil_{k+1}(A\ot_RR\Modl)^\oplus
$$
and $\Fil(A\ot_RR\Modl)^\oplus=\Fil_{k+1}(A\ot_RR\Modl)^\oplus$.
 So the weakly $A/R$\+projective left $A$\+modules are precisely
the direct summands of the $A$\+modules admitting a finite
$(k+1)$\+step filtration by $A$\+modules induced from left $R$\+modules.

 The reader can find a discussion of the related results for corings
and contramodules (of which this example is a particular case)
in~\cite[Lemma~3.11(b)]{Prev}.
\end{ex}

 For a class of examples to Theorem~\ref{finite-coresol-dim-theorem}
arising in connection with $n$\+tilting modules, see
Example~\ref{tilting-examples}(1) below.
 For a class of examples to the same theorem arising from curved
DG\+rings, see Proposition~\ref{finite-homol-dim-coacyclic}.

 One problem with the condition~($\dagger$) is that it mentions
the underived tensor product $A\ot_RC$.
 The groups $\Tor^R_i(A,C)$ with $i>0$ are a potential source of
problems, but they are ignored in the formulation of the condition.
 Yet there is no reason to expect these $\Tor$ groups to vanish for
all modules $C\in\C$.

 Therefore, one may want to restrict~($\dagger$) to some subclass of
the class $\C$, consisting of modules for which the functor
$A\ot_R{-}$ is better behaved.
 One can do so by considering the following condition:
\begin{itemize}
\item[($\widetilde\dagger$)] there exists a resolving class
$\G\subset R\Modl$ such that $\F\subset\G$, the underlying left
$R$\+modules of all the left $A$\+modules from $\F^A={}^{\perp_1}\C^A$
belong to~$\G$, and the left $R$\+module $A\ot_RC$ belongs to $\C$
for every left $R$\+module $C\in\C\cap\G$.
\end{itemize}

 Taking $\G=R\Modl$, one recovers~($\dagger$) as a particular case
of~($\widetilde\dagger$).

\begin{thm} \label{finite-coresol-dim-tilde}
 Let $(\F,\C)$ be a hereditary complete cotorsion pair in $R\Modl$.
 Assume that the left $R$\+module $A^+$ belongs to~$\C$, and
that the condition~\textup{($\widetilde\dagger$)} holds.
 Assume further that the $\C$\+coresolution dimension of any left
$R$\+module does not exceed a finite integer~$k\ge0$.
 Then the class $\F^A={}^{\perp_1}\C^A\subset A\Modl$ can be described
as $\F^A=\Fil_{k+1}(A\ot_R\F)^\oplus$.
 In particular, we have\/ $\Fil(A\ot_R\F)^\oplus=
\Fil_{k+1}(A\ot_R\F)^\oplus$.
\end{thm}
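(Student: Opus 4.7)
The strategy is to follow the proof of Corollary~\ref{finite-coresol-dim-cor} step by step, performing all the constructions of Proposition~\ref{finite-coresol-dim-approximation-sequences} inside the class $\G\subset R\Modl$ (and the class of left $A$\+modules whose underlying $R$\+modules lie in $\G$) so that the weaker condition~($\widetilde\dagger$) suffices wherever~($\dagger$) was invoked. The inclusion $\Fil_{k+1}(A\ot_R\F)^\oplus\subset\F^A$ is immediate from Lemma~\ref{orthogonal-to-induced-lemma}(c) together with closure of $\F^A={}^{\perp_1}\C^A$ under direct summands.

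For the reverse inclusion, given $M\in\F^A$, I plan to construct a short exact sequence $0\rarrow C\rarrow H\rarrow M\rarrow 0$ in $A\Modl$ with $H\in\Fil_{k+1}(A\ot_R\F)$ and $C\in\C^A$; since $M\in{}^{\perp_1}\C^A$, this sequence will split and exhibit $M$ as a direct summand of $H$, proving $M\in\Fil_{k+1}(A\ot_R\F)^\oplus$. To build it, take a surjection $E=A\ot_R F(M)\twoheadrightarrow M$ in $A\Modl$ coming from a special precover of ${}_R M$ (so $F(M)\in\F$), and set $L=\ker(E\to M)$. Then iterate the $W$\+construction of Proposition~\ref{finite-coresol-dim-approximation-sequences} starting from $L$ to obtain injective morphisms $L\hookrightarrow W(L)\hookrightarrow\dotsb\hookrightarrow W^k(L)$ with $W^k(L)\in\C^A$ and successive quotients in $A\ot_R\F$, giving a special preenvelope $0\rarrow L\rarrow W^k(L)\rarrow Q\rarrow 0$ with $Q\in\Fil_k(A\ot_R\F)$. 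Gluing this with $E\twoheadrightarrow M$ by the pushout construction from the proof of the ``if'' direction of Lemma~\ref{salce-lemma} yields the desired $0\rarrow W^k(L)\rarrow H\rarrow M\rarrow 0$, in which $H$ is an extension of $Q$ by $E\in\Fil_1(A\ot_R\F)$ and therefore lies in $\Fil_{k+1}(A\ot_R\F)$.

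The main obstacle is checking that every module appearing in this construction has its underlying left $R$\+module in $\G$, so that~($\widetilde\dagger$) may be invoked in place of~($\dagger$). Now ${}_RM\in\G$ by~($\widetilde\dagger$); Lemma~\ref{orthogonal-to-induced-lemma}(a) gives $A\ot_R\F\subset\F^A$, whence ${}_RE\in\G$; and then $L\in\G$ because $\G$ is resolving. Inductively, for each $L^{(i)}=W^i(L)$ with ${}_R L^{(i)}\in\G$, the special preenvelope $0\rarrow L^{(i)}\rarrow C(L^{(i)})\rarrow F'(L^{(i)})\rarrow 0$ has outer terms in $\G$, so $C(L^{(i)})\in\C\cap\G$ by extension\+closure of $\G$, and~($\widetilde\dagger$) gives $A\ot_R C(L^{(i)})\in\C$; the pushout $W(L^{(i)})$ then fits into $0\rarrow L^{(i)}\rarrow W(L^{(i)})\rarrow A\ot_R F'(L^{(i)})\rarrow 0$, whose right end lies in $\F^A$ and so has underlying $R$\+module in $\G$, whence $W(L^{(i)})\in\G$ by extension\+closure once more. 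The coresolution\+dimension drop $\cd_\C W(L^{(i)})<\cd_\C L^{(i)}$, together with the bound $\cd_\C L\le k$, proceeds exactly as in Proposition~\ref{finite-coresol-dim-approximation-sequences}: the key step, Lemma~\ref{induction-preserves-coresolution-dimension}, carries over verbatim to modules in $\G$ because the intermediate coresolution modules remain in $\C\cap\G$ and~($\widetilde\dagger$) keeps them in $\C$ after tensoring with $A$. Consequently $W^k(L)\in\C^A$, and the identity $\Fil(A\ot_R\F)^\oplus=\Fil_{k+1}(A\ot_R\F)^\oplus$ then follows from Lemma~\ref{orthogonal-to-induced-lemma}(c).
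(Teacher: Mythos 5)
Your proposal is correct and follows essentially the same route as the paper: reduce to the kernel $N=L$ of the surjection $A\ot_RF(M)\twoheadrightarrow M$, iterate the $W$\+construction on it while verifying at each stage that the underlying $R$\+modules stay in $\G$ so that~($\widetilde\dagger$) can replace~($\dagger$), and then split the resulting Salce\+glued sequence using $\Ext^1_A(M,W^k(L))=0$. Your explicit remark that Lemma~\ref{induction-preserves-coresolution-dimension} carries over because the intermediate coresolution terms remain in $\C\cap\G$ is a point the paper leaves implicit, and it is correctly justified.
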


\begin{proof}
 We are following the proof of Corollary~\ref{finite-coresol-dim-cor}
step by step and observing that the assumptions of the present theorem
are sufficient for the validity of the argument.
 Essentially, the point is that the key constructions are performed
within the class $\G\subset R\Modl$ and the class of all
left $A$\+modules whose underlying left $R$\+modules belong to~$\G$.

 The inclusion $\Fil(A\ot_R\F)^\oplus\subset\F^A$ holds by
Lemma~\ref{orthogonal-to-induced-lemma}(c).
 Given a left $A$\+module $M\in\F^A$, we will show that
$M\in\Fil_{k+1}(A\ot_R\F)^\oplus$.

 Arguing as in the last paragraph of the proof of
Proposition~\ref{finite-coresol-dim-approximation-sequences},
the left $R$\+module $M$ is a quotient module of an $R$\+module
$F(M)\in\F$, and therefore the left $A$\+module $M$ is a quotient
$A$\+module of the left $A$\+module $A\ot_RF(M)$.
 Denote by~$N$ the kernel of the surjective $A$\+module morphism
$A\ot_RF(M)\rarrow M$.
 By~($\widetilde\dagger$), we have ${}_RM\in\G$ and $A\ot_RF(M)\in\G$,
hence the underlying left $R$\+module of $N$ also belongs to~$\G$.

 Now we construct the diagram~\eqref{preenvelope-construction-diagram}
for the left $A$\+module~$N$.
 In the special preenvelope sequence $0\rarrow N\rarrow C(N)\rarrow
F'(N)\rarrow0$, we have $F'(N)\in\F\subset\G$ and ${}_RN\in\G$,
hence $C(N)\in\G$.
 According to~($\widetilde\dagger$), it follows that $A\ot_RC(N)\in\C$.
 Also by~($\widetilde\dagger$), we have $A\ot_RF'(M)\in A\ot_R\F
\subset\G$, so it follows from the short exact sequence
$0\rarrow N\rarrow W(N)\rarrow A\ot_RF'(N)\rarrow0$ that
the underlying left $R$\+module of the left $A$\+module $W(N)$
belongs to~$\G$.

 Iterating the construction and following the proof of
Proposition~\ref{finite-coresol-dim-approximation-sequences},
we obtain an injective morphism of left $A$\+modules $N\rarrow W^k(N)$
with $W^k(N)\in\C^A$ and the cokernel belonging to $\Fil_k(A\ot_R\F)$.
 Following the proof of (the ``if'' implication in)
Lemma~\ref{salce-lemma}, we produce a surjective $A$\+module morphism
onto $M$ from an $A$\+module belonging to $\Fil_{k+1}(A\ot_R\F)$ with
the kernel isomorphic to~$W^k(N)$.
 As $\Ext^1_A(M,W^k(N))=0$ by assumption, we can conclude that
$M\in\Fil_{k+1}(A\ot_R\F)^\oplus$.
\end{proof}

 For a class of examples to Theorem~\ref{finite-coresol-dim-tilde}
arising in connection with $n$\+tilting modules,
see Example~\ref{tilting-examples}(2).

\subsection{Tilting cotorsion pairs and Bongartz--Ringel lemma}
\label{tilting-subsecn}
 In this section we discuss an important class of examples in which
a suitable version of the Bongartz--Ringel lemma~\cite[Lemma~2.1]{Bon},
\cite[Lemma~4$'$]{Rin}, \cite[Lemma~6.15]{GT} leads to a better result
than the techniques of Section~\ref{finite-by-induced-subsecn}.

 Let $T$ be a left $R$\+module and $n\ge0$ be an integer.
 The $R$\+module $T$ is said to be \emph{$n$\+tilting}
\cite[Section~2]{AC}, \cite[Definition~13.1]{GT} if the following
three conditions hold:
\begin{enumerate}
\renewcommand{\theenumi}{T\arabic{enumi}}
\item the projective dimension of the left $R$\+module $T$ does not
exceed~$n$;
\item $\Ext_R^i(T,T^{(\kappa)})=0$ for all integers $i>0$ and all
cardinals~$\kappa$;
\item there exists an exact sequence of left $R$\+modules
$0\rarrow R\rarrow T^0\rarrow T^1\rarrow\dotsb\rarrow T^{n-1}
\rarrow T^n\rarrow0$ with $T^i\in\Add_R(T)$ for all $0\le i\le n$.
\end{enumerate}

 The \emph{$n$\+tilting class} induced by $T$ in $R\Modl$ is
the class of left $R$\+modules $\C=T^{\perp_{>0}}=
\{C\in R\Modl\mid\Ext^i_R(T,C)=0\ \forall i>0\}$.
 The cotorsion pair $(\F,\C)$ with $\F={}^{\perp_1}\C\subset R\Modl$
is hereditary and complete by Theorem~\ref{eklof-trlifaj-theorem}
(see~\cite[Theorem~3.1]{AC} for details); it is called
the \emph{$n$\+tilting cotorsion pair} induced by $T$ in $R\Modl$.

\begin{prop} \label{when-n-tilting}
 Let $R\rarrow A$ be a homomorphism of associative rings and $T$ be
an $n$\+tilting left $R$\+module.
 Assume that the underlying left $R$\+module of
$A^+=\Hom_{\Z}(A,\Q/\Z)$ belongs to $\C$, that is, in other words,
$\Tor^R_i(A,T)=0$ for all $i>0$.
 Then \par
\textup{(a)} the left $A$\+module $A\ot_RT$ satisfies
the conditions~\textup{(T1)} and~\textup{(T3)}; \par
\textup{(b)} the left $A$\+module $A\ot_RT$ satisfies~\textup{(T2)}
if and only if its underlying left $R$\+module belongs to~$\C$.
\end{prop}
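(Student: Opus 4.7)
The plan is to dualize the proof of Proposition~\ref{when-n-cotilting}, systematically replacing $\Hom_R(A,{-})$ by $A\ot_R{-}$ and exploiting the Tor-vanishing $\Tor^R_i(A,T)=0$ for $i>0$; this is exactly the rewriting of ${}_RA^+\in\C$ via the natural isomorphism $\Hom_R(T,A^+)\simeq(A\ot_RT)^+$ together with faithfulness of $({-})^+$.

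For part~(a), condition~(T1) is immediate: choose a projective resolution $0\to P_n\to\dotsb\to P_0\to T\to 0$ of $T$ over $R$ of length at most $n$; by the Tor-vanishing, applying $A\ot_R{-}$ produces an exact sequence $0\to A\ot_RP_n\to\dotsb\to A\ot_RP_0\to A\ot_RT\to 0$ of left $A$-modules with each $A\ot_RP_i$ projective over $A$, so $T':=A\ot_RT$ has projective dimension at most~$n$. For~(T3), I would apply $A\ot_R{-}$ to the sequence $0\to R\to T^0\to\dotsb\to T^n\to 0$ provided by~(T3) for $T$. Since each $T^i\in\Add_R(T)$ is a direct summand of a direct sum of copies of $T$, and $\Tor$ commutes with direct sums and passes through direct summands, we have $\Tor^R_j(A,T^i)=0$ for all $j>0$ and all~$i$. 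Splitting the long sequence via the images $K^i$ of the maps $T^{i-1}\to T^i$ (with $K^0=R$, $K^n=T^n$, and short exact sequences $0\to K^i\to T^i\to K^{i+1}\to 0$), I would prove by downward induction on $i$, starting from the trivial case $K^n=T^n$, that $\Tor^R_j(A,K^i)=0$ for every $j\ge 1$; the inductive step is a diagram chase in the long Tor exact sequence, combining the vanishing of $\Tor^R_*(A,T^i)$ with the inductive hypothesis for $K^{i+1}$. Consequently every short sequence remains exact after applying $A\ot_R{-}$, and splicing gives the required exact sequence $0\to A\to A\ot_RT^0\to\dotsb\to A\ot_RT^n\to 0$ with $A\ot_RT^i\in\Add_A(T')$.

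For part~(b), Lemma~\ref{homological-formulas-lemma}(a), applicable precisely because of the Tor-vanishing hypothesis, produces natural isomorphisms $\Ext^i_A(T',D)\simeq\Ext^i_R(T,D)$ for every left $A$-module $D$ and every $i\ge 0$. Specializing to $D=T'^{(\kappa)}$, whose underlying left $R$-module is $({}_RT')^{(\kappa)}$, condition~(T2) for $T'$ over $A$ becomes the requirement that $({}_RT')^{(\kappa)}\in\C$ for every cardinal~$\kappa$. Taking $\kappa=1$ gives the ``only if'' direction. For the converse, I would invoke the fact---dual to the product-closure of the cotilting class used in the proof of Proposition~\ref{when-n-cotilting} via~\cite[Proposition~15.5(a)]{GT}---that the $n$-tilting class $\C$ is closed under arbitrary direct sums, which is a classical consequence of the definability (or finite-type property) of tilting classes~\cite[Chapter~13]{GT}. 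Then ${}_RT'\in\C$ automatically forces $({}_RT')^{(\kappa)}\in\C$ for every~$\kappa$.

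The only delicate point is the bookkeeping in~(T3): a naive upward induction on $i$ would demand that each map $A\ot_RK^i\to A\ot_RT^i$ be injective, which is not evident from the hypotheses, whereas the downward induction sidesteps this issue entirely. Everything else is routine dualization of the cotilting case.
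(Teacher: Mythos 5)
Your proof is correct and follows essentially the same route as the paper: apply $A\ot_R{-}$ to the resolutions in (T1) and (T3) using the Tor-vanishing, and for (T2) use Lemma~\ref{homological-formulas-lemma}(a) together with closure of the tilting class $\C$ under direct sums (\cite[Proposition~13.13(b)]{GT}). The only difference is that you spell out the downward induction on the images $K^i$ for (T3), which the paper leaves implicit under ``similarly''.
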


\begin{proof}
 Part~(a): applying the functor $A\ot_R{-}$ to a projective resolution
$0\rarrow P_n\rarrow\dotsb\rarrow P_0\rarrow T\rarrow0$ of the left
$R$\+module $T$ produces a projective resolution $0\rarrow A\ot_RP_n
\rarrow\dotsb\rarrow A\ot_R P_0\rarrow A\ot_RT\rarrow0$ of the left
$A$\+module $A\ot_RT$.
 Similarly, applying the functor $A\ot_R{-}$ to an exact sequence
in~(T3) produces an exact sequence of left $A$\+modules $0\rarrow A
\rarrow A\ot_RT^0\rarrow\dotsb\rarrow A\ot_RT^n\rarrow0$, in which
$A\ot_RT^i\in\Add_A(A\ot_RT)$ for all $0\le i\le n$.

 Part~(b): put $T'=A\ot_RT$.
 By Lemma~\ref{homological-formulas-lemma}(a), we have
$\Ext^i_A(T',T'{}^{(\kappa)})\simeq\Ext_R^i(T,T'^{(\kappa)})$ for all
$i\ge0$, since $\Tor^R_i(A,T)=0$ for $i>0$.
 It follows that the left $A$\+module $T'$ is $n$\+tilting if and only
if the left $R$\+module $T'{}^{(\kappa)}$ belongs to $\C\subset R\Modl$
for every cardinal~$\kappa$.
 Since the $n$\+tilting class $\C$ is closed under infinite direct
sums in $R\Modl$ \cite[Proposition~13.13(b)]{GT}, it suffices that
${}_RT'\in\C$.
\end{proof}

\begin{lem} \label{tilting-class-change-of-ring}
 Let $R\rarrow A$ be a homomorphism of associative rings and $T$ be
an $n$\+tilting left $R$\+module.
 Let $(\F,\C)$ be the $n$\+tilting cotorsion pair induced by $T$ in
$R\Modl$.
 Assume that the underlying left $R$\+module of $A^+$ belongs to $\C$,
that is ${}_RA^+\in\C$.
 Assume further that the left $A$\+module $A\ot_RT$ is $n$\+tilting.
 Then the $n$\+tilting cotorsion pair induced by $A\ot_RT$ in $A\Modl$
has the form $(\F^A,\C^A)$ in our notation.
 In other words, the $n$\+tilting class induced by $A\ot_RT$ in $A\Modl$
consists precisely of all the left $A$\+modules whose underlying
left $R$\+modules belong to the $n$\+tilting class $\C$ induced by $T$
in $R\Modl$.
\end{lem}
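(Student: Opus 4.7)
The plan is to identify the right-hand class of the $n$-tilting cotorsion pair induced by $A\ot_R T$ in $A\Modl$ with $\C^A$. By definition, that class is $(A\ot_R T)^{\perp_{>0}}$, so what must be shown is that a left $A$-module $B$ satisfies $\Ext_A^i(A\ot_R T,B)=0$ for all $i>0$ if and only if the underlying $R$-module ${}_RB$ lies in $\C=T^{\perp_{>0}}$. Once this equivalence is established, the induced cotorsion pair in $A\Modl$ has right-hand class exactly $\C^A$; since a cotorsion pair is determined by its right-hand class, it must coincide with $(\F^A,\C^A)$.

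The heart of the argument is a direct invocation of Lemma~\ref{homological-formulas-lemma}(a). The hypothesis ${}_RA^+\in\C$ translates, via the natural character-module isomorphism $\Ext_R^i(T,A^+)\cong\Tor^R_i(A,T)^+$, into the vanishing $\Tor^R_i(A,T)=0$ for all $i>0$; this is precisely the hypothesis of the lemma applied with $L=T$. It therefore yields natural isomorphisms
\[
\Ext_A^i(A\ot_R T,\,B)\;\simeq\;\Ext_R^i(T,B)
\]
for every $0\le i\le n$ and every left $A$-module $B$. For indices $i>n$ both sides vanish automatically: the right-hand side because $T$ has projective dimension at most $n$ over $R$ (condition (T1)), and the left-hand side because Proposition~\ref{when-n-tilting}(a) ensures that $A\ot_R T$ has projective dimension at most $n$ over $A$. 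Hence the equivalence of Ext-vanishing conditions holds in all positive degrees, completing the identification of classes.

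There is no serious obstacle in this argument; the entire lemma is in effect a corollary of Lemma~\ref{homological-formulas-lemma}(a) combined with Proposition~\ref{when-n-tilting}(a). The only minor point that needs to be verified explicitly is the equivalence of the two forms of the hypothesis on $A$ (namely ${}_RA^+\in\C$ versus $\Tor^R_i(A,T)=0$ for $i>0$), which is the standard consequence of applying $\Hom_{\Z}(-,\Q/\Z)$ to a projective resolution of $T$ over $R$.
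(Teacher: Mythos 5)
Your proposal is correct and follows essentially the same route as the paper: the paper's proof is exactly the observation that ${}_RA^+\in\C$ gives $\Tor^R_i(A,T)=0$ for all $i>0$, so Lemma~\ref{homological-formulas-lemma}(a) yields $\Ext^i_A(A\ot_RT,C)\simeq\Ext^i_R(T,C)$ for all $i\ge0$, identifying the tilting class of $A\ot_RT$ with $\C^A$. Your extra handling of the range $i>n$ is harmless but unnecessary, since the $\Tor$-vanishing holds in all positive degrees and the lemma can be applied with an arbitrarily large bound.
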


\begin{proof}
 Indeed, for any left $A$\+module $C$ we have $\Ext^i_A(A\ot_RT,\>C)
\simeq\Ext^i_R(T,C)$ for all $i\ge0$ by
Lemma~\ref{homological-formulas-lemma}(a), since
$\Tor^R_i(A,T)=0$ for all $i>0$.
\end{proof}

\begin{prop} \label{dual-bazzoni-prop}
 Let $R$ be a commutative ring and $A$ be an associative $R$\+algebra.
 Let $T$ be an $n$\+tilting $R$\+module and $(\F,\C)$ be
the $n$\+tilting cotorsion pair induced by $T$ in $R\Modl$.
 Assume that the underlying $R$\+module of $A^+$ belongs to~$\C$.
 Then the left $A$\+module $A\ot_RT$ is $n$\+tilting.
\end{prop}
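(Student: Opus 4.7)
The plan is to mimic the proof of Proposition~\ref{bazzoni-prop}, replacing the functor $\Hom_R(A,{-})$ by $A\otimes_R{-}$, the $n$\+cotilting module by the $n$\+tilting module, and coresolutions by products with resolutions by direct sums. By Proposition~\ref{when-n-tilting}(b), it suffices to verify that the underlying $R$\+module of $A\otimes_R T$ lies in the tilting class $\C$.

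To this end, I would begin with the assumption ${}_RA^+\in\C=T^{\perp_{>0}}$, which says $\Ext_R^i(T,A^+)=0$ for all $i>0$; via the adjunction isomorphism $\Tor^R_i(A,T)^+\simeq\Ext_R^i(T,A^+)$ (valid because $\Q/\Z$ is an injective abelian group), this is equivalent to $\Tor^R_i(A,T)=0$ for all $i>0$. Now choose a free resolution $\dotsb\to P_1\to P_0\to A\to0$ of the $R$\+module $A$ and apply ${-}\otimes_R T$; the vanishing of the higher Tor groups ensures that the resulting sequence
$$
\dotsb\lrarrow P_1\otimes_R T\lrarrow P_0\otimes_R T\lrarrow A\otimes_R T\lrarrow 0
$$
is exact. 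Each $P_i\otimes_R T$ is a direct sum of copies of $T$; since $T\in\C$ by~(T2) and the tilting class $\C$ is closed under arbitrary direct sums in $R\Modl$ by \cite[Proposition~13.13(b)]{GT}, every $P_i\otimes_R T$ belongs to $\C$.

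To conclude, I would perform a standard dimension-shift: writing $K_0=\ker(P_0\otimes_R T\to A\otimes_R T)$ and $K_j=\ker(P_j\otimes_R T\to P_{j-1}\otimes_R T)$ for $j\ge1$, the short exact sequences $0\to K_j\to P_j\otimes_RT\to K_{j-1}\to0$ and the long exact sequences of $\Ext_R^*(T,{-})$ give, by induction on $j$ and using that $\Ext_R^i(T,P_j\otimes_R T)=0$ for all $i\ge1$, isomorphisms $\Ext_R^i(T,\>A\otimes_R T)\simeq\Ext_R^{i+j+1}(T,K_j)$ for every $i\ge1$ and $j\ge0$. Taking $j=n$ and using that the projective dimension of $T$ is at most $n$ by~(T1) makes the right-hand side vanish for every $i\ge1$. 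Hence ${}_R(A\otimes_RT)\in\C$, as required.

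There is no substantial obstacle: the key ingredient, beyond Bazzoni's idea of invoking a free resolution of $A$, is simply the observation that once a resolution by direct sums of $T$ is long enough, the finite projective dimension of $T$ forces all higher $\Ext$ groups into $A\otimes_R T$ to vanish. This also avoids any need to quote a tilting analogue of \cite[Lemma~3.2]{Baz} or \cite[Proposition~15.5(a)]{GT}.
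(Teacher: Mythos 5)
Your argument is correct. The first half --- reducing via Proposition~\ref{when-n-tilting}(b) to showing ${}_R(A\ot_RT)\in\C$, translating ${}_RA^+\in\C$ into $\Tor^R_i(A,T)=0$ for $i>0$ by the character-module adjunction, and tensoring a free resolution of ${}_RA$ with $T$ to get an exact resolution of $A\ot_RT$ by direct sums of copies of $T$ --- is exactly what the paper does. You diverge only in the final step: the paper simply quotes the characterization of the tilting class $\C$ as the class of all $R$\+modules admitting a resolution by direct sums of copies of $T$ (\cite[Lemma~3.2]{Baz} or \cite[Proposition~13.13(b)]{GT}) and is done, whereas you extract the vanishing of $\Ext_R^{>0}(T,\>A\ot_RT)$ from that same resolution by an explicit dimension shift, using~(T2) to kill the $\Ext$ groups into the terms $P_j\ot_RT\simeq T^{(\kappa_j)}$ and~(T1) to truncate after $n$ steps. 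Your route buys a more self-contained argument resting only on the axioms (T1)--(T2), at the cost of a few lines of bookkeeping; as a small remark, your appeal to closure of $\C$ under direct sums is redundant, since~(T2) already puts $T^{(\kappa)}$ in $\C$ directly. Both proofs are valid.
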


\begin{proof}
 According to Proposition~\ref{when-n-tilting}, it suffices to show that
the $R$\+module $T'=A\ot_RT$ belongs to~$\C$.
  We use the dual argument to the one in
Proposition~\ref{bazzoni-prop}.
 By~\cite[Lemma~3.2]{Baz} or~\cite[Proposition~13.13(b)]{GT},
the tilting class $\C$ can be described as the class of all $R$\+modules
admitting a resolution by direct sums of copies of~$T$.
 Let $\dotsb\rarrow P_2\rarrow P_1\rarrow P_0\rarrow A\rarrow0$ be
a free resolution of the $R$\+module~$A$.
 Then $\dotsb\rarrow P_2\ot_R T\rarrow P_1\ot_RT\rarrow P_0\ot_RT
\rarrow A\ot_RT\rarrow0$ is a resolution of the $R$\+module $A\ot_RT$
by direct sums of copies of~$T$.
 Thus ${}_RT'\in\C$, as desired.
\end{proof}

 A discussion of the particular case of the above proposition and lemma
in which $A$ is a flat commutative $R$\+algebra can be found
in~\cite[Proposition~2.3 and Lemma~2.4]{HST}.

\begin{exs} \label{tilting-examples}
 Let $A$ be an associative algebra over a commutative ring $R$, and let
$T$ be an $n$\+tilting $R$\+module.
 Let $(\F,\C)$ be the $n$\+tilting cotorsion pair induced by $T$ in
$R\Modl$.
 Assume that the $R$\+module $A^+$ belongs to~$\C$.
 Then $A\ot_RT$ is an $n$\+tilting left $A$\+module by
Proposition~\ref{dual-bazzoni-prop}, and the induced $n$\+tilting
cotorsion pair in $A\Modl$ has the form $(\F^A,\C^A)$ in our notation
by Lemma~\ref{tilting-class-change-of-ring}.

 (1)~In the following particular cases
Theorem~\ref{finite-coresol-dim-theorem} is applicable.
 Assume that either $A$ is a flat $R$\+module, or $n\le 2$.
 Then the condition~($\dagger$) holds.

 Indeed, when $A$ is a flat $R$\+module, it suffices to observe
that the $n$\+tilting class $\C$ is closed under
direct limits~\cite[Corollary~13.42]{GT}.
 When $n\le2$, consider an $R$\+module $C\in\C$.
 Choose a projective presentation $P_1\rarrow P_0\rarrow A\rarrow0$ for
the $R$\+module~$A$.
 Then we have a right exact sequence of $R$\+modules $P_1\ot_RC\rarrow
P_0\ot_RC\rarrow A\ot_RC\rarrow0$ with $P_i\ot_RC\in\C$ for $i=0$,~$1$.
 Denoting by $K$ the kernel of the morphism $P_1\ot_RC\rarrow
P_0\ot_RC$, we have $\Ext_R^i(T,\>A\ot_RC)=\Ext_R^{i+2}(T,K)=0$
for all $i>0$, as desired.

 Finally, the $\C$\+coresolution dimension of any $R$\+module does not
exceed~$n$ (since the projective dimension of the  $R$\+module $T$
is~$\le n$).
 According to Corollary~\ref{finite-coresol-dim-cor}, we have
${}^{\perp_1}\C^A=\F^A=\Fil_{n+1}(A\ot_R\F)^\oplus$.

 (2)~This is a generalization of~(1) that can be obtained using
Theorem~\ref{finite-coresol-dim-tilde}.
 We are assuming that $A$ is an associative $R$\+algebra, $T$ is
an $n$\+tilting $R$\+module, and ${}_RA^+\in\C$.
 Assume further that $\Tor^R_i(A,\>A\ot_RT)=0$ for all $i>0$.
 Then we claim that~($\widetilde\dagger$) is satisfied.

 Let $\G$ be the class of all $R$\+modules $G$ such that
$\Tor^R_i(A,G)=0$ for all $i>0$.
 Then we have $\F\subset\G$, since ${}_RA^+\in\C$.
 Furthermore, all the left $A$\+modules in $\F^A$ have finite
coresolutions by direct summands of direct sums of copies of $A\ot_RT$
\cite[Proposition~13.13(a)]{GT}; hence $\Tor^R_i(A,F)=0$ for all
$F\in\F^A$ and $i>0$.

 In order to check the condition~($\widetilde\dagger$), it remains to
show that $A\ot_RC\in\C$ for any $R$\+module $C\in\C\cap\G$.
 Indeed, let us choose a projective resolution $\dotsb\rarrow P_2
\rarrow P_1\rarrow P_0\rarrow A\rarrow0$ for the $R$\+module~$A$.
 Then we have an exact sequence of $R$\+modules $\dotsb\rarrow
P_2\ot_RC\rarrow P_1\ot_RC\rarrow P_0\ot_RC\rarrow A\ot_RC\rarrow0$
with $P_i\ot_RC\in\C$ for all $i\ge0$.
 Denoting by $K$ the image of the morphism $P_n\ot_RC\rarrow P_{n-1}
\ot_RC$, we have $\Ext_R^i(T,\>A\ot_RC)=\Ext_R^{i+n}(T,K)=0$
for all $i>0$, as desired.

 By Theorem~\ref{finite-coresol-dim-tilde}, we can infer that
${}^{\perp_1}\C^A=\F^A=\Fil_{n+1}(A\ot_R\F)^\oplus$.
\end{exs}

 Now we will explain how a stronger and more general version of
the results of Examples~\ref{tilting-examples} can be obtained with
an approach based on a suitable version of the Bongartz--Ringel lemma.

\begin{thm}[Bongartz--Ringel lemma] \label{n-bongartz}
 Let $A$ be an associative ring, $n\ge0$ be an integer, and
$\S=\{S_0,S_1,\dotsc,S_n\}$ be a collection of $n+1$ left $A$\+modules.
 Assume that $S_0$ is a projective generator of $A\Modl$ and\/
$\Ext_A^1(S_i,S_j^{(\kappa)})=0$ for all\/ $0\le i\le j\le n$ and
all cardinals~$\kappa$.
 Let $(\F,\C)=({}^{\perp_1}(\S^{\perp_1}),\S^{\perp_1})$ be
the (complete) cotorsion pair generated by the set $\S$ in $A\Modl$.
 Then the class $\F\subset A\Modl$ can be described as the class of
all direct summands of $(n+1)$\+filtered left $A$\+modules $G$ with
a filtration $0=F_0G\subset F_1G\subset\dotsb\subset F_nG\subset
F_{n+1}G=G$ such that $F_{i+1}G/F_iG\in\Add(S_i)$
for every\/ $0\le i\le n$.
 In particular, we have
$\F=\Fil_{n+1}(\bigcup_{i=0}^n\Add(S_i))^\oplus$.
\end{thm}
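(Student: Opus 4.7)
The plan is to dualize the proof of Theorem~\ref{dual-n-bongartz}, reversing the roles of products and coproducts and paying attention to the asymmetric orientation of the indices. The easy inclusion $\Fil_{n+1}(\bigcup_{i=0}^n\Add(S_i))^\oplus\subset\F$ is routine: each $S_i$ lies in $\F={}^{\perp_1}(\S^{\perp_1})$ by the definition of the cotorsion pair generated by $\S$, the Eklof lemma (Lemma~\ref{eklof-lemma}) propagates membership in $\F$ through filtrations, and $\F$ is closed under direct summands.

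For the opposite inclusion, I plan first to construct special preenvelope sequences for the $\Ext^1$\+orthogonal pair consisting of $\Fil_n(\bigcup_{i=1}^n\Add(S_i))$ and $\C$ by an iterative ``Ext-killing'' procedure. Given a left $A$\+module $N$, set $X_0=N$ and, for $k=1,2,\dots,n$, let $I_k$ be the underlying set of $\Ext^1_A(S_k,X_{k-1})$ and form the short exact sequence
\[
0\lrarrow X_{k-1}\lrarrow X_k\lrarrow S_k^{(I_k)}\lrarrow 0
\]
classified by the canonical element of $\Ext^1_A(S_k^{(I_k)},X_{k-1})=\Ext^1_A(S_k,X_{k-1})^{I_k}$. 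A long-exact-sequence computation gives $\Ext^1_A(S_k,X_k)=0$ by construction; moreover, the hypothesis $\Ext^1_A(S_i,S_j^{(\kappa)})=0$ for $i\le j$ is exactly what is needed to preserve $\Ext^1_A(S_i,X_k)=0$ for all $i<k$ as one climbs. After $n$ steps $X_n$ will annihilate $\Ext^1_A(S_i,-)$ for all $1\le i\le n$, and for $i=0$ this is automatic from projectivity of $S_0$; hence $X_n\in\C$, and the cokernel of the injection $N\hookrightarrow X_n$ carries an $n$\+step filtration with quotients $S_k^{(I_k)}\in\Add(S_k)$, $k=1,\dots,n$.

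To convert this into special precover sequences I would mimic the proof of Lemma~\ref{salce-lemma} using $S_0$ as a projective generator: for an arbitrary left $A$\+module $M$, surject $S_0^{(X)}\twoheadrightarrow M$ with kernel $N$, apply the preenvelope construction to $N$, and form the pushout of $S_0^{(X)}\leftarrow N\rightarrow X_n$. This yields a short exact sequence $0\rarrow X_n\rarrow E\rarrow M\rarrow 0$ with $X_n\in\C$, together with $0\rarrow S_0^{(X)}\rarrow E\rarrow X_n/N\rarrow 0$. The latter endows $E$ with the desired $(n+1)$\+step filtration, with $F_1E=S_0^{(X)}\in\Add(S_0)$ at the bottom and the top quotients inherited from the $n$\+step filtration of $X_n/N$. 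Lemma~\ref{direct-summand-lemma} then delivers the cotorsion pair $(\F,\C)$ together with $\F=\Fil_{n+1}(\bigcup_{i=0}^n\Add(S_i))^\oplus$ in the prescribed ordering.

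The main obstacle I expect is the bookkeeping of vanishing Ext groups across the induction: the direction of the hypothesis $\Ext^1_A(S_i,S_j^{(\kappa)})=0$ (namely $i\le j$) has to be matched precisely with the order in which pieces are stacked on top, i.e.\ from $S_1$ up to $S_n$. The na\"\i ve dualization of the construction in Theorem~\ref{dual-n-bongartz} would process indices in the opposite order and would destroy earlier vanishings; it is exactly this reversal of the index ordering, together with the role of $S_0$ as a projective generator living at the \emph{bottom} of the filtration, that makes the argument go through.
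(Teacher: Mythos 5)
Your proposal is correct and coincides with the paper's intended argument: the paper's proof of Theorem~\ref{n-bongartz} is literally the statement that it is dual to the proof of Theorem~\ref{dual-n-bongartz}, and your iterated Ext-killing by extensions $0\to X_{k-1}\to X_k\to S_k^{(I_k)}\to 0$, followed by the Salce pushout against a surjection from $S_0^{(X)}$ and an appeal to the Eklof lemma and Lemma~\ref{direct-summand-lemma}, is exactly that dualization with the correct ordering of the layers. No gaps.
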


\begin{proof}
 This is an $n\ge1$ generalization of the classical Bongartz
lemma~\cite[Lemma~6.15]{GT}, which corresponds to the case $n=1$.
 At the same time, this is an infinitely generated version of
Ringel's~\cite[Lemma~4$'$]{Rin}.
 The argument is dual to the proof of Theorem~\ref{dual-n-bongartz}.
\end{proof}

\begin{rem}
 For any $n$\+tilting $R$\+module $T$, the induced $n$\+tilting
cotorsion pair $(\F,\C)$ is obviously generated by the syzygy modules
$T$, $\Omega^1T$,~\dots, $\Omega^{n-1}T$ of the $R$\+module~$T$.
 However, the sequence of syzygy modules $S_n=T$, \,$S_{n-1}=
\Omega^1T$,~\dots, $S_1=\Omega^{n-1}T$ does \emph{not} satisfy
the assumptions of Theorem~\ref{n-bongartz} (generally speaking),
as one can see from a straightforward $k$\+vector space dualization of
the counterexample of D'Este~\cite[Theorem~5]{Est} mentioned in
Remark~\ref{deste-counterex-remark}.
 Hence the need for a more sophisticated approach based on the next 
lemma.
\end{rem}

\begin{lem} \label{derived-tilting-classes}
 Let $R$ be an associative ring and $T$ be an $n$\+tilting left
$R$\+module.
 Then, for every\/ $0\le j\le n$, there exists an $(n-j)$\+tilting
left $R$\+module $T_j$ such that the class $\C_j=T^{\perp_{>j}}$
is the tilting class induced by $T_j$ in $R\Modl$.
 In particular, one can (and we will) take $T_0=T$, while $P=T_n$ is
a projective generator of $R\Modl$.
\end{lem}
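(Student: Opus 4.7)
The plan is to proceed by induction on $j$, handling the two endpoints separately. For $j = 0$, take $T_0 = T$: the identity $\C_0 = T^{\perp_{>0}} = \C$ is the definition of the $n$-tilting class induced by $T$. For $j = n$, axiom~(T1) forces $\Ext^i_R(T, {-}) = 0$ for all $i > n$, hence $T^{\perp_{>n}} = R\Modl$; set $T_n = R$, which is a projective generator of $R\Modl$ and hence a $0$-tilting module with tilting class $R\Modl$.

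For the inductive step from $j$ to $j+1$ (with $1 \le j+1 \le n-1$), I would perform a classical one-step shift of the $(n-j)$-tilting module $T_j$. Its (T3)-sequence
\begin{equation*}
0 \rarrow R \rarrow T_j^0 \rarrow T_j^1 \rarrow \dotsb \rarrow T_j^{n-j} \rarrow 0, \qquad T_j^i \in \Add(T_j),
\end{equation*}
decomposes into short exact sequences $0 \to K_i \to T_j^i \to K_{i+1} \to 0$ with $K_0 = R$ and $K_{n-j} = T_j^{n-j}$. Following the construction carried out in \cite[Theorem 4.2]{AC}, \cite[Theorem 4.2]{Baz2}, \cite[Theorem 13]{Sto0}, and \cite[Proposition 15.13]{GT}, I would define $T_{j+1}$ as the module $K_1 = \operatorname{im}(T_j^0 \to T_j^1)$ together with an appropriate summand in $\Add(T_j)$, chosen so that axioms (T1), (T2), (T3) hold for $T_{j+1}$ with dimension $n-j-1$. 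Axiom~(T1) uses the presentation of $K_1$ as a kernel in the tail of the displayed sequence together with the (T1)-bound for $T_j$; axiom~(T2) is obtained by dimension-shifting $\Ext^*_R({-}, T_{j+1}^{(\kappa)})$ along the short exact sequences produced by the displayed sequence, invoking (T2) for $T_j$ against copies of itself; axiom~(T3) comes from splicing an $\Add(T_j)$-resolution with the displayed tail.

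To match tilting classes, I would verify $T_{j+1}^{\perp_{>0}} = T_j^{\perp_{>1}}$; combined with the inductive hypothesis $T_j^{\perp_{>0}} = T^{\perp_{>j}}$, this gives the desired $\C_{j+1} = T^{\perp_{>j+1}}$. The Ext-comparison rests on the long exact sequence associated to $0 \to R \to T_j^0 \to K_1 \to 0$: since $R$ is projective, $\Ext^i_R(K_1, C) \cong \Ext^i_R(T_j^0, C)$ for $i \ge 2$, so $\Ext^{\ge 2}_R(T_j, C) = 0$ (i.e., $C \in T_j^{\perp_{>1}}$) is equivalent to $\Ext^{\ge 2}_R(K_1, C) = 0$, since $T_j^0 \in \Add(T_j)$.

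The main obstacle, and the reason $T_{j+1} = K_1$ alone does not suffice, is the residual $\Ext^1_R(K_1, C)$ (which need not vanish for $C \in T_j^{\perp_{>1}}$) together with the requirement that the (T3)-sequence of $T_{j+1}$ restart from the free module $R$; both are handled by the appropriate $\Add(T_j)$-summand adjoined to $K_1$, whose construction and verification constitute the technical core of the cited classical references.
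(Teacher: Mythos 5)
Your base cases are fine, but the inductive step rests on a construction that does not work (and the paper itself does not attempt one: its proof of this lemma is a bare citation to \cite[Lemma~3.5]{BS} and \cite[Remark~15.14]{GT}). The module you propose, $T_{j+1}=K_1\oplus X$ with $K_1=\operatorname{im}(T_j^0\to T_j^1)$ and $X\in\Add(T_j)$, already fails for $R=\Z$, $n=1$, $T=\Q\oplus\Q/\Z$, $j=0$: the (T3)-sequence is $0\to\Z\to\Q\to\Q/\Z\to0$, so $K_1=\Q/\Z$, while $T^{\perp_{>1}}=\Z\Modl$ forces $T_1$ to be a projective generator; yet every $\Q/\Z\oplus X$ with $X\in\Add(\Q\oplus\Q/\Z)$ has projective dimension~$1$, and $\Ext^1_\Z(\Q/\Z,\Z)\ne0$, so its right $\Ext$\+orthogonal class is not all of $\Z\Modl$. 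Two of your claimed verifications break precisely here. First, the tail $0\to K_1\to T_j^1\to\dotsb\to T_j^{n-j}\to0$ is a coresolution of $K_1$ by modules of projective dimension $\le n-j$; this yields only $\operatorname{pd}K_1\le n-j$, never the needed $\le n-j-1$, and the example shows the bound is sharp. Second, and more fundamentally, adjoining a summand can never ``handle the residual $\Ext^1_R(K_1,{-})$'': one has $\Ext^1_R(K_1\oplus X,\>C)\supseteq\Ext^1_R(K_1,C)$, so passing from $K_1$ to $K_1\oplus X$ only \emph{shrinks} the right orthogonal class, whereas you need to enlarge it from $K_1^{\perp_{>0}}$ to $T_j^{\perp_{>1}}$. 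This is exactly the trap flagged in Remark~\ref{deste-counterex-remark} and in the remark following Theorem~\ref{n-bongartz}: no naive truncation of the (T3)-sequence or of a projective resolution of $T$ produces the modules~$T_j$.

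The route taken in the sources the paper cites is different in kind: one verifies that the \emph{class} $\C_j=T^{\perp_{>j}}$ satisfies the abstract characterization of $(n-j)$\+tilting classes --- it is coresolving, closed under direct sums and direct summands, and ${}^{\perp_1}\C_j$ consists of modules of projective dimension $\le n-j$ (the last point by dimension shifting, since $\Omega^{-(n-j)}M\in\C_j$ for every module $M$, whence $\Ext_R^{n-j+1}(F,M)\simeq\Ext_R^1(F,\Omega^{-(n-j)}M)=0$ for $F\in{}^{\perp_1}\C_j$). The tilting module $T_j$ is then produced by the proof of that characterization theorem, which builds it via iterated special $\C_j$\+preenvelopes of $R$ --- a Bongartz-type construction in the spirit of Theorem~\ref{n-bongartz} --- rather than by truncating the coresolution of $R$ attached to~$T$. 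A self-contained proof would have to reproduce that argument.
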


\begin{proof}
 This is~\cite[Lemma~3.5]{BS} or~\cite[Remark~15.14]{GT}.
\end{proof}

\begin{lem} \label{generated-by-tilting-modules}
 In the notation of Lemma~\ref{derived-tilting-classes},
the $n$\+tilting class $\C=T^{\perp_{>0}}$ can be described as
$\C=\{T_0,T_1,\dotsc,T_n\}^{\perp_1}=\{C\in R\Modl\mid
\Ext_R^1(T_j,C)=0\ \forall\,0\le j\le n\}$.
\end{lem}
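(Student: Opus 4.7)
The plan is to dualize the proof of Lemma~\ref{cogenerated-by-cotilting-modules}. The essential input is the shift identity
\[
T_j^{\perp_{>i}} = T^{\perp_{>i+j}} \qquad\text{for all } i,j\ge 0,
\]
established by Ext dimension-shifting: $C\in T_j^{\perp_{>i}}$ iff $\Omega^{-i}C\in T_j^{\perp_{>0}}=\C_j=T^{\perp_{>j}}$ (the middle equality is the defining property of $T_j$ in Lemma~\ref{derived-tilting-classes}) iff $C\in T^{\perp_{>i+j}}$. Specializing to $i=1$ yields in particular $T_j^{\perp_{>1}}=T_{j+1}^{\perp_{>0}}$.

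Next, since $T_n$ is a projective generator of $R\Modl$, one has $T_n^{\perp_1}=R\Modl$. I would then prove by decreasing induction on $0\le j\le n$ that
\[
\{T_j,T_{j+1},\ldots,T_n\}^{\perp_1}=T_j^{\perp_{>0}}.
\]
The base $j=n$ asserts $T_n^{\perp_1}=R\Modl=T^{\perp_{>n}}=T_n^{\perp_{>0}}$, where the middle equality uses the projective dimension bound on $T$. For the inductive step, assuming the identity holds at $j+1$,
\[
\{T_j,\ldots,T_n\}^{\perp_1}=T_j^{\perp_1}\cap\{T_{j+1},\ldots,T_n\}^{\perp_1}=T_j^{\perp_1}\cap T_{j+1}^{\perp_{>0}}=T_j^{\perp_1}\cap T_j^{\perp_{>1}}=T_j^{\perp_{>0}},
\]
where the third equality is the shift identity specialized to $i=1$. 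Setting $j=0$ gives $\{T_0,T_1,\ldots,T_n\}^{\perp_1}=T_0^{\perp_{>0}}=T^{\perp_{>0}}=\C$, which is the desired description.

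There is no substantial obstacle: the whole proof is a mechanical dualization of Lemma~\ref{cogenerated-by-cotilting-modules}, and the only matter requiring care is the bookkeeping of Ext degrees in the shift identity and the observation that, because $T_n$ is projective (not merely a generator), the induction has a clean starting point.
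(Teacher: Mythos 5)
Your proof is correct and is exactly what the paper intends: its own proof of this lemma reads ``Dual to the proof of Lemma~\ref{cogenerated-by-cotilting-modules}'', and your argument is precisely that dualization, with the shift identity $T_j^{\perp_{>i}}=T^{\perp_{>i+j}}$ obtained by cosyzygy dimension shifting and the same decreasing induction $\{T_j,\dotsc,T_n\}^{\perp_1}=T_j^{\perp_{>0}}$.
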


\begin{proof}
 Dual to the proof of Lemma~\ref{cogenerated-by-cotilting-modules}.
\end{proof}

\begin{thm} \label{left-tilting-class-described}
 Let $R$ be an associative ring and $T$ be an $n$\+tilting left
$R$\+module.
 Let $(\F,\C)$ be the $n$\+tilting cotorsion pair induced by $T$ in
$R\Modl$.
 Then the class $\F$ can be described as the class of all direct
summands of $(n+1)$\+filtered left $R$\+modules $G$ with a filtration
$0=F_0G\subset F_1G\subset\dotsb\subset F_nG\subset F_{n+1}G=G$ such
that, in the notation of Lemma~\ref{derived-tilting-classes},
\,$F_1G\in\Add(P)$, \ $F_{i+1}G/F_iG\in\Add(T_{n-i})$ for every\/
$0\le i\le n$, and $F_{n+1}G/F_nG\in\Add(T)$.
\end{thm}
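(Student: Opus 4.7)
The plan is to derive this theorem from the Bongartz--Ringel lemma (Theorem \ref{n-bongartz}) applied to the ring $R$ and the finite collection of left $R$\+modules $S_0=P=T_n$, \ $S_1=T_{n-1}$, \ $S_2=T_{n-2}$, \dots, $S_n=T_0=T$ supplied by Lemma \ref{derived-tilting-classes}. This is the exact tilting analog of the deduction of Theorem \ref{right-cotilting-class-described} from Theorem \ref{dual-n-bongartz}.

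First, $S_0=T_n=P$ is a projective generator of $R\Modl$ by Lemma \ref{derived-tilting-classes}. The main point is to verify the $\Ext^1$\+vanishing hypothesis of Theorem \ref{n-bongartz}, namely that $\Ext^1_R(S_i,S_j^{(\kappa)})=0$ for all $0\le i\le j\le n$ and all cardinals~$\kappa$. Each $T_m$ is an $(n-m)$\+tilting module whose induced tilting class is $\C_m=T^{\perp_{>m}}$, and this class is closed under arbitrary direct sums (being a tilting class, cf.~\cite[Proposition~13.13(b)]{GT}), so $T_{n-j}^{(\kappa)}\in\C_{n-j}=T^{\perp_{>n-j}}$. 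By the dual of the computation inside the proof of Lemma \ref{cogenerated-by-cotilting-modules}, one has $T_j^{\perp_{>i}}=T^{\perp_{>i+j}}$; in particular $T_{n-i}^{\perp_{>0}}=T^{\perp_{>n-i}}$. Since $i\le j$ gives $n-j\le n-i$, one obtains
\[
T_{n-j}^{(\kappa)}\in T^{\perp_{>n-j}}\subset T^{\perp_{>n-i}}=T_{n-i}^{\perp_{>0}}\subset T_{n-i}^{\perp_1},
\]
yielding the desired vanishing $\Ext^1_R(T_{n-i},T_{n-j}^{(\kappa)})=0$.

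Finally, by Lemma \ref{generated-by-tilting-modules}, the $n$\+tilting cotorsion pair $(\F,\C)$ is precisely the cotorsion pair generated by the set $\{T_0,T_1,\dotsc,T_n\}=\{S_0,S_1,\dotsc,S_n\}$. Thus Theorem \ref{n-bongartz} applies and describes $\F$ as the class of direct summands of modules $G$ admitting an $(n+1)$\+step filtration $0=F_0G\subset F_1G\subset\dotsb\subset F_{n+1}G=G$ with $F_{i+1}G/F_iG\in\Add(S_i)=\Add(T_{n-i})$ for every $0\le i\le n$. Reading off $i=0$ gives $F_1G\in\Add(T_n)=\Add(P)$, and $i=n$ gives $F_{n+1}G/F_nG\in\Add(T_0)=\Add(T)$, matching the claimed description. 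The only (mild) obstacle is the $\Ext^1$\+vanishing check above, which genuinely requires the intermediate modules $T_j$ from Lemma \ref{derived-tilting-classes}: the naive choice of syzygy modules of $T$ would fail, as the dual of the D'Este counterexample recalled in Remark \ref{deste-counterex-remark} shows.
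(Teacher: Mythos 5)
Your proposal is correct and follows essentially the same route as the paper: identify the generating set $\{P=T_n,T_{n-1},\dotsc,T_0=T\}$ via Lemma~\ref{generated-by-tilting-modules}, verify the $\Ext^1$\+vanishing hypothesis through the inclusions of tilting classes $T_{n-j}^{(\kappa)}\in T^{\perp_{>n-j}}\subset T^{\perp_{>n-i}}=T_{n-i}^{\perp_{>0}}$, and apply Theorem~\ref{n-bongartz}. The paper's proof records the same Ext computation (in the equivalent form $T_i^{(\kappa)}\in T_i^{\perp_{>0}}\subset T_j^{\perp_{>0}}$ for $i\le j$), so there is nothing to add.
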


\begin{proof}
 By Lemma~\ref{generated-by-tilting-modules}, the $n$\+tilting
cotorsion pair $(\F,\C)$ is generated by the set of $n+1$ modules
$S_0=P$, $S_1=T_{n-1}$, $S_2=T_{n-2}$,~\dots, $S_n=T$.
 Furthermore, one has $\Ext^m_R(T_j,T_i^{(\kappa)})=0$ for all
integers $0\le i\le j\le n$, \,$m>0$ and all cardinals~$\kappa$,
since $T_i^{(\kappa)}\in T_i^{\perp_{>0}}\subset T_j^{\perp_{>0}}$.
 Thus Theorem~\ref{n-bongartz} is applicable.
\end{proof}

 The result of the following corollary generalizes those of
Examples~\ref{tilting-examples}.

\begin{cor} \label{finite-filtration-by-induced-via-n-bongartz}
 Let $R\rarrow A$ be a homomorphism of associative rings and $T$ be
an $n$\+tilting left $R$\+module.
 Let $(\F,\C)$ be the $n$\+tilting cotorsion pair induced by $T$ in
$R\Modl$.
 Assume that the underlying left $R$\+module of $A^+$ belongs to $\C$,
that is\/ $\Tor^R_i(A,T)=0$ for all $i>0$.
 Assume further that the left $A$\+module $A\ot_RT_j$ is
$(n-j)$-tilting for every\/ $0\le j\le n$.
 (In particular, by Proposition~\ref{dual-bazzoni-prop}, this holds
whenever $R$ is commutative and $A$ is an $R$\+algebra.)
 Let $(\F^A,\C^A)$ be the $n$\+tilting cotorsion pair induced by
$T'=A\ot_RT$ in $A\Modl$.
 Then we have $\F^A=\Fil_{n+1}(A\ot_R\F)^\oplus$.
\end{cor}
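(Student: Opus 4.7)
The plan is to mirror the proof of Corollary~\ref{finite-filtration-by-coinduced-via-dual-n-bongartz} under the duality between $n$\+cotilting and $n$\+tilting cotorsion pairs. Concretely, I set $T'_j=A\ot_RT_j\in A\Modl$ for $0\le j\le n$ and aim to recognize $T'_0=T',\,T'_1,\dotsc,T'_n$ as a sequence of derived tilting modules attached to $T'$ in the sense of Lemma~\ref{derived-tilting-classes}; once this is done, Theorem~\ref{left-tilting-class-described} applied to the ring $A$ produces the required filtration description, which one then translates into the class $A\ot_R\F$.

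To identify the derived tilting modules I would compute the right orthogonals $(T'_j)^{\perp_{>0}}$ and $T'^{\perp_{>j}}$ inside $A\Modl$. Since $\C=T^{\perp_{>0}}\subset T^{\perp_{>j}}=\C_j$, we have ${}_RA^+\in\C\subset\C_j$, so Lemma~\ref{tilting-class-change-of-ring} applied to the $(n-j)$\+tilting module $T_j$ shows that $(T'_j)^{\perp_{>0}}$ consists of precisely those left $A$\+modules whose underlying left $R$\+module lies in $\C_j$. Symmetrically, Lemma~\ref{homological-formulas-lemma}(a) (available because $\Tor^R_i(A,T)=0$ for all $i>0$) gives $\Ext^i_A(T',D)\simeq\Ext^i_R(T,D)$ for every left $A$\+module $D$ and all $i\ge0$, so $T'^{\perp_{>j}}$ likewise consists of the left $A$\+modules whose underlying left $R$\+module belongs to $T^{\perp_{>j}}=\C_j$. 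Hence $(T'_j)^{\perp_{>0}}=T'^{\perp_{>j}}$, which is exactly the defining property that lets $T'_j$ serve as the $j$\+th entry in the derived tilting sequence for $T'$ from Lemma~\ref{derived-tilting-classes}; for $j=n$, the hypothesis that $A\ot_RT_n$ is $0$\+tilting supplies the projective generator of $A\Modl$ needed there.

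With this identification, Theorem~\ref{left-tilting-class-described} applied to the ring $A$ and the $n$\+tilting module $T'$ describes $\F^A$ as the class of direct summands of $(n+1)$\+filtered left $A$\+modules $G$ with successive quotients $F_{i+1}G/F_iG\in\Add_A(T'_{n-i})$ for $0\le i\le n$. For each $j$ the module $T_j$ lies in $\F$: indeed $T_j\in{}^{\perp_{>0}}(T_j^{\perp_{>0}})={}^{\perp_{>0}}\C_j\subset{}^{\perp_1}\C=\F$ because $\C\subset\C_j$. Using that $\F$ is closed under arbitrary coproducts and that $(T'_j)^{(\kappa)}\simeq A\ot_R T_j^{(\kappa)}$, every module in $\Add_A(T'_j)$ is a direct summand of some $A\ot_RF$ with $F\in\F$, i.e.\ $\Add_A(T'_j)\subset(A\ot_R\F)^\oplus$. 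Feeding this into the filtration description and applying the elementary identity $\Fil_\alpha(\S^\oplus)\subset\Fil_\alpha(\S)^\oplus$ (the filtration analogue of Lemma~\ref{iterated-cof-lemma}(b)) yields $\F^A\subset\Fil_{n+1}(A\ot_R\F)^\oplus$. The reverse inclusion $\Fil_{n+1}(A\ot_R\F)^\oplus\subset{}^{\perp_1}\C^A=\F^A$ is immediate from Lemma~\ref{orthogonal-to-induced-lemma}(c).

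The only genuine bookkeeping obstacle is the first step, namely the double identification $(T'_j)^{\perp_{>0}}=T'^{\perp_{>j}}=\{C\in A\Modl\mid{}_RC\in\C_j\}$, which requires that each Lemma~\ref{tilting-class-change-of-ring} hypothesis be checked along the entire sequence of shifted tilting pairs and that the tilting-class-change and $\Tor$\+vanishing inputs be combined consistently; the remainder of the argument is a straightforward assembly of the Bongartz--Ringel description together with the already-established orthogonality and filtration lemmas.
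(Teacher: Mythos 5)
Your proposal is correct and follows essentially the same route as the paper: the paper's own proof is precisely the dualization of Corollary~\ref{finite-filtration-by-coinduced-via-dual-n-bongartz}, namely setting $T'_j=A\ot_RT_j$, identifying $(T'_j)^{\perp_{>0}}=T'^{\perp_{>j}}$ via Lemma~\ref{homological-formulas-lemma}(a) (and the change-of-ring lemma for tilting classes), and then invoking Theorem~\ref{left-tilting-class-described} over $A$ together with the observation $T_j\in\F$ to land in $\Fil_{n+1}(A\ot_R\F)^\oplus$. All the verifications you flag (the $\Tor$-vanishing inputs, $T_j\in\F$, and $\Add_A(T'_j)\subset(A\ot_R\F)^\oplus$) are exactly the ones the paper relies on.
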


\begin{proof}
 Follows easily from Theorem~\ref{left-tilting-class-described}
and Lemma~\ref{homological-formulas-lemma}(a)
(cf.\ the proof of the dual assertion in
Corollary~\ref{finite-filtration-by-coinduced-via-dual-n-bongartz}).
\end{proof}

\subsection{Increasing filtrations by induced modules}
\label{increasing-filtrations-subsecn}
 Let $R\rarrow A$ be a homomorphism of associative rings, and
let $(\F,\C)$ be an $\Ext^1$\+orthogonal pair of classes of
left $R$\+modules.
 Instead of assuming finiteness of the $\C$\+coresolution dimension,
we now assume that the class $\C$ is closed under countable direct
sums in $R\Modl$.

 As above, we denote by~$\omega$ the ordinal of nonnegative integers.
 The ``filtrations'' appearing in the next proposition are the usual
exhaustive infinite increasing filtrations indexed by
the natural numbers.

\begin{prop} \label{direct-sum-closed-approximation-sequences}
 Assume that the\/ $\Ext^1$\+orthogonal pair of classes of
left $R$\+modules $(\F,\C)$ admits approximation
sequences~\textup{(\ref{sp-precover-seq}\+-\ref{sp-preenvelope-seq})}.
 Assume that the left $R$\+module $A^+$ belongs to~$\C$, and
that the condition~\textup{($\dagger$)} holds.
 Assume further that the class $\C$ is closed under the cokernels of
injective morphisms and countable direct sums in $R\Modl$.
 Then the\/ $\Ext^1$\+orthogonal pair of classes of left $A$\+modules\/
$\Fil_\omega(A\ot_R\F)$ and $\C^A$ admits approximation sequences
as well.
\end{prop}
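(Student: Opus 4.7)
The plan is to prove this as the direct limit analogue of Proposition~\ref{product-closed-approximation-sequences}, iterating $\omega$~times the single-step construction of Proposition~\ref{finite-coresol-dim-approximation-sequences}. The $\Ext^1$-orthogonality of $\Fil_\omega(A\ot_R\F)$ and $\C^A$ is automatic from Lemma~\ref{orthogonal-to-induced-lemma}(c), and once I have a construction of special preenvelope sequences for this pair, the corresponding special precover sequences will follow from the ``if'' direction of Lemma~\ref{salce-lemma}, in exactly the way used at the end of Propositions~\ref{finite-coresol-dim-approximation-sequences} and~\ref{product-closed-approximation-sequences}. So the whole task reduces to building the preenvelopes.

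Given a left $A$-module $N$, I would apply the functor $W$ from the proof of Proposition~\ref{finite-coresol-dim-approximation-sequences} repeatedly, producing a chain of injective $A$-module morphisms
$$
N\lrarrow W(N)\lrarrow W^2(N)\lrarrow\dotsb\lrarrow W^n(N)\lrarrow\dotsb,
$$
and set $\widetilde N=\varinjlim_{n\in\omega}W^n(N)$, which is simply the union of the $W^n(N)$. The cokernel $\widetilde N/N$ carries the natural $\omega$-filtration by the $W^n(N)/N$, whose successive quotients are the $A$-modules $W^{n+1}(N)/W^n(N)\simeq A\ot_RF'(W^n(N))\in A\ot_R\F$ (read off from the right-hand column of diagram~\eqref{preenvelope-construction-diagram}). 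This gives $\widetilde N/N\in\Fil_\omega(A\ot_R\F)$, so only one point remains: to show that the underlying $R$-module of $\widetilde N$ itself lies in $\C$. This is the main obstacle.

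To handle it, I would use the cofinality idea dual to the one in the proof of Proposition~\ref{product-closed-approximation-sequences}. Diagram~\eqref{preenvelope-construction-diagram} applied to $W^n(N)$ in place of $N$ factors the $A$-linear map $W^n(N)\rarrow W^{n+1}(N)$, in the category of left $R$-modules, as the composition $W^n(N)\xrightarrow{\epsilon_{W^n(N)}}A\ot_RW^n(N)\rarrow A\ot_RC(W^n(N))\rarrow W^{n+1}(N)$. Consequently, the inductive systems $(W^n(N))_n$ and $(A\ot_RC(W^n(N)))_n$ are mutually cofinal in $R\Modl$, so they have the same direct limit there. By condition~\textup{($\dagger$)}, each $A\ot_RC(W^n(N))$ belongs to $\C$. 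The telescope short exact sequence
$$
0\lrarrow\bigoplus_{n\in\omega}A\ot_RC(W^n(N))\lrarrow\bigoplus_{n\in\omega}A\ot_RC(W^n(N))\lrarrow\varinjlim_{n\in\omega}A\ot_RC(W^n(N))\lrarrow0
$$
has injective first map, so the closure of $\C$ under countable direct sums and under cokernels of injective morphisms (both assumed) forces the rightmost term to lie in $\C$. Hence $\widetilde N\in\C^A$, completing the preenvelope construction and, via Lemma~\ref{salce-lemma}, the proposition.
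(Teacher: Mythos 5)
Your proposal is correct and follows the paper's own proof essentially step for step: the same iterated $W$-construction, the same mutual-cofinality trick identifying $\varinjlim_n W^n(N)$ with $\varinjlim_n A\ot_RC(W^n(N))$ in $R\Modl$, the same telescope sequence exploited via closure of $\C$ under countable direct sums and cokernels of injections, and the same appeal to Salce's lemma for the precover sequences. No gaps.
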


\begin{proof}
 The pair of classes $\Fil(A\ot_R\F)$ and $\C^A\subset A\Modl$
is $\Ext^1$\+orthogonal by Lemma~\ref{orthogonal-to-induced-lemma}(c).
 The explicit construction below, showing that the pair of classes
$\Fil_\omega(A\ot_R\F)$ and $\C^A\subset A\Modl$ admits special
preenvelope sequences, plays a key role.
 It goes back to~\cite[Lemma~1.3.3]{Psemi}.
 
 Let $N$ be a left $A$\+module.
 We proceed with the construction from the proof of
Proposition~\ref{finite-coresol-dim-approximation-sequences}, but
instead of a finite number of~$k$ iterations, we perform
$\omega$~iterations now.
 So we produce a sequence of injective morphisms of left $A$\+modules
\begin{equation} \label{W-inductive-system}
 N\lrarrow W(N)\lrarrow W(W(N))\lrarrow\dotsb\lrarrow W^m(N)
 \lrarrow\dotsb,
\end{equation}
where $m$~ranges over the nonnegative integers.
 Clearly, the cokernel of the injective left $A$\+module morphism
$N\rarrow\varinjlim_{m\in\omega}W^m(N)$ is $\omega$\+filtered by
the left $A$\+modules $A\ot_RF'(W^m(N))$, \,$m\in\omega$, which
belong to $A\ot_R\F$ by construction.
 Now the claim is that the left $A$\+module $\varinjlim_{m\in\omega}
W^m(N)$ belongs to~$\C^A$.

 Recall that the surjective $A$\+module morphism $\pi_N\:A\ot_RN
\rarrow N$ admits a natural $R$\+linear section
$\epsilon_N\:N\rarrow A\ot_RN$.
 Looking on the diagram~\eqref{preenvelope-construction-diagram},
one can see that the injective map $N\rarrow W(N)$ factorizes as
$N\rarrow A\ot_RC(N)\rarrow W(N)$.
 Here $A\ot_RC(N)\rarrow W(N)$ is an $A$\+module morphism, but
$N\rarrow A\ot_RC(N)$ is only an $R$\+module morphism (between
$A$\+modules).
 Thus the sequence of injective morphisms of left
$A$\+modules~\eqref{W-inductive-system} is mutually cofinal with
a sequence of left $R$\+module morphisms
\begin{equation} \label{C-W-inductive-system}
 A\ot_RC(N)\lrarrow A\ot_RC(W(N))\lrarrow\dotsb\lrarrow
 A\ot_RC(W^m(N))\lrarrow\dotsb
\end{equation}

 We have a short exact sequence of left $R$\+modules
\begin{multline} \label{indlimit-telescope}
 0\lrarrow\bigoplus\nolimits_{m\in\omega} A\ot_RC(W^m(N))\lrarrow
 \bigoplus\nolimits_{m\in\omega} A\ot_RC(W^m(N)) \\ \lrarrow
 \varinjlim\nolimits_{m\in\omega} A\ot_RC(W^m(N))\lrarrow0.
\end{multline}
 The left $R$\+modules $C(W^m(N))$, \,$m\ge0$, belong to $\C$ by
construction.
 According to~($\dagger$), it follows that the underlying left
$R$\+modules of the left $A$\+modules $A\ot_RC(W^m(N))$ belong
to~$\C$, too.
 Since the class $\C\subset R\Modl$ is closed under countable direct
sums and the cokernels of injective morphisms by assumption,
it follows that the left $R$\+module
$\varinjlim_{m\in\omega} A\ot_RC(W^m(N))$ belongs to~$\C$.

 The inductive limits of mutually cofinal inductive systems agree,
so we have an isomorphism of left $R$\+modules
$$
 \varinjlim\nolimits_{m\in\omega}W^m(N)\simeq
 \varinjlim\nolimits_{m\in\omega}A\ot_RC(W^m(N)).
$$
 Since $\varinjlim_{m\in\omega}A\ot_RC(W^m(N))\in\C$, we can conclude
that $\varinjlim_{m\in\omega}W^m(N)\in\C^A$, as desired.
 This finishes the construction of the special preenvelope sequences
for the pair of classes of left $A$\+modules $\Fil_\omega(A\ot_R\F)$
and~$\C^A$.

 At last, the special precover sequences for the pair of classes
$\Fil_\omega(A\ot_R\F)$ and $\C^A\subset A\Modl$ are produced from
the special preenvelope sequences in the same way as in the last
paragraph of the proof of
Proposition~\ref{finite-coresol-dim-approximation-sequences}.
\end{proof}

\begin{thm} \label{direct-sum-closed-theorem}
 Let $(\F,\C)$ be a hereditary complete cotorsion pair in $R\Modl$.
 Assume that the left $R$\+module $A^+$ belongs to~$\C$, and
that the condition~\textup{($\dagger$)} holds.
 Assume further that the class $\C$ is closed under countable direct
sums in $R\Modl$.
 Then the pair of classes $\F^A=\Fil_\omega(A\ot_R\F)^\oplus$ and
$\C^A$ is a hereditary complete cotorsion pair in $A\Modl$.
\end{thm}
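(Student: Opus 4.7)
The plan is to mimic the brief derivations of Theorems \ref{finite-resol-dim-theorem}, \ref{finite-coresol-dim-theorem}, and \ref{product-closed-theorem}: all the real work has already been done in Proposition \ref{direct-sum-closed-approximation-sequences}, and one just needs to assemble the pieces.

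First, I observe that the hypotheses of the present theorem are exactly the hypotheses of Proposition \ref{direct-sum-closed-approximation-sequences}: a hereditary complete cotorsion pair $(\F,\C)$ gives in particular an $\Ext^1$-orthogonal pair admitting approximation sequences; $A^+\in\C$ holds by assumption; condition ($\dagger$) holds by assumption; and since $(\F,\C)$ is hereditary, the class $\C$ is coresolving by Lemma \ref{hereditary-cotorsion-pair-lemma}, which in particular gives closure under cokernels of injective morphisms, and closure under countable direct sums is assumed. Therefore the proposition applies and tells us that the $\Ext^1$-orthogonal pair $(\Fil_\omega(A\ot_R\F),\,\C^A)$ admits approximation sequences in $A\Modl$.

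Next I invoke Lemma \ref{direct-summand-lemma} with this pair. It produces a complete cotorsion pair $(\Fil_\omega(A\ot_R\F)^\oplus,\,(\C^A)^\oplus)$ in $A\Modl$. Since $\C$ is the right class of a cotorsion pair in $R\Modl$, it is closed under direct summands, and this property is inherited verbatim by $\C^A$; hence $(\C^A)^\oplus=\C^A$. Setting $\F^A=\Fil_\omega(A\ot_R\F)^\oplus$, this is the desired complete cotorsion pair $(\F^A,\C^A)$ in $A\Modl$.

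It remains to verify that this cotorsion pair is hereditary. By Lemma \ref{hereditary-cotorsion-pair-lemma}, it suffices to check that $\C^A$ is closed under cokernels of injective morphisms in $A\Modl$. But a short exact sequence of left $A$-modules is exact on underlying left $R$-modules, and $\C$ is coresolving in $R\Modl$ (again by Lemma \ref{hereditary-cotorsion-pair-lemma} applied to the hereditary cotorsion pair $(\F,\C)$); hence if the outer terms of such a sequence have underlying $R$-modules in $\C$, so does the middle one, i.e., $\C^A$ is coresolving in $A\Modl$. Applying Lemma \ref{hereditary-cotorsion-pair-lemma} to the cotorsion pair $(\F^A,\C^A)$ completes the proof.

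There is no real obstacle here; the content of the theorem is entirely concentrated in Proposition \ref{direct-sum-closed-approximation-sequences}, and the present statement is just the packaging of that proposition into the language of complete hereditary cotorsion pairs, in perfect parallel with the proof of Theorem \ref{product-closed-theorem}.
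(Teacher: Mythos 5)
Your proposal is correct and coincides with the paper's own argument: the paper derives the theorem from Proposition~\ref{direct-sum-closed-approximation-sequences} via Lemma~\ref{direct-summand-lemma}, exactly as in the proof of Theorem~\ref{finite-coresol-dim-theorem}, where the closure of $\C^A$ under direct summands and cokernels of injective morphisms (inherited from $\C$) is the only point that needs checking. Your additional verification that the hypotheses of the proposition are met and that heredity follows from Lemma~\ref{hereditary-cotorsion-pair-lemma} just makes explicit what the paper leaves implicit.
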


\begin{proof}
 Follows from
Proposition~\ref{direct-sum-closed-approximation-sequences}
in view of Lemma~\ref{direct-summand-lemma} (cf.\ the proof of
Theorem~\ref{finite-coresol-dim-theorem}).
\end{proof}

\begin{cor} \label{direct-sum-closed-cor}
 For any associative ring homomorphism $R\rarrow A$ and any
hereditary complete cotorsion pair $(\F,\C)$ in $R\Modl$ satisfying
the assumptions of Theorem~\ref{direct-sum-closed-theorem}, one has
${}^{\perp_1}\C^A=\Fil_\omega(A\ot_R\F)^\oplus$.
 In particular, it follows that\/ $\Fil(A\ot_R\F)^\oplus=
\Fil_\omega(A\ot_R\F)^\oplus$.
\end{cor}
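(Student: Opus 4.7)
The plan is to derive this corollary directly from Theorem~\ref{direct-sum-closed-theorem} together with Lemma~\ref{orthogonal-to-induced-lemma}(c), mirroring the argument used to deduce Corollary~\ref{product-closed-cor} from Theorem~\ref{product-closed-theorem}. No new construction is needed, since all the technical work has already been invested in Proposition~\ref{direct-sum-closed-approximation-sequences} and Theorem~\ref{direct-sum-closed-theorem}.

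First, I will invoke Theorem~\ref{direct-sum-closed-theorem} to conclude that, under the stated hypotheses, the pair of classes $(\Fil_\omega(A\ot_R\F)^\oplus,\,\C^A)$ is a (hereditary complete) cotorsion pair in $A\Modl$. By the very definition of a cotorsion pair, the left-hand class is recovered as the left Ext-orthogonal of the right-hand class; that is,
\[
{}^{\perp_1}\C^A \;=\; \Fil_\omega(A\ot_R\F)^\oplus,
\]
which establishes the first assertion of the corollary.

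For the ``in particular'' part, I will combine the first assertion with the inclusions
\[
\Fil_\omega(A\ot_R\F)^\oplus \;\subset\; \Fil(A\ot_R\F)^\oplus \;\subset\; {}^{\perp_1}\C^A.
\]
The leftmost inclusion is obvious (an $\omega$-filtration is in particular a filtration), while the rightmost inclusion is exactly the content of Lemma~\ref{orthogonal-to-induced-lemma}(c) applied to any generating class $\S$ with $\F = {}^{\perp_1}\C$ (for instance $\S = \F$ itself). Sandwiching $\Fil(A\ot_R\F)^\oplus$ between two classes that have just been shown to coincide forces the equality
\[
\Fil(A\ot_R\F)^\oplus \;=\; \Fil_\omega(A\ot_R\F)^\oplus,
\]
as required.

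There is no genuine obstacle here: the work lies entirely in Theorem~\ref{direct-sum-closed-theorem}, whose proof in turn rests on Proposition~\ref{direct-sum-closed-approximation-sequences}. The only thing to verify carefully is that the use of Lemma~\ref{orthogonal-to-induced-lemma}(c) is legitimate under the hypotheses of the corollary, but this is automatic: the hypothesis ${}_RA^+\in\C$ of Theorem~\ref{direct-sum-closed-theorem} is precisely the hypothesis required by that lemma, so the corollary follows immediately.
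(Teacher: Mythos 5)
Your proof is correct and follows exactly the paper's own route: the first equality is immediate from Theorem~\ref{direct-sum-closed-theorem} (the left class of a cotorsion pair is the left $\Ext^1$-orthogonal of the right class), and the second follows by sandwiching $\Fil(A\ot_R\F)^\oplus$ between $\Fil_\omega(A\ot_R\F)^\oplus$ and ${}^{\perp_1}\C^A$ via Lemma~\ref{orthogonal-to-induced-lemma}(c). Nothing further is needed.
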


\begin{proof}
 This is a corollary of Theorem~\ref{direct-sum-closed-theorem}
and  Lemma~\ref{orthogonal-to-induced-lemma}(c) (cf.\ the proof of
Corollary~\ref{finite-coresol-dim-cor}).
\end{proof}

\begin{rem} \label{semimodules-remark}
 As mentioned in Remark~\ref{corings-and-contramodules-remark},
the condition~($\dagger$) appears to be rather restrictive.
 In fact, the construction of
Proposition~\ref{direct-sum-closed-approximation-sequences} originates
from the theory of semimodules over semialgebras, as
in~\cite[Lemma~1.3.3]{Psemi}, where the natural analogue of this
condition feels much less restrictive, particularly when $\C$ is
simply the class of all injective objects.
 So one can say that the ring $R$ in this
Section~\ref{increasing-filtrations-subsecn} really ``wants'' to be
a coalgebra $C$ (say, over a field~$k$), and accordingly the ring $A$
becomes a semialgebra $S$ over~$C$.
 The left $R$\+modules ``want'' to be left $C$\+comodules, and
the left $A$\+modules ``want'' to be left $S$\+semimodules.

 Then the induction functor, which was $A\ot_R{-}$ in
the condition~($\dagger$), takes the form of the cotensor product
functor $S\oc_C{-}$.
 This one is much more likely to take injective left $C$\+comodules
to injective left $C$\+comodules (it suffices that $S$ be
an injective left $C$\+comodule).
 Besides, the class of all injective comodules over a coalgebra over
a field is always closed under infinite direct sums; so the specific
assumption of Section~\ref{increasing-filtrations-subsecn} is
satisfied in the comodule context, too.

 To make a ring $R$ behave rather like a coalgebra, one can assume it
to be ``small'' in some sense.
 The following examples are suggested by the analogy with semialgebras
and semimodules.
\end{rem}

\begin{exs} \label{semimodules-examples}
 Let $\C=R\Modl_\inj$ be the class of all injective left $R$\+modules;
then $\F=R\Modl$ is the class of all left $R$\+modules
(cf.\ Example~\ref{corings-and-contramodules-example}).

\smallskip
 (1)~Assume that the ring $R$ is left Noetherian.
 Then the class of all injective left $R$\+modules is closed under
infinite direct sums; so the specific assumption of
Section~\ref{increasing-filtrations-subsecn} is satisfied.

 Let $I$ be an injective left $R$\+module containing every
indecomposable injective left $R$\+module as a direct summand.
 Assume further that the left $R$\+module $A\ot_RI$ is injective.
 Then it follows that the functor $A\ot_R{-}$ preserves the class
of all injective left $R$\+modules.
 Thus the condition~($\dagger$) is satisfied.

\medskip
 (2)~Assume that $R$ is a finite-dimensional algebra over a field~$k$.
 This is a particular case of~(1).
 Furthermore, the injective left $R$\+module $I=R^*=\Hom_k(R,k)$ has
the property that every injective left $R$\+module is a direct
summand of a direct sum of copies of~$R^*$.
 Therefore, the condition~($\dagger$) holds whenever the underlying
left $R$\+module of the left $A$\+module $A\ot_RR^*$ is injective.

\medskip
 (3)~Assume that $R$ is a quasi-Frobenius ring.
 This is also a particular case of~(1)
(cf.\ Example~\ref{semicontramodules-examples}(3)).
 In this case, the condition~($\dagger$) can be rephrased by saying
that the functor $A\ot_R{-}$ takes projective left $R$\+modules to
projective left $R$\+modules.
 This holds whenever $A$ is a projective left $R$\+module.
\end{exs}

\begin{rem}
 The above examples shed some light on the condition~$(\dagger)$,
but they provide no new information from the point of view of
the comparison between the results of
Section~\ref{increasing-filtrations-subsecn} and
those known from the general theory of cotorsion pairs in module
categories.
 In fact, taking $\C$ to be the class of all injective left
$R$\+modules and assuming that the ring $R$ is left Noetherian, one can
drop the condition~($\dagger$) altogether, as the following version
of Proposition~\ref{direct-sum-closed-approximation-sequences},
and consequently also of Theorem~\ref{direct-sum-closed-theorem} and
Corollary~\ref{direct-sum-closed-cor}, is readily provable using
the small object argument.
\end{rem}

\begin{prop} \label{small-object-argument-is-better-for-noetherian}
 Let $\C$ be the class of all injective left $R$\+modules.
 Assume that the ring $R$ is left Noetherian and the left $R$\+module
$A^+$ is injective (equivalently, the right $R$\+module $A$ is flat).
 Then the\/ $\Ext^1$\+orthogonal pair of classes of left $A$\+modules\/
$\Fil_\omega(A\ot_RR\Modl)$ and $\C^A=A\Modl_{R\dinj}$ admits
approximation sequences.

 Consequently, the pair of classes
$\F^A=\Fil_\omega(A\ot_RR\Modl)^\oplus$ and $\C^A$ is a hereditary
complete cotorsion pair in $A\Modl$.
 In particular, ${}^{\perp_1}A\Modl_{R\dinj}=
\Fil_\omega(A\ot_RR\Modl)^\oplus$ and\/
$\Fil(A\ot_RR\Modl)^\oplus=\Fil_\omega(A\ot_RR\Modl)^\oplus$.
\end{prop}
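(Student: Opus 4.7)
The plan is to run the small object argument (as in the proof of Theorem~\ref{eklof-trlifaj-theorem}) arranged so that it terminates at the countable stage~$\omega$, using the Noetherian hypothesis to ensure the generators are finitely presented as $A$-modules.

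First I would identify a convenient generating set. By Baer's criterion, $\C = R\Modl_\inj = \S^{\perp_1}$ for $\S = \{R/I \mid I\subset R \text{ a left ideal}\}$; Noetherianity makes each $I$ finitely generated, so each $R/I$ is finitely presented over~$R$, and therefore each $A\ot_R R/I = A/AI$ is finitely presented over~$A$ by base change. The hypothesis that $A^+$ is injective as a left $R$-module is equivalent to $A$ being flat as a right $R$-module; in particular $\Tor^R_1(A,{-})=0$, so Lemma~\ref{orthogonal-to-induced-lemma} applies to give $\C^A = (A\ot_R\S)^{\perp_1}$ and identifies $({}^{\perp_1}\C^A,\C^A)$ as the cotorsion pair in $A\Modl$ generated by $\S^A = \{A\ot_R R/I\}$.

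Second, I would build special preenvelopes by iterating a pushout. Given an $A$-module $N$, set $N_0 = N$ and inductively let $N_{n+1}$ be the pushout of the inclusion $\bigoplus_j A\ot_R I_j \hookrightarrow \bigoplus_j A$ (injective because $A$ is flat as a right $R$-module) against the family of all $A$-linear maps $A\ot_R I_j \to N_n$, indexed by left ideals $I_j \subset R$. Then $N_{n+1}/N_n$ is a direct sum of modules $A\ot_R R/I_j$, so it lies in $A\ot_R R\Modl$. Let $N_\omega = \varinjlim_n N_n$. The crux is that because each $A\ot_R I$ is finitely presented over $A$ and the chain $N_0\to N_1\to\cdots$ is filtered, every $A$-linear map $A\ot_R I \to N_\omega$ factors through some $N_n$ and is therefore lifted to $A\to N_{n+1}\subset N_\omega$ at the next step. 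Hence $\Ext_A^1(A\ot_R R/I,\>N_\omega) = 0$ for all~$I$, i.e., $N_\omega\in\C^A$; the sequence $0\to N\to N_\omega\to N_\omega/N\to 0$ is then a special preenvelope with $N_\omega/N \in \Fil_\omega(A\ot_R R\Modl)$.

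Third, I would pass to special precovers by a Salce-style pushout, bypassing a direct invocation of Lemma~\ref{salce-lemma} because $\Fil_\omega(A\ot_R R\Modl)$ is not itself closed under extensions. For $M\in A\Modl$, pick an epimorphism $\pi\:A^{(I)}\twoheadrightarrow M$ with kernel~$K$, apply the previous step to~$K$ to get $0\to K\to K_\omega\to K_\omega/K\to 0$, and let $H$ be the pushout of $K\hookrightarrow A^{(I)}$ and $K\hookrightarrow K_\omega$. This yields $0\to A^{(I)}\to H\to K_\omega/K\to 0$ and $0\to K_\omega\to H\to M\to 0$. Now filter $H$ by $H_0=0$, $H_1=A^{(I)}$, and for $n\ge1$ let $H_{n+1}$ be the preimage in $H$ of the $n$-th term of the given $\omega$-filtration of $K_\omega/K$. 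The successive quotients all lie in $A\ot_R R\Modl$ (with $H_1/H_0 = A\ot_R R^{(I)}$), and the ordinal identity $1+\omega=\omega$ keeps the total length at~$\omega$; so $H\in\Fil_\omega(A\ot_R R\Modl)$ and the second sequence is the desired special precover. Approximation sequences in hand, Lemma~\ref{direct-summand-lemma} produces the complete cotorsion pair $(\Fil_\omega(A\ot_R R\Modl)^\oplus,\>\C^A)$; it is hereditary because for $R$ Noetherian $R\Modl_\inj$ is coresolving (extensions of injectives by injectives split), hence $\C^A$ is coresolving in $A\Modl$, so Lemma~\ref{hereditary-cotorsion-pair-lemma} applies. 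The remaining identities follow from uniqueness of the cotorsion pair together with Lemma~\ref{orthogonal-to-induced-lemma}(c). I expect the main obstacle to be the combination of two features: terminating the small-object argument at~$\omega$ (which rests on Noetherianity surviving base change) and, more delicately, circumventing the failure of $\Fil_\omega$ to be closed under extensions when promoting preenvelopes to precovers, which forces the explicit pushout-and-reindex step above rather than a black-box application of Salce.
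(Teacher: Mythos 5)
Your proof is correct and follows essentially the same route as the paper, which reduces this statement to Proposition~\ref{small-object-argument-is-better-for-sfp} and proves that by the same truncated-at-$\omega$ small object argument (generators $A\ot_R R/I$ finitely presented over $A$ thanks to Noetherianity, so $\Hom_A(A\ot_R I,{-})$ commutes with the countable direct limit). You have merely filled in the details that the paper explicitly leaves to the reader, including the $1+\omega=\omega$ reindexing in the Salce step.
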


\begin{proof}
 This is a particular case of
Proposition~\ref{small-object-argument-is-better-for-sfp} below.
\end{proof}

 In other words, in the assumptions of
Proposition~\ref{small-object-argument-is-better-for-noetherian},
the weakly $A/R$\+projective left $A$\+modules are precisely the direct
summands of the $A$\+modules admitting an $\omega$\+indexed increasing
filtration by $A$\+modules induced from left $R$\+modules.

 Let us formulate a generalization of 
Proposition~\ref{small-object-argument-is-better-for-noetherian}
to coherent rings.
 For this purpose, we need some definitions.
 Let $R$ be an associative ring.
 A left $R$\+module $J$ is called \emph{fp\+injective}~\cite{Ste}
if $\Ext^1_R(S,J)=0$ for all finitely presented left $R$\+modules~$S$.
 A left $R$\+module $Q$ is said to be \emph{fp\+projective}~\cite{MD}
if $\Ext^1_R(Q,J)=0$ for all fp\+injective left $R$\+modules~$J$;
equivalently, this means that $Q$ is a direct summand of an $R$\+module
filtered by finitely presented $R$\+modules.
 The fp\+projective modules are called ``direct summands of
fp\+filtered modules'' in~\cite[Definition~8.4]{GT}.
 The classes of fp\+projective and fp\+injective $R$\+modules form
a complete cotorsion pair in $R\Modl$ \,\cite[Theorem~8.6(b)]{GT};
over a left coherent ring $R$, this cotorsion pair is hereditary.
 The class of all fp\+injective left $R$\+modules is closed under
direct sums~\cite[Corollary~2.4]{Ste}; over a left coherent ring $R$,
it is also closed under direct limits~\cite[Theorem~3.2]{Ste}.
 We refer to~\cite[Section~1]{Pfp} for some further discussion.

\begin{prop} \label{small-object-argument-for-coherent}
 Let $\F$ be the class of all fp\+projective left $R$\+modules
and $\C$ be the class of all fp\+injective left $R$\+modules.
 Assume that the ring $R$ is left coherent and the left $R$\+module
$A^+$ is fp\+injective (equivalently, the right $R$\+module $A$
is flat).
 Then the\/ $\Ext^1$\+orthogonal pair of classes of left $A$\+modules\/
$\Fil_\omega(A\ot_R\F)$ and $\C^A$ admits approximation sequences.

 Consequently, the pair of classes
$\F^A=\Fil_\omega(A\ot_R\F)^\oplus$ and $\C^A$ is a hereditary
complete cotorsion pair in $A\Modl$.
 In particular, ${}^{\perp_1}\C^A=
\Fil_\omega(A\ot_R\F)^\oplus$ and\/
$\Fil(A\ot_R\nobreak\F)^\oplus=\Fil_\omega(A\ot_R\F)^\oplus$.
\end{prop}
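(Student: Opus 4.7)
The plan is to apply the small object argument to the generating set $\S_A = \{A \ot_R S : S \in \S\} \cup \{A\}$, where $\S$ denotes a set of representatives of the isomorphism classes of finitely presented left $R$-modules. The crucial feature is that each $A \ot_R S$ with $S \in \S$ is finitely presented as a left $A$-module (tensor a finite presentation of $S$ with $A$), which forces the iterative construction to terminate in $\omega$ steps.

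First I would set up the framework: the fp-projective/fp-injective cotorsion pair $(\F, \C)$ in $R\Modl$ is generated by $\S$, and over a left coherent $R$ finitely presented modules have finitely presented syzygies, so $(\F, \C)$ is hereditary by Lemma~\ref{hereditary-cotorsion-pair-lemma}. The hypothesis that $A^+$ is fp-injective is equivalent to $A$ being flat as a right $R$-module (cf.\ Lemma~\ref{necessary-A-plus-in-C}), so Lemma~\ref{homological-formulas-lemma}(a) with $n = 1$ gives $\Ext^1_A(A \ot_R S, D) \simeq \Ext^1_R(S, {}_R D)$ for every $S \in \S$ and every left $A$-module $D$. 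Consequently, $\C^A = (A \ot_R \S)^{\perp_1}$ in $A\Modl$.

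Next, the core construction. Given a left $A$-module $N$, I would build an $\omega$-chain of monomorphisms $N = N_0 \hookrightarrow N_1 \hookrightarrow N_2 \hookrightarrow \cdots$ where $N_{i+1}$ is the middle term of an extension $0 \to N_i \to N_{i+1} \to \bigoplus_{S, \xi} A \ot_R S \to 0$ whose class combines all elements $\xi \in \Ext^1_A(A \ot_R S, N_i)$ over $S \in \S$; by design each element of $\Ext^1_A(A \ot_R S, N_i)$ dies under the map $\Ext^1_A(A \ot_R S, N_i) \to \Ext^1_A(A \ot_R S, N_{i+1})$, and each quotient $N_{i+1}/N_i$ lies in $A \ot_R \S$. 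Setting $M = \varinjlim_i N_i$ yields a short exact sequence $0 \to N \to M \to F' \to 0$ with $F' \in \Fil_\omega(A \ot_R \F)$. The principal obstacle is verifying $M \in \C^A$. For any $\xi \in \Ext^1_A(A \ot_R S, M)$ represented by an extension $0 \to M \to E \to A \ot_R S \to 0$, I would choose a finite $A$-presentation $A^m \to A^n \to A \ot_R S \to 0$ derived from a finite $R$-presentation of $S$, lift $A^n \to A \ot_R S$ to $A^n \to E$, and observe that the resulting map $A^m \to M$ factors through some $N_i$ because $A^m$ is finitely generated and $M = \bigcup_i N_i$; thus $\xi$ is the image of some $\xi_i \in \Ext^1_A(A \ot_R S, N_i)$, which maps to zero in $\Ext^1_A(A \ot_R S, N_{i+1})$ and hence to zero in $\Ext^1_A(A \ot_R S, M)$.

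To conclude, every left $A$-module is a quotient of a free $A$-module $A^{(X)} = A \ot_R R^{(X)} \in A \ot_R \F \subset \Fil_\omega(A \ot_R \F)$ (since $R^{(X)} \in \F$) and embeds into an injective $A$-module $J$, which lies in $\C^A$ because ${}_R J \in \C$ by the same Ext-isomorphism. Salce's Lemma~\ref{salce-lemma} then produces approximation sequences on both sides; the pair $(\Fil_\omega(A \ot_R \F), \C^A)$ is $\Ext^1$-orthogonal by Lemmas~\ref{orthogonal-to-induced-lemma}(c) and~\ref{eklof-lemma}; and Lemma~\ref{direct-summand-lemma} delivers the complete cotorsion pair $(\F^A, \C^A) = (\Fil_\omega(A \ot_R \F)^\oplus, \C^A)$. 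Hereditariness follows since $\C^A$ inherits the coresolving property from $\C$ through the exact restriction-of-scalars functor, and the concluding identities ${}^{\perp_1}\C^A = \Fil_\omega(A \ot_R \F)^\oplus$ and $\Fil(A \ot_R \F)^\oplus = \Fil_\omega(A \ot_R \F)^\oplus$ are then immediate.
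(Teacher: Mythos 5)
Your proof is correct and follows essentially the same route as the paper: the paper deduces this statement as a special case of Proposition~\ref{small-object-argument-is-better-for-sfp}, whose proof is exactly the small object argument you describe, with the $\omega$\+termination coming from the fact that $\Ext^1_A(A\ot_RS,{-})$ commutes with the relevant direct limits because $A\ot_RS$ is finitely presented over~$A$. You have merely written out the details that the paper leaves to the reader (including the flatness of $A$ as a right $R$\+module, which is the one point the paper's proof does spell out).
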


\begin{proof}
 This is also a particular case of
Proposition~\ref{small-object-argument-is-better-for-sfp}.
 Let us only explain why the right $R$\+module $A$ is flat whenever
the left $R$\+module $A^+$ is fp\+injective.
 This can be shown by observing that any fp\+injective pure-injective
module is injective (by~\cite[Lemma~8.5]{GT}), but the simplest
argument is that, given a right $R$\+module $G$ such that
the left $R$\+module $G^+$ is fp\+injective, the isomorphism
$\Tor^R_1(G,S)^+\simeq\Ext_R^1(S,G^+)=0$ implies $\Tor^R_1(S,G)=0$
for all finitely presented left $R$\+modules~$S$.
\end{proof}

\begin{ex}
 This example is an $n=\infty$ version of
Example~\ref{tilting-examples}(1).
 Let $A$ be an associative algebra over a commutative ring $R$ such that
$A$ is a flat $R$\+module, and let $(\F,\C)$ be a hereditary cotorsion
pair in $R\Modl$ generated by a set $\S$ of strongly finitely presented
$R$\+modules (i.~e., every module $S\in\S$ admits a resolution by
finitely generated projective $R$\+modules).
 Then the class $\C$ is closed under direct limits (and in particular,
direct sums) in $R\Modl$, so the condition~($\dagger$) holds for
the reason explained in Example~\ref{tilting-examples}(1), and
Theorem~\ref{direct-sum-closed-theorem} is applicable.
 According to Corollary~\ref{direct-sum-closed-cor}, we can conclude
that ${}^{\perp_1}\C^A=\F^A=\Fil_\omega(A\ot_R\F)^\oplus$.

 Using the small object argument, one can get rid of the assumption of
commutativity of the ring $R$ in this result, and relax the other
conditions as follows.
\end{ex}

\begin{prop} \label{small-object-argument-is-better-for-sfp}
 Let $R\rarrow A$ be a homomorphism of associative rings, and let
$(\F,\C)$ be a cotorsion pair in $R\Modl$ generated by a set $\S$
of left $R$\+modules such that an exact sequence of left $R$\+modules
$P_2\rarrow P_1\rarrow P_0\rarrow S\rarrow0$ with finitely generated
projective $R$\+modules $P_2$, $P_1$, $P_0$ exists for every $S\in\S$.
 Assume that the left $R$\+module $A^+$ belongs to~$\C$.
 Then the\/ $\Ext^1$\+orthogonal pair of classes of left $A$\+modules\/
$\Fil_\omega(A\ot_R\F)$ and $\C^A$ admits approximation sequences.

 Consequently, the pair of classes\/ $\F^A=\Fil_\omega(A\ot_R\F)^\oplus$
and $\C^A$ is a complete cotorsion pair in $A\Modl$.
 In particular, ${}^{\perp_1}\C^A=\Fil_\omega(A\ot_R\F)^\oplus$ and\/
$\Fil(A\ot_R\F)^\oplus=\Fil_\omega(A\ot_R\F)^\oplus$.
\end{prop}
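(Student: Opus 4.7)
The plan is to run the small object argument against the set $A\ot_R\S$ in $A\Modl$ and to exploit the hypothesized three-term resolution of each $S\in\S$ by finitely generated projective $R$\+modules in order to force termination of the transfinite induction at the first infinite ordinal~$\omega$. Three preliminary observations frame the argument. First, since $\S\subset\F$ and $A^+\in\C$, one has $\Ext^1_R(S,A^+)=0$ for every $S\in\S$, and the natural isomorphism $\Tor^R_1(A,S)^+\simeq\Ext^1_R(S,A^+)$ gives $\Tor^R_1(A,S)=0$; Lemma~\ref{homological-formulas-lemma}(a) then produces a natural isomorphism $\Ext^1_A(A\ot_R S,\>X)\simeq\Ext^1_R(S,X_R)$ for every left $A$\+module~$X$. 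Together with Lemma~\ref{orthogonal-to-induced-lemma}, this identifies $\C^A$ with $(A\ot_R\S)^{\perp_1}$ inside $A\Modl$ and supplies the $\Ext^1$\+orthogonality between $\Fil_\omega(A\ot_R\F)$ and $\C^A$ (the class $\F$ being closed under direct sums as a left Ext-orthogonal class). Second, the exact sequence $P_2\rarrow P_1\rarrow P_0\rarrow S\rarrow 0$ realizes the first syzygy $\Omega S=\ker(P_0\to S)$ as the cokernel of $P_2\rarrow P_1$, so $\Omega S$ is finitely presented; hence $\Ext^1_R(S,-)$ commutes with filtered colimits in $R\Modl$, and because the forgetful functor $A\Modl\rarrow R\Modl$ preserves filtered colimits, the change-of-rings isomorphism forces $\Ext^1_A(A\ot_R S,-)$ to commute with $\omega$\+indexed colimits in $A\Modl$ as well.

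The preenvelope construction now proceeds as follows. Given $N\in A\Modl$, set $G_0=N$ and inductively form $G_{n+1}$ as the pushout along the codiagonal $\bigoplus_\xi G_n\rarrow G_n$ of the coproduct of the extensions $0\rarrow G_n\rarrow E_\xi\rarrow A\ot_R S_\xi\rarrow 0$ indexed by $\xi$ running over $\bigcup_{S\in\S}\Ext^1_A(A\ot_R S,\>G_n)$. This delivers a short exact sequence $0\rarrow G_n\rarrow G_{n+1}\rarrow\bigoplus_\xi A\ot_R S_\xi\rarrow 0$ whose cokernel lies in $A\ot_R\F$, and every chosen $\xi$ becomes trivial in $\Ext^1_A(A\ot_R S_\xi,G_{n+1})$ by the pushout property. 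Setting $G=\varinjlim_n G_n$, one obtains $G/N\in\Fil_\omega(A\ot_R\F)$, and by the compactness step
$$\Ext^1_A(A\ot_R S,\>G)=\varinjlim_n\Ext^1_A(A\ot_R S,\>G_n)=0,$$
because every transition map in this colimit is already the zero map. Thus $G\in\C^A$ and $0\rarrow N\rarrow G\rarrow G/N\rarrow 0$ is the desired special preenvelope sequence.

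For the special precover of an arbitrary $M\in A\Modl$, one picks a surjection $A^{(I)}\twoheadrightarrow M$ with kernel $K$, applies the preenvelope construction to $K$ to obtain $0\rarrow K\rarrow\widetilde K\rarrow\widetilde F\rarrow 0$ with $\widetilde K\in\C^A$ and $\widetilde F\in\Fil_\omega(A\ot_R\F)$, and forms the pushout along $K\rarrow\widetilde K$. This yields both $0\rarrow\widetilde K\rarrow P\rarrow M\rarrow 0$ and $0\rarrow A^{(I)}\rarrow P\rarrow\widetilde F\rarrow 0$; since $A^{(I)}=A\ot_R R^{(I)}$ lies in $A\ot_R\F$, prepending it to the $\omega$\+filtration of $\widetilde F$ produces an $\omega$\+filtration of $P$ (using the ordinal identity $1+\omega=\omega$), so $P\in\Fil_\omega(A\ot_R\F)$. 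Lemma~\ref{direct-summand-lemma} then promotes the $\Ext^1$\+orthogonal pair $(\Fil_\omega(A\ot_R\F),\C^A)$ admitting approximation sequences to the complete cotorsion pair $(\Fil_\omega(A\ot_R\F)^\oplus,\C^A)$ (the class $\C^A$ being already closed under direct summands), and the equalities ${}^{\perp_1}\C^A=\Fil_\omega(A\ot_R\F)^\oplus$ and $\Fil(A\ot_R\F)^\oplus=\Fil_\omega(A\ot_R\F)^\oplus$ then follow from Lemma~\ref{orthogonal-to-induced-lemma}(c) together with the evident containment $\Fil_\omega\subset\Fil$. The main delicate point will be justifying the filtered-colimit commutation precisely enough to conclude $G\in\C^A$: the finite generation of the second syzygy (equivalently, the $2$\+presentedness of $\Omega S$) is what is doing the work, and one should make sure that no further finiteness input is tacitly being used.
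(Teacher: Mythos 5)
Your proof is correct and is essentially the argument the paper has in mind: it runs the one\+step small object argument against the set $A\ot_R\S$, uses Lemma~\ref{homological-formulas-lemma}(a) together with the three\+term finitely generated projective presentation of each $S\in\S$ to see that $\Ext^1_A(A\ot_RS,{-})$ commutes with the $\omega$\+indexed colimit, and then passes to precovers via the Salce construction with $1+\omega=\omega$. The paper states exactly this strategy and leaves the details to the reader; your write\+up supplies those details correctly (the only quibble is terminological: $\Omega S$ is finitely presented, while it is $S$ itself that is $2$\+presented).
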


\begin{proof}
 The proof is a simple version of the small object
argument~\cite[Theorem~2]{ET}, \cite[Theorem~6.11]{GT}.
 The claim that $\omega$\+filtrations by induced modules
are sufficient follows from Lemma~\ref{homological-formulas-lemma}(a)
and the fact that the functor $\Ext_R^1(S,{-})$ preserves direct
limits for any left $R$\+module $S$ satisfying the assumption of
the proposition.
 So, in fact, all the $A$\+modules from $\F^A$ are direct summands of
$A$\+modules $\omega$\+filtered by left $A$\+modules induced from
direct sums of copies of left $R$\+modules from~$\S$.
 We leave the details to the reader.
\end{proof}

 The next theorem is a generalization of
Corollary~\ref{direct-sum-closed-cor} in which the condition~($\dagger$)
is replaced by the condition~($\widetilde\dagger$).

\begin{thm} \label{direct-sum-closed-tilde}
 Let $(\F,\C)$ be a hereditary complete cotorsion pair in $R\Modl$.
 Assume that the left $R$\+module $A^+$ belongs to~$\C$, and
that the condition~\textup{($\widetilde\dagger$)} holds.
 Assume further that the class $\C$ is closed under countable direct
sums in $R\Modl$.
 Then the class $\F^A={}^{\perp_1}\C^A\subset A\Modl$ can be described
as $\F^A=\Fil_\omega(A\ot_R\F)^\oplus$.
 In particular, we have\/ $\Fil(A\ot_R\F)^\oplus=
\Fil_\omega(A\ot_R\F)^\oplus$.
\end{thm}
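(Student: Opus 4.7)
The plan is to follow the proofs of Proposition~\ref{direct-sum-closed-approximation-sequences} and Corollary~\ref{direct-sum-closed-cor} step by step, observing that the class $\G$ from $(\widetilde\dagger)$ provides a ``safe zone'' within which all the constructions can be carried out; this lets $(\widetilde\dagger)$ substitute for the stronger $(\dagger)$. The relationship to be exploited is exactly parallel to the one between Theorems~\ref{finite-coresol-dim-tilde} and~\ref{finite-coresol-dim-theorem}.

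The inclusion $\Fil_\omega(A\ot_R\F)^\oplus\subset\F^A$ is immediate from Lemma~\ref{orthogonal-to-induced-lemma}(c). For the reverse inclusion, given $M\in\F^A$, I would first record two preliminary facts used throughout: $_RM\in\G$ by $(\widetilde\dagger)$, and $A\ot_RF\in\F^A$ for every $F\in\F$ (by Lemmas~\ref{necessary-A-plus-in-C} and~\ref{homological-formulas-lemma}(a), as in the proof of Theorem~\ref{finite-coresol-dim-tilde}), so that also $_R(A\ot_RF)\in\G$. Fixing a surjection $A\ot_RF(M)\twoheadrightarrow M$ with $F(M)\in\F$ and letting $N$ be its kernel, closure of $\G$ under kernels of surjections yields $_RN\in\G$. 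I would then apply the $W$\+iteration of diagram~\eqref{preenvelope-construction-diagram} to $N$, producing the $\omega$\+indexed sequence of injective $A$\+module morphisms $N\to W(N)\to W^2(N)\to\dotsb$.

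The key inductive claim is that $_RW^m(N)\in\G$ for every $m\ge 0$; the base case $m=0$ was just verified. For the inductive step, the special preenvelope $0\to W^m(N)\to C(W^m(N))\to F'(W^m(N))\to 0$ in $R\Modl$ forces $C(W^m(N))\in\C\cap\G$ by closure of $\G$ under extensions, so $(\widetilde\dagger)$ delivers $A\ot_RC(W^m(N))\in\C$; also $_R(A\ot_RF'(W^m(N)))\in\G$ by the preliminary observation, so the short exact sequence $0\to W^m(N)\to W^{m+1}(N)\to A\ot_RF'(W^m(N))\to 0$ from diagram~\eqref{preenvelope-construction-diagram} propagates membership in $\G$ to $W^{m+1}(N)$ by extension closure. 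The argument of Proposition~\ref{direct-sum-closed-approximation-sequences}, which invokes closure of $\C$ under countable direct sums and (by hereditarity) under cokernels of injective morphisms, then shows $\varinjlim_{m\in\omega}W^m(N)\in\C^A$, while its cokernel $Q$ over $N$ lies in $\Fil_\omega(A\ot_R\F)$ by construction.

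To conclude, I would form the pushout $E$ of the pair $A\ot_RF(M)\hookleftarrow N\hookrightarrow\varinjlim_{m\in\omega}W^m(N)$, yielding two short exact sequences: $0\to A\ot_RF(M)\to E\to Q\to 0$, from which $E$ carries an $\omega$\+filtration by modules in $A\ot_R\F$ (using $1+\omega=\omega$); and $0\to\varinjlim_{m\in\omega}W^m(N)\to E\to M\to 0$, which splits since $M\in\F^A$ and $\varinjlim_{m\in\omega}W^m(N)\in\C^A$. Hence $M$ is a direct summand of $E\in\Fil_\omega(A\ot_R\F)$, proving $M\in\Fil_\omega(A\ot_R\F)^\oplus$. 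The main obstacle is precisely the inductive bookkeeping of $\G$\+membership through the $W$\+iteration; once that is in place, the rest is a direct transposition of the proof of Corollary~\ref{direct-sum-closed-cor}, and the ``In particular'' assertion follows from sandwiching $\F^A$ between $\Fil_\omega(A\ot_R\F)^\oplus$ and $\Fil(A\ot_R\F)^\oplus$ via Lemma~\ref{orthogonal-to-induced-lemma}(c).
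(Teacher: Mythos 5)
Your proposal is correct and follows exactly the route the paper intends: it runs the $W$-iteration of Proposition~\ref{direct-sum-closed-approximation-sequences} on the kernel $N$ of $A\ot_RF(M)\twoheadrightarrow M$, with the inductive bookkeeping of $\G$-membership replacing ($\dagger$) by ($\widetilde\dagger$), and closes with the Salce pushout and the splitting from $\Ext^1_A(M,\varinjlim_m W^m(N))=0$ --- which is precisely the combination of the proofs of Theorem~\ref{finite-coresol-dim-tilde} and Corollary~\ref{direct-sum-closed-cor} that the paper's terse proof invokes. No gaps; your write-up simply supplies the details the paper leaves implicit.
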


\begin{proof}
 Similar to the proof of Theorem~\ref{finite-coresol-dim-tilde},
which contains all the essential details.
 One follows the proof of Corollary~\ref{direct-sum-closed-cor} step by
step and observes that the assumptions of the present theorem are
sufficient for the validity of the argument.
\end{proof}

\subsection{Combined result on induced modules}
\label{combined-induced-subsecn}
 In this section we combine the results of
Propositions~\ref{finite-coresol-dim-approximation-sequences}
and~\ref{direct-sum-closed-approximation-sequences} in order to
obtain a more general result under relaxed assumptions.
 Specifically, we assume that all the countable direct sums of
modules from $\C$ have finite $\C$\+coresolution dimensions.

\begin{prop} \label{combined-induced-approximation-sequences}
 Assume that the\/ $\Ext^1$\+orthogonal pair of classes of
left $R$\+modules $(\F,\C)$ admits approximation
sequences~\textup{(\ref{sp-precover-seq}\+-\ref{sp-preenvelope-seq})}.
 Assume that the left $R$\+module $A^+$ belongs to~$\C$, and
that the condition~\textup{($\dagger$)} holds.
 Assume further that the class $\C$ is coresolving in $R\Modl$ and
the $\C$\+coresolution dimension of any countable direct sum of modules
from $\C$ does not exceed a finite integer~$k\ge0$.
 Then the\/ $\Ext^1$\+orthogonal pair of classes of left $A$\+modules\/
$\Fil_{\omega+k}(A\ot_R\F)$ and $\C^A$ admits approximation
sequences as well.
 Here $\omega+k$ is the $k$\+th successor ordinal of~$\omega$.
\end{prop}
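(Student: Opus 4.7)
The plan is to combine the $\omega$\+step construction from Proposition~\ref{direct-sum-closed-approximation-sequences} with the finite $k$\+step construction from Proposition~\ref{finite-coresol-dim-approximation-sequences}, in the spirit of the dual argument already carried out in Proposition~\ref{combined-coinduced-approximation-sequences}.  I would first produce the special preenvelope sequences for the pair $(\Fil_{\omega+k}(A\ot_R\F),\C^A)$; the special precover sequences then follow at once by (the ``if'' implication of) Lemma~\ref{salce-lemma}, exactly as in the last paragraphs of the proofs of Propositions~\ref{finite-coresol-dim-approximation-sequences} and~\ref{direct-sum-closed-approximation-sequences}.

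Given a left $A$\+module $N$, I would perform $\omega$~iterations of the $W$\+construction of Proposition~\ref{finite-coresol-dim-approximation-sequences}, obtaining the inductive system $N\lrarrow W(N)\lrarrow W^2(N)\lrarrow\dotsb$ of injective $A$\+module morphisms, and set $N_\omega=\varinjlim_{m\in\omega}W^m(N)$.  By the argument of Proposition~\ref{direct-sum-closed-approximation-sequences}, the cokernel of the natural injection $N\rarrow N_\omega$ is $\omega$\+filtered by the left $A$\+modules $A\ot_RF'(W^m(N))\in A\ot_R\F$, while the underlying left $R$\+module of $N_\omega$ fits as the rightmost term of the telescope exact sequence~\eqref{indlimit-telescope}, whose two $\bigoplus$\+terms are $\bigoplus_{m\in\omega}A\ot_RC(W^m(N))$.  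By condition~($\dagger$), each summand $A\ot_RC(W^m(N))$ lies in $\C$, so by the standing assumption of the proposition this countable direct sum has $\C$\+coresolution dimension $\le k$.  Applying Lemma~\ref{co-resolution-dimension-defined-classes}(b) to the telescope sequence then gives the key estimate $\cd_\C N_\omega\le k$.

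Next I would reinvoke the finite $W$\+iteration from Proposition~\ref{finite-coresol-dim-approximation-sequences}, now applied to the $A$\+module $N_\omega$: after $k$ further steps one reaches $W^k(N_\omega)\in\C^A$, with the successive cokernels all belonging to $A\ot_R\F$.  The composition $N\rarrow N_\omega\rarrow W^k(N_\omega)$ is an injection of $A$\+modules whose cokernel is an extension of the cokernel of $N\rarrow N_\omega$ (which is $\omega$\+filtered by $A\ot_R\F$) by the cokernel of $N_\omega\rarrow W^k(N_\omega)$ (which is $k$\+filtered by $A\ot_R\F$).  Concatenating the two filtrations in the order $\omega$ followed by $k$ yields the desired $(\omega+k)$\+filtration by modules from $A\ot_R\F$, producing the required special preenvelope sequence.

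The one genuinely new ingredient is the coresolution\+dimension estimate on $N_\omega$: the telescope~\eqref{indlimit-telescope} exhibits $N_\omega$ as a cokernel squeezed between two copies of a countable direct sum of modules from $\C$, and the coresolving character of the class $\C(k)$ secured by Lemma~\ref{co-resolution-dimension-defined-classes}(b) is exactly what is needed to propagate the bound.  Once this step is cleared, the concatenation of an $\omega$\+ and a $k$\+filtration is routine ordinal arithmetic, and the passage from special preenvelopes to special precovers via Lemma~\ref{salce-lemma} is word\+for\+word identical to that in the previous two propositions.
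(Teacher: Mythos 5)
Your proposal matches the paper's own proof essentially step for step: the $\omega$\+iteration of the $W$\+construction, the telescope sequence~\eqref{indlimit-telescope} combined with condition~($\dagger$) and Lemma~\ref{co-resolution-dimension-defined-classes}(b) to bound $\cd_\C$ of the direct limit by~$k$, the subsequent finite $k$\+step iteration, the concatenation of the $\omega$\+ and $k$\+filtrations of the cokernels, and the passage to special precovers via Lemma~\ref{salce-lemma}. The argument is correct and no further comment is needed.
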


\begin{proof}
 As in previous proofs, we start with an explicit construction of
special preenvelope sequences for the pair of classes
$\Fil_{\omega+k}(A\ot_R\F)$ and $\C^A\subset A\Modl$.

 Let $N$ be a left $A$\+module.
 Proceeding as in the proof of
Proposition~\ref{direct-sum-closed-approximation-sequences}, we
construct the $\omega$\+indexed inductive system of injective
morphisms of left $A$\+modules~\eqref{W-inductive-system}.
 The underlying left $R$\+module of the left $A$\+module
$\varinjlim_{m\in\omega}W^m(N)$ is isomorphic to the inductive limit
of the inductive system of left
$R$\+modules~\eqref{C-W-inductive-system}, and it can be described
as the rightmost term of the short exact
sequence~\eqref{indlimit-telescope}.

 The left $R$\+modules $A\ot_RC(W^m(N))$ belong to~$\C$
by~($\dagger$), so the left $R$\+module $\bigoplus_{m\in\omega}
A\ot_RC(W^m(N))$ has $\C$\+coresolution dimension~$\le k$ in
our present assumptions.
 By Lemma~\ref{co-resolution-dimension-defined-classes}(b), it
follows that the $\C$\+coresolution dimension of (the underlying
left $R$\+module of the left $A$\+module)
$M=\varinjlim_{m\in\omega}W^m(N)$ does not exceed~$k$.

 Now we apply the construction from the proof of
Proposition~\ref{finite-coresol-dim-approximation-sequences} to
the left $A$\+module $M$, producing the sequence of injective
morphisms of left $A$\+modules
$$
 M\lrarrow W(M)\lrarrow W(W(M))\lrarrow\dotsb\lrarrow W^k(M).
$$
 Following the argument in the proof of
Proposition~\ref{finite-coresol-dim-approximation-sequences},
we have $W^k(M)\in\C^A$, since $\cd_\C M\le k$.
 Finally, the cokernel of the composition of injective morphisms
$$
 N\lrarrow\varinjlim\nolimits_{m\in\omega}W^m(N)=M\lrarrow W^k(M)
$$
is an extension of the cokernels of the morphisms $N\rarrow
\varinjlim_{m\in\omega}W^m(N)$ and $M\rarrow W^k(M)$.
 The former cokernel belongs to $\Fil_\omega(A\ot_R\F)$ and
the latter one to $\Fil_k(A\ot_R\nobreak\F)$; thus the cokernel of
the morphism $N\rarrow W^k(M)$ belongs to $\Fil_{\omega+k}(A\ot_R\F)$.

 We have produced the desired special preenvelope sequences.
 Using these, the special precover sequences are constructed in
the same way as in the proofs of
Propositions~\ref{finite-coresol-dim-approximation-sequences}
and~\ref{direct-sum-closed-approximation-sequences}.
\end{proof}

\begin{thm} \label{combined-induced-theorem}
 Let $(\F,\C)$ be a hereditary complete cotorsion pair in $R\Modl$.
 Assume that the left $R$\+module $A^+$ belongs to~$\C$, and
that the condition~\textup{($\dagger$)} holds.
 Assume further that the $\C$\+coresolution dimension of any
countable direct sum of modules from $\C$ in $R\Modl$ does not
exceed a finite integer $k\ge0$.
 Then the pair of classes $\F^A=\Fil_{\omega+k}(A\ot_R\F)^\oplus$ and
$\C^A$ is a hereditary complete cotorsion pair in $A\Modl$.
\end{thm}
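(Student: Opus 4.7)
The plan is to deduce this theorem directly from the preceding Proposition~\ref{combined-induced-approximation-sequences}, in exact analogy with the one-line proof of the dual Theorem~\ref{combined-coinduced-theorem}. The heavy lifting — namely the iterated $W$-construction telescoped over $\omega$ steps and then pushed through $k$ finite steps — has already been packaged into that proposition, so only formal bookkeeping remains.

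First, I would invoke Proposition~\ref{combined-induced-approximation-sequences} to obtain that the $\Ext^1$-orthogonal pair of classes $\Fil_{\omega+k}(A\ot_R\F)$ and $\C^A$ in $A\Modl$ admits both special precover and special preenvelope sequences for every left $A$-module. Then I would apply Lemma~\ref{direct-summand-lemma} to this pair, which promotes it to a complete cotorsion pair $\bigl(\Fil_{\omega+k}(A\ot_R\F)^{\oplus},(\C^A)^{\oplus}\bigr)$ in $A\Modl$.

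Next, I would verify that $(\C^A)^{\oplus}=\C^A$. Indeed, $\C$ is closed under direct summands as the right half of a cotorsion pair in $R\Modl$, and the forgetful functor $A\Modl\rarrow R\Modl$ commutes with direct summand decompositions, so the same holds for $\C^A$. Combined with the previous step, this shows that $\F^A=\Fil_{\omega+k}(A\ot_R\F)^{\oplus}$ and $\C^A$ form a complete cotorsion pair in $A\Modl$.

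Finally, to upgrade this to a \emph{hereditary} cotorsion pair I would appeal to Lemma~\ref{hereditary-cotorsion-pair-lemma}: it suffices to check that $\C^A$ is coresolving, and in fact only closedness under cokernels of injective morphisms needs verification. Since $(\F,\C)$ is hereditary in $R\Modl$, the class $\C$ has this closure property; and because the forgetful functor $A\Modl\rarrow R\Modl$ is exact, closedness of $\C$ under cokernels of injections transfers verbatim to $\C^A$. I do not anticipate any genuine obstacle in this argument — every nontrivial step has been absorbed into Proposition~\ref{combined-induced-approximation-sequences}, and what remains is purely formal.
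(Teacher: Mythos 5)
Your proposal is correct and follows exactly the paper's route: the paper's proof is literally ``Follows from Proposition~\ref{combined-induced-approximation-sequences} in view of Lemma~\ref{direct-summand-lemma},'' with the closure of $\C^A$ under direct summands and cokernels of injective morphisms (inherited from $\C$ via the exact forgetful functor) handled just as you describe in the parallel Theorem~\ref{finite-coresol-dim-theorem}. Your write-up merely makes the implicit bookkeeping explicit.
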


\begin{proof}
 Follows from
Proposition~\ref{combined-induced-approximation-sequences}
in view of Lemma~\ref{direct-summand-lemma}.
\end{proof}

\begin{cor} \label{combined-induced-cor}
 For any associative ring homomorphism $R\rarrow A$ and any
hereditary complete cotorsion pair $(\F,\C)$ in $R\Modl$ satisfying
the assumptions of Theorem~\ref{combined-induced-theorem}, one has
${}^{\perp_1}\C^A=\Fil_{\omega+k}(A\ot_R\F)^\oplus$.
 In particular, it follows that\/ $\Fil(A\ot_R\nobreak\F)^\oplus=
\Fil_{\omega+k}(A\ot_R\F)^\oplus$.
\end{cor}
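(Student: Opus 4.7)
The plan is to derive this corollary directly from Theorem~\ref{combined-induced-theorem} together with Lemma~\ref{orthogonal-to-induced-lemma}(c), exactly parallel to how Corollary~\ref{combined-coinduced-cor} was obtained from Theorem~\ref{combined-coinduced-theorem} on the coinduced side. There is no real obstacle: once Theorem~\ref{combined-induced-theorem} is in hand, the corollary is essentially a definitional unwinding plus one sandwich argument.

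First, I invoke Theorem~\ref{combined-induced-theorem} to conclude that under the stated assumptions $(\Fil_{\omega+k}(A\ot_R\F)^\oplus,\,\C^A)$ is a hereditary complete cotorsion pair in $A\Modl$. By the very definition of a cotorsion pair, the left class is recovered as $\Fil_{\omega+k}(A\ot_R\F)^\oplus={}^{\perp_1}\C^A$, which is precisely the first displayed equality.

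For the ``in particular'' clause, I combine this equality with the chain of inclusions
\[
 \Fil_{\omega+k}(A\ot_R\F)^\oplus\;\subset\;\Fil(A\ot_R\F)^\oplus\;\subset\;{}^{\perp_1}\C^A,
\]
where the first inclusion is trivial (taking the filtration length bound $\omega+k$ is a restriction of arbitrary ordinal-indexed filtrations) and the second inclusion is supplied by Lemma~\ref{orthogonal-to-induced-lemma}(c), whose hypothesis ${}_RA^+\in\C$ is part of the standing assumptions. Since the two outer terms coincide by the previous step, all three classes are equal, yielding $\Fil(A\ot_R\F)^\oplus=\Fil_{\omega+k}(A\ot_R\F)^\oplus$. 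This finishes the argument; the entire content of the corollary is really a bookkeeping reformulation of Theorem~\ref{combined-induced-theorem}.
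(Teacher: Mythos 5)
Your proof is correct and is exactly the paper's argument: the first equality is read off from the cotorsion pair of Theorem~\ref{combined-induced-theorem}, and the second follows by sandwiching $\Fil(A\ot_R\F)^\oplus$ between $\Fil_{\omega+k}(A\ot_R\F)^\oplus$ and ${}^{\perp_1}\C^A$ via Lemma~\ref{orthogonal-to-induced-lemma}(c). Nothing to add.
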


\begin{proof}
 This is a corollary of Theorem~\ref{combined-induced-theorem}
and Lemma~\ref{orthogonal-to-induced-lemma}(c).
\end{proof}

 For a class of examples to Theorem~\ref{combined-induced-theorem}
arising from curved DG\+rings, see
Proposition~\ref{star-coacyclic-prop} below.

 The final theorem of this section is a generalization of
Corollary~\ref{combined-induced-cor} in which the condition~($\dagger$)
is replaced by the condition~($\widetilde\dagger$).

\begin{thm} \label{combined-induced-tilde}
 Let $(\F,\C)$ be a hereditary complete cotorsion pair in $R\Modl$.
 Assume that the left $R$\+module $A^+$ belongs to~$\C$, and
that the condition~\textup{($\widetilde\dagger$)} holds.
 Assume further that the $\C$\+coresolution dimension of any
countable direct sum of modules from $\C$ in $R\Modl$ does not
exceed a finite integer $k\ge0$.
 Then the class $\F^A={}^{\perp_1}\C^A\subset A\Modl$ can be described
as $\F^A=\Fil_{\omega+k}(A\ot_R\F)^\oplus$.
 In particular, we have\/ $\Fil(A\ot_R\nobreak\F)^\oplus=
\Fil_{\omega+k}(A\ot_R\F)^\oplus$.
\end{thm}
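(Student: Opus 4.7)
My plan is to mimic the proof of Corollary~\ref{combined-induced-cor}, verifying that the hypothesis $(\widetilde\dagger)$ suffices in place of $(\dagger)$ throughout the two-phase construction of Proposition~\ref{combined-induced-approximation-sequences}. The strategy is exactly the one already worked out in Theorems~\ref{finite-coresol-dim-tilde} and~\ref{direct-sum-closed-tilde} (and dualized in Theorem~\ref{combined-coinduced-tilde}): every intermediate $A$-module produced by the construction will have its underlying left $R$-module in the resolving class~$\G$, so that $(\widetilde\dagger)$ can be invoked at each step.

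The containment $\Fil_{\omega+k}(A\ot_R\F)^\oplus\subset\F^A$ is Lemma~\ref{orthogonal-to-induced-lemma}(c). For the reverse, I fix $L\in\F^A$ and construct a special precover sequence $0\rarrow C\rarrow H\rarrow L\rarrow0$ with $H\in\Fil_{\omega+k}(A\ot_R\F)$ and $C\in\C^A$; since $\Ext^1_A(L,C)=0$, this splits and $L$ lies in $\Fil_{\omega+k}(A\ot_R\F)^\oplus$. The module $H$ is built by the Salce pushout (as in the final paragraph of Proposition~\ref{finite-coresol-dim-approximation-sequences}) from a special preenvelope of $N:=\ker(A\ot_RF(L)\to L)$, where $F(L)\in\F$ is a special precover of ${}_RL$ in $R\Modl$. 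The reason the index stays at $\omega+k$ rather than growing to $\omega+k+1$ is the non-commutativity of ordinal addition: stacking a length-one filtration under one of length $\omega+k$ yields length $1+(\omega+k)=\omega+k$.

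The preenvelope of $N$ is produced in two phases as in Proposition~\ref{combined-induced-approximation-sequences}: first iterate the pushout operation $W$ along~$\omega$ to form $N':=\varinjlim_m W^m(N)$, then iterate $W$ a further $k$ times on $N'$ to reach $W^k(N')\in\C^A$, with the cokernel of $N\hookrightarrow W^k(N')$ in $\Fil_{\omega+k}(A\ot_R\F)$. To show $(\widetilde\dagger)$ is enough, I track the underlying $R$-modules. First, ${}_RL\in\G$ by $(\widetilde\dagger)$, and for any $F\in\F$ one has ${}_R(A\ot_RF)\in\G$ because $A\ot_RF\in\F^A$ by Lemma~\ref{orthogonal-to-induced-lemma}(c). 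Since $\G$ is resolving, ${}_RN\in\G$. An induction on~$m$ then shows ${}_RW^m(N)\in\G$: the special preenvelope sequence $0\rarrow W^m(N)\rarrow C(W^m(N))\rarrow F'(W^m(N))\rarrow0$ places $C(W^m(N))\in\C\cap\G$ by extension closure, so $(\widetilde\dagger)$ gives $A\ot_RC(W^m(N))\in\C$, while diagram~\eqref{preenvelope-construction-diagram} exhibits $W^{m+1}(N)$ as an extension of $A\ot_RF'(W^m(N))\in\G$ by $W^m(N)\in\G$.

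At the transfinite transition, the cokernel of $N\hookrightarrow N'$ belongs to $\Fil_\omega(A\ot_R\F)\subset\F^A$ by Lemma~\ref{orthogonal-to-induced-lemma}(c), so its underlying $R$-module is in $\G$; extension closure then gives ${}_RN'\in\G$. Lemmas~\ref{induction-preserves-coresolution-dimension} and~\ref{co-resolution-dimension-defined-classes}(b), combined with the hypothesis that countable direct sums of modules from~$\C$ have $\C$-coresolution dimension $\le k$, yield $\cd_\C{}_RN'\le k$. The second phase then runs exactly as in the proof of Theorem~\ref{finite-coresol-dim-tilde}. The whole argument is really bookkeeping rather than a genuine obstacle; the only step that is not already contained in the earlier ``tilde'' proofs is the verification at the inductive limit, which crucially uses that $\Fil_\omega(A\ot_R\F)\subset\F^A$ so that $(\widetilde\dagger)$ applies to the cokernel.
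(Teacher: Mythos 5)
Your proof is correct and follows essentially the same route as the paper: one reruns the two-phase construction of Proposition~\ref{combined-induced-approximation-sequences} while tracking membership of the underlying $R$\+modules in the resolving class~$\G$, with the details of the finite phase already settled in Theorem~\ref{finite-coresol-dim-tilde}. Your verification at the limit stage --- that the cokernel of $N\hookrightarrow\varinjlim_{m}W^m(N)$ lies in $\Fil_\omega(A\ot_R\F)\subset\F^A$, so that extension-closure of $\G$ yields ${}_R\varinjlim_{m}W^m(N)\in\G$ --- is precisely the single additional observation the paper's own proof records.
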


\begin{proof}
 One follows the proof of Corollary~\ref{combined-induced-cor} step by
step and observes that the assumptions of the present theorem are
sufficient for the validity of the argument.
 Almost all the essential details have been presented already in
the proof of Theorem~\ref{finite-coresol-dim-tilde}, and only one
observation remains to be made.

 Let $N$ be a left $A$\+module whose underlying left $R$\+module belongs
to~$\G$.
 Then the underlying left $R$\+module of the left $A$\+module
$\varinjlim_{m\in\omega}W^m(N)$ also belongs to $\G$, because
the cokernel of the injective $A$\+module morphism $N\rarrow
\varinjlim_{m\in\omega}W^m(N)$ belongs to $\Fil_\omega(A\ot_R\F)\subset
\F^A$ and the class $\G\subset R\Modl$ is closed under extensions.
\end{proof}

\Section{Illustration: Contraderived and Coderived Categories}
\label{contra-co-derived-secn}

 The heading above starts with the word ``illustration'' rather than
``application'', because there are few new results in this section
(Theorems~\ref{countable-cofiltrations-contraacyclic}
and~\ref{countable-filtrations-coacyclic} being notable exceptions).
 Still we demonstrate some classes of examples where
Theorems~\ref{finite-resol-dim-theorem},
\ref{combined-coinduced-theorem}, \ref{finite-coresol-dim-theorem},
and~\ref{combined-induced-theorem} are applicable, leading to
nontrivial conclusions, even if previously known to be provable
with different methods.

\subsection{Curved DG-rings and modules} \label{curved-subsecn}
 A \emph{curved DG\+ring} (\emph{CDG\+ring}) $R=(R,d,h)$ is a graded ring
$R=\bigoplus_{n\in\Z}R^n$ endowed with an odd derivation $d\:R\rarrow R$
of degree~$1$ and a curvature element $h\in R^2$.
 These words mean that, for every $n\in\Z$, an abelian group
homomorphism $d_n\:R^n\rarrow R^{n+1}$ is specified such that
the equation $d(rs)=d(r)s+(-1)^{|r|}rd(s)$ holds for all
$r\in R^{|r|}$ and $s\in R^{|s|}$, \ $|r|$, $|s|\in\Z$.
 In addition, the following two equations need to be satisfied:
\begin{enumerate}
\renewcommand{\theenumi}{\roman{enumi}}
\item $d(d(r))=hr-rh$ for all $r\in R$;
\item $d(h)=0$.
\end{enumerate}
 The element $h\in R^2$ is called the \emph{curvature element}.
 A curved DG\+ring with $h=0$ is the same thing as a usual
DG\+ring (differential graded ring) $(R,d)$.

 A \emph{left CDG\+module} $M=(M,d_M)$ over a CDG\+ring $(R,d,h)$ is
a graded left $R$\+module $M=\bigoplus_{n\in\Z}M^n$ endowed with
an odd derivation $d_M\:M\rarrow M$ compatible with the derivation~$d$
on~$R$.
 These words mean that, for every $n\in\Z$, an abelian group
homomorphism $d_{M,n}\:M^n\rarrow M^{n+1}$ is specified such that
the equation $d_M(rm)=d(r)m+(-1)^{|r|}rd_M(m)$ holds for all
$r\in R^{|r|}$ and $m\in M^{|m|}$.
 In addition, the following equation needs to be satisfied:
\begin{enumerate}
\renewcommand{\theenumi}{\roman{enumi}}
\setcounter{enumi}{2}
\item $d_M(d_M(m))=hm$ for all $m\in M$.
\end{enumerate}

 Notice that \emph{CDG\+rings and CDG\+modules are not complexes},
due to the presence of a nontrivial right-hand side in the equations~(i)
and~(iii).
 Nevertheless, for any two left CDG\+modules $L$ and $M$ over $(R,d,h)$,
the \emph{complex of morphisms} $\Hom_R(L,M)$ is well-defined.
 This is a complex of abelian groups whose degree~$i$ component
$\Hom_R^i(L,M)$ is the group of all homomorphisms of graded left
$R$\+modules $L\rarrow M[i]$, where $[i]$~denotes the usual
cohomological grading shift $M[i]^n=M^{i+n}$.
 There is a sign rule involved in the definition of
the left $R$\+module structure on $M[i]$.
 We refer to~\cite[Sections~1.1 and~3.1]{Pkoszul} for the details.
 
 One can assign a graded ring $A$ to a CDG\+ring $(R,d,h)$ by adjoining
a new element $\delta\in A^1$ to the graded ring $R$ and imposing
the relations $\delta r-(-1)^{|r|}r\delta=d(r)$ for all $r\in R^{|r|}$
and $\delta^2=h$.
 The elements of the grading components $A^n$ are the formal expressions
$r+\delta s$ with $r\in R^n$ and $s\in R^{n-1}$, with the multiplication
of such formal expressions defined in the obvious way using the above
relations.
 With an appropriate definition of morphisms of CDG\+rings and
a natural structure (the differential $\partial=\partial/\partial\delta$)
on the graded ring $A$, the correspondence between CDG\+rings $(R,d,h)$
and acyclic DG\+rings $(A,\partial)$ becomes an equivalence of
categories (see~\cite[Section~4.2]{Prel}, where the notation is
$B=R$ and $\widehat B=A$).

 We denote the abelian categories of graded left modules over the graded
rings $R$ and $A$ by $R\Modl^\sgr$ and $A\Modl^\sgr$, respectively.
 As usually in module theory, all the results above in this paper can
be extended easily from the categories of modules to the categories
of graded modules.
 The abelian category $R\Modl^\cdg$ of left CDG\+modules over $(R,d,h)$,
with homogeneous morphisms of degree~$0$ commuting with the action of
$R$ and the differentials on the CDG\+modules, is equivalent to
the abelian category of graded $A$\+modules $A\Modl^\sgr$.
 The group of morphisms $L\rarrow M$ in this category is isomorphic
to the kernel of the differential $\Hom_R^0(L,M)\rarrow\Hom_R^1(L,M)$.

 Notice that the graded ring $A$ is a finitely generated projective
graded left and right $R$\+module.
 In fact, it is a free graded left $R$\+module with two generators~$1$
and~$\delta$, and it is also a free graded right $R$\+module with
the same two generators.
 The functors $A\ot_R\nobreak{-}\,\:\allowbreak R\Modl^\sgr\rarrow
A\Modl^\sgr=R\Modl^\cdg$ and $\Hom_R(A,{-})\:R\Modl^\sgr\rarrow
A\Modl^\sgr=R\Modl^\cdg$ are described
in~\cite[proof of Theorem~3.6]{Pkoszul}, where they are denoted by
$G^+=A\ot_R{-}$ and $G^-=\Hom_R(A,{-})$.
 The two functors only differ by a shift of grading: for every graded
left $R$\+module $S$, there is a natural isomorphism of graded left
$A$\+modules $G^-(S)=G^+(S)[1]$.

 It is an easy but important observation that all the CDG\+modules
in the essential image of the functor $G^+$, or equivalently, $G^-$
are \emph{contractible}.
 In other words, all of them represent zero objects in the homotopy
category of CDG\+modules $\Hot(R\Modl^\cdg)$
(cf.~\cite[Section~3.2]{Pkoszul}).
 The natural action of the differential
$\partial=\partial/\partial\delta$ in $G^+(S)$ and $G^-(S)$, induced
by the action of~$\partial$ in $A$, provides a contracting homotopy.

 The \emph{homotopy category of left CDG\+modules} $\Hot(R\Modl^\cdg)$
is defined by the rule $\Hom_{\Hot(R\Modl^\cdg)}(L,M)=H^0(\Hom_R(L,M))$;
so $\Hot(R\Modl^\cdg)$ is the degree-zero cohomology category of
the DG\+category of left CDG\+modules over $(R,d,h)$, with the complexes
of morphisms $\Hom_R(L,M)$ between CDG\+modules $L$ and~$M$.
 The homotopy category $\Hot(R\Modl^\cdg)$ is a triangulated category
with infinite direct sums and
products~\cite[Sections~1.2 and~3.1]{Pkoszul}.

\subsection{Contraderived category} \label{contraderived-subsecn}
 A left CDG\+module $P$ over $(R,d,h)$ is said to be
\emph{graded projective} if the graded left $R$\+module $P$ is
projective in $R\Modl^\sgr$.
 We denote the full subcategory of graded projective CDG\+modules
by $R\Modl^\cdg_\proj=A\Modl^\sgr_{R\dproj}\subset A\Modl^\sgr=
R\Modl^\cdg$ and the corresponding full subcategory in the homotopy
category by $\Hot(R\Modl^\cdg_\proj)\subset\Hot(R\Modl^\cdg)$.

 A left CDG\+module $X$ over $(R,d,h)$ is said to be \emph{contraacyclic
in the sense of Becker}~\cite{Bec} if the complex $\Hom_R(P,X)$
is acyclic for all graded projective CDG\+modules
$P\in R\Modl^\cdg_\proj$, or equivalently,
$\Hom_{\Hot(R\Modl^\cdg)}(P,X)=0$ for all $P\in\Hot(R\Modl^\cdg_\proj)$.
 We denote the full subcategory of contraacyclic CDG\+modules by
$R\Modl^{\cdg,\ctr}_\acycl\subset R\Modl^\cdg$ and the corresponding
full subcategory in the homotopy category by
$\Hot(R\Modl^{\cdg,\ctr}_\acycl)\subset\Hot(R\Modl^\cdg)$.
 Clearly, $\Hot(R\Modl^{\cdg,\ctr}_\acycl)$ is a triangulated subcategory
closed under infinite products in $\Hot(R\Modl^\cdg)$.

\begin{thm} \label{becker-contra}
 Let $(R,d,h)$ be a CDG\+ring and $A=R[\delta]$ be the corresponding
graded ring.
 Then \par
\textup{(a)} the pair of classes of objects $R\Modl^\cdg_\proj$ and
$R\Modl^{\cdg,\ctr}_\acycl$ is a hereditary complete cotorsion pair in
the abelian category $R\Modl^\cdg=A\Modl^\sgr$; \par
\textup{(b)} the composition of the triangulated inclusion functor\/
$\Hot(R\Modl^\cdg_\proj)\rarrow\Hot(R\Modl^\cdg)$ and the triangulated
Verdier quotient functor\/ $\Hot(R\Modl^\cdg)\rarrow
\Hot(R\Modl^\cdg)/\allowbreak\Hot(R\Modl^{\cdg,\ctr}_\acycl)$
is a triangulated equivalence\/ $\Hot(R\Modl^\cdg_\proj)\simeq
\Hot(R\Modl^\cdg)/\allowbreak\Hot(R\Modl^{\cdg,\ctr}_\acycl)$.
\end{thm}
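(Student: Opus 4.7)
The plan is to apply the framework of Section~\ref{cofiltrations-secn} to the graded ring extension $R\rarrow A$, where $A=R[\delta]$ is the graded ring associated to the CDG-structure, so that under the equivalence $A\Modl^\sgr\cong R\Modl^\cdg$ the class $\F_A$ coincides with $R\Modl^\cdg_\proj$. In the graded-module analog of the Section~\ref{cofiltrations-secn} setup, I take $(\F,\C)=(R\Modl^\sgr_\proj,\,R\Modl^\sgr)$, the trivial projective cotorsion pair in $R\Modl^\sgr$, which is obviously hereditary and complete. Since $A$ is a free graded left $R$\+module with basis $\{1,\delta\}$ in degrees $0$ and~$1$, one has $A\in\F$, and for any projective graded $R$\+module $F$ there is an isomorphism of graded $R$\+modules $\Hom_R(A,F)\cong F\oplus F[1]$, which is again projective; this verifies the condition~($\dagger\dagger$) for this setup.

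The first substantive step is the identification $\F_A^{\perp_1}=R\Modl^{\cdg,\ctr}_\acycl$. For any graded-projective CDG\+module $F$, an explicit computation yields a canonical short exact sequence of graded $A$\+modules
\[
 0\lrarrow F[-1]\lrarrow A\otimes_R F_\sharp\lrarrow F\lrarrow 0,
\]
in which the middle term is projective in $A\Modl^\sgr$ (since $F_\sharp$, the underlying graded $R$\+module, is projective over~$R$) and both outer terms are again graded-projective CDG\+modules. Splicing these sequences for the successive shifts of $F$ produces a projective resolution of $F$ in $A\Modl^\sgr$ whose terms all have the form $A\otimes_R P$ for $P$ graded projective over~$R$. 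Applying $\Hom_A(-,X)$ and using the tensor-forgetful adjunction $\Hom_A(A\otimes_R P,\,X)\cong\Hom_R(P,\,X_\sharp)$ yields the natural identification $\Ext^i_{A\Modl^\sgr}(F,X)\cong H^i(\Hom_R(F,X))$ for every $i\ge1$, where $\Hom_R(F,X)$ is the complex of morphisms from Section~\ref{curved-subsecn}. Since $\F_A$ is stable under cohomological shifts, the condition $X\in\F_A^{\perp_1}$ unfolds to the acyclicity of $\Hom_R(F,X)$ for every graded-projective CDG\+module $F$, which is precisely Becker's definition of contraacyclicity.

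For part~(a), the class $\F_A$ is automatically closed under direct summands and resolving in $A\Modl^\sgr$ (inherited from the analogous properties of $\F$ in $R\Modl^\sgr$). It remains, by Lemma~\ref{direct-summand-lemma}, to produce the required approximation sequences. The strategy is the iterated $Q$\+construction from the proof of Proposition~\ref{finite-resol-dim-approximation-sequences}, carried out $\omega$ many times in the spirit of the proof of Proposition~\ref{product-closed-approximation-sequences}. The main obstacle is that for a general CDG\+ring $R$ neither the finite-$\F$-resolution-dimension hypothesis of Theorem~\ref{finite-resol-dim-theorem} nor the countable-product-closedness hypothesis of Theorem~\ref{product-closed-theorem} holds. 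The way out is to exploit the special feature of the CDG-setting that, for every graded $R$\+module $N$, the coinduced CDG\+module $\Hom_R(A,N)$ is contractible and hence a fortiori contraacyclic; thus the kernel of $\varprojlim_{n\in\omega}Q^n(M)\rarrow M$ is automatically cofiltered by contraacyclic CDG\+modules of the form $\Hom_R(A,C'(Q^n(M)))$. Showing that $\varprojlim_{n\in\omega}Q^n(M)$ itself lies in $\F_A$ is the delicate point, to be handled by the countable-cofiltration arguments announced in the Introduction as a notable new contribution of the paper.

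Part~(b) then follows by a standard abstract argument. For any CDG\+module $M$, the special precover sequence $0\rarrow X\rarrow F\rarrow M\rarrow0$ produced in part~(a) becomes a distinguished triangle in $\Hot(R\Modl^\cdg)$ with $F\in\Hot(R\Modl^\cdg_\proj)$ and $X\in\Hot(R\Modl^{\cdg,\ctr}_\acycl)$, establishing essential surjectivity of the composite $\Hot(R\Modl^\cdg_\proj)\rarrow\Hot(R\Modl^\cdg)/\Hot(R\Modl^{\cdg,\ctr}_\acycl)$. Full faithfulness is the vanishing $\Hom_{\Hot(R\Modl^\cdg)}(F,X)=H^0(\Hom_R(F,X))=0$ whenever $F$ is graded-projective and $X$ is contraacyclic, which is the very definition of contraacyclicity in the sense of Becker.
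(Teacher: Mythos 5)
The paper gives no proof of this theorem: it simply cites Becker's \cite[Propositions~1.3.6(1) and~1.3.8(1)]{Bec} and remarks that (b) follows from~(a). Within your proposal, the identification $\F_A^{\perp_1}=R\Modl^{\cdg,\ctr}_\acycl$ — via the short exact sequence $0\to F[-1]\to A\ot_RF_\sharp\to F\to0$, the spliced projective resolution of $F$ by induced modules, the adjunction isomorphism, and the shift-invariance of $\F_A$ — is correct and is the standard argument; your deduction of~(b) from~(a) is also fine.

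The completeness claim in part~(a), however, contains a genuine gap. You propose to iterate the $Q$\+construction of Proposition~\ref{finite-resol-dim-approximation-sequences} $\omega$~many times, and you yourself note that for a general CDG\+ring neither the finite $\F$\+resolution dimension hypothesis nor the closedness of $\F=R\Modl^\sgr_\proj$ under countable products is available. The step you defer — that $\varprojlim_{n\in\omega}Q^n(M)$ is graded-projective — is precisely where those hypotheses are used in Sections~\ref{finite-by-coinduced-subsecn} and~\ref{decreasing-filtrations-subsecn}: the underlying graded $R$\+module of the limit sits, via the telescope sequence~\eqref{projlimit-telescope}, between two countable products of projective graded $R$\+modules, and for an arbitrary graded ring such a product need not be projective (nor of finite projective dimension), so the limit need not lie in $\F_A$ at all. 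The ``countable-cofiltration arguments'' you invoke (Theorem~\ref{countable-cofiltrations-contraacyclic}) concern membership in triangulated subcategories of $\Hot(R\Modl^\cdg)$ and cannot repair this; note also that the paper derives its cofiltration descriptions of $R\Modl^{\cdg,\ctr}_\acycl$ (Propositions~\ref{finite-homol-dim-contraacyclic} and~\ref{doublestar-contraacyclic-prop}) only under extra hypotheses on $R$, and uses Theorem~\ref{becker-contra} as an \emph{input} there, not as an output. The correct route to completeness for an arbitrary $(R,d,h)$ within the paper's own toolkit is Proposition~\ref{deconstructible-prop}: $R\Modl^\sgr_\proj$ is generated by the set of shifts of $R$, the graded left $R$\+module $A$ is free, hence $\F_A$ is deconstructible and $(\F_A,\F_A^{\perp_1})$ is a complete cotorsion pair by the Eklof--Trlifaj theorem; heredity follows since $\F_A$ is resolving. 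Alternatively, cite Becker, as the paper does.
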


\begin{proof}
 This is~\cite[Propositions~1.3.6(1) and~1.3.8(1)]{Bec}.
 Parts~(a) and~(b) are two closely related assertions; in fact,
(b)~follows from~(a).
 We skip the details.
\end{proof}

 The quotient category $\sD^\ctr(R\Modl^\cdg)=
\Hot(R\Modl^\cdg)/\allowbreak\Hot(R\Modl^{\cdg,\ctr}_\acycl)$
is called the \emph{contraderived category} of left CDG\+modules
over $(R,d,h)$ \emph{in the sense of Becker}.
 It has to be distinguished from the contraderived category
in the sense of the books and papers~\cite{Psemi,Pkoszul,Pfp,Prel}
(see~\cite[Example~2.6(3)]{Pps} for a discussion).
 It is an open question whether the two definitions of
a contraderived category are equivalent for an arbitrary CDG\+ring.
 In this section we explain how one can show that they are, in fact,
equivalent under certain assumptions.

 To any pair of morphisms with zero composition $K\rarrow L$ and
$L\rarrow M$ in the category $R\Modl^\cdg=A\Modl^\sgr$,
one can assign its totalization $\Tot(K\to L\to M)$, which is
an object of the same category.
 The construction of the CDG\+module $\Tot(K\to L\to M)$ is
a generalization of the construction of the total complex of
a bicomplex with three rows; it can be interpreted as a twisted
direct sum or an iterated cone in the DG\+category of CDG\+modules.
 We refer to~\cite[Section~1.2]{Pkoszul} for a discussion.
 Specifically, we are interested in totalizations of \emph{short
exact sequences} in the abelian category $R\Modl^\cdg=A\Modl^\sgr$.

\begin{prop} \label{totalizations-contraacyclic}
 Let $(R,d,h)$ be a CDG\+ring.
 Then the totalization of any short exact sequence of left
CDG\+modules over $(R,d,h)$ belongs to $R\Modl^{\cdg,\ctr}_\acycl$.
 Hence the minimal full triangulated subcategory of the homotopy
category\/ $\Hot(R\Modl^\cdg)$ containing the totalizations of
short exact sequences of CDG\+modules and closed under products
is a subcategory in\/ $\Hot(R\Modl^{\cdg,\ctr}_\acycl)$.
\end{prop}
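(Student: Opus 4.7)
The plan is to prove the first assertion directly from the definition of contraacyclicity: for every graded projective left CDG-module $P$, I must show that the complex of abelian groups $\Hom_R(P,\Tot(K\to L\to M))$ is acyclic, where $0\to K\to L\to M\to 0$ is an arbitrary short exact sequence in $R\Modl^\cdg=A\Modl^\sgr$.

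First I would analyze what $\Hom_R(P,{-})$ does to a short exact sequence of CDG-modules. Forgetting the differentials, $0\to K\to L\to M\to 0$ is a short exact sequence of graded left $R$-modules, and since $P$ is projective as a graded $R$-module, applying $\Hom_R(P,{-})$ componentwise yields a short exact sequence of graded abelian groups. Reinstating the CDG-module differentials (so the Hom-groups acquire their complex structure via the usual formula $d(f)=d_M\circ f-(-1)^{|f|}f\circ d_P$) upgrades this to a short exact sequence of complexes of abelian groups. This is the step where graded projectivity of $P$ enters decisively.

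Second, I would verify that totalization commutes with $\Hom_R(P,{-})$. As a graded $R$-module, $\Tot(K\to L\to M)$ is a finite direct sum of shifts of $K$, $L$, and $M$, with a differential that combines the internal CDG-differentials with the horizontal maps \cite[Section~1.2]{Pkoszul}. Since $\Hom_R(P,{-})$ preserves finite direct sums and the formula for its differential is natural in the target, one obtains a canonical isomorphism of complexes
\[
\Hom_R(P,\Tot(K\to L\to M))\simeq
\Tot\bigl(\Hom_R(P,K)\to\Hom_R(P,L)\to\Hom_R(P,M)\bigr).
\]
The right-hand side is the total complex of a short exact sequence of complexes of abelian groups, which is a classical acyclic complex (easily checked via a telescope/cone argument). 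This proves $\Tot(K\to L\to M)\in R\Modl^{\cdg,\ctr}_\acycl$.

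The second assertion is formal: the full subcategory $\Hot(R\Modl^{\cdg,\ctr}_\acycl)$ is by definition the right orthogonal in $\Hot(R\Modl^\cdg)$ to the class $\Hot(R\Modl^\cdg_\proj)$, hence it is a triangulated subcategory closed under arbitrary products. Containing all totalizations of short exact sequences by the first assertion, it must contain the minimal such subcategory closed under products. I do not expect any serious obstacle; the only point requiring genuine care is the sign bookkeeping in comparing the differential on $\Hom_R(P,\Tot(\cdot))$ with that on the totalization of the Hom-complexes, but this is routine and parallels the standard verifications in the DG-setting.
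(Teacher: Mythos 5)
Your argument is correct. The paper itself offers no argument here, simply citing \cite[Theorem~3.5(b)]{Pkoszul}; what you have written is precisely the direct verification underlying that reference: graded projectivity of $P$ makes $\Hom_R(P,{-})$ exact on short exact sequences of CDG\+modules, the Hom DG\+functor commutes with the totalization (a finite twisted direct sum), and the total complex of a short exact sequence of complexes of abelian groups is acyclic; the second assertion then follows because the right $\Hom$\+orthogonal to $\Hot(R\Modl^\cdg_\proj)$ is automatically a triangulated subcategory closed under products. So your proposal is a correct, self-contained substitute for the citation, with no gaps beyond the routine sign bookkeeping you already flag.
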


\begin{proof}
 This is the result of~\cite[Theorem~3.5(b)]{Pkoszul}.
\end{proof}

 We start with a rather general lemma concerning applicability of
the results of Section~\ref{cofiltrations-secn} to our injective
morphism of graded rings $R\rarrow A$.

\begin{lem} \label{doubledagger-holds-for-curved}
 Let $(R,d,h)$ be a CDG\+ring and $A=R[\delta]$ be the corresponding
graded ring.
 Then the (graded version of) condition~\textup{($\dagger\dagger$)}
from Section~\ref{finite-by-coinduced-subsecn} holds for \emph{any}
cotorsion pair $(\F,\C)$ in $R\Modl^\sgr$ that is invariant under
the degree shift\/~$[1]$.
 In other words, the underlying graded left $R$\+module of the left
CDG\+module $G^-(F)=\Hom_R(A,F)$ belongs to $\F$ for any graded
left $R$\+module $F\in\F$.
\end{lem}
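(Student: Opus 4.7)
The plan is to compute the underlying graded left $R$\+module of $G^-(F)=\Hom_R(A,F)$ explicitly and then invoke two closure properties of the class~$\F$: closure under arbitrary direct sums (which holds for the left class of any cotorsion pair) and closure under the degree shift~$[1]$ (by hypothesis).

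First I would analyze the graded right $R$\+module structure on $A=R[\delta]$. Since $\delta$ has degree~$1$ and $A$ is freely generated as a graded right $R$\+module by the two elements $1$ and~$\delta$, there is an isomorphism of graded right $R$\+modules $A\cong R\oplus R[-1]$, where the summand $R[-1]$ corresponds to the submodule $\delta R$ (multiplication by~$\delta$ raises degree by one, matching the grading convention $R[-1]^n=R^{n-1}$). Applying $\Hom_R({-},F)$ turns direct sums into direct products (finite, in this case, so they coincide) and intertwines the shift of the argument with the opposite shift of the result: $\Hom_R(R[-1],F)\cong F[1]$. Consequently the underlying graded left $R$\+module of $\Hom_R(A,F)$ is naturally isomorphic to $F\oplus F[1]$. (As a sanity check, this matches the identity $G^-(S)=G^+(S)[1]$ recalled in Section~\ref{curved-subsecn}, since $G^+(F)=A\ot_RF\cong F\oplus F[-1]$ and shifting by~$[1]$ gives $F[1]\oplus F$.)

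Next I would invoke the two closure properties. The left class $\F$ of any cotorsion pair in $R\Modl^\sgr$ is closed under arbitrary coproducts, because $\Ext^1_R(\bigoplus_iF_i,C)\simeq\prod_i\Ext^1_R(F_i,C)$ shows that ${}^{\perp_1}\C$ is closed under direct sums; in particular $\F$ is closed under the finite sum $F\oplus F[1]$. By the hypothesis that $(\F,\C)$ is invariant under the degree shift~$[1]$, we have $F[1]\in\F$ whenever $F\in\F$. Combining these, $F\oplus F[1]\in\F$, and hence the underlying graded left $R$\+module of $\Hom_R(A,F)$ belongs to~$\F$, verifying~($\dagger\dagger$).

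There is no real obstacle here: the argument is essentially a bookkeeping observation about the structure of $A$ over~$R$. The only thing to be mildly careful about is the sign/degree convention used in the isomorphism $A\cong R\oplus R[-1]$ as graded right $R$\+modules, and the corresponding computation that $\Hom_R(A,F)\cong F\oplus F[1]$ as graded left $R$\+modules; once that is fixed, the invariance of $\F$ under~$[1]$ and under finite direct sums yields the conclusion immediately.
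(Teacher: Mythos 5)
Your key computational claim---that the underlying graded left $R$\+module of $\Hom_R(A,F)$ is isomorphic to $F\oplus F[1]$---is false in general, and this is a genuine gap rather than a matter of sign conventions. The decomposition $A\cong R\oplus\delta R\cong R\oplus R[-1]$ holds as graded \emph{right} $R$\+modules, and $A\cong R\oplus R\delta$ holds as graded \emph{left} $R$\+modules, but neither is a decomposition of $R$\+$R$\+bimodules: the relation $\delta r=(-1)^{|r|}r\delta+d(r)$ shows that $\delta R\not\subset R\delta$ whenever $d\ne0$. Since forming $\Hom_R(A,F)$ uses the left $R$\+module structure on $A$ while the left $R$\+module structure on the result uses the right $R$\+module structure on $A$, you need a bimodule decomposition, and what actually exists is only a bimodule short exact sequence $0\rarrow R\rarrow A\rarrow R[-1]\rarrow0$, split on neither side unless $d$ is inner. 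Concretely, the evaluation map $f\mapsto(f(1),f(\delta))$ is a bijection onto $F\oplus F[1]$ but is not left $R$\+linear in the second coordinate: $(rf)(\delta)=f(\delta r)=(-1)^{|r|}rf(\delta)+d(r)f(1)$, with a correction term $d(r)f(1)$. What one does get is a short exact sequence of graded left $R$\+modules $0\rarrow F[1]\rarrow\Hom_R(A,F)\rarrow F\rarrow0$ (the submodule being $\{f\mid f(1)=0\}$, on which $f\mapsto f(\delta)$ \emph{is} $R$\+linear).

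The repair is small and is exactly what the paper does: replace ``closed under finite direct sums'' by ``closed under extensions,'' which also holds for the left class of any cotorsion pair. Then $F\in\F$ and $F[1]\in\F$ (by shift-invariance) force the middle term $\Hom_R(A,F)$ of the above short exact sequence to lie in $\F$. Your parenthetical sanity check suffers from the same issue: $G^+(F)=A\ot_RF$ sits in a generally non-split extension $0\rarrow F\rarrow A\ot_RF\rarrow F[-1]\rarrow0$, not a direct sum.
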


\begin{proof}
 The $R$\+$R$\+bimodule $A/R$ is isomorphic to $R[-1]$ (with
appropriate sign rules).
 Hence for any graded left $R$\+module $F$ there is a short exact
sequence of graded left $R$\+modules $0\rarrow F[1]\rarrow G^-(F)
\rarrow F\rarrow0$.
 Now $F\in\F$ and $F[1]\in\F$ imply $G^-(F)\in\F$, since
the class $\F$ is closed under extensions in $R\Modl^\sgr$.
\end{proof}

 We will apply the results of Sections~\ref{finite-by-coinduced-subsecn}
and~\ref{combined-coinduced-subsecn} to the following (trivial)
cotorsion pair $(\F,\C)$ in the category of graded left $R$\+modules
$R\Modl^\sgr$.
 Take $\F=R\Modl_\proj^\sgr$ to be the class of all projective graded
left $R$\+modules and $\C=R\Modl^\sgr$ to be the class of all graded
left $R$\+modules (as in Examples~\ref{corings-and-comodules-example}
and~\ref{semicontramodules-examples}).
 In the spirit of the notation in Section~\ref{cofiltrations-secn}, we
denote by $G^-(R\Modl^\sgr)=\Hom_R(A,R\Modl^\sgr)$ the class of all
left CDG\+modules over $(R,d,h)$ of the form $G^-(S)$
with $S\in R\Modl^\sgr$.

\begin{prop} \label{finite-homol-dim-contraacyclic}
 Let $(R,d,h)$ be a CDG\+ring.
 Assume that the abelian category of graded left $R$\+modules
$R\Modl^\sgr$ has finite homological dimension~$k$.
 Then one has\/ $R\Modl^{\cdg,\ctr}_\acycl=
\Cof_{k+1}(G^-(R\Modl^\sgr))^\oplus\subset R\Modl^\cdg$.
\end{prop}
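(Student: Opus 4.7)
The plan is to apply Theorem \ref{finite-resol-dim-theorem} to the trivial cotorsion pair $(\F,\C)=(R\Modl^\sgr_\proj,\>R\Modl^\sgr)$ in the abelian category of graded left $R$-modules, working with the homomorphism of graded rings $R\rarrow A=R[\delta]$ associated with the CDG\+ring $(R,d,h)$ as in Section~\ref{curved-subsecn}. I use the fact, noted in that section, that all results of Section~\ref{cofiltrations-secn} extend verbatim from modules to graded modules, so the framework applies with $A\Modl^\sgr=R\Modl^\cdg$ in place of $A\Modl$.

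First I would check the four hypotheses of Theorem~\ref{finite-resol-dim-theorem}. The pair $(R\Modl^\sgr_\proj,\>R\Modl^\sgr)$ is tautologically a hereditary complete cotorsion pair, since $\Ext^{\ge1}_R(P,{-})$ vanishes for projective~$P$, every graded module is a quotient of a projective one, and every graded module is trivially a submodule of itself. The graded left $R$-module $A$ is free on the two generators $1$ and~$\delta$, so $A\in\F$. Condition~($\dagger\dagger$) is supplied by Lemma~\ref{doubledagger-holds-for-curved}: the class $R\Modl^\sgr_\proj$ is invariant under the shift~$[1]$ and closed under extensions, so from the short exact sequence $0\rarrow F[1]\rarrow G^-(F)\rarrow F\rarrow0$ it follows that $G^-(F)\in\F$ for every $F\in\F$. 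Finally, the bound $k$ on the $\F$-resolution dimension is exactly the assumption that the category $R\Modl^\sgr$ has finite homological dimension~$k$.

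Theorem~\ref{finite-resol-dim-theorem} then yields a hereditary complete cotorsion pair $(\F_A,\C_A)$ in $A\Modl^\sgr=R\Modl^\cdg$, in which $\F_A$ consists of those CDG\+modules whose underlying graded $R$-module is projective---so $\F_A=R\Modl^\cdg_\proj$ by definition---and $\C_A=\Cof_{k+1}(\Hom_R(A,R\Modl^\sgr))^\oplus=\Cof_{k+1}(G^-(R\Modl^\sgr))^\oplus$.

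To identify $\C_A$ with the class of Becker-contraacyclic CDG\+modules, I would invoke Theorem~\ref{becker-contra}(a), which asserts that $(R\Modl^\cdg_\proj,\>R\Modl^{\cdg,\ctr}_\acycl)$ is itself a complete cotorsion pair in $R\Modl^\cdg$. Since a cotorsion pair is determined by either one of its two classes, the equality $\F_A=R\Modl^\cdg_\proj$ forces $\C_A=R\Modl^{\cdg,\ctr}_\acycl$, and the desired description $R\Modl^{\cdg,\ctr}_\acycl=\Cof_{k+1}(G^-(R\Modl^\sgr))^\oplus$ follows. I do not anticipate any real obstacle: the argument is a direct application of the machinery of Section~\ref{finite-by-coinduced-subsecn}, and the only delicate bookkeeping is the translation between graded $A$-modules and CDG\+modules and the verification of~($\dagger\dagger$) via Lemma~\ref{doubledagger-holds-for-curved}.
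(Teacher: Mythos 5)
Your proposal is correct and follows essentially the same route as the paper: identify $\F_A$ with $R\Modl^\cdg_\proj$, use Theorem~\ref{becker-contra}(a) to conclude $\C_A=R\Modl^{\cdg,\ctr}_\acycl$, verify the hypotheses of Theorem~\ref{finite-resol-dim-theorem} via Lemma~\ref{doubledagger-holds-for-curved}, and apply Corollary~\ref{finite-resol-dim-cor}. The only difference is that you spell out the (routine) verification that $(R\Modl^\sgr_\proj,\>R\Modl^\sgr)$ is a hereditary complete cotorsion pair and that $A$ is a free graded $R$-module, which the paper leaves implicit.
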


\begin{proof}
 In the notation of Section~\ref{cofiltrations-secn}, we have
$\F_A=A\Modl^\sgr_{R\dproj}=R\Modl^\cdg_\proj$.
 Hence, by Theorem~\ref{becker-contra}(a),
\,$\C_A=R\Modl^{\cdg,\ctr}_\acycl$.
 The assumptions of Theorem~\ref{finite-resol-dim-theorem} hold
in view of Lemma~\ref{doubledagger-holds-for-curved}, and
it remains to apply Corollary~\ref{finite-resol-dim-cor}.
\end{proof}

 The following condition from~\cite[Section~3.8]{Pkoszul} ensures
applicability of Theorem~\ref{combined-coinduced-theorem}:
\begin{itemize}
\item[($**$)] any countable product of projective graded left
$R$\+modules, viewed as a graded left $R$\+module, has finite
projective dimension not exceeding a fixed integer~$k$.
\end{itemize}

\begin{prop} \label{doublestar-contraacyclic-prop}
 Let $(R,d,h)$ be a CDG\+ring.
 Assume that the graded ring $R$ satisfies
the condition~\textup{($**$)}.
 Then one has\/ $R\Modl^{\cdg,\ctr}_\acycl=
\Cof_{\omega+k}(G^-(R\Modl^\sgr))^\oplus\subset R\Modl^\cdg$.
\end{prop}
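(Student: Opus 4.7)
The plan is to mirror the proof of Proposition \ref{finite-homol-dim-contraacyclic} exactly, but invoke Theorem \ref{combined-coinduced-theorem} (and its Corollary \ref{combined-coinduced-cor}) in place of Theorem \ref{finite-resol-dim-theorem}. Working in the graded setting throughout, I take the ``projective'' cotorsion pair $(\F,\C)$ in $R\Modl^\sgr$ given by $\F=R\Modl_\proj^\sgr$ and $\C=R\Modl^\sgr$; this is obviously a hereditary complete cotorsion pair. The ring homomorphism in play is the inclusion of graded rings $R\hookrightarrow A=R[\delta]$, with $\Hom_R(A,{-})=G^-$ as recalled in Section~\ref{curved-subsecn}.

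I then verify the three hypotheses of Theorem~\ref{combined-coinduced-theorem} in this setting. First, $A$ is free as a graded left $R$\+module on the two generators $1$ and $\delta$, hence belongs to $\F$. Second, the condition~($\dagger\dagger$) is automatic by Lemma~\ref{doubledagger-holds-for-curved}, because the projective class $\F=R\Modl_\proj^\sgr$ is visibly closed under the shift~$[1]$. Third, the hypothesis that every countable product of modules from $\F$ has $\F$\+resolution dimension at most~$k$ is precisely the assumption~($**$) placed on~$R$.

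In the notation of Section~\ref{cofiltrations-secn}, the class $\F_A$ of graded $A$\+modules whose underlying graded $R$\+module lies in $\F$ is exactly $A\Modl^\sgr_{R\dproj}=R\Modl^\cdg_\proj$. Theorem~\ref{becker-contra}(a) identifies $\F_A^{\perp_1}$ with $R\Modl^{\cdg,\ctr}_\acycl$. Now Corollary~\ref{combined-coinduced-cor} applied to the data above yields
\[
 R\Modl^{\cdg,\ctr}_\acycl\;=\;\F_A^{\perp_1}\;=\;\Cof_{\omega+k}(\Hom_R(A,\C))^\oplus\;=\;\Cof_{\omega+k}(G^-(R\Modl^\sgr))^\oplus,
\]
which is the desired conclusion.

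There is no real obstacle here: once the trivial projective cotorsion pair on $R\Modl^\sgr$ is selected and the degree-shift invariance argument of Lemma~\ref{doubledagger-holds-for-curved} is invoked to discharge~($\dagger\dagger$), the hypothesis~($**$) matches the countable-product $\F$\+resolution dimension bound of Theorem~\ref{combined-coinduced-theorem} on the nose, and the identification of $\F_A$ and $\C_A$ with the classes occurring in Becker's theorem is immediate from the definitions. The only mildly delicate point is to remember that all statements of Section~\ref{cofiltrations-secn} transfer without change from ungraded to graded modules, as observed in Section~\ref{curved-subsecn}.
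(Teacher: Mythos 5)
Your proof is correct and follows essentially the same route as the paper: the paper's own argument likewise takes the trivial projective cotorsion pair in $R\Modl^\sgr$, discharges~($\dagger\dagger$) via Lemma~\ref{doubledagger-holds-for-curved}, identifies~($**$) with the countable-product hypothesis of Theorem~\ref{combined-coinduced-theorem}, and concludes by comparing Theorem~\ref{becker-contra}(a) with Corollary~\ref{combined-coinduced-cor}. No gaps.
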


\begin{proof}
 Similar to Proposition~\ref{finite-homol-dim-contraacyclic}.
 The condition~($\dagger\dagger$) holds by
Lemma~\ref{doubledagger-holds-for-curved}, and the desired assertion
is obtained by comparing Theorem~\ref{becker-contra}(a) with
Corollary~\ref{combined-coinduced-cor}.
\end{proof}

\begin{lem} \label{finite-cofiltrations-absacyclic}
 Let $(R,d,h)$ be a CDG\+ring.
 Let $\T$ be a class of objects in $R\Modl^\cdg$ and $k$~be
a finite integer.
 Then any object from\/ $\Cof_{k+1}(\T)\subset R\Modl^\cdg$, viewed
as an object of the homotopy category\/ $\Hot(R\Modl^\cdg)$, belongs
to the minimal full triangulated subcategory of\/ $\Hot(R\Modl^\cdg)$
containing the CDG\+modules from $\T$ and the totalizations of
short exact sequences in $R\Modl^\cdg$.
\end{lem}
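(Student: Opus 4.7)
My plan is to proceed by induction on $k$; the essential content is the claim that the minimal full triangulated subcategory $\mathcal{S}\subset\Hot(R\Modl^\cdg)$ named in the statement is closed under extensions coming from short exact sequences in the abelian category $R\Modl^\cdg$. For the base case $k=0$, a one-step cofiltration $0=G_0N\subset G_1N=N$ forces $N=\ker(G_1N\to G_0N)\in\T\subset\mathcal{S}$, so $\Cof_1(\T)\subset\mathcal{S}$. For the inductive step, given $N\in\Cof_{k+2}(\T)$ with cofiltration $0=G_0N,\,G_1N,\dotsc,G_{k+2}N=N$, the truncated data exhibit $G_{k+1}N\in\Cof_{k+1}(\T)$, so $G_{k+1}N\in\mathcal{S}$ by the induction hypothesis, while $K=\ker(N\to G_{k+1}N)\in\T\subset\mathcal{S}$. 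The short exact sequence $0\to K\to N\to G_{k+1}N\to 0$ in $R\Modl^\cdg$ thus has both outer terms in $\mathcal{S}$, and it remains to propagate membership to the middle term.

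For the extension-closure claim, consider an arbitrary short exact sequence $0\to K\xrightarrow{i}N\xrightarrow{p}M\to 0$ in $R\Modl^\cdg$. I would form the cone $C_i=\text{cone}(i)$ in the DG\+category of CDG\+modules, giving a distinguished triangle $K\to N\to C_i\to K[1]$ in $\Hot(R\Modl^\cdg)$. Since $p\circ i=0$, the morphism $p$ factors through $C_i$ as a genuine CDG\+module morphism $\phi\:C_i\to M$. Using the description of $\Tot$ as an iterated cone in the DG\+category~\cite[Section~1.2]{Pkoszul}, a direct comparison identifies $\text{cone}(\phi)$ with a degree shift of $T=\Tot(K\to N\to M)$, yielding a second distinguished triangle $C_i\to M\to T[a]\to C_i[1]$ for an appropriate shift~$a$. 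Since $T\in\mathcal{S}$ by the very definition of $\mathcal{S}$ and $M\in\mathcal{S}$ by hypothesis, closure of $\mathcal{S}$ under cones and shifts forces $C_i\in\mathcal{S}$; together with $K\in\mathcal{S}$, the first triangle then forces $N\in\mathcal{S}$, completing the inductive step.

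The main obstacle I anticipate is pinning down the precise sign and shift conventions in the identification of $\text{cone}(\phi)$ with a shift of $\Tot(K\to N\to M)$; once those are in place the induction is mechanical. It would also be cleaner to isolate the extension-closure step as a standalone lemma, since the same pattern (a short exact sequence in $R\Modl^\cdg$ together with its totalization giving rise to two linked triangles in $\Hot(R\Modl^\cdg)$) is the basic mechanism already implicit in Proposition~\ref{totalizations-contraacyclic}.
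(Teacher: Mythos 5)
Your proposal is correct and follows essentially the same route as the paper: the paper's proof consists precisely of the observation that for any short exact sequence $0\to K\to L\to M\to 0$ in $R\Modl^\cdg$ the middle term $L$ lies in the triangulated subcategory generated by $K$, $M$, and $\Tot(K\to L\to M)$, with the induction on the length of the cofiltration left implicit. Your two-triangle argument via $\mathrm{cone}(i)$ and its comparison with a shift of the totalization is exactly the standard justification of that observation, so no gap remains.
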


\begin{proof}
 It suffices to observe that, for any short exact sequence
$0\rarrow K\rarrow L\rarrow M\rarrow0$ in $R\Modl^\cdg$, the object
$L$ belongs to the minimal triangulated subcategory
of $\Hot(R\Modl^\cdg)$ containing $K$, $M$, and $\Tot(K\to L\to M)$.
\end{proof}

\begin{thm} \label{countable-cofiltrations-contraacyclic}
 Let $\T$ be a class of objects in $R\Modl^\cdg$ and $\alpha$~be
a countable ordinal.
 Then any object from\/ $\Cof_\alpha(\T)\subset R\Modl^\cdg$,
viewed as an object of the homotopy category\/ $\Hot(R\Modl^\cdg)$,
belongs to the minimal full triangulated subcategory of\/
$\Hot(R\Modl^\cdg)$ containing the CDG\+modules from $\T$ and
the totalizations of short exact sequences in $R\Modl^\cdg$,
and closed under countable products.
\end{thm}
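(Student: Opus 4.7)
I would proceed by transfinite induction on the countable ordinal~$\alpha$.
Let $\S \subset \Hot(R\Modl^\cdg)$ denote the minimal full triangulated subcategory containing $\T$ and the totalizations of short exact sequences in $R\Modl^\cdg$, and closed under countable products; the aim is to show that every $\alpha$\+cofiltered CDG\+module belongs to~$\S$.
The base case $\alpha = 0$ is immediate.
For a successor ordinal $\alpha = \beta + 1$, any $\alpha$\+cofiltered $N$ with cofiltration $G$ sits in a short exact sequence
\begin{equation*}
0 \lrarrow \ker(G_{\beta+1} N \to G_\beta N) \lrarrow N \lrarrow G_\beta N \lrarrow 0,
\end{equation*}
with left term in $\T$ and right term in $\S$ by induction; invoking the observation from Lemma~\ref{finite-cofiltrations-absacyclic} --- that the three terms of a short exact sequence, together with its totalization, generate each other inside any triangulated subcategory closed under extensions --- places $N$ in~$\S$.

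The genuinely new step is the treatment of a countable limit ordinal~$\alpha$.
Here I would exploit countability of $\alpha$ to select a strictly increasing $\omega$\+indexed cofinal sequence $\alpha_0 < \alpha_1 < \alpha_2 < \cdots$ with $\sup_n \alpha_n = \alpha$.
Since cofinal subsequences of a projective system yield the same projective limit, one has $N = G_\alpha N = \varprojlim_n G_{\alpha_n} N$.
Each $G_{\alpha_n} N$ is $\alpha_n$\+cofiltered by $\T$ with $\alpha_n < \alpha$ itself a countable ordinal, hence belongs to $\S$ by the inductive hypothesis.
Since all transition maps $G_{\alpha_{n+1}} N \to G_{\alpha_n} N$ are surjective, $\varprojlim^1$ vanishes, and one obtains the telescope short exact sequence in $R\Modl^\cdg$:
\begin{equation*}
0 \lrarrow N \lrarrow \prod_{n \ge 0} G_{\alpha_n} N \lrarrow \prod_{n \ge 0} G_{\alpha_n} N \lrarrow 0.
\end{equation*}
Closure of $\S$ under countable products puts both middle and right terms in~$\S$, and a further application of the observation from Lemma~\ref{finite-cofiltrations-absacyclic} yields $N \in \S$.

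The main technical point requiring care is the symmetric use of the observation from Lemma~\ref{finite-cofiltrations-absacyclic}: one needs that any one of the three terms $K$, $L$, $M$ of a short exact sequence can be extracted, inside a triangulated subcategory, from the other two together with $\Tot(K \to L \to M)$.
This symmetry is built into the structure of the distinguished triangle in $\Hot(R\Modl^\cdg)$ linking $K$, $L$, $M$, and $\Tot$, so it is not a new obstacle but should be spelled out.
The countability hypothesis on $\alpha$ enters precisely to reduce an arbitrary limit step to a projective limit over~$\omega$, which is the natural regime for both the telescope sequence and the countable-products closure assumption; without countability, neither the cofinal $\omega$\+sequence nor the two-term product telescope would suffice.
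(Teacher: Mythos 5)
Your proof is correct and follows essentially the same route as the paper: transfinite induction on~$\alpha$, the successor step via the totalization triangle, and the limit step via a cofinal $\omega$\+sequence and the two-term product telescope sequence. The only (immaterial) organizational difference is that the paper treats $\alpha=\omega$ first and reduces a general limit ordinal to it through the kernels between consecutive cofinal stages, whereas you apply the inductive hypothesis directly to the stages $G_{\alpha_n}N$ themselves.
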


\begin{proof}
 Denote by $\X\subset\Hot(R\Modl^\cdg)$ the minimal triangulated
subcategory containing the CDG\+modules from $\T$ and the totalizations
of short exact sequences in $R\Modl^\cdg$, and closed under
countable products.
 Let us first consider the case $\alpha=\omega$.
 Let $M=G_\omega M\twoheadrightarrow \dotsb\twoheadrightarrow G_nM
\twoheadrightarrow\dotsb\twoheadrightarrow G_2M\twoheadrightarrow
G_1M\twoheadrightarrow G_0M=0$, \
$G_\omega M=\varprojlim_{n\in\omega}G_nM$, be an $\omega$\+cofiltration
of a left CDG\+module $M$ over $(R,d,h)$ by CDG\+modules from $\T$
(or even from~$\X$).
 Then we have a short exact sequence
\begin{equation} \label{cofiltration-telescope}
 0\lrarrow M\lrarrow\prod\nolimits_{n\in\omega}G_nM\lrarrow
 \prod\nolimits_{n\in\omega}G_nM\lrarrow0
\end{equation}
in the abelian category of CDG\+modules $R\Modl^\cdg=A\Modl^\sgr$.
 By Lemma~\ref{finite-cofiltrations-absacyclic}, we have
$G_nM\in\X$ for all the integers $n\ge0$.
 Hence $\prod_{n\in\omega}G_nM\in\X$.
 Since the totalization of the short exact
sequence~\eqref{cofiltration-telescope} also belongs to $\X$,
it follows that $M\in\X$.

 In the general case of a countable ordinal~$\alpha$, we proceed by
transfinite induction in~$\alpha$.
 Let $M=G_\alpha M\twoheadrightarrow\dotsb\twoheadrightarrow G_1M
\twoheadrightarrow G_0M=0$ be a CDG\+module $\alpha$\+cofiltered
by CDG\+modules from~$\T$.
 We need to show that $M\in\X$.
 The case $\alpha=0$ is obvious.
 If $\alpha=\beta+1$ is a successor ordinal, then we have a short
exact sequence of CDG\+modules $0\rarrow T\rarrow M\rarrow
G_\beta M\rarrow0$ with $T\in\T$.
 By the induction assumption, $G_\beta M\in\X$.
 Since $\Tot(T\to M\to G_\beta M)\in\X$, it follows that $M\in\X$.

 In the case of a countable limit ordinal~$\alpha$, choose an increasing
sequence of ordinals $(\beta_n)_{n\in\omega}$ with
$0=\beta_0<\beta_1<\beta_2<\dotsb<\alpha$ and $\alpha=
\lim_{n\to\omega}\beta_n$.
 Then there exist ordinals $(\gamma_n>0)_{n\in\omega}$ such that
$\beta_{n+1}=\beta_n+\gamma_n$ for every $n\in\omega$.
 It follows that $\gamma_n\le\beta_{n+1}<\alpha$.
 Define an $\omega$\+cofiltration $G'$ on the CDG\+module $M$ by
the rule $G'_nM=G_{\beta_n}M$.
 Then the kernel $K_n$ of the surjective morphism of CDG\+modules
$G'_{n+1}M\rarrow G'_nM$ is $\gamma_n$\+cofiltered by $\T$ for every
$n\in\omega$.
 By the induction assumption, we have $K_n\in\X$ for every $n\in\omega$.
 According to the above argument for the case of
an $\omega$\+cofiltration, it follows that $M\in\X$.
\end{proof}

\begin{cor} \label{finite-homol-dim-contra=absderived}
 Let $(R,d,h)$ be a CDG\+ring.
 Assume that the abelian category of graded left $R$\+modules
$R\Modl^\sgr$ has finite homological dimension.
 Then\/ $\Hot(R\Modl^{\cdg,\ctr}_\acycl)$ is the minimal thick
subcategory in\/ $\Hot(R\Modl^\cdg)$ containing the totalizations of
short exact sequences of CDG\+modules.
\end{cor}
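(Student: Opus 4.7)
My plan is to establish the equality by proving two inclusions, drawing on the results already developed in this section. Denote by $\X \subset \Hot(R\Modl^\cdg)$ the minimal thick subcategory containing the totalizations of short exact sequences in $R\Modl^\cdg$.

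For the inclusion $\X \subset \Hot(R\Modl^{\cdg,\ctr}_\acycl)$, I would invoke Proposition~\ref{totalizations-contraacyclic}, which tells us that any totalization of a short exact sequence of CDG\+modules is contraacyclic. Since $\Hot(R\Modl^{\cdg,\ctr}_\acycl)$ is a triangulated subcategory of $\Hot(R\Modl^\cdg)$ closed under infinite products (as already noted in the paragraph introducing contraacyclic modules), it is automatically thick: any direct summand decomposition $N = M\oplus M'$ makes $M$ a retract of $N$, and such a retract can be extracted by a standard Eilenberg-swindle argument inside a triangulated subcategory closed under countable products. Hence the minimal thick subcategory containing the totalizations lies inside $\Hot(R\Modl^{\cdg,\ctr}_\acycl)$.

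For the reverse inclusion, let $M$ be a contraacyclic left CDG\+module. Let $k$ denote the homological dimension of $R\Modl^\sgr$, which is finite by hypothesis. By Proposition~\ref{finite-homol-dim-contraacyclic}, $M$ is a direct summand of some CDG\+module $N$ belonging to $\Cof_{k+1}(G^-(R\Modl^\sgr))$. Applying Lemma~\ref{finite-cofiltrations-absacyclic} to the class $\T = G^-(R\Modl^\sgr)$ and the finite integer $k$, the CDG\+module $N$ lies in the minimal full triangulated subcategory $\X'$ of $\Hot(R\Modl^\cdg)$ that contains $G^-(R\Modl^\sgr)$ and the totalizations of short exact sequences in $R\Modl^\cdg$. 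Now the key observation, already recorded in Section~\ref{curved-subsecn}, is that every CDG\+module of the form $G^-(S)$ is contractible, the action of $\partial=\partial/\partial\delta$ furnishing a contracting homotopy; such a CDG\+module therefore represents the zero object of $\Hot(R\Modl^\cdg)$ and lies in any triangulated subcategory. Consequently $\X'\subset\X$, and since $M$ is a direct summand of $N\in\X'\subset\X$ while $\X$ is thick, we conclude $M\in\X$.

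I do not anticipate any real obstacle, since the two propositions cited above do the heavy lifting: Proposition~\ref{finite-homol-dim-contraacyclic} produces an explicit finite-cofiltration description of contraacyclic modules using the main machinery of Section~\ref{finite-by-coinduced-subsecn}, and Lemma~\ref{finite-cofiltrations-absacyclic} converts such cofiltrations into iterated extensions and triangles involving totalizations. The only points requiring care are the thickness of $\Hot(R\Modl^{\cdg,\ctr}_\acycl)$, which is automatic, and the contractibility of the coinduced CDG\+modules $G^-(S)$, which allows us to remove them from the list of generators of $\X'$ and recover exactly the minimal thick subcategory generated by totalizations.
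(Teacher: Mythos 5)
Your proposal is correct and follows essentially the same route as the paper: the easy inclusion via Proposition~\ref{totalizations-contraacyclic}, and the nontrivial one by combining Proposition~\ref{finite-homol-dim-contraacyclic} with Lemma~\ref{finite-cofiltrations-absacyclic} and the contractibility of the coinduced CDG\+modules $G^-(S)$. (The thickness of $\Hot(R\Modl^{\cdg,\ctr}_\acycl)$ is even more immediate than your swindle argument, since that subcategory is defined by a Hom-vanishing condition and is therefore closed under direct summands by definition.)
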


\begin{proof}
 This is essentially ``the contraderived half''
of~\cite[Theorem~3.6]{Pkoszul}.
 Here is a proof based on the techniques developed in this paper.
 Let $\X_0\subset\Hot(R\Modl^\cdg)$ be the minimal thick subcategory
containing the totalizations of short exact sequences of CDG\+modules.
 Then $\X_0\subset\Hot(R\Modl^{\cdg,\ctr}_\acycl)$
by Proposition~\ref{totalizations-contraacyclic} (this assertion
does not depend on any assumptions on the ring~$R$).

 The (nontrivial) inclusion $\Hot(R\Modl^{\cdg,\ctr}_\acycl)\subset\X_0$
holds by Proposition~\ref{finite-homol-dim-contraacyclic}
and Lemma~\ref{finite-cofiltrations-absacyclic}.
 Here we use the observation, mentioned in Section~\ref{curved-subsecn},
that the CDG\+module $G^-(S)$ is contractible for every graded
$R$\+module~$S$.
\end{proof}

\begin{cor} \label{doublestar-contraderived-cor}
 Let $(R,d,h)$ be a CDG\+ring.
 Assume that the graded ring $R$ satisfies
the condition~\textup{($**$)}.
 Then\/ $\Hot(R\Modl^{\cdg,\ctr}_\acycl)$ is the minimal full
triangulated subcategory in\/ $\Hot(R\Modl^\cdg)$ containing
the totalizations of short exact sequences of CDG\+modules
over $(R,d,h)$ and closed under countable products.
\end{cor}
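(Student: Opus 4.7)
The plan is to parallel the proof of Corollary~\ref{finite-homol-dim-contra=absderived}, substituting Proposition~\ref{doublestar-contraacyclic-prop} for Proposition~\ref{finite-homol-dim-contraacyclic} and Theorem~\ref{countable-cofiltrations-contraacyclic} for Lemma~\ref{finite-cofiltrations-absacyclic}. Let $\X\subset\Hot(R\Modl^\cdg)$ denote the minimal full triangulated subcategory containing the totalizations of short exact sequences of CDG\+modules and closed under countable products. The inclusion $\X\subset\Hot(R\Modl^{\cdg,\ctr}_\acycl)$ is immediate from Proposition~\ref{totalizations-contraacyclic} together with the observation, noted in Section~\ref{contraderived-subsecn}, that $\Hot(R\Modl^{\cdg,\ctr}_\acycl)$ is a full triangulated subcategory of $\Hot(R\Modl^\cdg)$ closed under arbitrary products.

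For the converse, one would start with a contraacyclic CDG\+module $M$ and invoke Proposition~\ref{combined-coinduced-approximation-sequences} applied to the trivial cotorsion pair $(R\Modl^\sgr_\proj,R\Modl^\sgr)$ in $R\Modl^\sgr$ (whose hypotheses hold by Lemma~\ref{doubledagger-holds-for-curved} and assumption~\textup{($**$)}) to obtain a special preenvelope sequence $0\rarrow M\rarrow C\rarrow F'\rarrow0$ in $R\Modl^\cdg$ with $C\in\Cof_{\omega+k}(G^-(R\Modl^\sgr))$ and $F'\in R\Modl^\cdg_\proj$. Since $M\in\F_A^{\perp_1}$ and $F'\in\F_A$, we have $\Ext^1_A(F',M)=0$, so this short exact sequence splits, giving $C\simeq M\oplus F'$ in $R\Modl^\cdg$. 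The direct summand $F'$ of the contraacyclic CDG\+module $C$ is itself contraacyclic; being simultaneously graded projective, $F'$ satisfies $\Hom_{\Hot(R\Modl^\cdg)}(F',F')=0$ by the very definition of contraacyclicity, so $\id_{F'}$ vanishes in $\Hot(R\Modl^\cdg)$ and $F'$ is contractible. Consequently, $M\simeq C$ as objects of $\Hot(R\Modl^\cdg)$.

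It remains to place $C$ in~$\X$. Every CDG\+module of the form $G^-(S)$ is contractible, as recalled in Section~\ref{curved-subsecn}, and hence lies trivially in~$\X$. Since $\omega+k$ is a countable ordinal, Theorem~\ref{countable-cofiltrations-contraacyclic} applied to $\T=G^-(R\Modl^\sgr)$ places $C$ in the minimal triangulated subcategory of $\Hot(R\Modl^\cdg)$ closed under countable products that contains $\T$ together with the totalizations of short exact sequences; this subcategory coincides with~$\X$, so $C\in\X$ and therefore $M\in\X$. The technical heart of the argument is Theorem~\ref{countable-cofiltrations-contraacyclic}, already established; the device that circumvents any need to verify direct-summand closure of $\X$ (which would otherwise be the main obstacle, given that $R\Modl^{\cdg,\ctr}_\acycl$ is described only up to direct summands in Proposition~\ref{doublestar-contraacyclic-prop}) is the observation that a graded projective contraacyclic CDG\+module is contractible.
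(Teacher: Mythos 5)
Your proposal is correct, and it diverges from the paper's proof in exactly one step, namely in how the direct-summand closure is handled. The paper's proof simply cites Proposition~\ref{doublestar-contraacyclic-prop} (whose conclusion is $R\Modl^{\cdg,\ctr}_\acycl=\Cof_{\omega+k}(G^-(R\Modl^\sgr))^\oplus$, with the $(-)^\oplus$) together with Theorem~\ref{countable-cofiltrations-contraacyclic}, and then disposes of the summand issue abstractly: any full triangulated subcategory of $\Hot(R\Modl^\cdg)$ closed under countable products is thick, by Rickard's criterion and the B\"okstedt--Neeman theorem, so the minimal subcategory $\X$ in the statement is automatically closed under direct summands. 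You instead go one level down, to the special preenvelope sequence $0\rarrow M\rarrow C\rarrow F'\rarrow0$ furnished by Proposition~\ref{combined-coinduced-approximation-sequences} (whose hypotheses you correctly verify via Lemma~\ref{doubledagger-holds-for-curved} and~($**$); note also that the class of graded projectives is resolving and the trivial cotorsion pair admits approximation sequences), and observe that the sequence splits because $M\in\F_A^{\perp_1}$ by Theorem~\ref{becker-contra}(a). Your key extra observation --- that $F'$, being a direct summand of the contraacyclic module $C$ and simultaneously graded projective, is contractible since $\id_{F'}\in\Hom_{\Hot(R\Modl^\cdg)}(F',F')=0$ --- then yields $M\simeq C$ in the homotopy category with $C$ lying in $\Cof_{\omega+k}(G^-(R\Modl^\sgr))$ on the nose, so Theorem~\ref{countable-cofiltrations-contraacyclic} applies directly and no thickness of $\X$ is ever needed. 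Both arguments are sound; yours is more self-contained (it avoids the external input of B\"okstedt--Neeman and Rickard at the cost of unpacking the cotorsion-pair machinery a little further), while the paper's is shorter given that machinery as a black box. The remaining steps --- the easy inclusion $\X\subset\Hot(R\Modl^{\cdg,\ctr}_\acycl)$ via Proposition~\ref{totalizations-contraacyclic} and closure of the contraacyclics under products, and the use of contractibility of the coinduced modules $G^-(S)$ to identify the subcategory generated by $\T\cup\{\text{totalizations}\}$ with $\X$ --- match the paper.
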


\begin{proof}
 This is essentially~\cite[Theorem~3.8]{Pkoszul}.
 Here is a proof based on the results of this paper.
 Let $\X\subset\Hot(R\Modl^\cdg)$ be the minimal thick subcategory
containing the totalizations of short exact sequences of CDG\+modules
and closed under countable products.
 Then $\X\subset\Hot(R\Modl^{\cdg,\ctr}_\acycl)$
by Proposition~\ref{totalizations-contraacyclic} (this assertion
does not depend on any assumptions on the ring~$R$).

 The (nontrivial) inclusion $\Hot(R\Modl^{\cdg,\ctr}_\acycl)\subset\X$
holds by Proposition~\ref{doublestar-contraacyclic-prop}
and Theorem~\ref{countable-cofiltrations-contraacyclic}.
 The observation that the CDG\+module $G^-(S)$ is contractible
for every graded $R$\+module $S$ is important here.

 Finally, notice that any full triangulated subcategory having
countable products is a thick subcategory by Rickard's
criterion~\cite[Criterion~1.3]{Neem} and the B\"okstedt--Neeman
theorem~\cite[Proposition~3.2 or Remark~3.3]{BN}.
\end{proof}

 The contraderived category of left CDG\+modules over $(R,d,h)$ in
the sense of the books and papers~\cite{Psemi,Pkoszul,Pfp,Pps,Prel}
is defined as the quotient category of the homotopy category
$\Hot(R\Modl^\cdg)$ by its minimal triangulated subcategory containing
the totalizations of short exact sequences of CDG\+modules and closed
under infinite products.
 Thus Corollary~\ref{doublestar-contraderived-cor} can be rephrased by
saying that, \emph{under the condition}~($**$), \emph{the contraderived
category in the sense of Becker coincides with the contraderived
category in the sense of}~\cite{Psemi,Pkoszul,Pfp,Pps,Prel}.

\subsection{Coderived category} \label{coderived-subsecn}
 A left CDG\+module $J$ over $(R,d,h)$ is said to be \emph{graded
injective} if the graded left $R$\+module $J$ is injective in
$R\Modl^\sgr$.
 We denote the full subcategory of graded injective CDG\+modules by
$R\Modl^\cdg_\inj=A\Modl^\sgr_{R\dinj}\subset A\Modl^\sgr=R\Modl^\cdg$
and the corresponding full subcategory in the homotopy category by
$\Hot(R\Modl^\cdg_\inj)\subset\Hot(R\Modl^\cdg)$.

 A left CDG\+module $X$ over $(R,d,h)$ is said to be \emph{coacyclic
in the sense of Becker}~\cite{Bec} if the complex $\Hom_R(X,J)$ is
acyclic for all graded injective CDG\+modules $J\in R\Modl^\cdg_\inj$,
or equivalently, $\Hom_{\Hot(R\Modl^\cdg)}(X,J)=0$ for all
$J\in\Hot(R\Modl^\cdg_\inj)$.
 We denote the full subcategory of coacyclic CDG\+modules by
$R\Modl^{\cdg,\co}_\acycl\subset R\Modl^\cdg$ and the corresponding
full subcategory in the homotopy category by
$\Hot(R\Modl^{\cdg,\co}_\acycl)\subset\Hot(R\Modl^\cdg)$.
 Clearly, $\Hot(R\Modl^{\cdg,\co}_\acycl)$ is a triangulated subcategory
closed under infinite direct sums in $\Hot(R\Modl^\cdg)$.
{\hbadness=1175\par}

\begin{thm} \label{becker-co}
 Let $(R,d,h)$ be a CDG\+ring and $A=R[\delta]$ be the corresponding
graded ring.
 Then \par
\textup{(a)} the pair of classes of objects $R\Modl^{\cdg,\co}_\acycl$
and $R\Modl^\cdg_\inj$ is a hereditary complete cotorsion pair in
the abelian category $R\Modl^\cdg=A\Modl^\sgr$; \par
\textup{(b)} the composition of the triangulated inclusion functor\/
$\Hot(R\Modl^\cdg_\inj)\rarrow\Hot(R\Modl^\cdg)$ and the triangulated
Verdier quotient functor\/ $\Hot(R\Modl^\cdg)\rarrow
\Hot(R\Modl^\cdg)/\allowbreak\Hot(R\Modl^{\cdg,\co}_\acycl)$
is a triangulated equivalence\/ $\Hot(R\Modl^\cdg_\inj)\simeq
\Hot(R\Modl^\cdg)/\allowbreak\Hot(R\Modl^{\cdg,\co}_\acycl)$.
\end{thm}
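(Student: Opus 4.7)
Theorem~\ref{becker-co} is strictly dual to Theorem~\ref{becker-contra}, and I would prove it by mirroring the contraderived argument, following~\cite[Propositions~1.3.6(2) and~1.3.8(2)]{Bec}. As in that case, part~(b) follows formally from part~(a): the special preenvelope sequences of a hereditary complete cotorsion pair in $A\Modl^\sgr$ yield, at each CDG-module, an adjunction morphism to a graded injective CDG-module with coacyclic cone, producing the equivalence of $\Hot(R\Modl^\cdg_\inj)$ with the Verdier quotient $\Hot(R\Modl^\cdg)/\Hot(R\Modl^{\cdg,\co}_\acycl)$. So the main work is part~(a).

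For the $\Ext^1$-orthogonality, the key ingredient is the natural isomorphism
\[
 \Ext^1_{A\Modl^\sgr}(X,J)\;\cong\;\Hom_{\Hot(R\Modl^\cdg)}(X,J[1]),
\]
valid for any CDG-module $X$ and any graded injective CDG-module $J\in R\Modl^\cdg_\inj$. Indeed, a short exact sequence $0\to J\to E\to X\to 0$ in $A\Modl^\sgr$ splits in $R\Modl^\sgr$ by the graded injectivity of~$J$, and the obstruction to upgrading the splitting to $A\Modl^\sgr$ is a closed degree-one $R$-linear map $X\to J$ taken modulo coboundaries, which is precisely $\Hom_{\Hot(R\Modl^\cdg)}(X,J[1])$. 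Coacyclicity of $X$ together with graded injectivity of $J[1]$ forces this to vanish. Conversely, every graded injective CDG-module arises as some $J'[1]$ with $J'=J[-1]$, so ${}^{\perp_1}R\Modl^\cdg_\inj=R\Modl^{\cdg,\co}_\acycl$ in $A\Modl^\sgr$. The analogous formula in higher degrees, together with the fact that both classes are closed under extensions, delivers heredity.

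For completeness, I would place the setup inside the framework of Section~\ref{filtrations-secn} by taking the trivial cotorsion pair $(\F,\C)=(R\Modl^\sgr, R\Modl^\sgr_\inj)$ in graded left $R$-modules along with the ring homomorphism $R\to A=R[\delta]$. The hypothesis ${}_RA^+\in\C$ holds because $A$ is a free graded $R$-module of rank two on both sides, and condition~$(\dagger)$ holds because $A\ot_RC\cong C\oplus C[-1]$ as graded $R$-modules, a direct sum of graded injectives whenever $C$ is graded injective. However, the class of graded injective $R$-modules is generally not closed under countable direct sums unless $R$ is graded left Noetherian, so Corollary~\ref{direct-sum-closed-cor} does not immediately apply. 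Under a graded-Noetherian or graded-coherent hypothesis on $R$, Proposition~\ref{small-object-argument-is-better-for-noetherian} or Proposition~\ref{small-object-argument-for-coherent} does the job; in full generality one invokes Becker's transfinite construction of graded-injective approximations inside the Grothendieck abelian category $A\Modl^\sgr$, producing special preenvelope sequences, with special precover sequences then supplied by Salce's Lemma~\ref{salce-lemma}.

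The main obstacle is the completeness statement in the general (non-Noetherian) case, where the techniques developed in Section~\ref{filtrations-secn} do not directly yield approximation sequences of a controlled length, and one must fall back on Becker's construction via injective envelopes in the Grothendieck category $A\Modl^\sgr$. Once completeness is in place, the cotorsion pair structure, heredity, and part~(b) all follow by the same formalities as in the contraderived case.
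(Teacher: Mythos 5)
Your proposal is correct and takes essentially the same route as the paper: the paper's entire proof of Theorem~\ref{becker-co} is the citation of Becker's Propositions~1.3.6(2) and~1.3.8(2) together with the remark that part~(b) follows from part~(a), with all details omitted. Your sketch --- the graded-splitting computation identifying $\Ext^1_{A\Modl^\sgr}(X,J)$ with $\Hom_{\Hot(R\Modl^\cdg)}(X,J[1])$ for graded injective $J$, the observation that the machinery of Section~\ref{filtrations-secn} needs a Noetherian-type hypothesis, and the fallback on Becker's set-theoretic construction in the Grothendieck category $A\Modl^\sgr$ for completeness in general --- is a correct expansion of exactly that argument.
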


\begin{proof}
 This is~\cite[Propositions~1.3.6(2) and~1.3.8(2)]{Bec}.
 Parts~(a) and~(b) are closely related; in fact, (b)~follows from~(a).
 We omit the details.
\end{proof}

 The quotient category $\sD^\co(R\Modl^\cdg)=
\Hot(R\Modl^\cdg)/\allowbreak\Hot(R\Modl^{\cdg,\co}_\acycl)$
is called the \emph{coderived category} of left CDG\+modules
over $(R,d,h)$ \emph{in the sense of Becker}.
 It needs to be distinguished from the coderived category
in the sense of the books and papers~\cite{Psemi,Pkoszul,Pfp,Prel}
(see~\cite[Example~2.5(3)]{Pps} for a discussion).
 It is an open question whether the two definitions of
a coderived category are equivalent for an arbitrary CDG\+ring.
 In this section we explain how one can show that they are, in fact,
equivalent under certain assumptions.

\begin{prop} \label{totalizations-coacyclic}
 Let $(R,d,h)$ be a CDG\+ring.
 Then the totalization of any short exact sequence of left
CDG\+modules over $(R,d,h)$ belongs to $R\Modl^{\cdg,\co}_\acycl$.
 Hence the minimal full triangulated subcategory of the homotopy
category\/ $\Hot(R\Modl^\cdg)$ containing the totalizations of
short exact sequences of CDG\+modules and closed under direct sums
is a subcategory in\/ $\Hot(R\Modl^{\cdg,\co}_\acycl)$.
\end{prop}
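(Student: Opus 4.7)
The plan is to mirror the proof of Proposition~\ref{totalizations-contraacyclic} (that result is the dual assertion, and would presumably also be available as the ``coderived half'' of~\cite[Theorem~3.5]{Pkoszul}). The first assertion is the substantive one; the second is then immediate because $\Hot(R\Modl^{\cdg,\co}_\acycl)$ is already a triangulated subcategory of $\Hot(R\Modl^\cdg)$ closed under direct sums (as noted in the text just before the statement), so any minimal triangulated subcategory with these properties whose generators lie inside $\Hot(R\Modl^{\cdg,\co}_\acycl)$ is automatically contained in it.

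For the first assertion, I would fix a short exact sequence $0\rarrow K\rarrow L\rarrow M\rarrow0$ in $R\Modl^\cdg$ and a graded injective CDG\+module $J$, and show that the complex of abelian groups $\Hom_R(\Tot(K\to L\to M),\>J)$ is acyclic. Recall from~\cite[Section~1.2]{Pkoszul} that $\Tot(K\to L\to M)$ is built in the DG\+category of CDG\+modules as an iterated cone of the morphisms $K\rarrow L$ and $L\rarrow M$. Applying the contravariant DG\+functor $\Hom_R({-},J)$ turns iterated cones into iterated (shifted) cones in the opposite direction, so $\Hom_R(\Tot(K\to L\to M),\>J)$ is naturally isomorphic, as a complex of abelian groups, to the totalization $\Tot(\Hom_R(M,J)\to\Hom_R(L,J)\to\Hom_R(K,J))$ of the corresponding three-term complex of morphism complexes.

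Now the point is that, because $J$ is graded injective as a graded left $R$\+module, applying $\Hom_R({-},J)$ to the short exact sequence $0\rarrow K\rarrow L\rarrow M\rarrow0$ of graded $R$\+modules yields a termwise short exact sequence of complexes of abelian groups $0\rarrow\Hom_R(M,J)\rarrow\Hom_R(L,J)\rarrow\Hom_R(K,J)\rarrow0$. The totalization of a termwise short exact sequence of complexes, viewed as the total complex of a three-row bicomplex, is a classically acyclic object (the total complex of an exact sequence of complexes is acyclic). Hence $\Hom_R(\Tot(K\to L\to M),\>J)$ is acyclic, and $\Tot(K\to L\to M)\in R\Modl^{\cdg,\co}_\acycl$ by the definition of Becker coacyclicity recalled above the statement.

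The main technical obstacle is checking the natural isomorphism in the second paragraph: one must keep track of the Koszul signs in the twisted direct sum defining $\Tot(K\to L\to M)$ and verify that $\Hom_R({-},J)$ intertwines this construction (up to shift) with the totalization of the corresponding sequence of $\Hom$ complexes. This is a routine but sign-sensitive DG\+category computation, and once it is in place the remainder of the argument is formal. The rest of the proposition then follows by observing that the class $R\Modl^{\cdg,\co}_\acycl$ is closed under shifts, cones, and direct sums in $\Hot(R\Modl^\cdg)$, so it contains the minimal triangulated subcategory generated by totalizations of short exact sequences and closed under direct sums.
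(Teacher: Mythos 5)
Your proof is correct. The paper itself does not argue this proposition at all --- it simply cites \cite[Theorem~3.5(a)]{Pkoszul} --- and your argument is precisely the standard proof of that cited result: $\Hom_R({-},J)$ is a contravariant DG\+functor, so it carries the iterated cone $\Tot(K\to L\to M)$ to the totalization of $\Hom_R(M,J)\rarrow\Hom_R(L,J)\rarrow\Hom_R(K,J)$, which is termwise short exact because $J$ (and each shift $J[i]$) is injective in $R\Modl^\sgr$, and the total complex of a bounded exact complex of complexes is acyclic. The deduction of the second assertion from the first, using that $\Hot(R\Modl^{\cdg,\co}_\acycl)$ is a triangulated subcategory closed under direct sums, is also exactly right.
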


\begin{proof}
 This is the result of~\cite[Theorem~3.5(a)]{Pkoszul}.
\end{proof}

 The following lemma is a rather general assertion concerning
applicability of the results of Section~\ref{filtrations-secn}
to our morphism of graded rings $R\rarrow A$.

\begin{lem} \label{dagger-holds-for-curved}
 Let $(R,d,h)$ be a CDG\+ring and $A=R[\delta]$ be the corresponding
graded ring.
 Then the (graded version of) condition~\textup{($\dagger$)}
from Section~\ref{finite-by-induced-subsecn} holds for \emph{any}
cotorsion pair $(\F,\C)$ in $R\Modl^\sgr$ that is invariant under
the degree shift\/~$[1]$.
 In other words, the underlying graded left $R$\+module of the left
CDG\+module $G^+(C)=A\ot_RC$ belongs to $\C$ for any graded
left $R$\+module $C\in\C$.
\end{lem}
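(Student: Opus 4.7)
The plan is to mirror the argument of Lemma~\ref{doubledagger-holds-for-curved}, replacing $\Hom_R(A,{-})$ by $A\ot_R{-}$ and extensions-in-the-middle by the same. Its key input is the short exact sequence of $R$\+$R$\+bimodules
$$
0 \lrarrow R \lrarrow A \lrarrow R[-1] \lrarrow 0,
$$
coming from the decomposition $A = R \oplus R\delta$ (with appropriate sign rules), which was already used implicitly in the proof of Lemma~\ref{doubledagger-holds-for-curved} via the identification $A/R \cong R[-1]$. First I would observe that this sequence is split as a sequence of \emph{right} $R$\+modules: $A$ is freely generated over $R$ on the right by $1$ and $\delta$, so $R$ is a right-$R$-module direct summand of $A$, and $R[-1]$ is accordingly a right-$R$-module direct summand of $A$ (via $r \mapsto r\delta$, up to signs).

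Next I would tensor this sequence on the right with the graded left $R$\+module $C$. Because the sequence is split over $R$ on the right, the resulting sequence of graded left $R$\+modules
$$
0 \lrarrow C \lrarrow A \ot_R C \lrarrow C[-1] \lrarrow 0
$$
is exact (in fact, split as a sequence of graded abelian groups, with the splitting inherited from the right-$R$-module splitting). Note that $A\ot_R C$ here is being regarded merely as a graded left $R$\+module, so the CDG\+module structure and the differential play no role.

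Finally, assuming $C \in \C$ and that the cotorsion pair $(\F, \C)$ is invariant under the shift $[1]$, I would conclude that $C[-1] \in \C$ as well. Since $\C$ is the right part of a cotorsion pair, it is closed under extensions in $R\Modl^\sgr$. The displayed short exact sequence then forces $A \ot_R C \in \C$, giving the required statement that the underlying graded left $R$\+module of $G^+(C) = A\ot_R C$ lies in $\C$, which is precisely condition~\textup{($\dagger$)}.

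No serious obstacle is expected; the only point deserving care is the sign convention underlying the identification $A/R \cong R[-1]$ of $R$\+$R$\+bimodules, but this is exactly parallel to its use in Lemma~\ref{doubledagger-holds-for-curved}, so no new verification is really needed.
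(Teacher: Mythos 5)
Your proposal is correct and follows essentially the same route as the paper: the paper's proof simply asserts the short exact sequence $0\rarrow C\rarrow G^+(C)\rarrow C[-1]\rarrow0$ of graded left $R$\+modules and concludes by shift-invariance and closure of $\C$ under extensions. You merely make explicit the justification for that sequence (tensoring the bimodule sequence $0\to R\to A\to R[-1]\to0$, which is split as right $R$\+modules since $A$ is free on $1$ and~$\delta$), which is a welcome but not essentially different elaboration.
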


\begin{proof}
 Similar to Lemma~\ref{doubledagger-holds-for-curved}.
 For any graded left $R$\+module $C$ there is a short exact sequence
of graded left $R$\+modules $0\rarrow C\rarrow G^+(C)
\rarrow C[-1]\rarrow0$.
 Now $C\in\C$ and $C[-1]\in\C$ imply $G^+(C)\in\C$, since
the class $\C$ is closed under extensions in $R\Modl^\sgr$.
\end{proof}

 We will apply the results of Sections~\ref{finite-by-induced-subsecn}
and~\ref{combined-induced-subsecn} to the following (trivial) cotorsion
pair $(\F,\C)$ in the category of graded left $R$\+modules
$R\Modl^\sgr$.
 Take $\C=R\Modl^\sgr_\inj$ to be the class of all injective graded
left $R$\+modules and $\F=R\Modl^\sgr$ to be the class of all graded
left $R$\+modules (as in
Examples~\ref{corings-and-contramodules-example}
and~\ref{semimodules-examples}).
 Following the notation of Sections~\ref{filtrations-secn}
and~\ref{contraderived-subsecn}, we denote by $G^+(R\Modl^\sgr)=
A\ot_R R\Modl^\sgr$ the class of all CDG\+modules over $(R,d,h)$
of the form $G^+(S)$ with $S\in R\Modl^\sgr$.

\begin{prop} \label{finite-homol-dim-coacyclic}
 Let $(R,d,h)$ be a CDG\+ring.
 Assume that the abelian category of graded left $R$\+modules
$R\Modl^\sgr$ has finite homological dimension~$k$.
 Then one has\/ $R\Modl^{\cdg,\co}_\acycl=
\Fil_{k+1}(G^+(R\Modl^\sgr))^\oplus\subset R\Modl^\cdg$.
\end{prop}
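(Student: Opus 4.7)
The plan is to mimic, in dual form, the proof of Proposition~\ref{finite-homol-dim-contraacyclic}, now invoking the results of Section~\ref{finite-by-induced-subsecn} in their graded-module version (permitted by the discussion at the start of Section~\ref{curved-subsecn}). Specifically, I would take $(\F,\C)$ to be the trivial hereditary complete cotorsion pair $(R\Modl^\sgr,\,R\Modl^\sgr_\inj)$ in the category of graded left $R$-modules, where $\F$ consists of all graded left $R$-modules and $\C$ consists of the graded-injective ones. In the notation of Section~\ref{filtrations-secn}, this gives $\C^A = A\Modl^\sgr_{R\dinj} = R\Modl^\cdg_\inj$, and by Theorem~\ref{becker-co}(a) the corresponding left class is $\F^A = {}^{\perp_1}\C^A = R\Modl^{\cdg,\co}_\acycl$.

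Next I would check that all hypotheses of Theorem~\ref{finite-coresol-dim-theorem} (equivalently of Corollary~\ref{finite-coresol-dim-cor}) are in force. First, the condition $A^+\in\C$: since $A\cong R\oplus R[-1]$ is a finitely generated free graded right $R$-module, the functor $A\otimes_R{-}$ is exact, so its right adjoint (the restriction functor $A\Modl^\sgr\rarrow R\Modl^\sgr$) preserves injective objects; equivalently, the graded version of Lemma~\ref{necessary-A-plus-in-C} is satisfied here. Second, the condition~($\dagger$) holds by Lemma~\ref{dagger-holds-for-curved}, since the trivial cotorsion pair $(R\Modl^\sgr,R\Modl^\sgr_\inj)$ is visibly preserved by the grading shift $[1]$. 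Third, the assumption that $R\Modl^\sgr$ has homological dimension at most~$k$ means precisely that the injective dimension of every graded left $R$-module is at most~$k$, i.e., the $\C$-coresolution dimension of any object of $R\Modl^\sgr$ is $\le k$.

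With these verifications in place, Corollary~\ref{finite-coresol-dim-cor} gives
\[
{}^{\perp_1}\C^A \;=\; \Fil_{k+1}(A\ot_R\F)^\oplus,
\]
and since $A\ot_R\F = A\ot_R R\Modl^\sgr = G^+(R\Modl^\sgr)$ by definition, comparing with the identification $\F^A = R\Modl^{\cdg,\co}_\acycl$ from Theorem~\ref{becker-co}(a) yields the desired equality $R\Modl^{\cdg,\co}_\acycl = \Fil_{k+1}(G^+(R\Modl^\sgr))^\oplus$.

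There is no genuine obstacle: all substantial work was done in Proposition~\ref{finite-coresol-dim-approximation-sequences}, Theorem~\ref{finite-coresol-dim-theorem}, and Lemma~\ref{dagger-holds-for-curved}; the only thing one might pause over is the tacit transfer of the theory from ungraded to graded modules, but this is asserted wholesale in Section~\ref{curved-subsecn} and poses no new difficulty, as the constructions and homological arguments of Section~\ref{finite-by-induced-subsecn} are entirely formal in the ambient abelian category.
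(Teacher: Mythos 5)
Your proposal is correct and follows essentially the same route as the paper's own proof: identify $\C^A=R\Modl^\cdg_\inj$ and hence $\F^A=R\Modl^{\cdg,\co}_\acycl$ via Theorem~\ref{becker-co}(a), verify the hypotheses of Theorem~\ref{finite-coresol-dim-theorem} (with condition~($\dagger$) supplied by Lemma~\ref{dagger-holds-for-curved} and $A^+\in\C$ following from $A$ being a free graded right $R$\+module), and conclude by Corollary~\ref{finite-coresol-dim-cor}. Your spelled-out verifications are all sound; the paper merely compresses them.
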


\begin{proof}
 In the notation of Section~\ref{filtrations-secn}, we have
$\C^A=A\Modl^\sgr_{R\dinj}=R\Modl^\cdg_\inj$.
 Hence, by Theorem~\ref{becker-co}(a),
\,$\F^A=R\Modl^{\cdg,\co}_\acycl$.
 The assumptions of Theorem~\ref{finite-coresol-dim-theorem}
hold in view of Lemma~\ref{dagger-holds-for-curved}, and it remains
to apply Corollary~\ref{finite-coresol-dim-cor}.
\end{proof}

\begin{cor} \label{finite-homol-dim-co=contra}
 For any CDG\+ring $(R,d,h)$ such that the abelian category of graded
left $R$\+modules has finite homological dimension, the classes of
contraacyclic and coacyclic left CDG\+modules in the sense of Becker
over $(R,d,h)$ coincide, $R\Modl^{\cdg,\ctr}_\acycl=
R\Modl^{\cdg,\co}_\acycl$.
\end{cor}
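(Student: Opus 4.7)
The plan is to show that both $R\Modl^{\cdg,\ctr}_\acycl$ and $R\Modl^{\cdg,\co}_\acycl$ coincide, as full subcategories of $\Hot(R\Modl^\cdg)$, with the minimal thick subcategory $\X_0\subset\Hot(R\Modl^\cdg)$ containing the totalizations of short exact sequences of CDG\+modules. The contraacyclic side of this statement is already Corollary~\ref{finite-homol-dim-contra=absderived}, so the work reduces to establishing the coderived analog: under the finite homological dimension hypothesis, $\Hot(R\Modl^{\cdg,\co}_\acycl)=\X_0$.

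One inclusion $\X_0\subset\Hot(R\Modl^{\cdg,\co}_\acycl)$ is Proposition~\ref{totalizations-coacyclic}. For the reverse inclusion, first invoke Proposition~\ref{finite-homol-dim-coacyclic} to obtain the description $R\Modl^{\cdg,\co}_\acycl=\Fil_{k+1}(G^+(R\Modl^\sgr))^\oplus$, and then prove a filtration analog of Lemma~\ref{finite-cofiltrations-absacyclic}: for any class $\T\subset R\Modl^\cdg$ and finite integer~$k$, every object of $\Fil_{k+1}(\T)$ lies, as an object of $\Hot(R\Modl^\cdg)$, in the minimal full triangulated subcategory containing $\T$ and the totalizations of short exact sequences in $R\Modl^\cdg$. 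This is entirely parallel to the cofiltration case: for any short exact sequence $0\rarrow K\rarrow L\rarrow M\rarrow0$ in $R\Modl^\cdg$, the middle term $L$ sits in the triangulated subcategory generated by $K$, $M$, and $\Tot(K\to L\to M)$, and induction on the filtration length does the rest. Applying this to $\T=G^+(R\Modl^\sgr)$ and recalling the observation from Section~\ref{curved-subsecn} that each CDG\+module $G^+(S)$ is contractible (the derivation $\partial$ supplies a contracting homotopy), one concludes that $R\Modl^{\cdg,\co}_\acycl$ embeds into $\X_0$ at the level of the homotopy category; closure under direct summands is automatic since thick subcategories are closed under retracts.

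With both equalities $\Hot(R\Modl^{\cdg,\ctr}_\acycl)=\X_0=\Hot(R\Modl^{\cdg,\co}_\acycl)$ in hand, the passage back to the level of CDG\+modules is immediate: a CDG\+module $X$ is contraacyclic (resp.\ coacyclic) in the sense of Becker precisely when its image in $\Hot(R\Modl^\cdg)$ lies in $\Hot(R\Modl^{\cdg,\ctr}_\acycl)$ (resp.\ $\Hot(R\Modl^{\cdg,\co}_\acycl)$), because the defining vanishing conditions depend only on the homotopy class. The two full subcategories of $\Hot(R\Modl^\cdg)$ now coincide, and hence so do the two classes of CDG\+modules.

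No step is a serious obstacle; the only thing requiring care is the filtration analog of Lemma~\ref{finite-cofiltrations-absacyclic}, which is a routine dualization of the cofiltration argument.
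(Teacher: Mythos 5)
Your proof is correct, but it takes a genuinely different route from the paper's. The paper's own proof is a direct one-line comparison of Propositions~\ref{finite-homol-dim-contraacyclic} and~\ref{finite-homol-dim-coacyclic}: since $G^-=G^+[1]$ and the class $R\Modl^\sgr$ is shift-invariant, one has $G^-(R\Modl^\sgr)=G^+(R\Modl^\sgr)$, and since a finite decreasing filtration is the same thing as a finite increasing filtration, $\Cof_{k+1}(\T)=\Fil_{k+1}(\T)$ for any class $\T$; hence the two descriptions $\Cof_{k+1}(G^-(R\Modl^\sgr))^\oplus$ and $\Fil_{k+1}(G^+(R\Modl^\sgr))^\oplus$ are literally the same class of CDG\+modules. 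This stays entirely at the level of module classes. You instead pass to the homotopy category and show both $\Hot(R\Modl^{\cdg,\ctr}_\acycl)$ and $\Hot(R\Modl^{\cdg,\co}_\acycl)$ equal the minimal thick subcategory $\X_0$ generated by totalizations — in effect you prove Corollary~\ref{finite-homol-dim-co=absderived} directly and combine it with Corollary~\ref{finite-homol-dim-contra=absderived}, reversing the paper's logical order (the paper deduces the co=absderived statement \emph{from} the present corollary). Your route is valid: the inclusion $\X_0\subset\Hot(R\Modl^{\cdg,\co}_\acycl)$ is Proposition~\ref{totalizations-coacyclic}, the reverse inclusion follows from Proposition~\ref{finite-homol-dim-coacyclic}, the contractibility of $G^+(S)$, and Lemma~\ref{finite-cofiltrations-absacyclic} — note that no ``filtration analog'' of that lemma actually needs to be proved, precisely because $\Fil_{k+1}(\T)=\Cof_{k+1}(\T)$ for finite~$k$, the same observation the paper exploits. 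The final passage back from subcategories of $\Hot(R\Modl^\cdg)$ to classes of CDG\+modules is fine, since both defining conditions are homotopy-invariant. The trade-off: your argument is longer and needs the triangulated-category machinery, but it makes the relation to the absolute derived category explicit; the paper's is shorter and purely module-theoretic.
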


\begin{proof}
 This is our version of~\cite[Theorem~3.6(a)]{Pkoszul}.
 It is provable by comparing the results of
Propositions~\ref{finite-homol-dim-contraacyclic}
and~\ref{finite-homol-dim-coacyclic}.
 We have $G^-(R\Modl^\sgr)=G^+(R\Modl^\sgr)$, since $G^-=G^+[1]$;
and, obviously, $\Cof_{k+1}(\T)=\Fil_{k+1}(\T)$ for any class
$\T\subset A\Modl^\sgr$ and any finite integer~$k$.
\end{proof}

\begin{cor} \label{finite-homol-dim-co=absderived}
 Let $(R,d,h)$ be a CDG\+ring.
 Assume that the abelian category of graded left $R$\+modules
$R\Modl^\sgr$ has finite homological dimension.
 Then\/ $\Hot(R\Modl^{\cdg,\co}_\acycl)$ is the minimal thick
subcategory in\/ $\Hot(R\Modl^\cdg)$ containing the totalizations of
short exact sequences of CDG\+modules.
\end{cor}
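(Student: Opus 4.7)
The plan is to mirror the proof of Corollary \ref{finite-homol-dim-contra=absderived}, substituting the coderived ingredients for the contraderived ones. Let $\X_0 \subset \Hot(R\Modl^\cdg)$ denote the minimal thick subcategory containing the totalizations of short exact sequences of CDG\+modules. The easy inclusion $\X_0 \subset \Hot(R\Modl^{\cdg,\co}_\acycl)$ is immediate from Proposition~\ref{totalizations-coacyclic}, and holds without any assumption on~$R$.

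For the nontrivial inclusion, I would invoke Proposition~\ref{finite-homol-dim-coacyclic}, which under the finite homological dimension hypothesis identifies $R\Modl^{\cdg,\co}_\acycl$ with $\Fil_{k+1}(G^+(R\Modl^\sgr))^\oplus$. So any coacyclic $M$ is a direct summand of a CDG\+module $M'$ admitting a finite filtration $0 = F_0 M' \subset F_1 M' \subset \dotsb \subset F_{k+1}M' = M'$ whose successive quotients have the form $G^+(S)$ for graded $R$\+modules~$S$. As recalled in Section~\ref{curved-subsecn}, every such $G^+(S)$ is contractible in $\Hot(R\Modl^\cdg)$, hence is a zero object there, and in particular belongs to~$\X_0$.

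The core step is the filtered analogue of Lemma~\ref{finite-cofiltrations-absacyclic}: for any class $\T \subset R\Modl^\cdg$ and any finite integer~$k$, every object of $\Fil_{k+1}(\T)$ lies in the minimal thick subcategory of $\Hot(R\Modl^\cdg)$ containing $\T$ and the totalizations of short exact sequences. This follows by induction on the length of the filtration from the observation that, for any short exact sequence $0 \rarrow K \rarrow L \rarrow M \rarrow 0$ in $R\Modl^\cdg$, the object $L$ lies in the triangulated subcategory generated by $K$, $M$, and $\Tot(K \to L \to M)$. Applying this to $\T = G^+(R\Modl^\sgr) \subset \X_0$ yields $M' \in \X_0$, and since $\X_0$ is thick, the direct summand $M$ also belongs to~$\X_0$.

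The only genuinely routine point is the induction on filtration length; no serious obstacle arises, since all the hard work has been done in Proposition~\ref{finite-homol-dim-coacyclic} (which rests on Theorem~\ref{becker-co}(a), Lemma~\ref{dagger-holds-for-curved}, and Corollary~\ref{finite-coresol-dim-cor}). As a stylistic remark, one could alternatively deduce the corollary directly from Corollary~\ref{finite-homol-dim-contra=absderived} by way of Corollary~\ref{finite-homol-dim-co=contra}, which asserts $R\Modl^{\cdg,\co}_\acycl = R\Modl^{\cdg,\ctr}_\acycl$ under the present hypothesis; but giving the self-contained argument above seems preferable for symmetry with the coderived setting.
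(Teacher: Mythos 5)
Your argument is correct, but it is not the route the paper takes: the paper disposes of this corollary in two lines, by combining Corollary~\ref{finite-homol-dim-contra=absderived} (the contraderived statement) with Corollary~\ref{finite-homol-dim-co=contra} (the coincidence $R\Modl^{\cdg,\ctr}_\acycl=R\Modl^{\cdg,\co}_\acycl$ under the finite homological dimension hypothesis) --- i.e.\ precisely the ``alternative'' you relegate to a closing stylistic remark. What you give instead is the direct coderived-side argument, mirroring step by step the paper's proof of Corollary~\ref{finite-homol-dim-contra=absderived}: the easy inclusion from Proposition~\ref{totalizations-coacyclic}, the identification $R\Modl^{\cdg,\co}_\acycl=\Fil_{k+1}(G^+(R\Modl^\sgr))^\oplus$ from Proposition~\ref{finite-homol-dim-coacyclic}, the contractibility of $G^+(S)$, and the passage from finite filtrations to thick subcategories. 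All of this is sound; the one simplification you miss is that your ``filtered analogue of Lemma~\ref{finite-cofiltrations-absacyclic}'' needs no separate induction, since $\Fil_{k+1}(\T)=\Cof_{k+1}(\T)$ for any finite~$k$ (a point the paper makes explicitly in the proof of Corollary~\ref{finite-homol-dim-co=contra}), so the lemma applies verbatim. The trade-off is the expected one: your version is self-contained and symmetric with the coderived setting, while the paper's version is shorter and exploits the already-established equality of the two classes of acyclic objects.
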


\begin{proof}
 This is ``the coderived half'' of~\cite[Theorem~3.6]{Pkoszul}.
 It can be deduced, e.~g., by comparing the results of
Corollaries~\ref{finite-homol-dim-contra=absderived}
and~\ref{finite-homol-dim-co=contra}.
\end{proof}

 The following condition from~\cite[Section~3.7]{Pkoszul} ensures
applicability of Theorem~\ref{combined-induced-theorem}:
\begin{itemize}
\item[($*$)] any countable direct sum of injective graded left
$R$\+modules, viewed as a graded left $R$\+module, has finite
injective dimension not exceeding a fixed integer~$k$.
\end{itemize}

\begin{prop} \label{star-coacyclic-prop}
 Let $(R,d,h)$ be a CDG\+ring.
 Assume that the graded ring $R$ satisfies
the condition~\textup{($*$)}.
 Then one has\/ $R\Modl^{\cdg,\co}_\acycl=
\Fil_{\omega+k}(G^+(R\Modl^\sgr))^\oplus\subset R\Modl^\cdg$.
\end{prop}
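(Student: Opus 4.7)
The proof will follow the same template as Proposition~\ref{doublestar-contraacyclic-prop} (the contraacyclic case), but with the roles of $G^+$ and $G^-$, and of induction and coinduction, swapped, and with Section~\ref{combined-induced-subsecn} replacing Section~\ref{combined-coinduced-subsecn}. The plan is to verify that all the hypotheses of Theorem~\ref{combined-induced-theorem} hold for the trivial ``injective'' cotorsion pair $(\F,\C)=(R\Modl^\sgr,\>R\Modl^\sgr_\inj)$ in the category of graded left $R$\+modules, and then read off the conclusion.

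First I would take $\C=R\Modl^\sgr_\inj$ and $\F=R\Modl^\sgr$, which form a hereditary complete cotorsion pair in $R\Modl^\sgr$. Under the translation of Section~\ref{filtrations-secn}, one has $\C^A=A\Modl^\sgr_{R\dinj}=R\Modl^\cdg_\inj$, and hence by Theorem~\ref{becker-co}(a), the induced class $\F^A={}^{\perp_1}\C^A$ equals precisely $R\Modl^{\cdg,\co}_\acycl$.

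Next I would check the three remaining hypotheses of Theorem~\ref{combined-induced-theorem}. The graded ring $A=R[\delta]$ is free as a graded right $R$\+module (with basis $\{1,\delta\}$), so the character module $A^+$ is an injective graded left $R$\+module; that is, $A^+\in\C$. The condition~($\dagger$) holds by Lemma~\ref{dagger-holds-for-curved}, since the cotorsion pair $(\F,\C)$ is clearly invariant under the shift~$[1]$. Finally, the $\C$\+coresolution dimension of a graded $R$\+module $N$ is just its injective dimension, so the condition~($*$) is exactly the assumption that every countable direct sum of modules from $\C$ has $\C$\+coresolution dimension~$\le k$.

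Having verified these, I would invoke Corollary~\ref{combined-induced-cor} (equivalently, Theorem~\ref{combined-induced-theorem}) to conclude that
$$
 R\Modl^{\cdg,\co}_\acycl=\F^A={}^{\perp_1}\C^A=
 \Fil_{\omega+k}(A\ot_R\F)^\oplus=\Fil_{\omega+k}(G^+(R\Modl^\sgr))^\oplus,
$$
as desired. No step is genuinely an obstacle here: the content of the proposition is entirely absorbed into the general machinery of Section~\ref{combined-induced-subsecn}, and the verification of ($\dagger$) and of $A^+\in\C$ is the same easy calculation used in Proposition~\ref{finite-homol-dim-coacyclic} and in the contraacyclic counterpart Proposition~\ref{doublestar-contraacyclic-prop}.
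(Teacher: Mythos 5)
Your proposal is correct and matches the paper's own proof: the paper likewise verifies condition~($\dagger$) via Lemma~\ref{dagger-holds-for-curved} for the trivial cotorsion pair $(R\Modl^\sgr,\allowbreak R\Modl^\sgr_\inj)$ and obtains the statement by comparing Theorem~\ref{becker-co}(a) with Corollary~\ref{combined-induced-cor}, exactly as you do. The only difference is that the paper leaves the routine checks ($A^+\in\C$, the identification of condition~($*$) with the coresolution-dimension hypothesis) implicit by referring back to Proposition~\ref{finite-homol-dim-coacyclic}, whereas you spell them out.
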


\begin{proof}
 Similar to Proposition~\ref{finite-homol-dim-coacyclic}.
 The condition~($\dagger$) holds by Lemma~\ref{dagger-holds-for-curved},
and the desired assertion is obtained by comparing
Theorem~\ref{becker-co}(a) with Corollary~\ref{combined-induced-cor}.
\end{proof}

\begin{thm} \label{countable-filtrations-coacyclic}
 Let $\T$ be a class of objects in $R\Modl^\cdg$ and $\alpha$~be
a countable ordinal.
 Then any object from\/ $\Fil_\alpha(\T)\subset R\Modl^\cdg$,
viewed as an object of the homotopy category\/ $\Hot(R\Modl^\cdg)$,
belongs to the minimal full triangulated subcategory of\/
$\Hot(R\Modl^\cdg)$ containing the CDG\+modules from $\T$ and
the totalizations of short exact sequences in $R\Modl^\cdg$,
and closed under countable direct sums.
\end{thm}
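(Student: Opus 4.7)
The plan is to mirror, step for step, the proof of Theorem~\ref{countable-cofiltrations-contraacyclic}, replacing every product by a direct sum and every cofiltration by a filtration. Denote by $\X\subset\Hot(R\Modl^\cdg)$ the minimal full triangulated subcategory containing $\T$ and the totalizations of short exact sequences in $R\Modl^\cdg$, and closed under countable direct sums. The goal is to show $\Fil_\alpha(\T)\subset\X$ for every countable ordinal~$\alpha$, by transfinite induction on~$\alpha$.

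First I would dispose of the finite-filtration case, which is the analogue of Lemma~\ref{finite-cofiltrations-absacyclic} and is proved by the same elementary observation: for any short exact sequence $0\rarrow K\rarrow L\rarrow M\rarrow0$ in $R\Modl^\cdg$, the object $L$ lies in the triangle on $K$, $M$, and $\Tot(K\to L\to M)$ inside $\Hot(R\Modl^\cdg)$. Iterating this along the filtration handles the successor step, and also takes care of $\Fil_{k+1}(\T)$ for every finite~$k$.

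The key step is the case $\alpha=\omega$. Given an $\omega$\+filtered left CDG\+module $M=\bigcup_{n\in\omega}F_nM$ with $F_nM\in\X$ already established, I would use the standard telescope short exact sequence in the abelian category $R\Modl^\cdg=A\Modl^\sgr$,
\begin{equation*}
 0\lrarrow\bigoplus\nolimits_{n\in\omega}F_nM\lrarrow
 \bigoplus\nolimits_{n\in\omega}F_nM\lrarrow M\lrarrow0,
\end{equation*}
where the first map is $\id-\text{shift}$ and the second is the evident surjection. Each $F_nM$ lies in $\X$ by the finite case, hence so does the direct sum $\bigoplus_n F_nM$ because $\X$ is closed under countable direct sums. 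Since the totalization of this short exact sequence also belongs to $\X$ by construction, the three-term relation forces $M\in\X$.

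For a general countable ordinal $\alpha$, I proceed by transfinite induction. The case $\alpha=0$ is trivial. If $\alpha=\beta+1$ is a successor, the short exact sequence $0\rarrow F_\beta M\rarrow M\rarrow F_{\beta+1}M/F_\beta M\rarrow0$ has $F_\beta M\in\X$ by induction and quotient in $\T\subset\X$, and its totalization is in $\X$, so $M\in\X$. If $\alpha$ is a countable limit ordinal, I pick an increasing sequence $0=\beta_0<\beta_1<\beta_2<\cdots<\alpha$ with $\lim_n\beta_n=\alpha$, set $F'_nM=F_{\beta_n}M$, and observe that $F'_{n+1}M/F'_nM$ is $\gamma_n$\+filtered by $\T$ for some $\gamma_n<\alpha$; by the induction hypothesis each of these successive quotients lies in $\X$, whence each $F'_nM$ lies in $\X$ by the already-treated successor step, and finally $M=\bigcup_n F'_nM$ lies in $\X$ by the $\omega$\+case. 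I do not foresee any genuine obstacle here: the whole argument is the direct Pontryagin dual of the proof of Theorem~\ref{countable-cofiltrations-contraacyclic}, and the only point that deserves care is choosing the cofinal sequence $(\beta_n)$ so that all the ordinals $\gamma_n$ are strictly smaller than~$\alpha$, which works because $\alpha$ is a countable limit.
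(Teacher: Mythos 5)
Your proposal is correct and follows essentially the same route as the paper: the same telescope short exact sequence $0\to\bigoplus_{n}F_nM\to\bigoplus_{n}F_nM\to M\to0$ for the $\omega$\+case, the finite/successor case via the triangle on $K$, $L$, $M$, $\Tot(K\to L\to M)$, and the same reduction of a countable limit ordinal to the $\omega$\+case via a cofinal sequence $(\beta_n)$ with the successive quotients $\gamma_n$\+filtered for $\gamma_n<\alpha$. The paper states the general-ordinal step only by reference to the dual Theorem~\ref{countable-cofiltrations-contraacyclic}, and your transfinite induction is exactly the dualization it has in mind.
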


\begin{proof}
 This is the dual version of
Theorem~\ref{countable-cofiltrations-contraacyclic}.
 Denote by $\X\subset\Hot(R\Modl^\cdg)$ the minimal triangulated
subcategory containing the CDG\+modules from $\T$ and the totalizations
of short exact sequences in $R\Modl^\cdg$, and closed under
countable direct sums.
 Let us consider the case $\alpha=\omega$.
 Let $0=F_0M\subset F_1M\subset F_2M\subset\dotsb F_nM\subset\dotsb
\subset F_\omega M=M$, \ $F_\omega M=\bigcup_{n\in\omega}F_nM$, be
an $\omega$\+filtration of a left CDG\+module $M$ over $(R,d,h)$ by
CDG\+modules from~$\X$.
 Then we have a short exact sequence
\begin{equation} \label{filtration-telescope}
 0\lrarrow \bigoplus\nolimits_{n\in\omega}F_nM\lrarrow
 \bigoplus\nolimits_{n\in\omega}F_nM\lrarrow M\lrarrow0
\end{equation}
in the abelian category of CDG\+modules $R\Modl^\cdg=A\Modl^\sgr$.
 By Lemma~\ref{finite-cofiltrations-absacyclic}, which is applicable
because $\Fil_{k+1}(\X)=\Cof_{k+1}(\X)$, we have $F_nM\in\X$ for
all the integers $n\ge0$.
 Hence $\bigoplus_{n\in\omega}F_nM\in\X$.
 Since the totalization of the short exact
sequence~\eqref{filtration-telescope} also belongs to $\X$,
it follows that $M\in\X$.
 The argument for an arbitrary countable ordinal~$\alpha$ is
similar to that in Theorem~\ref{countable-cofiltrations-contraacyclic}.
\end{proof}

\begin{cor} \label{star-coderived-cor}
 Let $(R,d,h)$ be a CDG\+ring.
 Assume that the graded ring $R$ satisfies
the condition~\textup{($*$)}.
 Then\/ $\Hot(R\Modl^{\cdg,\co}_\acycl)$ is the minimal full
triangulated subcategory in\/ $\Hot(R\Modl^\cdg)$ containing
the totalizations of short exact sequences of CDG\+modules
over $(R,d,h)$ and closed under countable direct sums.
\end{cor}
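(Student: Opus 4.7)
The plan is to mirror the proof of Corollary~\ref{doublestar-contraderived-cor}, replacing cofiltrations and products with filtrations and direct sums throughout. Let $\X\subset\Hot(R\Modl^\cdg)$ denote the minimal full triangulated subcategory containing the totalizations of short exact sequences of left CDG-modules over $(R,d,h)$ and closed under countable direct sums.

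The easy inclusion $\X\subset\Hot(R\Modl^{\cdg,\co}_\acycl)$ follows immediately from Proposition~\ref{totalizations-coacyclic}, which tells us that every such totalization is coacyclic in the sense of Becker, together with the fact that $\Hot(R\Modl^{\cdg,\co}_\acycl)$ is a triangulated subcategory closed under infinite direct sums in $\Hot(R\Modl^\cdg)$. This inclusion requires no hypothesis on~$R$.

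For the reverse inclusion, I would use Proposition~\ref{star-coacyclic-prop}, which under assumption~($*$) describes $R\Modl^{\cdg,\co}_\acycl$ as $\Fil_{\omega+k}(G^+(R\Modl^\sgr))^\oplus$. An arbitrary coacyclic left CDG-module $M$ is thus a direct summand of a CDG-module $N$ that is $(\omega+k)$-filtered by CDG-modules of the form $G^+(S)$ for $S\in R\Modl^\sgr$. Since $\omega+k$ is a countable ordinal, Theorem~\ref{countable-filtrations-coacyclic} shows that $N$ belongs to the minimal full triangulated subcategory of $\Hot(R\Modl^\cdg)$ containing $\{G^+(S)\mid S\in R\Modl^\sgr\}$ and the totalizations of short exact sequences of CDG-modules, and closed under countable direct sums. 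Now the key observation, recalled in Section~\ref{curved-subsecn}, is that every $G^+(S)$ is contractible, hence represents the zero object in $\Hot(R\Modl^\cdg)$; these generators therefore lie trivially in~$\X$. Consequently $N\in\X$.

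It remains to pass from $N\in\X$ to $M\in\X$, i.e.\ to see that $\X$ is closed under direct summands in $\Hot(R\Modl^\cdg)$. This is where I would invoke Rickard's criterion~\cite[Criterion~1.3]{Neem} together with the B\"okstedt--Neeman theorem~\cite[Proposition~3.2 or Remark~3.3]{BN}: a full triangulated subcategory closed under countable direct sums is automatically thick, hence closed under direct summands. Applying this to~$\X$ yields $M\in\X$ and finishes the proof. The only step that required real work was the reduction of a coacyclic CDG-module to an $(\omega+k)$-filtration by induced modules, and that was carried out in Proposition~\ref{star-coacyclic-prop} and Theorem~\ref{countable-filtrations-coacyclic}; everything else here is formal.
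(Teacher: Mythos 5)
Your proposal is correct and follows the paper's own proof essentially verbatim: the easy inclusion via Proposition~\ref{totalizations-coacyclic}, the hard inclusion via Proposition~\ref{star-coacyclic-prop} and Theorem~\ref{countable-filtrations-coacyclic} together with the contractibility of the $G^+(S)$, and the final appeal to Rickard's criterion and the B\"okstedt--Neeman theorem to handle direct summands. Nothing to add.
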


\begin{proof}
 This is essentially~\cite[Theorem~3.7]{Pkoszul}.
 Here is a proof based on the results of this paper.
 Let $\X\subset\Hot(R\Modl^\cdg)$ be the minimal thick subcategory
containing the totalizations of short exact sequences of CDG\+modules
and closed under countable direct sums.
 Then $\X\subset\Hot(R\Modl^{\cdg,\co}_\acycl)$
by Proposition~\ref{totalizations-coacyclic} (this assertion
does not depend on any assumptions on the ring~$R$).

 The (nontrivial) inclusion $\Hot(R\Modl^{\cdg,\co}_\acycl)\subset\X$
holds by Proposition~\ref{star-coacyclic-prop}
and Theorem~\ref{countable-filtrations-coacyclic}.
 The observation that the CDG\+module $G^+(S)$ is contractible for
every graded $R$\+module $S$ needs to be used here.

 Finally, any full triangulated subcategory having countable
direct sums is a thick subcategory by Rickard's
criterion~\cite[Criterion~1.3]{Neem} and the B\"okstedt--Neeman
theorem~\cite[Proposition~3.2 or Remark~3.3]{BN}.
\end{proof}

 The coderived category of left CDG\+modules over $(R,d,h)$ in the sense
of the books and papers~\cite{Psemi,Pkoszul,Pfp,Pps,Prel} is defined as
the quotient category of the homotopy category $\Hot(R\Modl^\cdg)$ by
its minimal triangulated subcategory containing the totalizations of
short exact sequences of CDG\+modules and closed under infinite
direct sums.
 Thus Corollary~\ref{star-coderived-cor} can be rephrased by saying
that, \emph{under the condition}~($*$), \emph{the coderived category in
the sense of Becker coincides with the coderived category in the sense
of}~\cite{Psemi,Pkoszul,Pfp,Pps,Prel}.

\bigskip

\end{document}